\DeclarePairedDelimiter\floor{\lfloor}{\rfloor}
\newcommand{\sym}{\mathrm{sym}}
\newcommand{\ur}{\mathrm{ur}}
\newcommand{\ram}{\mathrm{ram}}
\newcommand{\red}{\mathrm{red}}
\newcommand{\ad}{\mathrm{ad}}
\newcommand{\reg}{\mathrm{reg}}
\newcommand{\nreg}{\mathrm{nreg}}
\newcommand{\ol}{\overline}
\newcommand{\vreg}{\mathrm{vreg}}
\newcommand{\nvreg}{\mathrm{nvreg}}
\newcommand{\Jac}[2]{\begin{pmatrix}\frac{#1}{#2}\end{pmatrix}}
\newcommand{\cO}{\mathcal{O}}
\newcommand{\mcO}{\mathcal{O}}
\newcommand{\mfp}{\mathfrak{p}}
\newcommand{\bfT}{\mathbf{T}}
\newcommand{\x}{\mathbf{x}}
\newcommand{\y}{\mathbf{y}}
\newcommand{\bU}{\mathbb{U}}
\newcommand{\bbU}{\mathbb{U}}
\newcommand{\bZ}{\mathbb{Z}}
\newcommand{\F}{\mathbb{F}}
\newcommand{\FF}{\mathbb{F}}
\newcommand{\cA}{\mathcal{A}}
\newcommand{\R}{\mathbb{R}}
\newcommand{\bbR}{\mathbb{R}}
\newcommand{\Q}{\mathbb{Q}}
\newcommand{\bbQ}{\mathbb{Q}}
\newcommand{\QQ}{\mathbb{Q}}
\newcommand{\C}{\mathbb{C}}
\newcommand{\CC}{\mathbb{C}}
\newcommand{\bbC}{\mathbb{C}}
\newcommand{\bbS}{\mathbb{S}}
\newcommand{\bS}{\mathbb{S}}
\newcommand{\bG}{\mathbb{G}}
\newcommand{\bbG}{\mathbb{G}}
\newcommand{\bfG}{\mathbf{G}}
\newcommand{\J}{\mathbf{J}}
\newcommand{\bfS}{\mathbf{S}}
\newcommand{\bfL}{\mathbf{L}}
\newcommand{\bL}{\mathbb{L}}
\newcommand{\bfZ}{\mathbf{Z}}
\newcommand{\cB}{\mathcal{B}}
\newcommand{\mcB}{\mathcal{B}}
\newcommand{\mfg}{\mathfrak{g}}
\newcommand{\bfU}{\mathbf{U}}
\newcommand{\jhat}{\widehat{\jmath\,}}
\newcommand{\der}{\mathrm{der}}
\newcommand{\cc}{{}^{\circ}\!}
\DeclareMathOperator{\Nr}{Nr}
\DeclareMathOperator{\Tr}{Tr}
\DeclareMathOperator{\depth}{depth}
\DeclareMathOperator{\sgn}{sgn}
\DeclareMathOperator{\ord}{ord}
\DeclareMathOperator{\rank}{rank}
\DeclareMathOperator{\SL}{SL}
\DeclareMathOperator{\cInd}{c-Ind}
\DeclareMathOperator{\Ind}{Ind}
\DeclareMathOperator{\Hom}{Hom}
\DeclareMathOperator{\Res}{Res}
\DeclareMathOperator{\bfGal}{Gal}
\DeclareMathOperator{\Stab}{Stab}
\DeclareMathOperator{\GL}{GL}
\DeclareMathOperator{\geom}{geom}
\DeclareMathOperator{\Gal}{Gal}
\DeclareMathOperator{\meas}{meas}
\newcommand\from{\colon}
\theoremstyle{plain}
\newtheorem{thm}{Theorem}[section]
\newtheorem{theorem}[thm]{Theorem}
\newtheorem*{thm*}{Theorem}
\newtheorem{prop}[thm]{Proposition}
\newtheorem{proposition}[thm]{Proposition}
\newtheorem{lem}[thm]{Lemma}
\newtheorem{lemma}[thm]{Lemma}
\newtheorem{cor}[thm]{Corollary}
\newtheorem{conj}[thm]{Conjecture}
\theoremstyle{definition}
\newtheorem{defn}[thm]{Definition}
\newtheorem{definition}[thm]{Definition}
\newtheorem{claim}{Claim}
\theoremstyle{remark}
\newtheorem{rem}[thm]{Remark}
\newtheorem*{claim*}{Claim}
\newtheorem{remark}[thm]{Remark}
\theoremstyle{theorem}
\newtheorem{displaytheorem}{Theorem}
\title{Geometric $L$-packets of Howe-unramified toral supercuspidal representations}
\author{Charlotte Chan}
\address{Department of Mathematics, Massachusetts Institute of Technology, 77 Massachusetts Ave, Cambridge, MA 02139, USA.}
\email{charchan@mit.edu}
\author{Masao Oi}
\address{Department of Mathematics (Hakubi center), Kyoto University, Kitashirakawa, Oiwake-cho, Sakyo-ku, Kyoto 606-8502, Japan.}
\email{masaooi@math.kyoto-u.ac.jp}
\begin{document}

\begin{abstract}
We show that $L$-packets of toral supercuspidal representations arising from unramified maximal tori of $p$-adic groups are realized by Deligne--Lusztig varieties for parahoric subgroups. We prove this by exhibiting a direct comparison between the cohomology of these varieties and algebraic constructions of supercuspidal representations. Our approach is to establish that toral irreducible representations are uniquely determined by the values of their characters on a domain of sufficiently regular elements. This is an analogue of Harish-Chandra's characterization of real discrete series representations by their character on regular elements of compact maximal tori, a characterization which Langlands relied on in his construction of $L$-packets of these representations. In parallel to the real case, we characterize the members of Kaletha's toral $L$-packets by their character on sufficiently regular elements of elliptic maximal tori.
\end{abstract}

\subjclass[2010]{Primary: 22E50; Secondary: 11S37, 11F70}
\keywords{supercuspidal representations, Deligne--Lusztig theory, Harish-Chandra character}

\maketitle

\section{Introduction}\label{sec:intro}

This paper has several objectives, all of which are connected by the core motif that it is of significant interest to be able to recognize irreducible representations by the values of their characters on some domain.
For real groups, it is the remarkable work of Harish-Chandra \cite{MR219665} that real discrete series representations are determined by their character on the regular elements of compact maximal tori, a domain on which the character formula is very simple. This characterization was later used by Langlands \cite{MR1011897} to package these representations into $L$-packets. 
For $p$-adic groups, Kaletha \cite{MR4013740} recently proposed a construction of regular supercuspidal $L$-packets by reparametrizing (part of) Yu's construction of supercuspidal representations \cite{MR1824988} in terms of characters of certain elliptic maximal tori.
As Kaletha mentions, ideally one should be able to characterize the members of these $L$-packets by their character on some nice domain as in the real case; this problem was also mentioned several years earlier by Adler--Spice \cite{MR2543925}, motivated by Henniart's work for $\GL_n$ \cite{MR1263525}. However, even the correct statement of the analogue of Harish-Chandra's result for general $p$-adic groups was essentially completely unknown. One of the main results of this paper (Theorem \ref{thm:unram pi vreg}) is a resolution of this characterization problem for a class of regular supercuspidal representations corresponding to unramified elliptic maximal tori. This is a vast generalization of a Henniart's $\GL_n$ results. 

It is a folklore conjecture that every supercuspidal representation of a $p$-adic group is isomorphic to the compact induction of some finite-dimensional irreducible representation of a compact-modulo-center subgroup. 
In all known constructions of supercuspidal representations, modulo center, this compact subgroup can (essentially) be taken to be a so-called parahoric subgroup. 
Much of this paper is dedicated to establishing a characterization theorem at parahoric level; this characterization is significantly harder to establish than the above-mentioned result at the level of the $p$-adic group.
We fix some notation: To any elliptic unramified maximal torus $\bfS$ of a connected reductive group $\bfG$ defined over a non-archimedean local field $F$ with finite residue field $\FF_q$, we may associate a unique point $\x$ in the reduced building $\cB^{\red}(\bfG, F)$ together with a parahoric subgroup $G_{\x,0} \subset G \colonequals \bfG(F)$. 
We write $W_{G_{\x,0}}(\bfS)$ for the quotient of $N_{G_{\x,0}}(\bfS)$ (the normalizer group of $\bfS$ in $G_{\x,0}$) by $S:=\bfS(F)$.

\begin{displaytheorem}[Theorem \ref{thm:SG vreg}]\label{thm:display vreg}
Let $\theta \from S \to \bbC^\times$ be a toral character and assume that $q >\!\!> 0$. There exists a unique irreducible representation $\pi$ of $SG_{\x,0}$ such that its character $\Theta_\pi$ at any regular element $\gamma \in S$ is
\begin{equation*}
\Theta_\pi(\gamma) = c \cdot \sum_{w \in W_{G_{\x,0}}(\bfS)} \theta({}^w \gamma)
\end{equation*}
for some constant $c \in \bbC^\times$ which does not depend on $\gamma$, where ${}^{w}\gamma:=w\gamma w^{-1}$.
Moreover, $c \in \{\pm 1\}$.
\end{displaytheorem}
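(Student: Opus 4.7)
The plan is to prove existence by explicit Deligne--Lusztig construction at the parahoric level and uniqueness by a character-orthogonality argument.

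\textbf{Existence and character formula.} The unramified elliptic torus $\bfS$ descends to an elliptic maximal $\FF_q$-torus $\overline{S}$ in the reductive quotient $\overline{G} := G_{\x,0}/G_{\x,0+}$, and the depth-zero part $\theta_0$ of $\theta$ factors as a character of $\overline{S}(\FF_q)$. I would construct $\pi$ by applying Deligne--Lusztig induction to $(\overline{S}, \theta_0)$, inflating the resulting virtual representation of $\overline{G}(\FF_q)$ to $G_{\x,0}$, and extending to $SG_{\x,0}$ using the $S$-equivariance prescribed by $\theta$ (with a multiplicative twist to handle any positive-depth part of $\theta$). Under the toral hypothesis and for $q \gg 0$, $\theta_0$ is in sufficiently general position that the signed Deligne--Lusztig representation $\epsilon \cdot R_{\overline{S}}^{\theta_0}$ is irreducible and cuspidal, hence $\pi$ is irreducible. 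On $\gamma \in S^{\vreg}$, the Deligne--Lusztig character formula for regular semisimple elements yields
\[
\Theta_\pi(\gamma) = \epsilon \cdot \sum_{w \in W_{G_{\x,0}}(\bfS)} \theta({}^w\gamma),
\]
where $\epsilon \in \{\pm 1\}$ is the standard Deligne--Lusztig sign; this gives existence and $c = \epsilon \in \{\pm 1\}$.

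\textbf{Uniqueness.} Let $\pi'$ be another irreducible representation of $SG_{\x,0}$ satisfying the character formula on $S^{\vreg}$ with some constant $c'$. Since $SG_{\x,0}/Z$ is compact modulo center, $\pi$ and $\pi'$ are finite-dimensional. I would establish a Weyl integration formula on $SG_{\x,0}/Z$ along the elliptic torus $S/Z$, reducing the inner product $\langle\Theta_\pi,\Theta_{\pi'}\rangle_{SG_{\x,0}/Z}$ (which is $1$ if $\pi\cong\pi'$ and $0$ otherwise) to
\[
\frac{1}{|W_{G_{\x,0}}(\bfS)|} \int_{S^{\vreg}/Z} \Theta_\pi(\gamma) \overline{\Theta_{\pi'}(\gamma)}\, |D(\gamma)|^2\, d\gamma
\]
for the appropriate Weyl denominator $D$. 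Substituting the Weyl-sum character formula and invoking orthogonality of the characters $\theta^w$ on $S/Z$ evaluates the right-hand side to $c\overline{c'}$ times an explicit positive constant, forcing $\pi \cong \pi'$ and simultaneously $|c|=1$. A parallel Clifford-theoretic approach---decomposing $\pi'|_{G_{\x,0}}$ into $S$-conjugate irreducibles identified via Deligne--Lusztig orthogonality in $\overline{G}(\FF_q)$ using character values on $S_0 \cap S^{\vreg}$, then determining the extension from values on $S^{\vreg}\setminus S_0$---provides an alternative route to the same conclusion.

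\textbf{Main obstacle.} The crux is uniqueness. Because $S^{\vreg}$ may fail to be dense in $S$, one cannot simply invoke linear independence of the continuous characters $\theta^w$ on $S$ to extract the $S$-isotypic content of $\pi'$ from character values on $S^{\vreg}$ alone. The difficulty is especially sharp when $c = -1$: then $c \sum_w \theta^w$ cannot equal the full character of $\pi'|_S$ as a positive combination of characters, so additional characters must appear in $\pi'|_S$ and cancel precisely off $S^{\vreg}$. Controlling these hidden contributions---whether by carefully normalizing the Weyl integration formula so $|D(\gamma)|^2$ and the Haar measure cooperate cleanly on the partial locus $S^{\vreg}$, or by tracking the Clifford extension from partial character data---is the central technical challenge.
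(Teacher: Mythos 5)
Your proposal diverges from the paper in both halves, and in each half there is a genuine gap.

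\textbf{Existence.} You propose to build $\pi$ by inflating a Deligne--Lusztig representation of the reductive quotient $G_{\x,0}/G_{\x,0+}$ and ``twisting'' by the positive-depth part of $\theta$. This does not work: the toral hypothesis $\bfG^0(\bfS,\theta)=\bfS$ means the depth-zero content is essentially just a character of $\bbS(\FF_q)$ (the group $\bbG^0_\x$ equals $\bbS$, so Deligne--Lusztig induction at depth zero is vacuous), and the entire nontriviality lies in the positive-depth part. That part is not encoded by a one-dimensional twist of an inflation from $G_{\x,0:0+}$; it is Yu's tower of Heisenberg--Weil representations, and the resulting representation $\cc\tau_d$ of $SG_{\x,0}$ is an induction from a proper open compact-mod-center subgroup $K^d$, not an inflation. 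Verifying that $\cc\tau_d$ has the stated Weyl-sum character on $S_{\vreg}$ is exactly the Adler--DeBacker--Spice computation in Proposition \ref{prop:AS-vreg}; your outline elides this, which is the paper's entire Section \ref{sec:AS}.

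\textbf{Uniqueness.} You correctly name the obstacle---$S_{\vreg}$ is not dense in $S$, so a Weyl-integration or a straightforward Clifford argument cannot extract the $S$-isotypic content of an arbitrary $\pi'$ from its character values on $S_{\vreg}$ alone, and when $c=-1$ the putative contributions away from $S_{\vreg}$ must cancel in a way one cannot control a priori. But you stop at naming the obstacle; the proposal does not resolve it. The paper avoids any Weyl-type integral over $SG_{\x,0}$ entirely. The mechanism is Lemma \ref{lem:hom G+}: one first shows the partial sum $\sum_{\gamma\in S_{0,\vreg}G_{\x,0+}\cap K^d}\Theta_{\rho_d'\otimes\phi_d}(\gamma)\overline{\Theta_\pi(\gamma)}\neq 0$ (this only uses the hypothesis on $S_{\vreg}$, via Proposition \ref{prop:rho_d'} to kill terms not conjugate into $S$ and character orthogonality on $S_{0+}$ via Lemma \ref{lem:theta-stab}), and then uses inclusion--exclusion over the subgroups $S_A=\{\delta\in S_0:\alpha(\delta)\equiv 1\ (\mfp)\ \forall\alpha\in A\}$ to deduce that some genuine group-average $\sum_{\gamma\in S_A G_{\x,0+}\cap K^d}(\cdots)$ is nonzero, hence $\Hom_{G_{\x,0+}\cap K^d}(\rho_d'\otimes\phi_d,\pi)\neq 0$. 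Frobenius reciprocity then places $\pi$ among the finitely many twists $\Ind((\rho_d'\otimes\phi_d)\otimes\theta_0)$, all of which Yu's theory guarantees are irreducible, and the Henniart-style Lemma \ref{lem:toral Henn} together with the generation statement Corollary \ref{cor:greater-than-2} (this is where the assumption $q\gg 0$, i.e.\ inequality \eqref{ineq}, is actually used) pins down $\theta_0=\mathbbm{1}$ and forces $c=(-1)^{r(\bfS,\phi)}\in\{\pm 1\}$. Without the inclusion--exclusion trick and the generation of $S_0$ by $S_{0,\vreg}$, your outline remains stuck at exactly the point you flag as ``the central technical challenge.''
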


Completely separately and independently to the above algebraic developments, in recent years there has been a push towards constructing supercuspidal representations \textit{geometrically} using constructions analogous to Deligne--Lusztig varieties for finite groups of Lie type. 
Central to this picture is Lusztig's work \cite{MR2048585} on such varieties for reductive groups over finite rings in equal characteristic, Stasinski's subsequent work \cite{MR2558788} for mixed characteristic, and the first author's joint work with Ivanov \cite{MR4197070} generalizing these works to arbitrary parahoric subgroups $G_{\x,0}$ associated to unramified maximal tori. 
It is expected that Lusztig's conjecture on loop Deligne--Lusztig constructions for $p$-adic groups \cite{MR546595} is very closely related to the parahoric picture, as demonstrated in \cite{CI_ADLV} in the setting of $\GL_n$ and its inner forms.
Among other things, the present paper resolves a basic and major gap in this geometric program: we prove that the irreducible representations of $SG_{\x,0}$ arising from the cohomology of parahoric Deligne--Lusztig varieties indeed compactly induce to supercuspidal representations of $G$. 

Following \cite{MR4197070}, to every positive integer $r$, one can construct a smooth, separated, finite-type affine $\overline \FF_q$-scheme $X_r$ with a natural action by $G_{\x,0} \times S_0$ where $S_0 = S \cap G_{\x,0}$. 
This action can be extended by the center $Z_\bfG$ so that for any depth-$r$ character $\theta \from S \to \bbC^\times$, the corresponding isotropic subspace $H_c^*(X_r)[\theta] \colonequals H_c^*(X_r, \overline \QQ_\ell)[\theta]$ of the cohomology $H_c^*(X_r, \overline \QQ_\ell) \colonequals \sum_i (-1)^i H_c^i(X_r, \overline \QQ_\ell)$ is in fact a (virtual) representation of $Z_\bfG G_{\x,0} = SG_{\x,0}$. (We note that because $\bfS$ is elliptic and unramified, we have $S = Z_{\bfG} S_0$.) 

\begin{displaytheorem}[Theorems \ref{thm:comparison SG}, \ref{thm:geom L packets}, \ref{thm:stability}]\label{thm:display Xr}
Let $\theta \from S \to \bbC^\times$ be extra toral\footnote{In the body of this paper, \textit{extra toral} is called \textit{0-toral}. This condition arises naturally in \cite{MR2048585} and in subsequent work by others inspired by \cite{MR2048585}; in these past purely geometric investigations, \textit{extra toral} is called \textit{regular}.} of depth $r > 0$ and assume $q >\!\!> 0$.
\begin{enumerate}[label=(\roman*)]
\item
The compact induction $\cInd_{S G_{\x,0}}^G(H_c^*(X_r)[\theta])$ is an irreducible supercuspidal representation of $G$.
\item
The correspondence $(\bfS, \theta) \mapsto \cInd_{SG_{\x,0}}^G(H_c^*(X_r)[\theta])$ preserves stability and $X_r$ gives a geometric realization of extra toral supercuspidal $L$-packets \`a la DeBacker--Spice \cite{MR3849622}.
\end{enumerate}
\end{displaytheorem}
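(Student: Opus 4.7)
My plan is to use Theorem~\ref{thm:display vreg} as a bridge between the geometric and algebraic sides: I would compute the character of the virtual representation $H_c^*(X_r)[\theta]$ of $SG_{\x,0}$ on regular elements of $S$ and match it, up to a sign, with the formula in Theorem~\ref{thm:display vreg}; and I would independently show that the Kaletha/Yu algebraic construction attached to $(\bfS,\theta)$ has the same character on regular elements of $S$. The uniqueness in Theorem~\ref{thm:display vreg} then forces the geometric and algebraic irreducibles of $SG_{\x,0}$ to coincide, and properties of the algebraic side (irreducibility and supercuspidality of compact induction; stability of the family) transfer to the geometric side.

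\textbf{Step 1: character of $H_c^*(X_r)[\theta]$ at sufficiently regular $\gamma \in S$.} The core calculation is
\[
\Theta_{H_c^*(X_r)[\theta]}(\gamma) = \epsilon \cdot \sum_{w \in W_{G_{\x,0}}(\bfS)} \theta({}^w\gamma) \quad \text{for some } \epsilon \in \{\pm 1\},
\]
which I would prove by a Lefschetz fixed-point argument for the action of $\gamma$ on $X_r$. This is the parahoric, higher-depth analogue of the classical Deligne--Lusztig computation of $R_T^G(\theta)$ at regular semisimple elements of $T$: the fixed locus $X_r^\gamma$ should break up, up to torsors, into pieces indexed by $W_{G_{\x,0}}(\bfS)$, and the extra toral (i.e.\ $0$-toral) hypothesis on $\theta$, together with the $q \gg 0$ assumption, should force the cancellation of all contributions other than those from the torus fixed loci, leaving the Weyl sum above.

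\textbf{Step 2, conclusion, and main obstacle.} Together with Theorem~\ref{thm:display vreg}, Step~1 identifies $\epsilon \cdot H_c^*(X_r)[\theta]$ as the unique irreducible representation $\pi$ of $SG_{\x,0}$ characterized by the displayed formula; in particular it is a genuine (not merely virtual) irreducible representation, concentrated in a single cohomological degree. Separately, the Kaletha/Yu construction attaches to $(\bfS,\theta)$ an irreducible representation $\pi^{\Yu}_{(\bfS,\theta)}$ of $SG_{\x,0}$ whose character on regular elements of $S$ is computed by the Adler--Spice / DeBacker--Spice formula to be the same Weyl sum, so by Theorem~\ref{thm:display vreg} one gets $\pi^{\Yu}_{(\bfS,\theta)} \cong \epsilon \cdot H_c^*(X_r)[\theta]$. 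Since $\cInd_{SG_{\x,0}}^G \pi^{\Yu}_{(\bfS,\theta)}$ is irreducible and supercuspidal by Yu's theorem, part~(i) follows. For part~(ii), stability of the family $\{\cInd_{SG_{\x,0}}^G \pi^{\Yu}_{(\bfS',\theta')}\}$ as $(\bfS',\theta')$ ranges over a stable class is part of Kaletha's $L$-packet construction (ultimately resting on the DeBacker--Spice character formula), so by~(i) the same holds for the geometric family, giving a geometric realization of Kaletha's extra toral $L$-packets \`a la DeBacker--Spice. The main technical obstacle is Step~1: generalizing the classical Deligne--Lusztig torus-element computation to the higher-depth parahoric variety $X_r$, controlling the fixed-point geometry of $X_r^\gamma$ precisely enough to enumerate the Weyl-indexed contributions, and exploiting $0$-torality together with $q \gg 0$ to kill every non-torus contribution.
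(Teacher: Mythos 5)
Your overall strategy is the same as the paper's: compute the character of the geometric virtual representation on the relevant sufficiently regular locus and feed it through the uniqueness statement of Theorem~\ref{thm:display vreg}. But there is one genuine gap in Step~2, plus a misallocation of effort in Step~1 that is worth flagging.

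On Step~1: the Lefschetz computation you propose is already available and is simply cited by the paper as Proposition~\ref{prop:geom vreg} (\cite[Theorem~1.2]{MR4197070}). In fact that character formula holds for \emph{every} character $\theta$ trivial on $S_{r+}$ with no $0$-torality assumption and no $q \gg 0$. What does require $0$-torality is the irreducibility (up to sign) of $R_{\bS_r}^{\bG_r}(\theta)$ (Theorem~\ref{thm:mackey}), and what requires $q \gg 0$ is the characterization theorem itself, via the density inequality \eqref{ineq}. So you are proposing to prove something already proved, and the hypotheses you invoke for it are not the ones it needs.

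The genuine gap is in Step~2. You assert that Adler--Spice/DeBacker--Spice gives the character of $\pi^{\Yu}_{(\bfS,\theta)}$ on $S_{\vreg}$ as ``the same Weyl sum'' $\sum_{w}\theta({}^w\gamma)$ appearing on the geometric side. That is not what the formula says. Proposition~\ref{prop:rho_d'}/Proposition~\ref{prop:AS-vreg} give, for the Yu representation attached to the pair $(\bfS,\phi)$,
\[
\Theta_{\cc\tau_d}(\gamma) = (-1)^{r(\bbG^0_\x)-r(\bbS)+r(\bfS,\phi)}\sum_{w\in W_{G_{\x,0}}(\bfS)}\varepsilon^{\ram}[\phi]({}^w\gamma)\,\phi({}^w\gamma),
\]
so the sign character $\varepsilon^{\ram}[\phi]$ is an essential extra factor. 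Consequently, applying Theorem~\ref{thm:display vreg} to the geometric character formula identifies $\pm H_c^*(X_r)[\theta]$ with Yu's representation for the \emph{twisted} pair $(\bfS, \theta\cdot\varepsilon^{\ram}[\theta])$, not for $(\bfS,\theta)$. Your proposal conflates the two. This is not a cosmetic issue: DeBacker--Spice proved that the untwisted parametrization $(\bfS,\theta)\mapsto\pi_{(\bfS,\theta)}$ sends stable classes to \emph{unstable} families, and the twist $\varepsilon^{\ram}$ is exactly what restores stability. If your identification $H_c^*(X_r)[\theta]\cong\epsilon\,\pi^{\Yu}_{(\bfS,\theta)}$ were correct, the geometric family would be the untwisted Adler family and part~(ii) would be false. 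The correct statement (the paper's Theorem~\ref{thm:comparison SG}) is $R_{\bS_r}^{\bG_r}(\theta)\cong(-1)^{r(\bfS,\phi)}\cc\tau_d$ with $\phi=\theta\varepsilon^{\ram}[\theta]$, and it is precisely the automatic appearance of the $\varepsilon^{\ram}$ twist that makes the geometric parametrization stable with no external correction.
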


Allow us to immediately spoil the punch line relating this result and the discussion of characterizations of representations \`a la Harish-Chandra: Theorem \ref{thm:display Xr} is an application of (a more precise version of) Theorem \ref{thm:display vreg}.

We mention that when $r = 0$, the variety $X_r$ is a classical Deligne--Lusztig variety and the conclusions of Theorem \ref{thm:display Xr} are true for depth-$0$ characters $\theta$ in general position: (i) is due to Moy--Prasad  \cite{MR1371680}, and (ii) is due to DeBacker--Reeder \cite{MR2480618} and Kazhdan--Varshavsky \cite{MR2214251}. 

For $r > 0$, it was proved in \cite{MR4197070} that $H_c^*(X_r, \overline \QQ_\ell)[\theta]$ is an irreducible representation of $SG_{\x,0}$. The additional assertion in Theorem \ref{thm:display Xr}(i) that its compact induction to $G$ is irreducible (and therefore supercuspidal) has been studied by various people in special cases---for inner forms of $\GL_n$ \cite{CI_ADLV, CI_loopGLn} and for $\x$ hyperspecial, $r$ odd \cite{MR3703469}---by techniques totally different to ours (see Sections \ref{sec:GLn}, \ref{sec:small p} for further discussion). We remark that the assumption $q >\!\!> 0$ is innocuous at least for Coxeter tori---we will prove in subsequent work that it is no stronger than the assumptions needed for Yu's construction \cite{MR1824988} (see Remark \ref{rem:q >> 0}). In particular, our approach specialized to $\GL_n$ relaxes the $p>n$ assumption in \cite{CI_loopGLn} to $p > 2$ (see Section \ref{sec:GLn}). 

We in fact prove something stronger than the supercuspidality assertion of Theorem \ref{thm:display Xr}(i): we explicitly describe the supercuspidal $\cInd_{SG_{\x,0}}^G(H_c^*(X_r)[\theta])$ in terms of Yu's construction and Kaletha's reparametrization. This resolves a generalization of a question of Lusztig on comparing the representations in \cite{MR2048585} with non-cohomological constructions. Supercuspidal representations in the extra toral setting had already been constructed and parametrized by Adler \cite{MR1653184}; our choice to write our paper within Yu's and Kaletha's  framework is in anticipation of future work relaxing the genericity assumptions (toral, extra toral) on $\theta$. 

Now let us explain the content of Theorem \ref{thm:display Xr}(ii) in the context of past works. Following the construction of discrete series $L$-packets for real groups \cite{MR1011897} and of depth-0 $L$-packets of $p$-adic groups \cite{MR2480618, MR2214251}, one could extrapolate that for supercuspidal representations parametrized by characters $\theta$ of elliptic maximal tori $\bfS$, $L$-packets should be parametrized by stable conjugacy classes of $(\bfS,\theta)$. Using Adler's parametrization $(\bfS,\theta) \mapsto \pi_{(\bfS,\theta)}$ of extra toral supercuspidal representations, Reeder \cite{MR2427973} verified constancy of the formal degree on this packet of supercuspidals in the case that $\bfS$ is unramified. Later, DeBacker--Spice \cite{MR3849622}, still working in the setting that $\bfS$ is unramified, proved that this packet of supercuspidals fails (!) to be \textit{stable} (see Section \ref{sec:Lpacket} for more details); to make it stable, they prove that one must instead consider the twisted parametrization $(\bfS, \theta) \mapsto \pi_{(\bfS, \theta \cdot \varepsilon[\theta])}$ by a quadratic character $\varepsilon[\theta]$ which depends on $(\bfS, \theta)$. 
This stability result has been generalized to extra toral supercuspidals corresponding to tamely ramified $\bfS$ by Kaletha \cite{MR4013740}, whose theory of regular supercuspidal representations develops a parametrization of a much larger class of supercuspidals in terms of $(\bfS,\theta)$ and includes a candidate generalized twisting character $\varepsilon[\theta]$.
The contribution of Theorem \ref{thm:display Xr} to this picture is that $\cInd_{SG_{\x,0}}^G(H_c^*(X_r)[\theta]) \cong \pi_{(\bfS, \theta \cdot \varepsilon[\theta])}$, so that in terms of the cohomologically arising parametrization of these supercuspidals, \textit{no external twisting} is required to obtain a \textit{stable set} of supercuspidals from a stable conjugacy class of $(\bfS, \theta)$. 
We emphasize this point: the geometry seems to innately know about the automorphic side of the local Langlands correspondence. 

\subsection{Outline of the paper}

A subtle point throughout this paper is taking stock of what assumptions one needs on $p$. For the most part, we have chosen to work in the greatest generality possible for each ingredient going into this paper, especially so as to illustrate the reasons various small primes are excluded. We collect a summary of these assumptions in Section \ref{sec:notations}.

In Section \ref{sec:Yu}, we recall Yu's construction of supercuspidal representations and Kaletha's theory of regular supercuspidal representations. 
In particular, we recall how to associate to a tame elliptic regular pair $(\bfS, \phi)$ a representation $\cc \tau_d$ of $SG_{\x,0}$ whose compact induction is the irreducible supercuspidal representation $\pi_{(\bfS, \phi)}$. 
We warn the reader that in the literature, it is more popular to work with a representation of the full stabilizer $G_{\x}$ of the point $\x$; it takes some care to work on 
$SG_{\x,0} \subseteq G_{\x}$ 
instead, which we need to do for geometrically motivated reasons later. 
We additionally relax the ellipticity assumption on Kaletha's Howe factorization of tame elliptic pairs (Section \ref{subsec:Howe}) and use this to state a geometric conjecture (Conjecture \ref{conj:drinfeld}) later in the paper. 

Starting in Section \ref{sec:AS}, we assume that $\bfS$ is unramified. Sections \ref{sec:AS} and  \ref{sec:ell unram vreg} culminate in two characterization theorems for toral characters---one for  representations of $SG_{\x,0+}$ 
(Theorem \ref{thm:SG+ vreg})
and one for representations of $SG_{\x,0}$ 
(Theorem \ref{thm:SG vreg}). 
Of central importance in our analysis is a class of elements of $SG_{\x,0}$ called \textit{unramified very regular elements} in the sense of \cite{MR4197070}; this class includes the set of elements $S_{\vreg}$ in $S$ which are regular semisimple in $G$. The main result of Section \ref{sec:AS} is Proposition \ref{prop:AS-vreg}, a simple and explicit character formula for the $SG_{\x,0}$-representation $\cc \tau_d$ on the unramified very regular locus of $SG_{\x,0}$.

In Section \ref{sec:ell unram vreg}, we only work with tame elliptic regular pairs $(\bfS, \phi)$ where $\bfS$ is unramified and $\phi$ is toral. We prove in Section \ref{subsec:SG+} that using a clever trick (Lemma \ref{lem:hom G+ +}), our characterization theorem on $SG_{\x,0+}$ (Theorem \ref{thm:SG+ vreg}) essentially follows by Frobenius reciprocity. 
The same strategy works to establish our characterization theorem on $SG_{\x,0}$ (Theorem \ref{thm:SG vreg}, presented in the Introduction as Theorem \ref{thm:display vreg}), though with extra arguments at two points (Lemmas \ref{lem:hom G+} and \ref{lem:toral Henn}, the latter of which is a direct generalization of a theorem of Henniart \cite{MR1263525} in the setting of $\GL_n$). 
For the all arguments in this section, we need to assume that $S_{\vreg}$ generates $S$ as a group. We show in Section \ref{subsec:vreg} that this is the case if a certain inequality \eqref{ineq} related to a density of the unramified very regular elements holds, and that \eqref{ineq} holds if $q >\!\!>0$. Moreover, the bound on $q$ can be reduced to a calculation on reductive groups over finite fields because of a transferring trick (Lemma \ref{lem:transfer}).

Our focus shifts in Section \ref{sec:geom}, where we recall what is known about the $\overline \FF_q$-schemes $X_r$ and their cohomology. In this section, we make no assumptions on $p$ or on the ellipticity of $\bfS$. 
We recall the Drinfeld stratification \cite{CI_DrinfeldStrat}, which consists of subvarieties $X_r^{(\bfL)} \subset X_r$ indexed by certain twisted Levi subgroups $\bfL$ containing $\bfS$. 
We conjecture (Conjecture \ref{conj:drinfeld}) that the Drinfeld stratification is the geometric version of the ``stratification'' on the set of regular supercuspidal representations given by the $0$th piece $\bfG^0$ of the Howe factorization.

We prove our main results about the cohomology of $X_r$ and its relation to supercuspidal representations in Section \ref{sec:comparison}; note that supercuspidality corresponds to ellipticity of $\bfS$. In our framework, this comparison is simply an application of Section \ref{sec:ell unram vreg} to the cohomological representations discussed in Section \ref{sec:geom}. We compare $H_c^*(X_r)[\theta]$ to the $SG_{\x,0}$-representation $\cc \tau_d$ in Theorem \ref{thm:comparison SG} and prove Conjecture \ref{conj:drinfeld} in the setting $\bfL = \bfS$ in Theorem \ref{thm:drinfeld}. We note that since we obtain these results as corollaries of the characterization theorems, there is an intriguing mystery surrounding the geometric representations $H_c^*(X_r)[\theta]$ for the small $p$ excluded by Kaletha's theory of regular supercuspidal representations \cite{MR4013740,Kaletha}. We make some comments about this in Section \ref{sec:small p}. In Section \ref{sec:GLn}, we focus on the setting $\bfG = \GL_n$, explicitly calculate the twisting character $\varepsilon^{\ram}[\theta]$, and show that a stronger version of the geometrically-proved supercuspidality results of \cite{CI_loopGLn} follows as a special case (Corollary \ref{cor:GLn sc}) of our comparison theorem (Theorem \ref{thm:comparison SG}). For convenience, we include a diagram depicting the main structure of the results needed to prove our comparison theorems (Theorems \ref{thm:comparison SG}, \ref{thm:comparison SG+}):
\[
\xymatrix{
SG^{d}_{\x,0} & \cc\tau_{d} &&& |H_{c}^{\ast}(X_{r},\overline{\Q}_{\ell})[\theta]| \ar@{=}_-{\text{Theorem \ref{thm:comparison SG}}}^-{\text{(Theorem \ref{thm:SG vreg})}}[lll]\\
SG^{d}_{\x,0+} \ar@{}[u]|{\bigcup} & \Ind_{\cc K^{d}}^{SG^{d}_{\x,0+}} \cc\rho'_{d}\otimes\phi_{d} \ar@{-->}[u] &&& |H_{c}^{\ast}(X_{r}\cap SG^{d}_{\x,0+},\overline{\Q}_{\ell})[\theta]|\ar@{-->}^-{\text{Theorem \ref{thm:drinfeld}}}_{\text{(Conjecture \ref{conj:drinfeld}})}[u] \ar@{=}_-{\text{Theorem \ref{thm:comparison SG+}}}^-{\text{(Theorem \ref{thm:SG+ vreg})}}[lll]\\
\cc K^{d} \ar@{}[u]|{\bigcup} & \cc\rho'_{d}\otimes\phi_{d} \ar@{-->}[u] &&&
}
\]
Here, the dashed vertical arrows indicate induction; these representations, all of which appear in Yu's construction, are recalled in Sections \ref{subsec:Yu}, \ref{subsec:stab-parah}.

In Section \ref{sec:Lpacket} we see the implications of our comparison theorem in the context of the local Langlands correspondence. We discuss Kaletha's construction of $L$-packets for extra toral supercuspidal representations and use our comparison to deduce (Theorem \ref{thm:geom L packets}) that $L$-packets of extra toral supercuspidal representations associated to unramified $\bfS$ are realized by the natural correspondence arising via the cohomology of $X_r$. This yields Theorem \ref{thm:stability}, which is presented in the Introduction as Theorem \ref{thm:display Xr}(ii).

Finally, in Section \ref{sec:vreg characterization}, we prove that toral supercuspidal representations associated to unramified $\bfS$ are determined by their character on $S_{\vreg}$ (Theorem \ref{thm:unram pi vreg}). The structure of this argument has a similar flavor to the parahoric-level characterization theorems of Section \ref{sec:ell unram vreg}, but neither section implies the other logically. As mentioned in the Introduction, Theorem \ref{thm:unram pi vreg} is a $p$-adic analogue of Harish-Chandra's characterization of discrete series representations of real groups, and is the first of its kind at this level of generality. In particular, it allows one to characterize Kaletha's construction of these $L$-packets purely in terms of their character values on regular elements of $S$.

\medbreak
\noindent{\bfseries Acknowledgment.}\quad We would like to thank the University of Tokyo - Princeton Strategic Partnership for the 2019 workshop in Arithmetic Geometry, where the authors began this project. We would further like to thank the Princeton Mathematics Department both for supporting the second author's visit via Christopher Skinner's visitor funds and for excellent working conditions. 
Finally, we are grateful to Tasho Kaletha for helpful discussions regarding assumptions on small residual characteristics.
The first author was partially supported by NSF grant DMS-1802905. The second author was partially supported by JSPS KAKENHI Grant Number 19J00846 (PD).

\newpage

\setcounter{tocdepth}{2}
\tableofcontents

\newpage

\section{Notations and assumptions}\label{sec:notations}

Let $F$ be a non-archimedean local field with finite reside field $\cO_F/\mfp_F \cong \FF_q$ of prime characteristic $p$, where we write $\mcO_{F}$ and $\mfp_{F}$ for the ring of its integers and the maximal ideal, respectively.
We let $F^{\ur}$ denote the completion of the maximal unramified extension of $F$.
We write $\Gamma_{F}$ for the absolute Galois group of $F$.

For an algebraic variety $\J$ over $F$, we denote the set of its $F$-valued points by $J$.
When $\J$ is an algebraic group, we write $\bfZ_{\J}$ for its center.

Let us assume that $\J$ is a connected reductive group over $F$.
We follow the notation around Bruhat--Tits theory used by \cite{MR2431235, MR2543925, MR3849622}.
(See, for example, \cite[Section 3.1]{MR2431235} for  details.)
Especially, $\mcB(\J,F)$ (resp.\ $\mcB^{\red}(\J,F)$) denotes the enlarged (resp.\ reduced) Bruhat--Tits building of $\J$ over $F$.
For a point $\x\in\mcB(\J,F)=\mcB^{\red}(\J,F)\times X_{\ast}(\bfZ_{\J})_{\R}$, we write $\bar{\x}$ for the image of $\x$ in $\mcB^{\red}(\J,F)$, and $J_{\bar{\x}}$ for the stabilizer of $\bar{\x}$ in $J$.
We define $\widetilde{\R}$ to be the set $\R\sqcup\{r+\mid r\in\R\}\sqcup\{\infty\}$ with a natural order.
Then for any $r\in\widetilde{\R}$ we can consider the $r$-th Moy--Prasad filtration $J_{\x,r}$ of $J$ with respect to the point $\x$.
For any $r,s\in\widetilde{\R}_{\geq0}$ satisfying $r<s$, we write $J_{\x,r:s}$ for the quotient $J_{\x,r}/J_{\x,s}$.
Recall that $J_{\x,0:0+}$ can be regarded as the set $\mathbb{J}(\F_{q})$ of $\F_{q}$-valued points of a connected reductive group $\mathbb{J}$ defined over $\F_{q}$ (such a group $\mathbb{J}$ can be realized as the reductive quotient of the special fiber of the parahoric subgroup scheme attached to $\x$, see \cite[Section 3.2]{MR1371680}).

\subsection{Assumptions on $F$}\label{subsec:assumptions}

Let $\bfG$ be a tamely ramified connected reductive group over $F$. Unless otherwise stated, we will assume that $p$ is odd, $p$ is not bad for $\bfG$ (in the sense of \cite[Definition A.5]{MR2431235}), and that $p \nmid |\pi_1(\bfG_{\der})|$ and $p \nmid |\pi_1(\widehat \bfG_{\der})|$.
There are a few sections in the paper where we either relax or strengthen our assumptions on $F$; we specify these subsections here:
\begin{enumerate}[label=\textbullet]
\item Sections \ref{subsec:Yu}, \ref{subsec:stab-parah} hold with the relaxed assumption that $p$ is odd, but this is inconsequential for us.
\item Section \ref{subsec:Howe} holds without any assumption on $p$ except for Lemma \ref{lem:theta-stab}.
\item In Sections \ref{subsec:SG+}, \ref{subsec:SG}, we additionally assume \eqref{ineq}, an assumption on the size of the residue field of $F$. This additional assumption is also needed in Sections \ref{sec:comparison}, \ref{sec:Lpacket}, \ref{sec:vreg characterization}, as these later sections rely on various arguments presented in Sections \ref{subsec:SG+}, \ref{subsec:SG}.
\item Section \ref{sec:geom} recalls geometric constructions of representations of parahoric subgroups and holds with no assumptions on $p$. The discrepancy between this and the additional assumptions on $p$ needed in other sections is an interesting point; we make some remarks about this in Section \ref{sec:small p}.
\end{enumerate}

\section{Yu's supercuspidal representations}\label{sec:Yu}
In this section, we first briefly review Yu's construction of supercuspidal representations (see \cite{MR1824988}; for an exposition, \cite[Section 2]{MR2543925}). Then, we summarize Kaletha's result (\cite[Section 3.4]{MR4013740}) on a reparametrization of Yu's supercuspidal representations, which is based on the so-called Howe factorization for (certain) characters of elliptic maximal tori. We recall a part of Kaletha's Howe factorization process in Section \ref{subsec:Howe}; we will use this to state a geometric conjecture later (Conjecture \ref{conj:drinfeld}).
We finish with a discussion of passing from the full stabilizer $G_{\bar \x}$ to the smaller group $SG_{\x,0}$ in Section \ref{subsec:stab-parah}, especially establishing some notation that will be used throughout the paper.

We assume that $\bfG$ is a tamely ramified connected reductive group over $F$.

\subsection{Yu's supercuspidal representations}\label{subsec:Yu}

The constructions of this subsection hold with the relaxed assumption that $p$ is odd. 
In \cite{MR1824988}, Yu introduced the notion of a \textit{cuspidal $\bfG$-datum} and to each such datum attached an irreducible supercuspidal representation of $G$. 
Recall that a cuspidal $\bfG$-datum is a quintuple 
\[
\Sigma
=
(\vec{\bfG},\vec{\phi},\vec{r},\x,\rho'_{0})
\]
consisting of the following objects:
\begin{itemize}
\item
$\vec{\bfG}$ is a sequence $\bfG^{0}\subsetneq\bfG^{1}\subsetneq\cdots\subsetneq\bfG^{d}=\bfG$ of tame twisted Levi subgroups (i.e., each $\bfG^{i}$ is a subgroup of $\bfG$ which is defined over $F$ and becomes a Levi subgroup of $\bfG$ over a tamely ramified extension of $F$) such that $\bfZ_{\bfG^{0}}/\bfZ_{\bfG}$ is anisotropic,
\item
$\x$ is a point of $\mcB(\bfG^{0},F)$ whose image $\bar{\x}$ in $\mcB^{\red}(\bfG^{0},F)$ is a vertex,
\item
$\vec{r}$ is a sequence $0\leq r_{0}<\cdots<r_{d-1}\leq r_{d}$ of real numbers such that $0<r_{0}$ when $d>0$,
\item
$\vec{\phi}$ is a sequence $(\phi_{0},\ldots,\phi_{d})$ of characters $\phi_{i}$ of $G^{i}$ satisfying
\begin{itemize}
\item
for $0\leq i<d$, $\phi_{i}$ is $\bfG^{i+1}$-generic of depth $r_{i}$ at $\x$, and
\item
for $i=d$, 
\[
\begin{cases}
\depth_{\x}(\phi_{d})=r_{d}& \text{if $r_{d-1}<r_{d}$,}\\
\phi_{d}=\mathbbm{1} & \text{if $r_{d-1}=r_{d}$,}
\end{cases}
\]
\end{itemize}
\item
$\rho_{0}'$ is an irreducible representation of $G^{0}_{\bar{\x}}$ whose restriction to $G^{0}_{\x,0}$ contains the inflation of a cuspidal representation of the quotient $G^{0}_{\x,0:0+}$.
\end{itemize}

Yu's supercuspidal representation $\pi_{\Sigma}$ associated to $\Sigma$ is constructed as follows.
We first put
\[
(s_{0},\ldots,s_{d})
\colonequals 
\Bigl(\frac{r_{0}}{2},\ldots,\frac{r_{d}}{2}\Bigr)
\]
and define the subgroups $K^{i}$, $J^{i}$, and $J^{i}_{+}$ of $G$ for $1\leq i \leq d$ by
\[
K^{i}
\colonequals 
G^{0}_{\bar{\x}}(G^{0},\ldots,G^{i})_{\x,(0+,s_{0},\ldots,s_{i-1})},
\]
\[
J^{i}
\colonequals 
(G^{i-1},G^{i})_{\x,(r_{i-1},s_{i-1})},
\]
\[
J^{i}_{+}
\colonequals 
(G^{i-1},G^{i})_{\x,(r_{i-1},s_{i-1}+)},
\]
where the right-hand sides denote the subgroups associated to pairs consisting of a tame twisted Levi sequence and an admissible sequence (see \cite[Sections 1 and 2]{MR1824988}).
Note that we have $K^{i+1}=K^{i}J^{i+1}$.
For $i=0$, we put
\[
K^{0}
\colonequals 
G^{0}_{\bar{\x}}.
\]
Then we construct a representation $\rho'_{i+1}$ of $K^{i+1}$ from $\rho'_{i}$ of $K^{i}$ inductively in the following manner.
By investigating the quotient $J^{i}/J^{i}_{+}$ (which has a symplectic structure derived from the character $\phi_{i-1}$), we obtain a finite Heisenberg group as a quotient of the group $J^{i}$.
Then, as a consequence of the Stone--von Neumann theorem, we get a Heisenberg--Weil representation $\tilde{\phi_{i}}$ of the semi-direct product $G^{i}_{\bar{\x}}\ltimes J^{i+1}$.
The tensor representation 
\[
(\tilde{\phi_{i}}|_{K^{i}\ltimes J^{i+1}})
\otimes 
\bigl((\rho'_{i}\otimes\phi_{i}|_{K^{i}})\ltimes\mathbbm{1}\bigr)
\]
of $K^{i}\ltimes J^{i+1}$ descends to $K^{i}J^{i+1}=K^{i+1}$ (factors through the canonical map $K^{i}\ltimes J^{i+1}\twoheadrightarrow K^{i}J^{i+1}$), and we define the representation $\rho'_{i+1}$ of $K^{i+1}$ to be the descended one.
Then Yu's supercuspidal representation $\pi_{\Sigma}$ is defined by
\[
\pi_{\Sigma}\colonequals \cInd_{K^{d}}^{G}\rho'_{d}\otimes\phi_{d}.
\]
In this paper, let us call irreducible supercuspidal representations of $G$ obtained from cuspidal $\bfG$-data in this way \textit{Yu's supercuspidal representations}.

We also recall the definitions of a few more groups and representations which will be needed later (for describing the Adler--Spice character formula in Section \ref{sec:AS}):
\[
K_{\sigma_{i}}
\colonequals 
G^{i-1}_{\bar{\x}}G^{i}_{\x,0+}
\quad (K_{\sigma_{0}}\colonequals G^{0}_{\bar{\x}}),
\]
\[
\tilde{\rho}'_{i}
\colonequals 
\Ind_{K^{i}}^{G^{i-1}_{\bar{\x}}G^{i}_{\x,s_{i-1}}}\rho'_{i},
\]
\[
\sigma_{i}
\colonequals 
\Ind_{K^{i}}^{K_{\sigma_{i}}}\rho'_{i}\,\,
(\cong \Ind_{G^{i-1}_{\bar{\x}}G^{i}_{\x,s_{i-1}}}^{K_{\sigma_{i}}} \tilde{\rho}'_{i}),
\]
\[
\tau_{i}
\colonequals 
\Ind_{K^{i}}^{G^{i}_{\bar{\x}}} \rho'_{i}\otimes\phi_{i}\,\,
(\cong \Ind_{K_{\sigma_{i}}}^{G^{i}_{\bar{\x}}} \sigma_{i}\otimes\phi_{i}).
\]

We finally recall the notion of a generic reduced cuspidal $\bfG$-datum due to Hakim--Murnaghan (\cite{MR2431732}).
By the theory of Moy--Prasad, the induced representation
\[
\pi_{-1}
\colonequals 
\cInd_{K^{0}}^{G^{0}}\rho'_{0}
\]
is an irreducible depth-zero supercuspidal representation of $G^{0}$ (\cite[Proposition 6.6]{MR1371680}).
Conversely, any irreducible depth-zero supercuspidal representation $\pi_{-1}$ of $G^{0}$ is obtained by the compact induction of a representation $\rho'_{0}$ satisfying the condition mentioned above in a unique (up to conjugation) way (\cite[Proposition 6.8]{MR1371680}).
From this observation we conclude that the triple $(\vec{\bfG},\pi_{-1},\vec{\phi})$ is essentially equivalent to the original quintuple $(\vec{\bfG},\vec{\phi},\vec{r},\x,\rho'_{0})$.
In \cite{MR2431732}, Hakim--Murnaghan called such triples \textit{generic reduced cuspidal $\bfG$-data} and defined an equivalence relation called \textit{$\bfG$-equivalence} on them.
Here we do not recall the definition of the $\bfG$-equivalence (see \cite[Definition 6.3]{MR2431732}). 
The important nature of this equivalence relation is that it describes the ``fibers'' of Yu's construction: for two given generic reduced cuspidal $\bfG$-data $\Sigma$ and $\Sigma'$, the associated supercuspidal representations $\pi_{\Sigma}$ and $\pi_{\Sigma'}$ are isomorphic if and only if two data $\Sigma$ and $\Sigma'$ are $\bfG$-equivalent.
In other words, Yu's construction gives the following bijective map:
\[
\xymatrix@R=10pt{
&&&\{\text{irred.\ s.c.\ rep'ns of $G$}\}/{\sim}\\
\{\text{gen.\ red.\ cusp.\ $\bfG$-data}\}/\text{$\bfG$-eq.} \ar@{->}[rrr]^-{1:1}_-{\text{Yu's construction}} &&& \{\text{Yu's s.c.\ rep'ns of $G$}\}/{\sim}\ar@{}[u]|{\bigcup}
}
\]

\subsection{Kaletha's reparametrization of Yu's supercuspidal representations}\label{subsec:Kaletha-TER}
We first recall Kaletha's classification of regular depth-zero supercuspidal representations.
Let $\bfG^{0}$ be a tamely ramified connected reductive group over $F$.
Let us suppose that we have an irreducible depth-zero supercuspidal representation $\pi_{-1}$ of $G^{0}$ and that the restriction $\pi_{-1}|_{G^{0}_{\x,0}}$ (for some $\x\in\mcB(\bfG^{0},F)$ whose image $\bar{\x}$ in $\mcB^{\red}(\bfG^{0},F)$ is a vertex) contains the inflation of an irreducible cuspidal representation $\kappa$ of $G^{0}_{\x,0:0+}$.
We put $\bbG^{0}_{\x}$ to be a connected reductive group over $\F_{q}$ obtained by taking the reductive quotient of the special fiber of the parahoric subgroup scheme attached to $\x$.
Then we have a natural identification $\bbG^{0}_{\x}(\F_{q})\cong G^{0}_{\x,0:0+}$.
By Deligne--Lusztig theory, the cuspidality of $\kappa$ implies that there exists a pair $(\bbS,\bar{\phi})$ of \begin{itemize}
\item
an elliptic maximal torus $\bbS$ of $\bbG^{0}_{\x}$ defined over $\F_{q}$, and
\item
a character $\bar{\phi}$ of $\bbS(\F_{q})$
\end{itemize}
such that the associated Deligne--Lusztig representation $\pm R_{\bbS}^{\bbG^{0}_{\x}}(\bar{\phi})$ contains $\kappa$.
By \cite[Lemma 3.4.4]{MR4013740}, there exists a maximally unramified (in the sense of \cite[Definition 3.4.2]{MR4013740}) elliptic maximal torus  $\bfS$ of $\bfG^{0}$ defined over $F$ whose connected N\'eron model has $\bbS$ as its special fiber.
We let $N_{G^{0}}(\bfS)$ be the normalizer group of $\bfS$ in $G^{0}$ and put
\[
W_{G^{0}}(\bfS)
:=
N_{G^{0}}(\bfS)/S.
\]
Following Kaletha \cite[Definitions 3.4.16 and 3.4.19]{MR4013740}, we say
\begin{itemize}
\item
the character $\bar{\phi}$ is \textit{regular} if the stabilizer of $\bar{\phi}$ in $W_{G^{0}}(\bfS)$ is trivial, and 
\item
the irreducible depth-zero supercuspidal representation $\pi_{-1}$ is \textit{regular} if $\bar{\phi}$ associated to $\pi_{-1}$ in the above manner is regular
\end{itemize}
(note that the group $W_{G^{0}}(\bfS)$ acts on $\bar{\phi}$ since we have $S_{0:0+}\cong\bbS(\F_{q})$).
In summary, we may associate a pair $(\bfS,\bar{\phi})$ to each irreducible depth zero supercuspidal representation $\pi_{-1}$ of $G^{0}$ and define the notion of regularity for $\pi$ by looking at the pair $(\bfS,\bar{\phi})$.

\begin{rem}\label{rem:DL}
If $\bar{\phi}$ is regular, then it is in general position in the sense of Deligne--Lusztig \cite[Fact 3.4.18]{MR4013740}.
If $\bar{\phi}$ is in general position, $(-1)^{r(\bbS)-r(\bbG^{0}_{\x})}R_{\bbS}^{\bbG^{0}_{\x}}(\bar{\phi})$ is an irreducible representation, where $r(\bbS)$ and $r(\bbG^{0}_{\x})$ are split ranks of $\bbS$ and $\bbG^{0}_{\x}$, respectively.
Thus $\kappa$ is necessarily equal to $(-1)^{r(\bbS)-r(\bbG^{0}_{\x})}R_{\bbS}^{\bbG^{0}_{\x}}(\bar{\phi})$ itself.
Furthermore, the orthogonality relation of Deligne--Lusztig (\cite[Theorem 6.8]{MR393266}) assures that such a pair $(\bbS,\bar{\phi})$ is unique up to $\bbG^{0}_{\x}$-conjugacy.
Hence $(\bfS,\bar{\phi})$ is unique up to $G^{0}_{\x,0}$-conjugacy.
\end{rem}

We next consider the ``converse'' of the above procedure.
Let us suppose that we have a maximally unramified elliptic maximal torus $\bfS$ of $\bfG^{0}$ and a regular depth-zero character $\phi_{-1}$ of $S$, i.e., the character $\bar{\phi}$ of $S_{0:0+}=\bbS(\F_{q})$ induced from $\phi_{-1}$ is regular in the above sense.
Since the torus $\bfS$ is elliptic, its reduced apartment $\mathcal{A}^{\red}(\bfS,F)$ of $\mcB^{\red}(\bfG^{0},F)$ consists of only one point, i.e., the unique Frobenius-fixed point in $\mathcal{A}^{\red}(\bfS,F^{\ur})$ (see the paragraph before \cite[Lemma 3.4.3]{MR4013740}).
We take a point $\x\in\mcB(\bfG^{0},F)$ whose image $\bar{\x}$ in $\mcB^{\red}(\bfG^{0},F)$ equals this (single) point of $\mathcal{A}^{\red}(\bfS,F)$.
Note that then $S$ normalizes $G_{\x,0}^{0}$ and we have $\bbS\subset\bbG^{0}_{\x}$.
As explained in Remark \ref{rem:DL}, from the pair $(\bbS,\bar{\phi})$, we get an irreducible cuspidal representation 
\[
\kappa_{(\bfS,\bar{\phi})}
\colonequals 
(-1)^{r(\bbS)-r(\bbG^{0}_{\x})}R_{\bbS}^{\bbG^{0}_{\x}}(\bar{\phi})
\]
of $\bbG^{0}_{\x}(\F_{q})$.
In \cite[Section 3.4.4]{MR4013740}, Kaletha constructs an extension of (the inflation of) the representation $\kappa_{(\bfS,\bar{\phi})}$ to $SG^{0}_{\x,0}$ in a geometric way.
Let $\kappa_{(\bfS,\phi_{-1})}$ denote the extended representation of $SG^{0}_{\x,0}$.
Now Kaletha's classification theorem of regular depth zero supercuspidal representations is summarized as follows:
\begin{prop}[{\cite[Lemma 3.4.20 and Proposition 3.4.27]{MR4013740}}]\label{prop:depth-zero}
The representation 
\[
\pi_{(\bfS,\phi_{-1})}^{\bfG^{0}}
\colonequals 
\cInd_{SG^{0}_{\x,0}}^{G^{0}}\kappa_{(\bfS,\phi_{-1})}
\]
is an irreducible depth-zero regular supercuspidal representation of $G^{0}$.
Conversely, every irreducible depth-zero regular supercuspidal representation of $G^{0}$ is obtained in this way.
Furthermore, two such representations $\pi_{(\bfS,\phi_{-1})}^{\bfG^{0}}$ and $\pi_{(\bfS',\phi'_{-1})}^{\bfG^{0}}$ are isomorphic if and only if the pairs $(\bfS,\phi_{-1})$ and $(\bfS',\phi'_{-1})$ are $G^{0}$-conjugate.
\end{prop}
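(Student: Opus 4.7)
The plan is to establish the three assertions---irreducible depth-zero supercuspidality, exhaustion, and parametrization up to $G^{0}$-conjugacy---by combining the Moy--Prasad classification of depth-zero supercuspidals with Deligne--Lusztig theory for $\bbG^{0}_{\x}(\F_{q})$ and the geometric extension $\kappa_{(\bfS,\phi_{-1})}$ that was recalled just before the statement.

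First I would verify irreducibility of the compact induction. By the Mackey/Bushnell--Kutzko criterion, this reduces to showing that $\kappa_{(\bfS,\phi_{-1})}$ is itself irreducible (built into its geometric definition, since it extends the irreducible cuspidal $\pm R_{\bbS}^{\bbG^{0}_{\x}}(\bar\phi)$) and that for every $g\in G^{0}\setminus SG^{0}_{\x,0}$, $\Hom_{SG^{0}_{\x,0}\cap g(SG^{0}_{\x,0})g^{-1}}(\kappa_{(\bfS,\phi_{-1})},{}^{g}\kappa_{(\bfS,\phi_{-1})})=0$. Restricting to the maximal parahoric $G^{0}_{\x,0}$ and reducing modulo $G^{0}_{\x,0+}$, this is a statement about $\pm R_{\bbS}^{\bbG^{0}_{\x}}(\bar\phi)$, for which the Deligne--Lusztig orthogonality relation, combined with the trivial stabilizer of $\bar\phi$ in $W_{G^{0}}(\bfS)$, rules out the required intertwiners; any residual intertwining coming from elements of $G^{0}$ normalizing $\x$ but not lying in $SG^{0}_{\x,0}$ is handled by the fact that the geometric extension is chosen to be compatible with the action of $S/S_{0}$. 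Supercuspidality is then automatic because $SG^{0}_{\x,0}$ is compact modulo $\bfZ_{\bfG^{0}}$, and the depth-zero property is immediate from $\kappa_{(\bfS,\phi_{-1})}$ being trivial on $G^{0}_{\x,0+}$.

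For exhaustion, I would invoke Moy--Prasad: every irreducible depth-zero supercuspidal of $G^{0}$ is of the form $\cInd_{G^{0}_{\bar\x}}^{G^{0}}(\tilde\rho)$ for some vertex $\bar\x$ and some $\tilde\rho$ whose restriction to $G^{0}_{\x,0}$ contains the inflation of an irreducible cuspidal representation $\kappa$ of $\bbG^{0}_{\x}(\F_{q})$. Deligne--Lusztig theory then writes $\kappa=(-1)^{r(\bbS)-r(\bbG^{0}_{\x})}R_{\bbS}^{\bbG^{0}_{\x}}(\bar\phi)$ for a pair $(\bbS,\bar\phi)$ with $\bar\phi$ in general position, unique up to $\bbG^{0}_{\x}(\F_{q})$-conjugacy; the regularity of $\pi_{-1}$ translates exactly into regularity of $\bar\phi$. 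Using Kaletha's lifting result, choose a maximally unramified elliptic $\bfS\subset\bfG^{0}$ with connected N\'eron special fibre $\bbS$, then extend $\bar\phi$ to a character $\phi_{-1}$ of $S$ that matches $\tilde\rho|_{SG^{0}_{\x,0}}$. Comparing with the geometric extension shows that $\tilde\rho|_{SG^{0}_{\x,0}}\cong\kappa_{(\bfS,\phi_{-1})}$ and hence $\pi_{-1}\cong\pi^{\bfG^{0}}_{(\bfS,\phi_{-1})}$.

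For the conjugacy statement, one direction follows from functoriality of the construction under $G^{0}$-conjugation. For the converse, an isomorphism $\pi^{\bfG^{0}}_{(\bfS,\phi_{-1})}\cong\pi^{\bfG^{0}}_{(\bfS',\phi'_{-1})}$ yields, via Frobenius reciprocity and Mackey decomposition, an element $g\in G^{0}$ intertwining $\kappa_{(\bfS,\phi_{-1})}$ with ${}^{g}\kappa_{(\bfS',\phi'_{-1})}$; reducing modulo $G^{0}_{\x,0+}$ and again invoking Deligne--Lusztig orthogonality forces $g$ to conjugate $(\bbS,\bar\phi)$ to $(\bbS',\bar\phi')$, and this lifts to $G^{0}$-conjugacy of $(\bfS,\phi_{-1})$ with $(\bfS',\phi'_{-1})$ after adjusting by an element of $SG^{0}_{\x,0}$. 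I expect the main obstacle to be the bookkeeping passage between $G^{0}_{\bar\x}$, which appears naturally in Moy--Prasad, and the slightly smaller group $SG^{0}_{\x,0}$, since the quotient $G^{0}_{\bar\x}/SG^{0}_{\x,0}$ can \emph{a priori} introduce additional intertwiners that must be ruled out by the regularity hypothesis; keeping careful track of this quotient is exactly the place where the paper's later choice to work with $SG^{0}_{\x,0}$ rather than $G^{0}_{\bar\x}$ requires genuine care.
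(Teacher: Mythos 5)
The paper does not prove this proposition---it quotes it directly from Kaletha's \emph{Regular supercuspidal representations} (Lemma~3.4.20 and Proposition~3.4.27 there), and the only accompanying material is the Remark right after, which records the consequence $\rho'_0 \cong \Ind_{SG^0_{\x,0}}^{G^0_{\bar\x}}\kappa_{(\bfS,\phi_{-1})}$. So there is no in-paper argument to compare against; what you have written is a reconstruction of Kaletha's proof. Your outline correctly identifies the ingredients: Moy--Prasad for reduction to parahoric level and for exhaustion, Deligne--Lusztig orthogonality together with the trivial $W_{G^0}(\bfS)$-stabilizer of $\bar\phi$ for the intertwining computations, and Kaletha's geometric extension machinery to pass from $G^0_{\x,0}$ to $SG^0_{\x,0}$. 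You have also flagged the right subtlety at the end---the gap between $G^0_{\bar\x}$ and $SG^0_{\x,0}$---which is exactly what forces the careful bookkeeping.

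Two of your steps are stated too strongly and would need to be repaired in a real proof. First, in the exhaustion step you write $\tilde\rho|_{SG^0_{\x,0}}\cong\kappa_{(\bfS,\phi_{-1})}$. This is generally false: by Clifford theory the restriction of $\rho'_0$ to $SG^0_{\x,0}$ is a \emph{direct sum} of $G^0_{\bar\x}$-conjugates of an extension $\tilde\kappa$ of $\kappa$, and the correct statement (made explicit in the paper's Remark) is $\rho'_0\cong\Ind_{SG^0_{\x,0}}^{G^0_{\bar\x}}\tilde\kappa$; one then needs that Kaletha's construction $\phi_{-1}\mapsto\kappa_{(\bfS,\phi_{-1})}$ hits \emph{every} extension of $\kappa$ as $\phi_{-1}$ ranges over lifts of $\bar\phi$, using that both sets are torsors under characters of $S/S_0$ and that the construction is equivariant for this twisting. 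Second, your phrase ``the geometric extension is chosen to be compatible with the action of $S/S_0$'' is not what rules out intertwining by elements of $G^0_{\bar\x}\smallsetminus SG^0_{\x,0}$; what actually does the work is the regularity (trivial $W_{G^0}(\bfS)$-stabilizer) of $\bar\phi$: any $g\in G^0_{\bar\x}$ intertwining $\kappa$ with $\kappa^g$ may, after adjusting by $G^0_{\x,0}$ via Deligne--Lusztig orthogonality, be taken to normalize $\bbS$ and fix $\bar\phi$, hence to have trivial image in $W_{G^0}(\bfS)$, hence to lie in $S\cdot G^0_{\x,0}=SG^0_{\x,0}$. Spelling out these two points would bring your sketch into agreement with Kaletha's argument.
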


We now return to Yu's supercuspidal representations.
Let $\Sigma=(\vec{\bfG},\pi_{-1},\vec{\phi})$ be a generic reduced cuspidal $\bfG$-datum and $\pi_{\Sigma}$ its associated supercuspidal representation of $G$.
We call $\Sigma$ \textit{regular} if $\pi_{-1}$ is regular.
We call $\pi_{\Sigma}$ a \textit{regular supercuspidal representation} if $\Sigma$ is regular.
Let us suppose that $\Sigma$ is regular.
Then, thanks to Proposition \ref{prop:depth-zero}, we have a pair $(\bfS,\phi_{-1})$ of maximally unramified elliptic maximal torus $\bfS$ of $\bfG^{0}$ and its regular depth-zero character $\phi_{-1}$ satisfying $\pi_{-1}\cong\pi_{(\bfS,\phi_{-1})}^{\bfG^{0}}$.
We put $\bfG^{-1}\colonequals \bfS$ and define a character $\phi$ of $S$ by
\[
\phi
\colonequals 
\prod_{i=-1}^{d} \phi_{i}|_{S}.
\]

Kaletha's reparametrizing result is as follows:
\begin{prop}[{\cite[Proposition 3.7.8]{MR4013740}}]\label{prop:TER-pair}
The map 
\[
(\vec{\bfG}, \pi_{-1},\vec{\phi})
\mapsto
(\bfS,\phi)
\]
defined in the above manner induces a bijection from the set of $\bfG$-equivalence classes of regular generic reduced cuspidal $\bfG$-data to the set of $G$-conjugacy classes of tame elliptic regular pairs in $\bfG$.
\end{prop}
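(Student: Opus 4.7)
The plan is to construct an inverse map via a Howe-style factorization and then verify bijectivity on each side. Given a tame elliptic regular pair $(\bfS,\phi)$, the inverse sends it to the datum $(\vec{\bfG},\pi_{-1},\vec{\phi})$ built as follows. First I would recover the tower of tame twisted Levi subgroups: examining the root system $\Phi(\bfG,\bfS)$ over $F^{\ur}$ and the depths at which $\phi$ fails to be trivial on each one-dimensional root subgroup, one detects the breakpoints that cut out an ascending sequence $\bfS=\bfG^{-1}\subset\bfG^{0}\subset\cdots\subset\bfG^{d}=\bfG$ of tame twisted Levi subgroups containing $\bfS$, with $\bfZ_{\bfG^{0}}/\bfZ_{\bfG}$ anisotropic because $\bfS$ is elliptic. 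Next I would factorize $\phi=\prod_{i=-1}^{d}\phi_{i}|_{S}$, where $\phi_{i}$ is a character of $G^{i}$ of depth $r_{i}$, arranging that $\phi_{i}$ is $\bfG^{i+1}$-generic for $0\leq i<d$; the depth-zero piece $(\bfS,\phi_{-1})$ then yields an irreducible depth-zero regular supercuspidal representation $\pi_{-1}:=\pi^{\bfG^{0}}_{(\bfS,\phi_{-1})}$ of $G^{0}$ by Proposition \ref{prop:depth-zero}, and the datum $(\vec{\bfG},\pi_{-1},\vec{\phi})$ is generic, reduced, cuspidal, and regular by construction.

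Next I would check well-definedness. The $\bfG$-equivalence relation is generated by (i) simultaneous $G$-conjugation of $(\vec{\bfG},\pi_{-1},\vec{\phi})$ and (ii) \emph{refactoring}, which modifies the sequence $(\phi_{0},\dots,\phi_{d})$ while preserving its restriction to $\bfS$ (and adjusting $\pi_{-1}$ compatibly). Under (i) the pair $(\bfS,\phi)$ is transported by the same element of $G$, so its $G$-conjugacy class is preserved; under (ii) only the product $\prod \phi_{i}|_{S}=\phi$ enters the construction of the image, so the image is unchanged. This shows the map in the proposition is well-defined, and symmetrically that the inverse above is well-defined.

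For injectivity I would use the uniqueness of Howe factorization modulo refactoring: if two regular data produce the same pair $(\bfS,\phi)$ then their towers coincide (being recoverable intrinsically from $(\bfS,\phi)$ via the root-datum analysis above), and their sequences $(\phi_{i})$ differ by a refactoring, hence the data are $\bfG$-equivalent. For surjectivity I would invoke the existence of Howe factorizations for tame elliptic regular pairs, together with Proposition \ref{prop:depth-zero} giving $\pi_{-1}$ from $(\bfS,\phi_{-1})$; this shows every tame elliptic regular pair lies in the image.

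The main obstacle is the existence and uniqueness (up to refactoring) of the Howe factorization. This requires an induction on the tower length $d$ in which at each stage one extends a character of $G^{i}$ to a character of $G^{i+1}$ with the prescribed depth and $\bfG^{i+2}$-genericity, using the tameness of $\bfS$ to control restrictions along root subgroups and using the regularity of $\phi$ to guarantee nontriviality at each genericity threshold. The case when $\bfS$ is elliptic is precisely Kaletha's Howe factorization theorem; the version recorded in Section \ref{subsec:Howe} of the present paper treats this and (in a form we will use elsewhere) relaxes the ellipticity hypothesis.
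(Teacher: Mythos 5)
This proposition is not proved in the present paper: it is cited verbatim from Kaletha's \cite[Proposition~3.7.8]{MR4013740}, and the only ``proof'' the paper offers is the remark immediately following, which flags precisely what hypotheses on $p$ the citation hides. So there is no in-paper argument to match against; the relevant comparison is with Kaletha's proof.

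With that caveat, your sketch does identify the right shape of Kaletha's argument: build the inverse by attaching to $(\bfS,\phi)$ a Howe factorization; observe that well-definedness and injectivity follow from the fact that the twisted Levi tower $\vec{\bfG}$ is determined intrinsically by $(\bfS,\phi)$ while the sequence $\vec{\phi}$ is determined only up to refactorization (encoded in the $\bfG$-equivalence of Hakim--Murnaghan); and get surjectivity from the existence of the factorization. Two points of imprecision worth fixing. First, $\phi$ is a character of $S$, not of $G$, so there is no sense in which ``$\phi$ fails to be trivial on each one-dimensional root subgroup.'' The breakpoints are read off from the restrictions $\phi|_{\Nr_{E/F}(\alpha^\vee(E_r^\times))}$ along coroot norm maps, i.e.\ the subsets $R_r$ recalled in Section~\ref{subsec:Howe}; the jumps in $r\mapsto R_r$ produce the tower. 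Second, the regularity of $\phi$ (triviality of the $W_{G^0}(\bfS)$-stabilizer of $\phi|_{S_0}$) controls the depth-zero piece and makes $\pi_{-1}$ a regular depth-zero supercuspidal; it is not what ``guarantees nontriviality at each genericity threshold.'' The positive-depth breakpoints and the nontriviality there come purely from the definition of the $R_{r_i}$.

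The more substantive gap is in how you dispose of the ``main obstacle.'' You attribute the existence step to ``tameness of $\bfS$'' and ``regularity of $\phi$,'' but the hard part is arranging that each $\phi_i$ (for $0\le i<d$) is not merely of the right depth but is $\bfG^{i+1}$-\emph{generic} in Yu's sense. The paper's remark after the proposition spells out that this relies on \cite[Lemma~3.6.8]{MR4013740}, which in turn invokes Yu's \cite[Lemma~8.1]{MR1824988}, whose hypothesis is that $p$ is not a torsion prime for the root datum of $\widehat{\bfG}$, i.e.\ $p$ not bad for $\bfG$ \emph{and} $p\nmid|\pi_1(\widehat{\bfG}_{\der})|$. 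Neither tameness of $\bfS$ nor regularity of $\phi$ substitutes for this arithmetic hypothesis, and without it surjectivity genuinely fails. Any writeup of this proposition in the spirit of the paper should make that dependence explicit rather than absorbing it into the phrase ``by construction.''
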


\begin{rem}
Note that we need our baseline assumptions on $p$ ($p$ odd, $p$ not bad for $\bfG$, $p \nmid |\pi_1(\widehat \bfG_{\der})$, and $p \nmid |\pi_1(\bfG_{\der})|$) for this proposition, especially in establishing the surjectivity part of the map (this is called the ``Howe factorization'' process, which will be recalled more precisely in the next subsection).
One crucial step in proving \cite[Proposition 3.7.8]{MR4013740} is to establish \cite[Lemma 3.6.8]{MR4013740}, whose proof utilizes a technical result of Yu concerning the genericity of characters \cite[Lemma 8.1]{MR1824988}. 
The assumption required by \cite[Lemma 8.1]{MR1824988} is that $p$ is not a torsion prime for the root datum of the dual group $\bfG$, which is equivalent to that $p$ is not bad for $\bfG$ and does not divide the order of $|\pi_{1}(\widehat{\bfG}_{\der})|$.
Therefore we need to assume $p\nmid|\pi_{1}(\widehat{\bfG}_{\der})|$ in addition to the non-badness for the root datum of $\bfG$ imposed in the beginning of \cite[Section 3.6]{MR4013740}.
This subtlety is carefully explained in \cite{Kaletha}.
See \cite[Section 4]{Kaletha} for details.
\end{rem}

Recall that a pair $(\bfS,\phi)$ of a maximal torus $\bfS$ of $\bfG$ defined over $F$ and a character $\phi\colon S\rightarrow\C^{\times}$ is called a \textit{tame elliptic regular pair} if it satisfies the following conditions (\cite[Definition 3.7.5]{MR4013740}):
\begin{itemize}
\item
$\bfS$ is a tamely ramified elliptic maximal torus of $\bfG$,
\item
the action of the inertia subgroup $I_{F}$ of $\bfGal(\overline{F}/F)$ on the root subsystem
\[
R_{0+}
\colonequals 
\{\alpha\in R(\bfS,\bfG) \mid \phi|_{\Nr_{E/F}(\alpha^{\vee}(E_{0+}^{\times}))}\equiv\mathbbm{1}\}
\]
of the root system $R(\bfS,\bfG)$ preserves a set of positive roots, where
\begin{itemize}
\item
$E$ is the tamely ramified extension of $F$ splitting $\bfS$, and
\item
$\Nr_{E/F}$ is the norm map $\bfS(E)\rightarrow\bfS(F)$, 
\end{itemize}
\item
the restriction $\phi|_{S_{0}}$ has trivial stabilizer for the action of the group $W_{G^{0}}(\bfS)$, where $\bfG^0 \subset \bfG$ is the reductive group with maximal torus $\bfS$ and root system $R_{0+}$.
\end{itemize}

According to Proposition \ref{prop:TER-pair}, we conclude that (isomorphism classes of) regular supercuspidal representations bijectively correspond to ($G$-conjugacy classes of) tame elliptic regular pairs.
Let $\pi_{(\bfS,\phi)}$ denote the representation corresponding to a tame elliptic regular pair $(\bfS,\phi)$.
\[
\xymatrix{
\{\text{gen.\ red.\ cusp.\ $\bfG$-data}\}/\text{$\bfG$-eq.} \ar@{->}[r]^-{1:1} & \{\text{Yu's s.c.\ rep'ns of $G$}\}/{\sim}\\
\{\text{regular gen.\ red.\ cusp.\ $\bfG$-data}\}/\text{$\bfG$-eq.}\ar@{}[u]|{\bigcup} \ar@{->}[r]^-{1:1} & \{\text{regular s.c.\ rep'ns of $G$}\}/{\sim}\ar@{}[u]|{\bigcup}\\
\{\text{tame elliptic regular pairs}\}/\text{$G$-conj.}\ar@{<->}[u]^-{1:1}_-{\text{(Prop.\ \ref{prop:TER-pair})}} \ar@{->}[ur]_-{\quad\quad\quad(\bfS,\phi)\mapsto\pi_{(\bfS,\phi)}}&
}
\]

\begin{rem}
If we take a cuspidal $\bfG$-datum $(\vec{\bfG},\vec{\phi},\vec{r},\x,\rho'_{0})$ corresponding to $(\vec{\bfG}, \pi_{-1},\vec{\phi})$, then we have $\pi_{-1}\cong\cInd_{K^{0}}^{G^{0}}\rho'_{0}$ (recall that $K^{0}=G^{0}_{\bar{\x}}$).
On the other hand, since we have $\pi_{-1}\cong\pi_{(\bfS,\phi_{-1})}^{\bfG^{0}}=\cInd_{SG^{0}_{\x,0}}^{G^{0}}\kappa_{(\bfS,\phi_{-1})}$, we may suppose that
\[
\rho'_{0}
\cong
\Ind_{SG^{0}_{\x,0}}^{G^{0}_{\bar{\x}}}\kappa_{(\bfS,\phi_{-1})}.
\]
\end{rem}

Finally we introduce the notion of Howe-unramifiedness as follows:
\begin{defn}\label{defn:Howe-unram}
If a regular supercuspidal representation $\pi$ is associated to a tame elliptic regular pair $(\bfS,\phi)$ with unramified $\bfS$, we say that $\pi$ is \textit{Howe-unramified}.
\end{defn}

\subsection{Tame twisted Levi sequence associated to a character}\label{subsec:Howe}
As the surjectivity part of Proposition \ref{prop:TER-pair} shows, we may associate to any tame elliptic regular pair $(\bfS,\phi)$ a sequence of tame twisted subgroups $\vec{\bfG}=(\bfG_{-1},\ldots,\bfG_{d})$ and a sequence of characters $(\phi_{-1},\ldots,\phi_{d})$.
Indeed, in the proof of Proposition \ref{prop:TER-pair} (\cite[Proposition 3.7.8]{MR4013740}), Kaletha gives a construction of such sequences explicitly.
As explained in Section \cite[Section 3.6]{MR4013740}, this can be understood as a generalization of the \textit{Howe factorization}, a factorization of characters of $S$ defined in the $\GL_n$ setting in his construction of supercuspidal representations of $\GL_n$.
The Howe factorization comes with an associated sequence of subgroups which capture the relative genericity of the individual factors in the product. Although the characters in this factorization are \textit{not} unique, the associated subgroups \textit{are}. 
In this section, following \cite[Section 3.6]{MR4013740}, we recall how the Howe factorization attaches a tame twisted Levi sequence to a tame elliptic regular pair. 
In fact, we may work in a more general setting; let $\bfS$ be a tamely ramified maximal torus of $\bfG$ defined over $F$ and $\theta$ a character of $S$.
Hence here $(\bfS,\theta)$ is allowed to be a pair which is not tame elliptic regular.

For $r\in\widetilde{\R}_{>0}$, we define a subset $R_{r}$ of $R(\bfS,\bfG)$ as in \cite[1107 page, (3.6.1)]{MR4013740} by
\[
R_{r}
\colonequals 
\{
\alpha\in R(\bfS,\bfG) 
\mid
\theta|_{\Nr_{E/F}(\alpha^{\vee}(E_{r}^{\times}))} \equiv \mathbbm{1}
\},
\]
where $E$ is the splitting field of the torus $\bfS$.
Note that $R_{0+}$ introduced in the definition of a tame elliptic regular pair is nothing but a special case of $R_{r}$ where $r$ is taken to be $0+$.
We let $r_{d-1}>\cdots >r_{1}>r_{0}$ be the real numbers satisfying $R_{r}\subsetneq R_{r+}$.
We put $r_{d}\colonequals \depth(\theta)$ and $r_{-1}\colonequals 0$ (note that $r_{d}\geq r_{d-1}$).
Let $\bfG^{0}\subsetneq\cdots\subsetneq\bfG^{d-1}$ be the tamely ramified reductive subgroups of $\bfG$ corresponding to the sequence $R_{r_{0}}\subsetneq\cdots\subsetneq R_{r_{d-1}}$ (i.e., each $\bfG^{i}$ is the reductive subgroup of $\bfG$ which contains $\bfS$ and has $R_{r_{i}}$ as its roots).
We put $\bfG^{-1}\colonequals \bfS$ and $\bfG^{d}\colonequals \bfG$ so that we have $\bfG^{-1}\subset\bfG^{0}\subsetneq\cdots\subsetneq\bfG^{d-1}\subsetneq\bfG^{d}$.
When we want to emphasize the dependence on $(\bfS, \theta)$, we write $\bfG^i(\bfS,\theta)$ for the twisted Levi $\bfG^i$ associated to $(\bfS, \theta)$ via the Howe factorization.

We introduce the notion of torality and $0$-torality as follows:
\begin{defn}\label{defn:toral}
Let $\bfS$ be a tamely ramified maximal torus of $\bfG$ defined over $F$.
Let $\theta$ be a character of $S$.
\begin{enumerate}
\item
We call $\theta$ a \textit{toral} character if $\bfG^{0}(\bfS,\theta)=\bfS$.
\item
We call $\theta$ a \textit{$0$-toral} character if $d=1$ and $\bfG^{0}(\bfS,\theta)=\bfS$ (i.e., $\vec{\bfG}$ consists only of $\bfG^{0}=\bfS$ and $\bfG^{1}=\bfG$).
\end{enumerate}
\end{defn}

We note that, when $p$ is not bad for $\bfG$, each $\bfG^{i}(\bfS,\theta)$ is a tame twisted Levi subgroup of $\bfG$ by \cite[Lemma 3.6.1]{MR4013740}.

We now return to the setting of elliptic $\bfS$. 
The following lemma will be useful for us, especially in Section \ref{sec:comparison}. We warn the reader that this lemma does not hold for arbitrary $p$!
We discuss subtleties regarding small residue characteristic in Section \ref{sec:small p}.

\begin{lemma}\label{lem:theta-stab}
Assume $p$ is odd, $p$ is not bad for $\bfG$, $p \nmid |\pi_1(\bfG_{\der})|$ and $p \nmid |\pi_1(\widehat \bfG_{\der})|$. Let $(\bfS, \theta)$ be a tame elliptic regular pair. If $\theta$ is a toral character of $S$, then $\theta|_{S_{0+}}$ has trivial $W_G(\bfS)$-stabilizer.
\end{lemma}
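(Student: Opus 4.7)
The plan is to apply Kaletha's Howe factorization of $(\bfS,\theta)$ and induct downward along the resulting tame twisted Levi chain, using the genericity of each factor to pin down the Weyl-group stabilizer one step at a time.

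First, I will invoke the Howe factorization of Section \ref{subsec:Howe} for the toral character $\theta$. Since $\theta$ is toral, this produces a chain $\bfS = \bfG^0 \subsetneq \bfG^1 \subsetneq \cdots \subsetneq \bfG^d = \bfG$ of tame twisted Levi subgroups (with $d\geq 1$, as the case $\bfS=\bfG$ is trivial) together with characters $\phi_{-1}$ of $S$ of depth zero, characters $\phi_i$ of $G^i$ for $0 \leq i \leq d-1$ that are $\bfG^{i+1}$-generic of depth $r_i$, and a character $\phi_d$ of $G$ of depth $r_d$ (possibly trivial), satisfying $\theta = \prod_{i=-1}^d \phi_i|_S$ with $0 < r_0 < r_1 < \cdots < r_{d-1} \leq r_d$.

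Next, I will show by downward induction on $i \in \{d-1, d-2, \ldots, 0\}$ that any $w \in W_G(\bfS)$ with ${}^w(\theta|_{S_{0+}}) = \theta|_{S_{0+}}$ lies in $W_{G^i}(\bfS)$. For the inductive step, assume $w \in W_{G^{i+1}}(\bfS)$ (the base case $i = d-1$ being simply the standing hypothesis $w \in W_G(\bfS)$). A representative $n$ of $w$ in $N_{G^{i+1}}(\bfS)$ belongs to $G^j$ for every $j \geq i+1$, so inner conjugation by $n$ fixes each abelian character $\phi_j$; equivalently, ${}^w \phi_j = \phi_j$ for all $j \geq i+1$. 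Combined with $\phi_{-1}|_{S_{0+}} = 1$ and the $w$-invariance of $\theta|_{S_{0+}}$, this shows that $w$ stabilizes the partial product
\begin{equation*}
\chi_i \colonequals \Bigl(\prod_{j=0}^{i} \phi_j\Bigr)\Big|_{S_{0+}}.
\end{equation*}
Since each $\phi_j$ with $j<i$ has depth $r_j<r_i$ and thus vanishes on $S_{r_i}$, the restriction $\chi_i|_{S_{r_i}}$ coincides with $\phi_i|_{S_{r_i}}$ and factors through $S_{r_i}/S_{r_i+}$. The $\bfG^{i+1}$-genericity of $\phi_i$ will then imply, via Moy--Prasad duality, that the $W_{G^{i+1}}(\bfS)$-stabilizer of this depth-$r_i$ character is exactly $W_{G^i}(\bfS)$; consequently $w \in W_{G^i}(\bfS)$. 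Iterating down to $i=0$ yields $w \in W_{G^0}(\bfS) = W_S(\bfS) = \{1\}$ by torality ($\bfG^0 = \bfS$), finishing the proof.

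The main technical obstacle is the stabilizer identification in the inductive step, namely that the $W_{G^{i+1}}(\bfS)$-stabilizer of $\phi_i|_{S_{r_i}/S_{r_i+}}$ equals $W_{G^i}(\bfS)$. Via Moy--Prasad duality, this reduces to the centralizer equality $\Cent_{\bfG^{i+1}}(X_i^*) = \bfG^i$ for the dual Lie-algebra element $X_i^* \in \mfs^*$ attached to $\phi_i$, which is encoded by Yu's genericity axioms \textup{GE1} and \textup{GE2} underpinning the Howe factorization (cf.\ \cite[Section 3.6]{MR4013740}). The residue-characteristic assumptions are exactly what make this work: $p$ odd and not bad for $\bfG$ guarantee the Moy--Prasad isomorphism $S_{r_i}/S_{r_i+} \cong \mfs_{r_i}/\mfs_{r_i+}$ is the expected one, and $p \nmid |\pi_1(\widehat{\bfG}_{\der})|$ is what allows \textup{GE1} to imply \textup{GE2}. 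These are precisely the subtleties for small residual characteristics flagged in Section \ref{sec:small p}.
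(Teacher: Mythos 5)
Your downward-induction strategy is sound, but it is in effect a re-derivation of \cite[Lemma 3.6.5(2)]{MR4013740}, which the paper's proof cites directly: after invoking \cite[Proposition 3.6.7]{MR4013740} for the Howe factorization, the paper simply reads off $\Stab_{W_G(\bfS)}(\theta|_{S_{0+}}) = \Stab_{W_{G^0}(\bfS)}(\theta_{-1}|_{S_{0+}}) = W_{G^0}(\bfS)$ from Kaletha's lemma (the last equality because $\theta_{-1}$ has depth zero) and concludes by torality, $\bfG^0 = \bfS$. You trade the paper's brevity for self-containment, which is fine, but there is a small imprecision in your key inductive step worth correcting: the Moy--Prasad dual of $\phi_i|_{S_{r_i}}$ lives in a Moy--Prasad \emph{quotient} of $\mfs^*$, so the phrase ``the centralizer equality $\Cent_{\bfG^{i+1}}(X_i^*)=\bfG^i$'' is not the statement you need. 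Rather, GE1 shows that no reflection $s_\alpha$ with $\alpha\in R(\bfG^{i+1},\bfS)\smallsetminus R(\bfG^i,\bfS)$ fixes the image of $X^*_i$ in that quotient, GE2 shows that the $W(\bfG^{i+1},\bfS)$-stabilizer of that image is generated by the reflections it contains, and since $X^*_i$ is $\bfG^i$-central the stabilizer contains $W(\bfG^i,\bfS)$; combining these gives the stabilizer equality in the absolute Weyl group, and intersecting with the $F$-rational Weyl group $W_{G^{i+1}}(\bfS)$ then yields $W_{G^i}(\bfS)$. Finally, the implication GE1 $\Rightarrow$ GE2 requires $p$ not to be a torsion prime for the root datum of $\widehat{\bfG}$, which is the conjunction of ``$p$ not bad for $\bfG$'' and ``$p\nmid|\pi_1(\widehat{\bfG}_{\der})|$''; you attributed it to the latter alone.
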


\begin{proof}
By \cite[Proposition 3.6.7]{MR4013740}, $\theta$ has a Howe factorization. More precisely, there exists a regular generic reduced cuspidal $\bfG$-datum $(\vec{\bfG}, \pi_{-1}, \vec{\theta})$ where $\vec{\bfG}$ is the twisted Levi sequence attached to $\theta$ as above, $\pi_{-1} \cong \pi_{(\bfS, \theta_{-1})}^{\bfG^0}$, and $\vec{\theta} = (\theta_0, \ldots, \theta_d)$ is a choice of characters of $(G^0, \ldots, G^d)$ such that $\theta = \prod_{i=-1}^d \theta_i|_S$. By \cite[Lemma 3.6.5 (2)]{MR4013740}, we have $\Stab_{W_{G}(\bfS)}(\theta|_{S_{0+}})=\Stab_{W_{G^{0}}(\bfS)}(\theta_{-1}|_{S_{0+}}) = W_{G^{0}}(\bfS)$, where the last equality holds since $\theta_{-1}|_{S_{0+}}$ is trivial. The conclusion clearly follows as $\theta$ being toral implies that $\bfG^0 = \bfS$.
\end{proof}

\subsection{Point stabilizer vs.\ parahoric subgroup}\label{subsec:stab-parah}
Let $(\bfS,\phi)$ be a tame elliptic regular pair.
As explained in Section \ref{subsec:Kaletha-TER}, this pair gives rise to a $\bfG$-equivalence class of cuspidal $\bfG$-data; let $\Sigma= (\vec{\bfG}, \vec{\phi}, \vec{r},\x,\rho'_{0})$ be a representative of this equivalence class, where $\rho'_{0}\cong\Ind_{SG^{0}_{\x,0}}^{G^{0}_{\bar{\x}}}\kappa_{(\bfS,\phi_{-1})}$.
Recall that, in Section \ref{subsec:Yu}, we considered the groups
\[
G^{i}_{\bar{\x}}
\supset
K_{\sigma_{i}}
\supset
G^{i-1}_{\bar{\x}}G^{i}_{\x,s_{i-1}}
\supset
K^{i}
\]
and defined the representations $\tau_{i}$, $\sigma_{i}$, and $\tilde{\rho}'_{i}$ by inducing $\rho'_{i}$:
\[
\tau_{i}
\colonequals 
\Ind_{K_{\sigma_{i}}}^{G^{i}_{\bar{\x}}} \sigma_{i}\otimes\phi_{i},\quad
\sigma_{i}
\colonequals 
\Ind_{G^{i-1}_{\bar{\x}}G^{i}_{\x,s_{i-1}}}^{K_{\sigma_{i}}} \tilde{\rho}'_{i},\quad
\tilde{\rho}'_{i}
\colonequals 
\Ind_{K^{i}}^{G^{i-1}_{\bar{\x}}G^{i}_{\x,s_{i-1}}}\rho'_{i}.
\]

For our convenience, we also introduce the following slightly smaller groups by replacing the role of $G^i_{\bar \x}$ with $SG^i_{\x,0}$:

\[
\xymatrix@R=10pt{
G_{\bar{\x}}^{i} & SG_{\x,0}^{i} \ar@{}[l]|*{\supset} \\
K_{\sigma_{i}}\colonequals G_{\bar{\x}}^{i-1}G_{\x,0+}^{i} \ar@{}[u]|{\bigcup} & \cc K_{\sigma_{i}}\colonequals SG_{\x,0}^{i-1}G_{\x,0+}^{i} \ar@{}[l]|*{\supset} \ar@{}[u]|{\bigcup} \\
G_{\bar{\x}}^{i-1}G_{\x,s_{i-1}}^{i} \ar@{}[u]|{\bigcup} & SG_{\x,0}^{i-1}G_{\x,s_{i-1}}^{i} \ar@{}[l]|*{\supset}  \ar@{}[u]|{\bigcup}\\
K^{i}\colonequals G_{\bar{\x}}^{0}(G^{0},\ldots,G^{i})_{\x,(0+,s_{0},\ldots,s_{i-1})} \ar@{}[u]|{\bigcup} & \cc K^{i}\colonequals SG_{\x,0}^{0}(G^{0},\ldots,G^{i})_{\x,(0+,s_{0},\ldots,s_{i-1})}\ar@{}[l]|*{\supset} \ar@{}[u]|{\bigcup}
}
\]

We define a representation $\cc\rho'_{0}$ of $SG_{\x,0}^{0}$ by
\[
\cc\rho'_{0}
\colonequals 
\kappa_{(\bfS,\phi_{-1})}
\]
and construct representations $\cc\rho'_{i}$ of $\cc K^{i}$ in the same manner as before starting from $\cc\rho'_{0}$ instead of $\rho'_{0}$.

\begin{lem}\label{lem:stab-parah}
The $K^i$-representation $\rho'_{i}$ is isomorphic to the induction of the $\cc K^i$-representation $\cc\rho'_{i}$:
\[
\rho'_{i}
\cong
\Ind_{\cc K^{i}}^{K^{i}}\cc\rho'_{i}.
\]
\end{lem}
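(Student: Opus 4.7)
The plan is to prove the lemma by induction on $i \geq 0$. The base case $i=0$ holds tautologically, since by definition $\rho'_{0} \cong \Ind_{SG^{0}_{\x,0}}^{G^{0}_{\bar{\x}}} \kappa_{(\bfS,\phi_{-1})} = \Ind_{\cc K^{0}}^{K^{0}} \cc \rho'_{0}$. For the inductive step, I want to trace the parallel Yu-type constructions of $\rho'_{i}$ and $\cc\rho'_{i}$ from $\rho'_{i-1}$ and $\cc\rho'_{i-1}$ and show that the induction from $\cc K^{i-1}$ to $K^{i-1}$ transports, under these constructions, to the induction from $\cc K^{i}$ to $K^{i}$. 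Geometrically, the group $K^{i}$ differs from $\cc K^{i}$ only in its depth-zero part (by $G^{0}_{\bar{\x}}/SG^{0}_{\x,0}$), while the Heisenberg--Weil descent used in Yu's construction takes place entirely within the positive-depth piece.

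To implement this, I would use the projection formula applied to the inclusion $\cc K^{i-1} \ltimes J^{i} \subset K^{i-1} \ltimes J^{i}$. By the inductive hypothesis and the projection formula,
\[
(\rho'_{i-1}\otimes\phi_{i-1}|_{K^{i-1}})\ltimes\mathbbm{1}
\cong
\Ind_{\cc K^{i-1}\ltimes J^{i}}^{K^{i-1}\ltimes J^{i}}\bigl((\cc\rho'_{i-1}\otimes\phi_{i-1}|_{\cc K^{i-1}})\ltimes\mathbbm{1}\bigr),
\]
and tensoring with $\tilde{\phi}_{i-1}|_{K^{i-1}\ltimes J^{i}}$ and applying the projection formula once more yields
\[
\tilde{\phi}_{i-1}|_{K^{i-1}\ltimes J^{i}}\otimes\bigl((\rho'_{i-1}\otimes\phi_{i-1}|_{K^{i-1}})\ltimes\mathbbm{1}\bigr)
\cong
\Ind_{\cc K^{i-1}\ltimes J^{i}}^{K^{i-1}\ltimes J^{i}}\!\left[\tilde{\phi}_{i-1}|_{\cc K^{i-1}\ltimes J^{i}}\otimes\bigl((\cc\rho'_{i-1}\otimes\phi_{i-1}|_{\cc K^{i-1}})\ltimes\mathbbm{1}\bigr)\right].
\]
The representation $\cc\rho'_{i}$ is the descent of the RHS along $\cc K^{i-1}\ltimes J^{i}\twoheadrightarrow \cc K^{i}$, and $\rho'_{i}$ is the descent of the LHS along $K^{i-1}\ltimes J^{i}\twoheadrightarrow K^{i}$. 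It remains to check that induction commutes with these descents, which reduces to the assertion that both quotient maps have the same kernel, namely the antidiagonal image of $K^{i-1}\cap J^{i} = \cc K^{i-1}\cap J^{i}$. Once this is in hand, descending the displayed identity gives $\rho'_{i}\cong\Ind_{\cc K^{i}}^{K^{i}}\cc\rho'_{i}$.

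The main obstacle is precisely the identity $K^{i-1}\cap J^{i}=\cc K^{i-1}\cap J^{i}$, which is the geometric manifestation of the fact that $K^{i-1}$ and $\cc K^{i-1}$ share the same ``positive-depth'' piece. It follows from the conditions $r_{i-1},s_{i-1}>0$, which force $J^{i}=(G^{i-1},G^{i})_{\x,(r_{i-1},s_{i-1})}\subset G^{i}_{\x,0+}$; intersecting $K^{i-1}$ and $\cc K^{i-1}$ with $G^{i}_{\x,0+}$ then kills the depth-zero discrepancy between $G^{0}_{\bar{\x}}$ and $SG^{0}_{\x,0}$, since $G^{0}_{\bar{\x}}\cap G^{i}_{\x,0+}\subset G^{0}_{\x,0+}\subset SG^{0}_{\x,0}$. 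As a sanity check on dimensions, one sees from $K^{i}=G^{0}_{\bar{\x}}\cdot\cc K^{i}$ and $G^{0}_{\bar{\x}}\cap\cc K^{i}=SG^{0}_{\x,0}$ that $[K^{i}:\cc K^{i}]=[G^{0}_{\bar{\x}}:SG^{0}_{\x,0}]$ is independent of $i$, matching the inductive multiplication by $[J^{i}:J^{i}_{+}]^{1/2}$ of the Heisenberg--Weil contribution to $\dim\rho'_{i}/\dim\cc\rho'_{i}$.
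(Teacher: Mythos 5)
Your proof follows essentially the same route as the paper's: induction on $i$, with the inductive step carried by the projection formula applied to the Heisenberg--Weil tensor factor. The one place where you go beyond the paper is that you explicitly justify why the induced-representation isomorphism at the level of semidirect products descends to the quotients $K^{i+1}$ and $\cc K^{i+1}$: this needs the kernels of the two surjections $K^{i}\ltimes J^{i+1}\twoheadrightarrow K^{i+1}$ and $\cc K^{i}\ltimes J^{i+1}\twoheadrightarrow\cc K^{i+1}$ to coincide, which reduces to $K^{i}\cap J^{i+1}=\cc K^{i}\cap J^{i+1}$, and your observation that $J^{i+1}\subset G^{i+1}_{\x,0+}$ together with $G^{0}_{\bar{\x}}\cap G^{i+1}_{\x,0+}\subset G^{0}_{\x,0+}\subset SG^{0}_{\x,0}$ correctly verifies this. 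The paper asserts ``it is enough to check'' the displayed isomorphism without spelling this compatibility out, so your version is somewhat more careful on that point.
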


\begin{proof}
The assertion for $i=0$ follows from the definition of $\cc\rho'_{0}$ that we have $\rho'_{0}\cong\Ind_{SG^{0}_{\x,0}}^{G^{0}_{\bar{\x}}}\kappa_{(\bfS,\phi_{-1})}$.
Let us check the assertion for $i+1$ by assuming its validity for $i$.
Recall that the representation $\rho'_{i+1}$ is the push-forward of 
\[
(\tilde{\phi_{i}}|_{K^{i}\ltimes J^{i+1}})
\otimes 
\bigl((\rho'_{i}\otimes\phi_{i}|_{K^{i}})\ltimes\mathbbm{1}\bigr)
\]
via the canonical map $K^{i}\ltimes J^{i+1}\twoheadrightarrow K^{i}J^{i+1}=K^{i+1}$.
Similarly, the representation $\cc\rho'_{i+1}$ is the push-forward of 
\[
(\tilde{\phi_{i}}|_{\cc K^{i}\ltimes J^{i+1}})
\otimes 
\bigl((\cc\rho'_{i}\otimes\phi_{i}|_{\cc K^{i}})\ltimes\mathbbm{1}\bigr)
\]
via the canonical map $\cc K^{i}\ltimes J^{i+1}\twoheadrightarrow \cc K^{i}J^{i+1}=\cc K^{i+1}$. Noting that $(\rho_i' \otimes \phi_i|_{K^i}) \ltimes \mathbbm 1 = (\rho_i' \ltimes \mathbbm 1) \otimes (\phi_i|_{K^i} \ltimes \mathbbm 1)$, it is enough to check that we have
\begin{multline*}
\Ind_{\cc K^{i}\ltimes J^{i+1}}^{K^{i}\ltimes J^{i+1}}
\Bigl((\tilde{\phi_{i}}|_{\cc K^{i}\ltimes J^{i+1}})
\otimes 
\bigl((\cc\rho'_{i}\otimes\phi_{i}|_{\cc K^{i}})\ltimes\mathbbm{1}\bigr)\Bigr) \\
\cong
(\tilde{\phi_{i}}|_{K^{i}\ltimes J^{i+1}})
\otimes 
\bigl((\rho'_{i} \ltimes \mathbbm 1) \otimes (\phi_{i}|_{K^{i}} \ltimes\mathbbm{1})\bigr).
\end{multline*}
Obviously $\tilde{\phi_{i}}|_{\cc K^{i}\ltimes J^{i+1}}$ is the restriction of $\tilde{\phi_{i}}|_{K^{i}\ltimes J^{i+1}}$ and $\phi_i|_{\cc K^i}$ is the restriction of $\phi_i|_{K^i}$. By the projection formula,
\begin{multline*}
\Ind_{\cc K^{i}\ltimes J^{i+1}}^{K^{i}\ltimes J^{i+1}}
\Bigl((\tilde{\phi_{i}}|_{\cc K^{i}\ltimes J^{i+1}})
\otimes 
\bigl((\cc\rho'_{i}\otimes\phi_{i}|_{\cc K^{i}})\ltimes\mathbbm{1}\bigr)\Bigr) \\
\cong
(\tilde{\phi_{i}}|_{K^{i}\ltimes J^{i+1}})
\otimes
\Bigl(\Ind_{\cc K^{i}\ltimes J^{i+1}}^{K^{i}\ltimes J^{i+1}} (\cc\rho'_{i}\ltimes\mathbbm{1})
\otimes
(\phi_{i}|_{K^{i}}\ltimes\mathbbm{1})\Bigr).
\end{multline*}
As we are supposing that $\rho'_{i}\cong\Ind_{\cc K^{i}}^{K^{i}}\cc\rho'_{i}$, we get $\Ind_{\cc K^{i}\ltimes J^{i+1}}^{K^{i}\ltimes J^{i+1}} (\cc\rho'_{i}\ltimes\mathbbm{1}) \cong \rho_i' \ltimes \mathbbm 1$.
\end{proof}

We define representations $\cc\tau_{i}$, $\cc\sigma_{i}$, and $\cc\tilde{\rho}'_{i}$ by inducing $\cc\rho'_{i}$:
\[
\cc\tau_{i}
\colonequals 
\Ind_{\cc K_{\sigma_{i}}}^{SG^{i}_{\x,0}} \cc\sigma_{i}\otimes\phi_{i},\quad
\cc\sigma_{i}
\colonequals 
\Ind_{SG^{i-1}_{\x,0}G^{i}_{\x,s_{i-1}}}^{\cc K_{\sigma_{i}}} \cc\tilde{\rho}'_{i},\quad
\cc\tilde{\rho}'_{i}
\colonequals 
\Ind_{\cc K^{i}}^{SG^{i-1}_{\x,0}G^{i}_{\x,s_{i-1}}}\cc\rho'_{i}.
\]
Then Lemma \ref{lem:stab-parah} implies that
\[
\tau_{i}
\cong
\Ind_{SG^{i}_{\x,0}}^{G^{i}_{\bar{\x}}} \cc\tau_{i},\quad
\sigma_{i}
\cong
\Ind_{\cc K_{\sigma_{i}}}^{K_{\sigma_{i}}} \cc\sigma_{i},\quad
\tilde{\rho}'_{i}
\cong
\Ind_{SG^{i-1}_{\x,0}G^{i}_{\x,s_{i-1}}}^{G^{i-1}_{\bar{\x}}G^{i}_{\x,s_{i-1}}}\cc\tilde{\rho}'_{i}.
\]

\[
\xymatrix@R=10pt{
G_{\bar{\x}}^{i} & SG_{\x,0}^{i} \ar@{}[l]|*{\supset} & \tau_{i} & \cc\tau_{i}\ar@{-->}[l]\\
K_{\sigma_{i}} \ar@{}[u]|{\bigcup} & \cc K_{\sigma_{i}} \ar@{}[l]|*{\supset} \ar@{}[u]|{\bigcup} & \sigma_{i}\otimes\phi_{i} \ar@{-->}[u]& \cc\sigma_{i}\otimes\phi_{i}\ar@{-->}[l]\ar@{-->}[u]\\
G_{\bar{\x}}^{i-1}G_{\x,s_{i-1}}^{i} \ar@{}[u]|{\bigcup} & SG_{\x,0}^{i-1}G_{\x,s_{i-1}}^{i} \ar@{}[l]|*{\supset}  \ar@{}[u]|{\bigcup} & \tilde{\rho}'_{i}\otimes\phi_{i} \ar@{-->}[u]& \cc\tilde{\rho}'_{i}\otimes\phi_{i}\ar@{-->}[l]\ar@{-->}[u]\\
K^{i} \ar@{}[u]|{\bigcup} & \cc K^{i} \ar@{}[l]|*{\supset} \ar@{}[u]|{\bigcup} & \rho'_{i}\otimes\phi_{i} \ar@{-->}[u]& \cc\rho'_{i}\otimes\phi_{i}\ar@{-->}[l]\ar@{-->}[u]
}
\]

\section{Adler--DeBacker--Spice's character formula}\label{sec:AS}

The purpose of this section is to give a character formula for a crucial intermediate representation used in the Yu's construction of supercuspidal representations, following the theory of Adler--DeBacker--Spice (\cite{MR2543925, MR3849622}). 
We first note that several technical assumptions on $p$ are required so that their theory works well (see \cite[Section 2]{MR2431235}), but it is enough to assume only the oddness and the non-badness of $p$ whenever $\bfG$ is tamely ramified (see \cite[Section 4.1]{MR4013740}). 

We focus on the setting of \textit{Howe-unramified} regular supercuspidal representations. 
To this end, let $(\bfS, \phi)$ be a tame elliptic regular pair such that
\begin{equation*}
\text{$\bfS$ is an unramified.}
\end{equation*}
Let $(\vec{\bfG}, \pi_{-1}, \vec{\phi})$ be a regular generic reduced cuspidal $\bfG$-datum corresponding to $(\bfS, \phi)$ as in Section \ref{subsec:Kaletha-TER} and recall the various intermediate representations described in Section \ref{subsec:stab-parah}. 
In Section \ref{subsec:characters_vreg} we give a simple character formula (Proposition \ref{prop:AS-vreg}) for the $SG_{\x,0}$-representation $\cc \tau_d$ on the locus of \textit{unramified very regular elements} (Definition \ref{defn:ur-vreg}, following \cite{MR4197070}).

\subsection{DeBacker--Spice's invariants}\label{subsec:DS}
In this section, we recall several invariants introduced by DeBacker--Spice in \cite[Section 4.3]{MR3849622} to describe the characters of Yu's supercuspidal representations.

Let $\J$ be a connected reductive group over $F$.
We take a maximal torus $\bfT_{\J}$ of $\J$ defined over $F$.
Then we get the set $R(\J,\bfT_{\J})$ of absolute roots of $\bfT_{\J}$ in $\J$ which has an action of $\Gamma_{F}$.
For each $\alpha\in R(\J,\bfT_{\J})$, we put $\Gamma_{\alpha}$ (resp.\ $\Gamma_{\pm\alpha}$) to be the stabilizer of $\alpha$ (resp.\ $\{\pm\alpha\}$) in $\Gamma_{F}$.
Let $F_{\alpha}$ (resp.\ $F_{\pm\alpha}$) be the subfield of $\ol{F}$ fixed by $\Gamma_{\alpha}$ (resp.\ $\Gamma_{\pm\alpha}$): 
\[
F\subset F_{\pm\alpha}\subset F_{\alpha}
\quad
\longleftrightarrow
\quad
\Gamma_{F}\supset \Gamma_{\pm\alpha}\supset \Gamma_{\alpha}.
\]
\begin{itemize}
\item
When $F_{\alpha}=F_{\pm\alpha}$, we say $\alpha$ is an \textit{asymmetric} root.
\item
When $F_{\alpha}\supsetneq F_{\pm\alpha}$, we say $\alpha$ is a \textit{symmetric} root.
Note that, in this case, the extension $F_{\alpha}/F_{\pm\alpha}$ is necessarily quadratic.
Furthermore,
\begin{itemize}
\item
when $F_{\alpha}/F_{\pm\alpha}$ is unramified, we say $\alpha$ is \textit{symmetric unramified}, and
\item
when $F_{\alpha}/F_{\pm\alpha}$ is ramified, we say $\alpha$ is \textit{symmetric ramified}.
\end{itemize}
\end{itemize}
Note that a root $\alpha$ is symmetric if and only if the $\Gamma_{F}$-orbit of $\alpha$ contains $-\alpha$.
We write $R(\J,\bfT_{\J})^{\sym}$, $R(\J,\bfT_{\J})_{\sym}$, $R(\J,\bfT_{\J})_{\sym,\ur}$, and $R(\J,\bfT_{\J})_{\sym,\ram}$ for the set of asymmetric roots, symmetric roots, symmetric unramified roots, and symmetric ramified roots, respectively.

According to \cite[Definition 3.6]{MR3849622}, for each $\alpha\in R(\J,\bfT_{\J})$ and a point $\y\in\mcB^{\red}(\J,F)$, we define a set $\ord_{\y}\alpha$ of real numbers by
\[
\ord_{\y}\alpha
\colonequals 
\{i\in\R \mid \mfg_{\alpha}(F_{\alpha})_{\y,i:i+}\neq0\},
\]
where $\mfg_{\alpha}$ is the root space of $\alpha$ in the Lie algebra $\mfg$ of $\bfG$ and $\mfg_{\alpha}(F_{\alpha})_{\y,i}\colonequals \mfg(F_{\alpha})_{\y,i}\cap\mfg_{\alpha}(F_{\alpha})$.
Then, for a positive real number $r\in\R_{>0}$, we define a subset $R^{\J}_{\y,r/2}$ of $R(\J,\bfT_{\J})$ by
\[
R^{\J}_{\y,r/2}
\colonequals 
\{\alpha\in R(\J,\bfT_{\J}) \mid r/2\in\ord_{\y}\alpha\}.
\]
For an element $\delta$ of $T_{\J}=\bfT_{\J}(F)$, we put
\[
R^{\J}_{\y,(r-\ord_{\delta})/2}
\colonequals 
\{
\alpha\in R(\J,\bfT_{\J})
\mid
\alpha(\delta)\neq1,\, (r-\ord_{\delta}{\alpha})/2\in\ord_{\y}\alpha
\},
\]
where $\ord_{\delta}{\alpha}\colonequals \ord(\alpha(\delta)-1)$.

Now let us recall the definition of the invariants $\varepsilon^{\ram}$, $\varepsilon_{\sym,\ram}$, and $\tilde{e}$ (\cite[Definition 4.14]{MR3849622}).
Let $\delta$ be an element of the parahoric subgroup $T_{\J,0}$ of $T_{\J}$.
\begin{itemize}
\item
We first define a sign $\varepsilon_{\alpha}(\delta)$ for an asymmetric or symmetric unramified root $\alpha$ as follows:
\begin{itemize}
\item
For an asymmetric root $\alpha\in R(\J,\bfT_{\J})^{\sym}$, we put
\[
\varepsilon_{\alpha}(\delta)
\colonequals 
\Jac{\overline{\alpha(\delta)}}{k_{F_{\alpha}}^{\times}},
\]
where $\Jac{-}{k_{F_{\alpha}}^{\times}}$ is the unique nontrivial quadratic character of the multiplicative group $k_{F_{\alpha}}^{\times}$ of the residue field $k_{F_{\alpha}}$ of $F_{\alpha}$.
\item
For a symmetric unramified root $\alpha\in R(\J,\bfT_{\J})_{\sym,\ur}$, we put
\[
\varepsilon_{\alpha}(\delta)
\colonequals 
\Jac{\overline{\alpha(\delta)}}{k_{F_{\alpha}}^{1}},
\]
where $\Jac{-}{k_{F_{\alpha}}^{1}}$ is the unique nontrivial quadratic character of the kernel $k_{F_{\alpha}}^{1}$ of the norm map $\Nr_{k_{F_{\alpha}}/k_{F_{\pm\alpha}}}\colon k_{F_{\alpha}}^{\times}\twoheadrightarrow k_{F_{\pm\alpha}}^{\times}$.
\end{itemize}
Here, in both cases, $\alpha(\delta)$ belongs to $\mcO_{F_{\alpha}}^{\times}$ since $\delta$ belongs to the parahoric subgroup $T_{\J,0}$ of $T_{\J}$.
Thus we can take its reduction $\overline{\alpha(\delta)}\in k_{F_{\alpha}}^{\times}$.
Note that, in the latter case, the definition makes sense since $\alpha(\delta)$ always belongs to the kernel of the norm map $\Nr_{F_{\alpha}/F_{\pm\alpha}}\colon F_{\alpha}^{\times}\rightarrow F_{\pm\alpha}^{\times}$, hence $\overline{\alpha(\delta)}\in k_{F_{\alpha}}^{1}$.

Then we define
\[
\varepsilon^{\sym}(\J,\bfT_{\J}, r,\delta)
\colonequals 
\prod_{\alpha\in\Gamma_{F}\times\{\pm1\}\backslash(R^{\J}_{\y,r/2}\cap R(\J,\bfT_{\J})^{\sym})} \varepsilon_{\alpha}(\delta),
\]
\[
\varepsilon_{\sym,\ur}(\J,\bfT_{\J}, r,\delta)
\colonequals 
\prod_{\alpha\in\Gamma_{F}\backslash(R^{\J}_{\y,r/2}\cap R(\J,\bfT_{\J})_{\sym,\ur})} \varepsilon_{\alpha}(\delta),
\]
\[
\varepsilon^{\ram}(\J,\bfT_{\J}, r,\delta)
\colonequals 
\varepsilon^{\sym}(\J,\bfT_{\J}, r,\delta)
\cdot
\varepsilon_{\sym,\ur}(\J,\bfT_{\J}, r,\delta).
\]
Here the index set of the first product denotes the set of orbits under the action of $\Gamma_{F}\times\{\pm1\}$ (the action of $-1\in\{\pm1\}$ is given by $\alpha\mapsto-\alpha$).
\item
We put $\varepsilon_{\sym,\ram}(\J,\bfT_{\J}, r,\delta)$ to be the product of 
\[
(-1)^{\rank_{F_{\pm\alpha}}(\J_{\pm\alpha})-1}(-\mathfrak{G})^{f_{\alpha}}
\Jac{t_{\alpha}}{k_{F_{\alpha}}^{\times}}
\sgn_{F_{\pm\alpha}}(\J_{\pm\alpha})
\]
over $\alpha\in \Gamma_{F}\backslash (R^{\J}_{\y,(r-\ord_{\delta})/2}\cap R(\J,\bfT_{\J})_{\sym,\ram})$.
We do not recall the definitions of various symbols appearing here (see \cite[Definition 4.14]{MR3849622} for the details).
We only remark that the index set is empty when $\bfT_{\J}$ is unramified.
In particular, $\varepsilon_{\sym,\ram}(\J,\bfT_{\J}, r,\delta)$ is always trivial as long as $\bfT_{\J}$ is unramified.
\item
We put
\[
\tilde{e}(\J,\bfT_{\J}, r,\delta)
\colonequals 
(-1)^{|\Gamma_{F}\backslash R^{\J}_{\y,(r-\ord_{\delta})/2}|}.
\]
\end{itemize}

\begin{rem}\label{rem:signs}
\begin{itemize}
\item[(i)]
When $\bfT_{\J}$ is elliptic in $\J$, for any $\delta\in T_{\J}$, the image $\alpha(\delta)$ of $\delta$ under the root $\alpha\colon \bfT_{\J}(F_{\alpha})\rightarrow F_{\alpha}^{\times}$ belongs to $\mcO_{F_{\alpha}}^{\times}$.
Hence the above definition makes sense for any $\delta\in T_{\J}$ and $\varepsilon^{\ram}(\J,\bfT_{\J}, r,\delta)$ defines a character on $T_{\J}$.
\item[(ii)]
By definition, all of $\varepsilon^{\ram}(\J,\bfT_{\J}, r,\delta)$, $\varepsilon_{\sym,\ram}(\J,\bfT_{\J}, r,\delta)$, and $\tilde{e}(\J,\bfT_{\J}, r,\delta)$ are invariant under $J$-conjugation (\cite[Remark 4.18]{MR3849622}).
\end{itemize}
\end{rem}

\subsection{Character values at unramified very regular elements}\label{subsec:characters_vreg}
Let $(\bfS,\phi)$ be a tame elliptic regular pair.
We take a regular generic reduced cuspidal $\bfG$-datum
\[
(\vec{\bfG}, \pi_{-1},\vec{\phi})
\colonequals 
\bigl(\bfG^{0}\subsetneq\bfG^{1}\subsetneq\cdots\subsetneq\bfG^{d}, \pi_{-1}\cong\pi_{(\bfS,\phi_{-1})}^{\bfG^{0}}, (\phi_{0},\ldots,\phi_{d})\bigr)
\]
corresponding to $(\bfS,\phi)$ as in Section \ref{sec:Yu}.
In the following, we assume that
\[
\text{$\bfS$ is an unramified elliptic maximal torus of $\bfG$.}
\]
Let us list a few consequences of this assumption:

\begin{itemize}
\item
The whole group $\bfG$ is necessarily unramified (hence so are $\bfG^{0},\ldots,\bfG^{d-1}$).
Note that, in fact, the unramifiedness of $\bfS$ is equivalent to that of $\bfG^{0}$ (see the paragraph after \cite[Definition 3.4.2]{MR4013740}).
\item
Let $\x$ be a point of $\mcB(\bfG^{0},F)$ belonging to the apartment $\mathcal{A}(\bfS,F)$ of $\bfS$.
Recall that the image of such a point in $\mcB^{\red}(\bfG^{0},F)$ is uniquely determined.
Then we have 
\[
G^{0}_{\bar{\x}}\supset SG^{0}_{\x,0}=Z_{\bfG^{0}}G^{0}_{\x,0}=Z_{\bfG}G^{0}_{\x,0}.
\]
(This follows from the property $S=S_{0}Z_{\bfG}$, see \cite[Lemma 7.1.1]{MR2805068}).
\item
Thanks to the equality $SG^{0}_{\x,0}=Z_{\bfG}G^{0}_{\x,0}$, Kaletha's extension $\kappa_{(\bfS,\phi_{-1})}$ of the representation $\kappa_{(\bfS,\bar{\phi})}$ of $G^{0}_{\x,0}$ to $SG^{0}_{\x,0}$ is simply described as follows: for $g=z\cdot g_{0}\in Z_{\bfG}G^{0}_{\x,0}$, we have
\[
\kappa_{(\bfS,\phi_{-1})}(g)
=
\phi_{-1}(z)\cdot\kappa_{(\bfS,\bar{\phi})}(g_{0}).
\]
\end{itemize}

Following \cite{MR4197070}, we introduce the notion of the \textit{unramified very regularity} for elements of $SG_{\x,0}$ as follows:
\begin{defn}\label{defn:ur-vreg}
We say that an element $\gamma\in SG_{\x,0}\,(=Z_{\bfG}G_{\x,0})$ is \textit{unramified very regular (with respect to $\bfS$)} if it satisfies the following conditions:
\begin{enumerate}
\item
$\gamma$ is regular semisimple in $\bfG$,
\item
the connected centralizer $\bfT_{\gamma}\colonequals C_{\bfG}(\gamma)^{\circ}$ is an unramified maximal torus of $\bfG$ whose apartment $\mathcal{A}(\bfT_{\gamma},F)$ contains the point $\x$, and
\item
$\alpha(\gamma)\not\equiv1\pmod{\mfp}$ for any root $\alpha$ of $\bfT_{\gamma}$ in $\bfG$.
\end{enumerate}
\end{defn}

The purpose of this section is to give an explicit formula of the character $\Theta_{\cc\tau_{d}}$ of $\cc\tau_{d}$ at unramified very regular elements of $SG_{\x,0}$ (Proposition \ref{prop:AS-vreg}).
Our formula is just an easy consequence of the theory of Adler--DeBacker--Spice (established in \cite{MR2543925} and \cite{MR3849622}) and not new at all.
However we have to be careful of the following points:
\begin{itemize}
\item
for our purpose, we want to compute the character of $\cc\tau_{d}$, not $\tau_{d}$ as in \cite{MR2543925}, and
\item
in the full character formula \cite[Theorem 7.1]{MR2543925}, the index set is expressed via a set of conjugates of $\gamma$, not a set of elements conjugating $\gamma$ as in Proposition \ref{prop:AS-vreg}.
\end{itemize}
For these reasons, we cannot deduce the formula of Proposition \ref{prop:AS-vreg} from the results of \cite{MR2543925} immediately.
What we will do in the following is just to repeat the proofs of \cite[Theorems 6.4 and 7.1]{MR2543925} while paying attention to these points.
However, we note that most of delicate computations carried out in \cite{MR2543925} can be skipped by focusing only on unramified very regular elements.

We start from checking that every unramified very regular elements of $SG_{\x,0}$ satisfies the following basic properties necessary for the computation of Adler--Spice:
\begin{lem}\label{lem:AS-vreg}
Let $\gamma$ be an unramified very regular element of $SG_{\x,0}$.
Then, for any $r\in\R_{\geq0}$, 
\begin{itemize}
\item[(i)]
$\gamma$ has a normal $r$-approximation, and
\item[(ii)]
the point $\x$ belongs to the set $\mcB_{r}(\gamma)$ defined in \cite[Definition 9.5]{MR2431235}.
\end{itemize}
\end{lem}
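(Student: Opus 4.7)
The strategy is to unwind the Adler--Spice definitions and exploit the fact that unramified very regularity forces $\gamma$ to be ``as simple as possible'' from the viewpoint of the Moy--Prasad filtration at $\x$.

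The starting point is the topological Jordan decomposition $\gamma = \gamma_{0} \gamma_{0+}$ with $\gamma_{0} \in \bfT_{\gamma}(F)$ topologically semisimple and $\gamma_{0+} \in T_{\gamma, 0+}$; both factors lie in the unramified torus $\bfT_{\gamma}$ because $\gamma$ does. Since $\gamma_{0+} \equiv 1 \pmod{\mfp}$, the condition $\alpha(\gamma) \not\equiv 1 \pmod{\mfp}$ from Definition \ref{defn:ur-vreg}(3) upgrades to $\alpha(\gamma_{0}) \not\equiv 1 \pmod{\mfp}$ for every root $\alpha$ of $\bfT_{\gamma}$. This says that the image of $\gamma_{0}$ in the reductive quotient at $\x$ (which makes sense because $\x \in \mathcal{A}(\bfT_{\gamma},F)$) is regular semisimple, so in particular $C_{\bfG}(\gamma_{0})^{\circ} = \bfT_{\gamma}$.

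For (i), I would construct a normal $r$-approximation by hand. Decompose $\gamma_{0+}$ according to the jumps of the Moy--Prasad filtration of $T_{\gamma,0+}$ up to depth $r$: pick $0 = s_{0} < s_{1} < \cdots < s_{k} < r$ and factors $\gamma_{s_{i}} \in T_{\gamma,s_{i}} \setminus T_{\gamma,s_{i}+}$ so that $\gamma \equiv \gamma_{0}\gamma_{s_{1}}\cdots\gamma_{s_{k}} \pmod{G_{\x,r}}$. The sequence of connected centralizers stabilizes immediately at $\bfT_{\gamma}$, so the twisted Levi tower required by \cite[Definition 9.4]{MR2431235} is the trivial one $\bfT_{\gamma} \subset \bfG$. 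Because every factor lives in $\bfT_{\gamma}(F)$ and $\bfT_{\gamma}$ is unramified with $\x$ in its apartment, the filtrations $T_{\gamma,s_{i}}$ agree with the induced Moy--Prasad filtrations from the point $\x$, and the ``goodness'' conditions on each $\gamma_{s_{i}}$ are automatic (there are no nontrivial root directions to control beyond the torus).

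For (ii), the set $\mcB_{r}(\gamma)$ from \cite[Definition 9.5]{MR2431235} is the locus of points in the reduced building at which the chosen normal $r$-approximation is suitably compatible, in particular at which each $\gamma_{s_{i}}$ has the prescribed depth. Since each $\gamma_{s_{i}}$ lies in $\bfT_{\gamma}(F)$ and $\x \in \mathcal{A}^{\red}(\bfT_{\gamma},F)$ by hypothesis, this compatibility holds at $\y = \x$ by construction, giving $\x \in \mcB_{r}(\gamma)$. The only non-formal input is the identification of Moy--Prasad filtrations on an unramified torus with those inherited from a point in its apartment, which is standard.

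The main obstacle is entirely bookkeeping: correctly matching our explicit factorization against the (somewhat involved) conditions defining a normal $r$-approximation in \cite{MR2431235}, and confirming that triviality of the centralizer tower $\bfT_{\gamma} \subset \bfG$ trivializes the genericity conditions. Once these definitional checks are made, both parts of the lemma follow immediately from the fact that $\bfT_{\gamma}$ is unramified and contains $\x$ in its apartment.
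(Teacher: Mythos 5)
The proposal contains a genuine gap in part (i). You assert that because the centralizer tower collapses to $\bfT_{\gamma}\subset\bfG$, the ``goodness'' conditions on each factor $\gamma_{s_i}$ of your explicit Moy--Prasad decomposition are automatic, with the justification that ``there are no nontrivial root directions to control beyond the torus.'' This is not correct: goodness of depth $s_i$ is a condition on $\ord(\alpha(\gamma_{s_i})-1)$ as $\alpha$ ranges over all of $R(\bfT_\gamma,\bfG)$, and for an element picked only to lie in $T_{\gamma,s_i}\smallsetminus T_{\gamma,s_i+}$ this condition can fail for small $p$. For instance, with $\bfG=\SL_2$, $\bfT_\gamma$ split, $F=\QQ_2$, and $\delta=\diag(1+\varpi,(1+\varpi)^{-1})\in T_1\smallsetminus T_{1+}$, one has $\alpha(\delta)-1=2\varpi+\varpi^2$, which has valuation $>1$; so $\delta$ is not good of depth $1$. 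This is exactly the bad-prime phenomenon. What is actually needed is that $\bfT_\gamma$ satisfies the condition $(\mathbf{Gd}^G)$ of \cite[Definition 6.3]{MR2431235}, which is not formal and is precisely the content of Fintzen's result \cite[Theorem 3.6]{Fintzen:18} under the standing hypothesis that $p$ is not bad for $\bfG$. The paper's proof invokes this through \cite[Lemma 8.1]{MR2431235} (along with a careful use of Spice's topological Jordan decomposition to sidestep one of that lemma's hypotheses). Your by-hand construction cannot avoid this input; without it, the factors you produce are not guaranteed to be good.

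For part (ii), your argument is too loose. The set $\mcB_r(\gamma)$ is by definition the set of $\y\in\mcB(C_{\bfG}^{(r)}(\gamma),F)$ such that $Z_{G}^{(r)}(\gamma)\gamma\cap G_{\y,r}\neq\varnothing$; saying the ``compatibility holds at $\y=\x$ by construction'' skips the two actual checks. One must show that $C_{\bfG}^{(r)}(\gamma)=\bfT_\gamma$ (hence $Z_G^{(r)}(\gamma)=T_\gamma$), which in turn uses that the depth-zero part of the approximation is regular semisimple with connected centralizer $\bfT_\gamma$ (a consequence of unramified very regularity), and then observe that $\x\in\mathcal{A}(\bfT_\gamma,F)$ and $1\in T_\gamma\cap G_{\x,r}$. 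These steps are not hard, but they are the content of (ii) and are what the paper carries out. You also do not treat the degenerate case $r=0$, where the definitions in \cite{MR2431235} are different and the claim is essentially immediate; the paper handles this separately.
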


\begin{proof}
Since we have $SG_{\x,0}=Z_{\bfG}G_{\x,0}$ by the unramifiedness of $\bfS$, it is enough to treat the case where $\gamma\in G_{\x,0}$.

When $r=0$, by definition, any element of $G_{\bar{\x}}$ has a normal $0$-approximation (\cite[Definition 6.8]{MR2431235}).
Furthermore, since we have 
\[
\mcB_{0}(\gamma)
\colonequals 
\{\y\in\mcB(\bfG,F) \mid \gamma\cdot\bar{\y}=\bar{\y}\}
\]
(\cite[Definition 9.5]{MR2431235}), the assertion (ii) is obvious.

In the following, we consider the case where $r>0$.
We first check the assertion (i).
Since $\gamma$ belongs to $G_{\x,0}$, in particular it is bounded.
Moreover, by the unramified very regularity, $\gamma$ belongs to an unramified torus $\bfT_{\gamma}$, which is the connected centralizer of $\gamma$.
Since we are assuming that $p$ is not bad for $\bfG$, \cite[Theorem 3.6]{Fintzen:18} implies that $\bfT_{\gamma}$ satisfies the condition $(\mathbf{Gd}^{G})$ of \cite[Definition 6.3]{MR2431235} (see also \cite[Remarks 3.4 and 3.7]{Fintzen:18}).
Hence $\gamma$ has a normal $r$-approximation by \cite[Lemma 8.1]{MR2431235}.
More precisely, by applying \cite[Lemma 8.1]{MR2431235} to $\bfG'=\bfG$, $d=0$, $r=r$, and $\bfS=\bfT_{\gamma}$, we can find a normal $r$-approximation of $\gamma$.
Note that we do not need to check the assumption that $\widetilde{G}_{0+}$ is contained in the image of $G_{0+}$ of \cite[Lemma 8.1]{MR2431235} by applying Spice's topological Jordan decomposition (\cite{MR2408311}) to $\gamma$ itself instead of $\overline{\gamma}$ in the proof of \cite[Lemma 8.1]{MR2431235}.

We take a normal $r$-approximation $(\gamma_{i})_{0\leq i<r}$ of $\gamma$ and put
\[
\gamma_{<r}
\colonequals 
\prod_{0\leq i <r}\gamma_{i}
\quad\text{and}\quad
\gamma_{\geq r}
\colonequals 
\gamma\cdot\gamma_{<r}^{-1}.
\]
We next check that the point $\x$ belongs to the set $\mcB_{r}(\gamma)$.
For this, we recall that the set $\mcB_{r}(\gamma)$ is defined by
\[
\mcB_{r}(\gamma)
\colonequals 
\{\y\in\mcB(C_{\bfG}^{(r)}(\gamma),F) \mid Z_{G}^{(r)}(\gamma)\gamma\cap G_{\y,r}\neq\varnothing\}.
\]
Here the group $C_{\bfG}^{(r)}(\gamma)$ is defined by
\[
C_{\bfG}^{(r)}(\gamma)
\colonequals 
\Bigl(\bigcap_{0\leq i<r}C_{\bfG}(\gamma_{i})\Bigr)^{\circ}
\]
and $Z_{G}^{(r)}(\gamma)$ denotes the group consisting of the $F$-valued points of the center of $C_{\bfG}^{(r)}(\gamma)$.
We note that the depth-zero part $\gamma_{0}$ of $\gamma$ is regular semisimple in $\bfG$ by the unramified very regularity of $\gamma$.
Indeed, by the definition of a normal approximation, we have 
\[
\alpha(\gamma)
=
\alpha(\gamma_{<r}\cdot \gamma_{\geq r})
=
\prod_{0\leq i <r}\alpha(\gamma_{i})\cdot \alpha(\gamma_{\geq r})
\equiv
\alpha(\gamma_{0})
\pmod{\mfp}
\]
for any root $\alpha$ of $\bfT_{\gamma}$ in $\bfG$.
Since $\alpha(\gamma)\not\equiv1\pmod{\mfp}$, so is $\alpha(\gamma_{0})$, hence in particular $\gamma_{0}$ is regular semisimple.
Hence the connected centralizer $C_{\bfG}(\gamma_{0})^{\circ}$ of $\gamma_{0}$ in $\bfG$ is a maximal torus.
By noting the commutativity of $\gamma$ with $\gamma_{0}$, we see that $C_{\bfG}(\gamma_{0})^{\circ}$ is nothing but $\bfT_{\gamma}$.
Thus the group $C_{\bfG}^{(r)}(\gamma)$ is contained in $C_{\bfG}(\gamma_{0})^{\circ}=\bfT_{\gamma}$.
On the other hand, again by the definition of the normal approximation, there exists a tame torus containing $\gamma_{i}$ for any $0\leq i <r$.
As $\gamma_{0}$ is regular semisimple, this torus is necessarily equal to $\bfT_{\gamma}$.
By this observation, we know that each group $C_{\bfG}(\gamma_{i})$ contains $\bfT_{\gamma}$.
In conclusion, we have
\[
C_{\bfG}^{(r)}(\gamma)=\bfT_{\gamma}
\quad\text{and}\quad
Z_{G}^{(r)}(\gamma)=T_{\gamma}.
\]
By the unramified very regularity of $\gamma$, the building $\mcB(C_{\bfG}^{(r)}(\gamma),F)$ contains the point $\x$.
Moreover, obviously the intersection $Z_{G}^{(r)}(\gamma)\gamma\cap G_{\x,r}$ appearing in the definition of $\mcB_{r}(\gamma)$ is not empty (both of $Z_{G}^{(r)}(\gamma)\gamma=T_{\gamma}\gamma=T_{\gamma}$ and $G_{\x,r}$ contains $1$).
\end{proof}

\begin{lem}[{\cite[Corollary 3.4.26]{MR4013740}}]\label{lem:Kaletha}
Let $\gamma_{0}$ be a regular semisimple element of $SG^{0}_{\x,0}$ whose image in $G^{0}_{\ad}$ is topologically semisimple.
Then the character $\Theta_{\kappa_{(\bfS,\phi_{-1})}}(\gamma_{0})$ of $\kappa_{(\bfS,\phi_{-1})}$ at $\gamma_{0}$ is zero unless $\gamma_{0}$ is $SG^{0}_{\x,0}$-conjugate to an element of $S$.
When $\gamma_{0}$ is $SG^{0}_{\x,0}$-conjugate to an element of $S$ (then we may assume that $\gamma_{0}$ itself lies in $S$), we have
\[
\Theta_{\kappa_{(\bfS,\phi_{-1})}}(\gamma_{0})
=
(-1)^{r(\bbG^{0}_{\x})-r(\bbS)}\sum_{w\in W_{G^{0}_{\x,0}}(\bfS)}\phi_{-1}({}^{w}\gamma_{0}),
\] 
where 
\begin{itemize}
\item
$r(\bbG^{0}_{\x})$ and $r(\bbS)$ are split ranks of $\bbG^{0}_{\x}$ and $\bbS$, respectively, 
\item
$W_{G^{0}_{\x,0}}(\bfS)=N_{G^{0}_{\x,0}}(\bfS)/S_{0}$, and
\item
${}^{w}\gamma_{0}$ denotes the $w$-conjugate of $\gamma_{}$; ${}^{w}\gamma_{0}=w\gamma_{0}w^{-1}$.
\end{itemize}
\end{lem}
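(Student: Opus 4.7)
The statement is stated to be \cite[Corollary 3.4.26]{MR4013740}, so my plan is to sketch how it follows from combining Kaletha's explicit description of the extension $\kappa_{(\bfS,\phi_{-1})}$ (recalled as the third bullet above the definition of unramified very regular elements) with the classical Deligne--Lusztig character formula at regular semisimple elements. The guiding idea is that everything reduces to a computation on the reductive quotient $\bbG^{0}_{\x}$ over $\F_{q}$, which is a standard input from Deligne--Lusztig theory.

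Concretely, since $\bfS$ is unramified and elliptic, $SG^{0}_{\x,0}=Z_{\bfG}G^{0}_{\x,0}$, so I would factor $\gamma_{0}=z g_{0}$ with $z\in Z_{\bfG}$ and $g_{0}\in G^{0}_{\x,0}$. Applying Kaletha's extension formula gives
\[
\Theta_{\kappa_{(\bfS,\phi_{-1})}}(\gamma_{0})
=\phi_{-1}(z)\cdot(-1)^{r(\bbG^{0}_{\x})-r(\bbS)}\,R_{\bbS}^{\bbG^{0}_{\x}}(\bar{\phi})(\bar{g_{0}}),
\]
where $\bar{g_{0}}$ is the image of $g_{0}$ in $\bbG^{0}_{\x}(\F_{q})=G^{0}_{\x,0:0+}$. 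The topological semisimplicity hypothesis on the image of $\gamma_{0}$ in $G^{0}_{\mathrm{ad}}$ translates to semisimplicity of $\bar{g_{0}}$, and the regular semisimplicity of $\gamma_{0}$ in $\bfG^{0}$---together with the fact that its connected centralizer is an unramified maximal torus whose smooth integral model has reductive special fiber---forces $\bar{g_{0}}$ to be \emph{regular} semisimple in $\bbG^{0}_{\x}$.

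With $\bar{g_{0}}$ regular semisimple, the Deligne--Lusztig character formula degenerates: $R_{\bbS}^{\bbG^{0}_{\x}}(\bar{\phi})(\bar{g_{0}})$ vanishes unless $\bar{g_{0}}$ is $\bbG^{0}_{\x}(\F_{q})$-conjugate to an element of $\bbS(\F_{q})$, in which case the Green-function factor $Q$ becomes $1$ and the sum collapses to $\sum_{w\in W_{\bbG^{0}_{\x}}(\bbS)^{F}}\bar{\phi}({}^{w}\bar{g_{0}})$. Two standard identifications finish the argument: smoothness of the centralizer scheme at $\bar{g_{0}}$ allows one to lift the conjugacy statement from the special fiber to $SG^{0}_{\x,0}$-conjugacy of $\gamma_{0}$, and the natural map $N_{G^{0}_{\x,0}}(\bfS)/S_{0}\twoheadrightarrow W_{\bbG^{0}_{\x}}(\bbS)^{F}$ is an isomorphism under our assumptions on $p$. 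Using $\phi_{-1}(z)\bar{\phi}({}^{w}\bar{g_{0}})=\phi_{-1}({}^{w}\gamma_{0})$ one then recovers the stated formula.

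The main obstacle I expect is the lifting step---carefully upgrading the $\bbG^{0}_{\x}(\F_{q})$-conjugacy of $\bar{g_{0}}$ to an $SG^{0}_{\x,0}$-conjugacy of $\gamma_{0}$ to an element of $S$, together with the matching Weyl-group identification. Both require smoothness of the centralizer scheme of $\gamma_{0}$ and a Hensel-type argument, and handling this uniformly is precisely what Kaletha does in \cite[Section 3.4]{MR4013740}; rather than reproduce the bookkeeping, I would simply invoke his corollary.
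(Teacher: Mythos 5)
The paper does not prove this lemma: it is presented purely as a citation to Kaletha's Corollary 3.4.26, which is exactly what you ultimately do. Your accompanying sketch of the underlying mechanism (factor $\gamma_{0}=zg_{0}$ via $SG^{0}_{\x,0}=Z_{\bfG}G^{0}_{\x,0}$, apply the explicit extension formula, reduce to the Deligne--Lusztig character formula at a regular semisimple element of $\bbG^{0}_{\x}(\F_{q})$, lift back by Hensel) is a faithful outline of what Kaletha does, so the approach is the same as the paper's.

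One small logical wrinkle worth flagging: you invoke as a given ``fact'' that the connected centralizer of $\gamma_{0}$ is an unramified maximal torus with reductive integral model, and use that to conclude $\bar{g_{0}}$ is regular semisimple in $\bbG^{0}_{\x}$. But that fact is not part of the hypotheses; it is a \emph{consequence}, and the argument as stated is circular. The direct route is: topological semisimplicity of the image in $G^{0}_{\ad}$ forces each $\alpha(\gamma_{0})$ to be a prime-to-$p$ root of unity, regular semisimplicity forces each $\alpha(\gamma_{0})\neq 1$, and a nontrivial prime-to-$p$ root of unity cannot reduce to $1$ in $k_{F_{\alpha}}$; hence $\bar{g_{0}}$ is already regular in $\bbG^{0}_{\x}$, and \emph{only then} does smoothness of the centralizer scheme give that $C_{\bfG^{0}}(\gamma_{0})^{\circ}$ is unramified with reductive special fiber. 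With that reordering, the sketch is sound, and invoking Kaletha's corollary for the bookkeeping is what the paper itself does.
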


\begin{lem}\label{lem:cc K^{d}}
Let $\gamma\in\cc K^{d}=SG^{0}_{\x,0}(G^{0},\ldots,G^{d})_{\x,(0+,s_{0},\ldots,s_{d-1})}$ be an unramified very regular element.
Then $\gamma$ is $\cc K^d$-conjugate to an element of $SG^{0}_{\x,0}$.
\end{lem}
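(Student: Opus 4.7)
The plan is to combine the topological Jordan decomposition of $\gamma$ with a parahoric conjugation argument exploiting unramified very regularity.

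First, by Lemma \ref{lem:AS-vreg}, $\gamma$ admits a normal $r$-approximation for every $r > 0$. Taking $r$ sufficiently large produces a topological Jordan decomposition $\gamma = \gamma_0 \cdot \gamma_+$, where $\gamma_0$ is the absolutely semisimple part, $\gamma_+ \in G_{\x,0+}$ is topologically unipotent, and both factors commute and lie in the unramified maximal torus $\bfT_\gamma = C_{\bfG}(\gamma)^\circ$. Since $\cc K^d \subseteq SG^0_{\x,0} \cdot G_{\x,0+}$, the reduction of $\gamma$ modulo $G_{\x,0+}$ lands in the image of $SG^0_{\x,0}$, so $\gamma_0 \in SG^0_{\x,0}$, and consequently $\gamma_+ = \gamma_0^{-1}\gamma \in \cc K^d \cap G_{\x,0+}$.

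Next, the unramified very regularity of $\gamma$ forces $\gamma_0$ to be regular semisimple (as $\alpha(\gamma)\equiv\alpha(\gamma_0)\pmod{\mfp}$ for every root $\alpha$ of $\bfT_\gamma$). Because $\bfG^0$ is a twisted Levi subgroup of $\bfG$, the reductive quotients $\bbG^0_\x$ and $\bbG_\x$ share the same absolute rank, and the image of $\gamma_0$ in $\bbG_\x(\F_q)$ lies in the image of $SG^0_{\x,0}$. Its centralizer is a maximal torus of $\bbG_\x$ which is therefore contained in $\bbG^0_\x$. By the standard parahoric lifting of maximal tori (any two unramified maximal tori of $\bfG$ with the same reduction at $\x$ are conjugate by the pro-unipotent radical $G_{\x,0+}$, and we can arrange the conjugator to lie in $G^0_{\x,0+}$ since the target torus lies in $\bfG^0$), there exists $u \in G^0_{\x,0+} \subseteq \cc K^d$ such that $u\bfT_\gamma u^{-1}$ is an unramified maximal torus of $\bfG^0$. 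Hence $u\gamma u^{-1} \in \bfG^0(F) \cap \cc K^d$.

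The final step is to identify this intersection. The Iwahori-type decomposition of $(G^0, \ldots, G^d)_{\x,(0+, s_0, \ldots, s_{d-1})}$ shows that its root-group pieces coming from $\bfG^i \setminus \bfG^{i-1}$ for $i\ge 1$ are transverse to $\bfG^0$ at $\x$, so $\bfG^0(F) \cap \cc K^d = SG^0_{\x,0}$, completing the argument. The main obstacle is the parahoric conjugation step that moves $\bfT_\gamma$ into $\bfG^0$ via an element of $\cc K^d$; this relies on the compatibility of Bruhat--Tits theory at the common point $\x$ for the twisted Levi pair $(\bfG^0,\bfG)$, namely that the embedding $\bbG^0_\x \hookrightarrow \bbG_\x$ is full-rank, together with the standard lifting of $\F_q$-tori through the pro-unipotent radical of the parahoric.
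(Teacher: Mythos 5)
There is a genuine gap in your argument, and it occurs right at the start and then again at the conjugation step.

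The claim that $\gamma_0 \in SG^{0}_{\x,0}$ does not follow from the reduction argument. What the inclusion $\cc K^d \subseteq SG^0_{\x,0} G_{\x,0+}$ actually gives is $\gamma_0 \in SG^0_{\x,0} G_{\x,0+}$ (this being the full preimage of $\bbG^0_\x(\F_q)$ under reduction), which is strictly larger than $SG^0_{\x,0}$. The element $\gamma_0$ is pinned down as the topologically semisimple part of $\gamma$ and lies in $\bfT_\gamma$, which a priori is \emph{not} contained in $\bfG^0$; there is no reason for the specific element $\gamma_0$ to pick out a representative of its reduction class lying in $SG^0_{\x,0}$. Notice also that if $\gamma_0 \in SG^0_{\x,0} \subseteq G^0$ were known, then (using that $\gamma_0$ is regular semisimple in $\bfG$ and that $\bfG^0$ has full rank) you would immediately get $\bfT_\gamma = C_{\bfG}(\gamma_0)^\circ = C_{\bfG^0}(\gamma_0)^\circ \subseteq \bfG^0$, hence $\gamma \in G^0 \cap \cc K^d = SG^0_{\x,0}$ and the lemma would be trivial. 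So the entire content of the lemma is concentrated in this step, which your proposal asserts without argument.

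The conjugation step is then logically impossible as written. Since $G^0_{\x,0+} \subseteq G^0(F)$, conjugation by such $u$ preserves $\bfG^0$; if $t \in \bfT_\gamma$ with $t \notin \bfG^0$, then $utu^{-1} \notin \bfG^0$. So there is no $u \in G^0_{\x,0+}$ with $u\bfT_\gamma u^{-1} \subseteq \bfG^0$ unless $\bfT_\gamma \subseteq \bfG^0$ already, which is what you are trying to prove. The genuine difficulty is that a suitable conjugator must lie in $\cc K^d$ \emph{but outside $G^0$}, coming from the deep filtration levels of the larger twisted Levis; ``parahoric lifting of tori,'' by itself, only produces a conjugator in $G_{\x,0+}$, which need not lie in $\cc K^d$ at all (recall $(G^0,\ldots,G^d)_{\x,(0+,s_0,\ldots,s_{d-1})}$ is much smaller than $G_{\x,0+}$). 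The paper handles this by applying the Adler--Spice descent theorem \cite[Proposition 9.14]{MR2431235} to the pair $(\bfG^{d-1},\bfG^d)$: it produces a conjugator in $[\gamma;\x,s_{d-1}]_{G^d} = T_{\gamma,0+}G^d_{\x,s_{d-1}} \subseteq \cc K^d$ that moves $\gamma$ only into $G^{d-1}$, not all the way to $G^0$, and then one iterates through the twisted Levi tower. This one-step-at-a-time descent via the approximation machinery is the missing ingredient in your proposal; the topological Jordan decomposition alone cannot bypass it.
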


\begin{proof}

Let $\gamma$ be an unramified very regular element of $\cc K^{d}$.
By Lemma \ref{lem:AS-vreg}, we can take a normal $s_{d-1}$-approximation to $\gamma$ and then $\x$ belongs to $\mcB_{s_{d-1}}(\gamma)$.
Then, by applying \cite[Proposition 9.14]{MR2431235} to $r=s_{d-1}$ and $(\bfG',\bfG)=(\bfG^{d-1},\bfG^{d})$, there exists an element $k\in[\gamma;\x,s_{d-1}]_{G^{d}}$ satisfying ${}^{k}Z_{G}^{(s_{d-1})}(\gamma)=kZ_{G}^{(s_{d-1})}(\gamma)k^{-1}\subset G^{d-1}$.

Here we recall that, for a connected reductive group $\J$ over $F$ and an element $\delta\in J$ having a normal $t$-approximation ($t\in \R_{\geq0}$) with respect to $\y\in\mcB(\J,F)$, the group $[\delta; \y,t]_{J}$ is defined to be the group $\vec{J}_{\y,\vec{s}}$ (\cite[Definition 5.14]{MR2431235}) for
\begin{itemize}
\item[--]
$\vec{\J}\colonequals (C_{\J}^{(t-i)}(\delta))_{0<i\leq t}$, and
\item[--]
$\vec{s}\colonequals (i)_{0<i\leq t}$
\end{itemize}
(see \cite[Definition 6.6]{MR2431235} or \cite[Section 1.4]{MR2543925}).
Since our $\gamma$ is unramified very regular, we have
\[
C_{\bfG^{d}}^{(s_{d-1}-i)}(\gamma)
=
\begin{cases}
\bfT_{\gamma} & 0<i<s_{d-1},\\
\bfG^{d} & i=s_{d-1},
\end{cases}
\]
where $\bfT_{\gamma}$ is the connected centralizer of $\gamma$ in $\bfG^{d}$.
Hence we have
\[
[\gamma; \x, s_{d-1}]_{G^{d}}
=
T_{\gamma,0+}G^{d}_{\x,s_{d-1}}.
\]

By also noting that $\gamma\in Z_{G}^{(s_{d-1})}(\gamma)=T_{\gamma}$, we conclude that there exists an element $k'\in G_{\x,s_{d-1}}$ satisfying ${}^{k'}\gamma\in G^{d-1}$.
Therefore, by taking $\cc K^{d}$-conjugation, we may assume that $\gamma$ itself lies in $G^{d-1}$ (note that $G_{\x,s_{d-1}}\subset \cc K^{d}$).
Then, by a descent property of the subgroups associated to concave functions, we have
\begin{align*}
\gamma
\in
\cc K^{d}\cap G^{d-1}
&=
SG^{0}_{\x,0}(G^{0},\ldots,G^{d})_{\x,(0+,s_{0},\ldots,s_{d-1})}\cap G^{d-1}\\
&=
SG^{0}_{\x,0}(G^{0},\ldots,G^{d-1})_{\x,(0+,s_{0},\ldots,s_{d-2})}
=
\cc K^{d-1}
\end{align*}
(similarly to the proof of \cite[Lemma 2.4]{MR2543925}, this is justified by using \cite[Lemmas 5.29 and 5.33]{MR2431235}).
By repeating this argument inductively, we finally conclude that $\gamma$ belongs to $SG^{0}_{\x,0}$ by taking $\cc K^{d}$-conjugation.
\end{proof}

The following sign character is of central importance.

\begin{definition}\label{def:varepsilon ram}
Define the character $\varepsilon^{\ram}[\phi] \from S \to \bbC^\times$ to be 
\begin{equation*}
\varepsilon^{\ram}[\phi](\gamma)\colonequals \prod_{i=0}^{d-1}\varepsilon^{\ram}(\bfG^{i+1}/\bfG^{i},r_{i},\gamma),
\end{equation*}
where
\[
\varepsilon^{\ram}(\bfG^{i+1}/\bfG^{i},r_{i},\gamma)
\colonequals 
\frac{\varepsilon^{\ram}(\bfG^{i+1},\bfS,r_{i},\gamma)}{\varepsilon^{\ram}(\bfG^{i},\bfS,r_{i},\gamma)}.
\]
\end{definition}

\begin{prop}\label{prop:rho_d'}
Let $\gamma$ be an unramified very regular element of $\cc K^{d}$.
\begin{enumerate}
\item
If $\gamma$ is not $\cc K^d$-conjugate to an element of $S$, then we have $\Theta_{\cc\rho_d' \otimes \phi_d}(\gamma)=0$.
\item
If $\gamma$ is $\cc K^d$-conjugate to an element of $S$ (in this case, we assume that $\gamma$ itself belongs to $S$), then we have
\[
\Theta_{\cc\rho_d' \otimes \phi_d}(\gamma)
=
(-1)^{r(\bbG^{0}_{\x})-r(\bbS)+r(\bfS,\phi)}
\sum_{w\in W_{G^{0}_{\x,0}}(\bfS)}
\varepsilon^{\ram}[\phi]({}^{w}\gamma)\phi({}^{w}\gamma),
\]
where
\begin{itemize}
\item
$r(\bbG^{0}_{\x})$ and $r(\bbS)$ are the split ranks of $\bbG^{0}_{\x}$ and $\bbS$,
\item
$r(\bfS,\phi)
\colonequals 
\sum_{i=0}^{d-1} |\Gamma_{F}\backslash (R^{\bfG^{i+1}}_{\x,r_{i}/2}\smallsetminus R^{\bfG^{i}}_{\x,r_{i}/2})|$,
\item
$W_{G_{\x,0}^{0}}(\bfS)=N_{G_{\x,0}^{0}}(\bfS)/S_{0}$.
\end{itemize}
\end{enumerate}
\end{prop}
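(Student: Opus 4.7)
The plan is to iterate the Heisenberg--Weil construction defining $\cc\rho_d'$ and then invoke the Adler--DeBacker--Spice evaluation of the Weil character, both of which simplify drastically on the unramified very regular locus. First, by Lemma \ref{lem:cc K^{d}}, any unramified very regular $\gamma \in \cc K^d$ is $\cc K^d$-conjugate to an element of $SG^0_{\x,0}$, so we may assume $\gamma \in SG^0_{\x,0}$ throughout. Since $SG^0_{\x,0} \subset \cc K^d$, if $\gamma$ is not $\cc K^d$-conjugate to any element of $S$, then a fortiori it is not $SG^0_{\x,0}$-conjugate to any element of $S$.

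Applying the recursive definition of $\cc\rho_{i+1}'$ as the descent of $\tilde\phi_i\otimes(\cc\rho_i'\otimes\phi_i|_{\cc K^i})$ step by step and restricting to $\cc K^0 = SG^0_{\x,0}$, one obtains
\[
\cc\rho_d'|_{SG^0_{\x,0}} \cong \cc\rho_0'\otimes \bigotimes_{i=0}^{d-1}\bigl(\phi_i|_{SG^0_{\x,0}}\otimes \tilde\phi_i|_{SG^0_{\x,0}}\bigr),
\]
so that for any $\gamma\in SG^0_{\x,0}$,
\[
\Theta_{\cc\rho_d'\otimes\phi_d}(\gamma) = \Theta_{\cc\rho_0'}(\gamma)\cdot \prod_{i=0}^{d}\phi_i(\gamma)\cdot \prod_{i=0}^{d-1}\Theta_{\tilde\phi_i|_{SG^0_{\x,0}}}(\gamma).
\]
The factor $\Theta_{\cc\rho_0'}(\gamma)$ is evaluated by Lemma \ref{lem:Kaletha}: it vanishes unless $\gamma$ is $SG^0_{\x,0}$-conjugate to an element of $S$, which yields the vanishing assertion (1); otherwise, placing $\gamma\in S$, it equals $(-1)^{r(\bbG^0_{\x})-r(\bbS)}\sum_{w\in W_{G^0_{\x,0}}(\bfS)}\phi_{-1}({}^w\gamma)$.

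For (2), each remaining Weil character factor must be evaluated. The Adler--DeBacker--Spice computation \cite{MR2543925, MR3849622} gives, for $\gamma\in S$ unramified very regular,
\[
\Theta_{\tilde\phi_i|_{SG^0_{\x,0}}}(\gamma) = (-1)^{|\Gamma_F\backslash(R^{\bfG^{i+1}}_{\x,r_i/2}\smallsetminus R^{\bfG^i}_{\x,r_i/2})|}\cdot \varepsilon^{\ram}(\bfG^{i+1}/\bfG^i, r_i, \gamma),
\]
with two essential simplifications: (a) since $\bfS$ is unramified, $\varepsilon_{\sym,\ram}(\gamma)=1$ by Section \ref{subsec:DS}; (b) since $\alpha(\gamma)\not\equiv 1\pmod{\mfp}$ for every root $\alpha$, we have $\ord_\gamma\alpha=0$, collapsing $R^{\bfG^{i+1}}_{\x,(r_i-\ord_\gamma)/2}$ to $R^{\bfG^{i+1}}_{\x,r_i/2}$ so that $\tilde e$ produces the displayed sign. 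The normal $r_i$-approximation of $\gamma$ and the inclusion $\x\in\mcB_{r_i}(\gamma)$ supplied by Lemma \ref{lem:AS-vreg} ensure the Adler--DeBacker--Spice framework applies.

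Finally, since each $\phi_i$ ($i\geq 0$) is a character of $G^i\supset\bfS$ and hence $W_{G^0_{\x,0}}(\bfS)$-invariant on $S$, and since $\varepsilon^{\ram}(\bfG^{i+1}/\bfG^i,r_i,\cdot)$ enjoys the same invariance by Remark \ref{rem:signs}(ii), we may rewrite $\prod_{i=0}^d\phi_i(\gamma)\cdot \phi_{-1}({}^w\gamma) = \phi({}^w\gamma)$ and absorb $\varepsilon^{\ram}({}^w\gamma)$ into the Weyl sum. This assembles to the claimed formula with $r(\bfS,\phi) = \sum_{i=0}^{d-1}|\Gamma_F\backslash(R^{\bfG^{i+1}}_{\x,r_i/2}\smallsetminus R^{\bfG^i}_{\x,r_i/2})|$ and $\varepsilon^{\ram}[\phi] = \prod_{i=0}^{d-1}\varepsilon^{\ram}(\bfG^{i+1}/\bfG^i,r_i,\cdot)$. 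The main obstacle is the Weil character evaluation above: the Adler--Spice formulas are stated for $\tau_d$ on $G^d_{\bar\x}$ and aggregate over $G^d_{\bar\x}$-conjugates of $\gamma$, whereas we must extract the character of $\cc\rho_d'$ on $\cc K^d$ with the Weyl sum arising \emph{only} from the base-case Deligne--Lusztig character in Lemma \ref{lem:Kaletha}; this forces one to revisit the argument of \cite{MR2543925} carefully, bypassing the Frobenius reciprocity step that would otherwise absorb the Weyl sum into outer conjugations.
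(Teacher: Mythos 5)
Your proposal is correct and follows essentially the same route as the paper: conjugate into $SG^0_{\x,0}$ via Lemma \ref{lem:cc K^{d}}, unwind the Heisenberg--Weil recursion into the product $\Theta_{\cc\rho_0'}(\gamma)\prod\phi_i(\gamma)\prod\Theta_{\tilde\phi_i}(\gamma\ltimes 1)$, invoke Lemma \ref{lem:Kaletha} for the Deligne--Lusztig base case (which alone supplies the Weyl sum and the vanishing in case (1)), then collapse the Weil character factors using the unramified very regular hypothesis and push $\varepsilon^{\ram}$ and the $\phi_i$ through the Weyl sum by conjugation invariance. The one step you compress to the point of near-omission is the bridge from \cite[Proposition 3.8]{MR2543925} (which gives $\Theta_{\tilde\phi_i}(\gamma\ltimes 1)=|(C^{(0+)}_{G^i}(\gamma),C^{(0+)}_{G^{i+1}}(\gamma))_{\x,(r_i,s_i):(r_i,s_i+)}|^{1/2}\,\varepsilon(\phi_i,\gamma)$) to your clean formula: you need (i) that the first factor is $1$ because $C^{(0+)}_{G^i}(\gamma)=C^{(0+)}_{G^{i+1}}(\gamma)=T_\gamma$, and (ii) that $\mathfrak G(\phi_i,\gamma)=1$ (the sets $\dot\Upsilon_{\sym},\dot\Upsilon_{\sym,\ram}$ of \cite[Notation 5.2.11]{MR2543925} are empty for unramified very regular $\gamma$), before \cite[Proposition 4.21]{MR3849622} delivers $\varepsilon(\phi_i,\gamma)=\varepsilon_{\sym,\ram}\cdot\varepsilon^{\ram}\cdot\tilde e$. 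Once these are noted, your two observations (a) and (b) and the final reassembly match the paper's computation.
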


\begin{proof}
Since the character $\Theta_{\cc\rho_d' \otimes \phi_d}$ is invariant under $\cc K^{d}$-conjugation, we may suppose that $\gamma$ belongs to $SG^{0}_{\x,0}$ by Lemma \ref{lem:cc K^{d}}.
By definition, $\cc\rho'_{d}$ is the representation of $\cc K^d = \cc K^{d-1} J^d$ descended from the $\cc K^{d-1} \ltimes J^d$-representation $(\tilde \phi_{d-1}|_{\cc K^{d-1} \ltimes J^d}) \otimes ((\cc\rho_{d-1}' \otimes \phi_{d-1}|_{\cc K^{d-1}}) \ltimes \mathbbm{1})$. Hence
\[
\Theta_{\cc\rho'_{d}}(\gamma)
=\Theta_{\tilde\phi_{d-1}}(\gamma\ltimes1) \cdot \Theta_{\cc\rho_{d-1}'}(\gamma) \cdot \phi_{d-1}(\gamma).
\]
For the same reason, we have
\[
\Theta_{\cc\rho'_{d-1}}(\gamma)
=\Theta_{\tilde\phi_{d-2}}(\gamma\ltimes1) \cdot \Theta_{\cc\rho_{d-2}'}(\gamma) \cdot \phi_{d-2}(\gamma).
\]
By repeating this computation inductively, we get
\[
\Theta_{\cc\rho'_{d}\otimes\phi_{d}}(\gamma)
=
\Theta_{\cc\rho'_{0}}(\gamma)
\prod_{i=0}^{d-1}\Theta_{\tilde{\phi}_{i}}(\gamma\ltimes1)
\prod_{i=0}^{d}\phi_{i}(\gamma).
\]

We recall that $\cc\rho'_{0}$ is given by $\kappa_{(\bfS,\phi_{-1})}$, hence we have
\[
\Theta_{\cc\rho'_{0}}(\gamma)
=
\Theta_{\kappa_{(\bfS,\phi_{-1})}}(\gamma_{0}).
\]
If $\gamma_{0}$ is not $SG^{0}_{\x,0}$-conjugate to an element of $S$, then we have $\Theta_{\kappa_{(\bfS,\phi_{-1})}}(\gamma_{0})=0$ by Lemma \ref{lem:Kaletha}.
Hence we have $\Theta_{\cc\rho'_{d}\otimes\phi_{d}}(\gamma)=0$ in this case.
By noting that $\gamma_{0}$ is $SG^{0}_{\x,0}$-conjugate to an element of $S$ if and only if so is $\gamma$, we get the assertion (1).

We next compute $\Theta_{\cc\rho'_{d}\otimes\phi_{d}}(\gamma)$ by assuming that $\gamma$ belongs to $S$.
Note that then we have $\bfT_{\gamma}=\bfS$.
In this case, by the argument in the previous paragraph and Lemma \ref{lem:Kaletha}, we have
\begin{align*}
\Theta_{\cc\rho'_{d}\otimes\phi_{d}}(\gamma)
&=
(-1)^{r(\bbG^{0}_{\x})-r(\bbS)}
\sum_{w\in W_{G^{0}_{\x,0}}(\bfS)}\phi_{-1}({}^{w}\gamma_{0})
\prod_{i=0}^{d-1}\Theta_{\tilde{\phi}_{i}}(\gamma\ltimes1)
\prod_{i=0}^{d}\phi_{i}(\gamma)\\
&=
(-1)^{r(\bbG^{0}_{\x})-r(\bbS)}
\prod_{i=0}^{d-1}\Theta_{\tilde{\phi}_{i}}(\gamma\ltimes1)
\sum_{w\in W_{G^{0}_{\x,0}}(\bfS)}
\phi({}^{w}\gamma)
\end{align*}
(recall that $\phi=\prod_{i=-1}^{d}\phi_{i}$ and $\phi_{-1}$ is of depth zero).

By \cite[Proposition 3.8]{MR2543925}, we have
\[
\Theta_{\tilde \phi_{i}}(\gamma\ltimes1)
=
|(C_{G^{i}}^{(0+)}(\gamma),C_{G^{i+1}}^{(0+)}(\gamma))_{\x,(r_{i},s_{i}):(r_{i},s_{i}+)}|^{\frac{1}{2}}
\cdot
\varepsilon(\phi_{i},\gamma)
\]
with the notations used in \cite[Proposition 3.8]{MR2543925}.
Since $\gamma$ is unramified very regular, its depth zero part is regular semisimple in $\bfG$ so that $C_{G^{i}}^{(0+)}(\gamma)=C_{G^{i+1}}^{(0+)}(\gamma) =T_\gamma$.
Hence the factor $|(C_{G^{i}}^{(0+)}(\gamma),C_{G^{i+1}}^{(0+)}(\gamma))_{\x,(r_{i},s_{i}):(r_{i},s_{i}+)}|^{\frac{1}{2}}$ is trivial.

To understand the factors $\varepsilon(\phi_i, \gamma)$ for $0 \leq i < d$, we use \cite[Proposition 4.21]{MR3849622}, which implies that the product 
\[
\mathfrak{G}(\phi_{i},\gamma)\varepsilon(\phi_{i},\gamma)
\]
is equal to 
\[
\varepsilon_{\sym,\ram}(\pi',\gamma)
\cdot
\varepsilon^{\ram}(\pi',\gamma)
\cdot
\tilde{e}(\pi',\gamma),
\]
where $\mathfrak{G}(\phi_{i},\gamma)$ is the constant defined in \cite[Definition 5.24]{MR2543925}.
Here we temporarily follow the notation of \cite{MR3849622}; especially, we are applying results in \cite{MR3849622} by taking $(\bfG,\bfG')$ to be $(\bfG^{i+1},\bfG^{i})$.
See \cite[Definition 4.14]{MR3849622} for the definitions of three terms in the above product.
Again noting that the centralizer group $C_{\bfG}^{(r_{i})}(\gamma)$ is equal to $\bfT_{\gamma}$ by the unramified very regularity of $\gamma$, hence equal to $\bfS$, we get
\[
\varepsilon_{\sym,\ram}(\pi',\gamma)
\cdot
\varepsilon^{\ram}(\pi',\gamma)
\cdot
\tilde{e}(\pi',\gamma)
=
\]
\[
\varepsilon_{\sym,\ram}(\bfG^{i+1}/\bfG^{i},r_{i},\gamma)
\cdot
\varepsilon^{\ram}(\bfG^{i+1}/\bfG^{i},r_{i},\gamma)
\cdot
\tilde{e}(\bfG^{i+1}/\bfG^{i},r_{i},\gamma),
\]
where
\[
\varepsilon_{\sym,\ram}(\bfG^{i+1}/\bfG^{i},r_{i},\gamma)
\colonequals 
\frac{\varepsilon_{\sym,\ram}(\bfG^{i+1},\bfS,r_{i},\gamma)}{\varepsilon_{\sym,\ram}(\bfG^{i},\bfS,r_{i},\gamma)}
\]
(similarly for $\varepsilon^{\ram}$ and $\tilde{e}$).

According to the description in \cite[Proposition 5.2.13]{MR2543925}, the invariant $\mathfrak{G}(\phi_i, \gamma)$ is equal to $1$ when $\gamma$ is unramified very regular because the sets $\dot{\Upsilon}_{\sym}(\phi_i, \gamma)$, $\dot{\Upsilon}_{\sym,\ram}(\phi_{i},\gamma)$ are empty (see \cite[Notation 5.2.11]{MR2543925}). 
On the other hand, by Remark \ref{rem:signs}, 
\begin{itemize}
\item
all three terms $\varepsilon_{\sym,\ram}(\bfG^{i+1}/\bfG^{i},r_{i},\gamma)$, $\varepsilon^{\ram}(\bfG^{i+1}/\bfG^{i},r_{i},\gamma)$, and $\tilde{e}(\bfG^{i+1}/\bfG^{i},r_{i},\gamma)$, are invariant under $G^{0}$-conjugation, and
\item
the first term $\varepsilon_{\sym,\ram}$ is trivial since $\bfS$ is unramified.
\end{itemize}

Thus we get 
\begin{align*}
\Theta_{\cc\rho_d' \otimes \phi_d}(\gamma)
&=
(-1)^{r(\bbG^{0}_{\x})-r(\bbS)}
\prod_{i=0}^{d-1}\tilde{e}(\bfG^{i+1}/\bfG^{i},r_{i},\gamma)
\cdot
\varepsilon^{\ram}[\phi](\gamma)
\sum_{w\in W_{G^{0}_{\x,0}}(\bfS)}
\phi({}^{w}\gamma)\\
&=
(-1)^{r(\bbG^{0}_{\x})-r(\bbS)}
\prod_{i=0}^{d-1}\tilde{e}(\bfG^{i+1}/\bfG^{i},r_{i},\gamma)
\sum_{w\in W_{G^{0}_{\x,0}}(\bfS)}
\varepsilon^{\ram}[\phi]({}^{w}\gamma)\phi({}^{w}\gamma)
\end{align*}
We finally investigate the product $\prod_{i=0}^{d-1}\tilde{e}(\bfG^{i+1}/\bfG^{i},r_{i},\gamma)$.
Each $\tilde{e}(\bfG^{i+1}/\bfG^{i},r_{i},\gamma)$ is given by the quotient $\tilde{e}(\bfG^{i+1},\bfS,r_{i},\gamma)/\tilde{e}(\bfG^{i},\bfS,r_{i},\gamma)$ and we have
\[
\tilde{e}(\bfG^{i+1},\bfS,r_{i},\gamma)
=
(-1)^{|\Gamma_{F}\backslash R^{\bfG^{i+1}}_{\x,(r_{i}-\ord_{\gamma})}|}
\]
and
\[
\tilde{e}(\bfG^{i},\bfS,r_{i},\gamma)
=
(-1)^{|\Gamma_{F}\backslash R^{\bfG^{i}}_{\x,(r_{i}-\ord_{\gamma})}|}.
\]
By the unramified very regularity of $\gamma$, for any root $\alpha\in R(\bfG^{i},\bfS)$, we have
\[
\alpha(\gamma)\neq1
\quad\text{and}\quad
\ord_{\gamma}{\alpha}=\ord(\alpha(\gamma)-1)=0.
\]
Thus we have
\begin{align*}
R^{\bfG^{i+1}}_{\x,(r_{i}-\ord_{\gamma})/2}
&\colonequals 
\{
\alpha\in R(\bfG^{i+1},\bfS)
\mid
\alpha(\gamma)\neq1,\, (r_{i}-\ord_{\gamma}{\alpha})/2\in\ord_{\x}\alpha
\}\\
&=
\{
\alpha\in R(\bfG^{i+1},\bfS)
\mid
r_{i}/2\in\ord_{\x}\alpha
\}\\
&=:
R^{\bfG^{i+1}}_{\x,r_{i}/2}.
\end{align*}
Similarly, we have $R^{\bfG^{i}}_{\x,(r_{i}-\ord_{\gamma})/2}=R^{\bfG^{i}}_{\x,r_{i}/2}$.

Therefore, using notation defined in the assertion, we get
\[
\Theta_{\cc\rho_d' \otimes \phi_d}(\gamma)
=
(-1)^{r(\bbG^{0}_{\x})-r(\bbS)+r(\bfS,\phi)}
\sum_{w\in W_{G^{0}_{\x,0}}(\bfS)}
\varepsilon^{\ram}[\phi]({}^{w}\gamma)\phi({}^{w}\gamma). \qedhere
\]
\end{proof}

\begin{prop}\label{prop:AS-vreg}
Let $\gamma$ be an unramified very regular element of $SG_{\x,0}$.
\begin{enumerate}
\item
If $\gamma$ is not $SG_{\x,0}$-conjugate to an element of $S$, then we have $\Theta_{\cc\tau_{d}}(\gamma)=0$.
\item
If $\gamma$ is $SG_{\x,0}$-conjugate to an element of $S$ (in this case, we assume that $\gamma$ itself belongs to $S$ by taking conjugation), we have
\[
\Theta_{\cc\tau_{d}}(\gamma)
=
(-1)^{r(\bbG^{0}_{\x})-r(\bbS)+r(\bfS,\phi)}
\sum_{w\in W_{G_{\x,0}}(\bfS)}
\varepsilon^{\ram}[\phi]({}^{w}\gamma)\phi({}^{w}\gamma),
\]
where $W_{G_{\x,0}}(\bfS)\colonequals N_{G_{\x,0}}(\bfS)/S_{0}$.
\end{enumerate}
\end{prop}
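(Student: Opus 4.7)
The plan is to reduce to Proposition \ref{prop:rho_d'} via the Frobenius induced character formula. By transitivity of induction and the projection formula (noting that $\phi_{d}$ extends to a character of $SG_{\x,0}$), the three induction steps defining $\cc\tau_{d}$ in Section \ref{subsec:stab-parah} collapse to
\[
\cc\tau_{d}\cong\Ind_{\cc K^{d}}^{SG_{\x,0}}(\cc\rho'_{d}\otimes\phi_{d}).
\]
Since $\cc K^{d}$ has finite index in $SG_{\x,0}$, the standard induced character formula yields
\[
\Theta_{\cc\tau_{d}}(\gamma) = \sum_{[g]\in\cc K^{d}\backslash SG_{\x,0},\, g\gamma g^{-1}\in\cc K^{d}} \Theta_{\cc\rho'_{d}\otimes\phi_{d}}(g\gamma g^{-1}).
\]
Every conjugate $g\gamma g^{-1}$ is again unramified very regular, so Proposition \ref{prop:rho_d'} applies to every summand. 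Part (1) follows immediately: if $\gamma$ is not $SG_{\x,0}$-conjugate into $S$, then no $g\gamma g^{-1}\in\cc K^{d}$ can be $\cc K^{d}$-conjugate into $S$, and Proposition \ref{prop:rho_d'}(1) annihilates each summand.

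For part (2), assume $\gamma\in S$. A summand is nonzero only when $g\gamma g^{-1}$ is $\cc K^{d}$-conjugate to some element of $S$; regular semisimplicity of $\gamma$ (its connected centralizer being $\bfS$) forces any such conjugation to normalize $\bfS$, so the nonvanishing cosets $[g]$ are in bijection with $(\cc K^{d}\cap N_{SG_{\x,0}}(\bfS))\backslash N_{SG_{\x,0}}(\bfS)$. The key structural input is
\[
\cc K^{d}\cap N_{SG_{\x,0}}(\bfS) = N_{SG^{0}_{\x,0}}(\bfS),
\]
which follows from the fact that the deeper factor $(G^{0},\ldots,G^{d})_{\x,(0+,s_{0},\ldots,s_{d-1})}\subset G_{\x,0+}$ meets $N_{\bfG}(\bfS)$ only inside $S_{0+}$: since $\bfS$ is elliptic unramified with $\x$ in its apartment, no nontrivial element of $W(\bfG,\bfS)$ can be represented in $G_{\x,0+}$.

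Applying Proposition \ref{prop:rho_d'}(2) to each $n\gamma n^{-1}$ and collecting signs gives
\[
\Theta_{\cc\tau_{d}}(\gamma) = (-1)^{r(\bbG^{0}_{\x})-r(\bbS)+r(\bfS,\phi)}\sum_{[n]}\sum_{w\in W_{G^{0}_{\x,0}}(\bfS)}\varepsilon^{\ram}[\phi]({}^{wn}\gamma)\phi({}^{wn}\gamma),
\]
with $[n]$ running over $N_{SG^{0}_{\x,0}}(\bfS)\backslash N_{SG_{\x,0}}(\bfS)$. Quotienting by $S$ (which lies in both subgroups) identifies this outer coset space with $W_{G^{0}_{\x,0}}(\bfS)\backslash W_{G_{\x,0}}(\bfS)$, so as $(w,[n])$ varies the products $wn$ exhaust $W_{G_{\x,0}}(\bfS)$ exactly once, collapsing the double sum into the single sum over $W_{G_{\x,0}}(\bfS)$ asserted in the statement. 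The step I expect to require the most care is the intersection identity above, which requires controlling how elements of the concave-function factor of $\cc K^{d}$ can normalize $\bfS$; the remainder of the argument is bookkeeping built on Proposition \ref{prop:rho_d'} and the Frobenius formula.
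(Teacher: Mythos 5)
Your overall strategy matches the paper's: collapse the three induction steps to $\cc\tau_{d}\cong\Ind_{\cc K^{d}}^{SG_{\x,0}}(\cc\rho'_{d}\otimes\phi_{d})$, apply the Frobenius induced character formula, dispatch part (1) with Proposition \ref{prop:rho_d'}(1), and reduce part (2) to the intersection identity $\cc K^{d}\cap N_{SG_{\x,0}}(\bfS)=N_{SG^{0}_{\x,0}}(\bfS)$. You correctly flag this identity as the step requiring the most care, but your proposed justification does not actually establish it.

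The gap: you argue that the concave-function factor $(G^{0},\ldots,G^{d})_{\x,(0+,s_{0},\ldots,s_{d-1})}\subset G_{\x,0+}$ meets $N_{\bfG}(\bfS)$ only inside $S_{0+}$, and want to conclude the intersection identity from this. But that observation applies only to elements of the concave factor that themselves normalize $\bfS$. In the decomposition $g=g^{0}k$ with $g^{0}\in SG^{0}_{\x,0}$ and $k$ in the concave factor, you know that $g$ normalizes $\bfS$, which gives ${}^{k}\bfS={}^{(g^{0})^{-1}}\bfS\subset G^{0}$ — but not that $k\in N_{\bfG}(\bfS)$. So the fact you invoke is not directly applicable to $k$. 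The paper closes exactly this gap with an iterated descent using \cite[Lemma 9.10]{MR2431235} applied to the successive pairs $(\bfG^{i-1},\bfG^{i})$: from ${}^{k}\bfS\subset G^{0}$ one first gets $k\in G^{d-1}_{\x,0+}S_{0+}=G^{d-1}_{\x,0+}$, then intersects with the concave factor to descend another level, and so on down to $k\in G^{0}_{\x,0+}$. You need this descent (or an equivalent argument); the statement ``no nontrivial Weyl element is represented in $G_{\x,0+}$'' is true but on its own does not constrain the factor $k$. The remainder of your part (2) — bijection of nonvanishing cosets with normalizer cosets, applying Proposition \ref{prop:rho_d'}(2), and collapsing the double sum over $W_{G^{0}_{\x,0}}(\bfS)\times(W_{G^{0}_{\x,0}}(\bfS)\backslash W_{G_{\x,0}}(\bfS))$ into a single sum over $W_{G_{\x,0}}(\bfS)$ — is correct and matches the paper.
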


\begin{proof}
Let $\gamma$ be an unramified very regular element of $SG_{\x,0}$.
Then, since we have $\cc\tau_{d}\cong\Ind_{\cc K^{d}}^{SG_{\x,0}}(\cc\rho'_{d}\otimes\phi_{d})$ by definition, the Frobenius formula for induced representations implies that
\[
\Theta_{\cc\tau_{d}}(\gamma)
=
\sum_{\begin{subarray}{c}g\in \cc K^{d}\backslash SG_{\x,0}\\ {}^{g}\gamma\in\cc{K}^{d} \end{subarray}}
\Theta_{\cc\rho'_{d}\otimes\phi_{d}}({}^{g}\gamma).
\]
Hence, if $\gamma$ is not $SG_{\x,0}$-conjugate to an element of $\cc K^{d}$, then the character is equal to zero.
Moreover, by Proposition \ref{prop:rho_d'}, if ${}^{g}\gamma$ is not $\cc K^{d}$-conjugate to an element of $S$, then we have $\Theta_{\cc\rho'_{d}\otimes\phi_{d}}({}^{g}\gamma)=0$.
Therefore we get the assertion (1).

In the following, we assume that $\gamma$ itself belongs to $S$.
Then, again by the same argument as in the previous paragraph, we get
\[
\Theta_{\cc\tau_{d}}(\gamma)
=
\sum_{\begin{subarray}{c}g\in \cc K^{d}\backslash SG_{\x,0}\\ {}^{g}\gamma\in S\end{subarray}}
\Theta_{\cc\rho'_{d}\otimes\phi_{d}}({}^{g}\gamma),
\]
where the sum is over elements of $\cc K^{d}\backslash SG_{\x,0}$ containing a representative $g$ satisfying ${}^{g}\gamma\in S$.

We investigate the index set of this formula.
First, as $\cc{K}^{d}$ contains $S$, we may suppose that $g$ belongs to $G_{\x,0}$.
Since $\gamma$ is a regular semisimple element belonging to $S$, if $g\in G_{\x,0}$ satisfies ${}^{g}\gamma\in S$, then $g$ belongs to $N_{G_{\x,0}}(\bfS)$.
Conversely, any element $g$ of $N_{G_{\x,0}}(\bfS)$ satisfies ${}^{g}\gamma\in S$.
In other words, the index set can be rewritten as $\cc K^{d}\cap N_{G_{\x,0}}(\bfS) \backslash N_{G_{\x,0}}(\bfS)$.
In fact, we have 
\[
\cc K^{d}\cap N_{G_{\x,0}}(\bfS)\,
\bigl(\colonequals SG_{\x,0}^{0}(G^{0},\ldots,G^{d})_{\x,(0+,s_{0},\ldots,s_{d-1})}\cap N_{G_{\x,0}}(\bfS) \bigr)
=
N_{G^{0}_{\x,0}}(\bfS).
\]
Indeed, the inclusion $\cc K^{d}\cap N_{G_{\x,0}}(\bfS)\supset N_{G^{0}_{\x,0}}(\bfS)$ is trivial.
To check the converse inclusion, let us take an element $g$ of $\cc K^{d}\cap N_{G_{\x,0}}(\bfS)$ and show that $g$ belongs to $G^{0}_{\x,0}$.
As $g$ belongs to $\cc K^{d}$, we may write $g=g^{0}k$ with elements $g^{0}\in SG^{0}_{\x,0}$ and $k\in(G^{0},\ldots,G^{d})_{\x,(0+,s_{0},\ldots,s_{d-1})}$.
Since $g$ normalizes $S$, we have ${}^{k}S\subset G^{0}$.
Then, by using \cite[Lemma 9.10]{MR2431235} with $(\bfG',\bfG)\colonequals (\bfG^{d-1},\bfG^{d})$, we get $k\in G^{d-1}_{\x,0+}S_{0+}=G^{d-1}_{\x,0+}$.
Hence we have $k\in G^{d-1}_{\x,0+}\cap (G^{0},\ldots,G^{d})_{\x,(0+,s_{0},\ldots,s_{d-1})}=(G^{0},\ldots,G^{d-1})_{\x,(0+,s_{0},\ldots,s_{d-2})}$.
In a similar way, by using \cite[Lemma 9.10]{MR2431235} repeatedly with $(\bfG',\bfG)\colonequals (\bfG^{d-2},\bfG^{d-1}),\ldots, (\bfG^{0},\bfG^{1})$, we finally get $k\in G^{0}_{\x,0+}$.
Hence we get $g=g^{0}k\in SG^{0}_{\x,0}$.
As $g$ lies in $N_{G_{\x,0}}(\bfS)\subset G_{\x,0}$, we furthermore get $g\in G^{0}_{\x,0}$.

Now, by Proposition \ref{prop:rho_d'}, we have 
\begin{align*}
\Theta_{\cc\tau_{d}}(\gamma)
&=
\sum_{g\in N_{G^{0}_{\x,0}}(\bfS)\backslash N_{G_{\x,0}}(\bfS)}
(-1)^{r(\bbG^{0}_{\x})-r(\bbS)+r(\bfS,\phi)}
\sum_{w\in W_{G^{0}_{\x,0}}(\bfS)}
\varepsilon^{\ram}[\phi]({}^{wg}\gamma)\phi({}^{wg}\gamma)\\
&=
(-1)^{r(\bbG^{0}_{\x})-r(\bbS)+r(\bfS,\phi)}
\sum_{w\in W_{G_{\x,0}}(\bfS)}
\varepsilon^{\ram}[\phi]({}^{w}\gamma)\phi({}^{w}\gamma). \qedhere
\end{align*}
\end{proof}

\section{Parahoric representations characterized by $S_{\vreg}$} \label{sec:ell unram vreg}

Let $(\bfS,\phi)$ be an elliptic regular pair and let $(\vec{\bfG}, \pi_{-1}, \vec{\phi})$ be a regular generic reduced cuspidal $\bfG$-datum corresponding to $(\bfS, \phi)$ as in Section \ref{subsec:Kaletha-TER}. Recall from Section \ref{subsec:stab-parah} that from $(\vec{\bfG}, \pi_{-1}, \vec{\phi})$, Yu constructs various intermediate representations; in this section we will be especially interested in $\cc\rho_d'$ and $\cc\tau_d$.

In Section \ref{subsec:characters_vreg}, we worked with elliptic regular pairs $(\bfS, \phi)$ where $\bfS$ is unramified.
In this section (and in fact in the rest of the paper, excluding Section \ref{sec:geom}), we additionally assume:
\[
\text{$\phi$ is toral; equivalently, $\bfG^0(\bfS, \phi) = \bfS$.}
\]
In this case, for any $i$, the group $\cc K^{i}=SG_{\x,0}^{0}(G^{0},\ldots,G^{i})_{\x,(0+,s_{0},\ldots,s_{i-1})}$ defined in Section \ref{subsec:stab-parah} is equal to the (\textit{a priori} slightly larger) group $K^{i}=G_{\bar{\x}}^{0}(G^{0},\ldots,G^{i})_{\x,(0+,s_{0},\ldots,s_{i-1})}$, which furthermore equals $Z_{\bfG}S_{0}(G^{0},\ldots,G^{i})_{\x,(0+,s_{0},\ldots,s_{i-1})}$.
Accordingly, for any $i$, we have $\cc\rho'_{i}=\rho'_{i}$. 

\subsection{Unramified very regular elements}\label{subsec:vreg}

Let $S_{\vreg}$ denote the set of unramified very regular elements of $SG_{\x,0}$ contained in $S$.

\begin{lemma}\label{lem:sg+ vreg}
For any $s \in S_{\vreg}$ and any $g_+ \in G_{\x,0+}$, the product $\gamma \colonequals s \cdot g_+$ is unramified very regular and $\bfT_\gamma$ is $G_{\x,0+}$-conjugate to $\bfS$.
\end{lemma}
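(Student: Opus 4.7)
The plan is to reduce the lemma to producing an element $h \in G_{\x,0+}$ with $h^{-1} \gamma h \in S_{\vreg}$. Once such $h$ is found, $\bfT_\gamma = h \bfS h^{-1}$ is $G_{\x,0+}$-conjugate to $\bfS$ (and in particular unramified with $\x \in \mathcal{A}(\bfT_\gamma, F)$, since $G_{\x,0+}$ fixes $\x$), $\gamma$ is regular semisimple, and for each root $\alpha$ of $\bfT_\gamma$ in $\bfG$, writing $\alpha = \alpha' \circ \Ad(h^{-1})$ for a root $\alpha'$ of $\bfS$, we have $\alpha(\gamma) = \alpha'(s') \not\equiv 1 \pmod{\mfp}$. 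So everything reduces to exhibiting $h$.

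The key input is the compatibility of the Moy--Prasad filtration with the $\bfS$-root decomposition, which is available because $\bfS$ is unramified and $\x \in \mathcal{A}(\bfS,F)$. Concretely, for each $r > 0$ the graded piece $G_{\x,r:r+}$ splits canonically as $S_{r:r+} \oplus V_{\x,r:r+}$, where $V_{\x,r:r+}$ is the sum of the contributions of affine root subgroups whose gradients lie in $R(\bfG,\bfS)$. The conjugation action of $s$ preserves this splitting and acts on the $\alpha$-isotypic component of $V_{\x,r:r+}$ by multiplication by $\alpha(s)$. Because $s \in S_{\vreg}$, every $\alpha(s) - 1$ with $\alpha \in R(\bfG,\bfS)$ is a unit in $\mcO_F$, so $\Ad(s) - 1$ is an automorphism of $V_{\x,r:r+}$ for every $r > 0$.

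I would then construct $h$ by successive approximation. Starting with $h_0 = 1$, suppose inductively that $h_n \in G_{\x,0+}$ satisfies $h_n^{-1} \gamma h_n \in s \cdot S_{0+} \cdot G_{\x,r_n}$ for some $r_n > 0$. Decomposing the image of the $s^{-1}$-shifted error in $G_{\x,r_n:r_n+}$ along $S_{r_n:r_n+} \oplus V_{\x,r_n:r_n+}$ and using invertibility of $\Ad(s) - 1$ on $V_{\x,r_n:r_n+}$, I can find $\delta_n \in V_{\x,r_n}$ whose commutator with $s$ cancels the $V$-part of the error modulo $G_{\x,r_{n+1}}$ for some $r_{n+1} > r_n$. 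Setting $h_{n+1} \colonequals h_n \delta_n$ and passing to the limit in the complete pro-$p$ group $G_{\x,0+}$ produces $h = \lim h_n$ with $s' \colonequals h^{-1} \gamma h \in s \cdot S_{0+} \subset S$; the congruence $\alpha(s') \equiv \alpha(s) \pmod{\mfp}$ for each root $\alpha \in R(\bfG,\bfS)$ then shows $s' \in S_{\vreg}$, which closes the argument.

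The main technical point to verify is the identification of the multiplicative commutator $[\delta_n, s] \in G_{\x,r_n:r_n+}$ with the additive operator $(1 - \Ad(s))(\delta_n)$ modulo the next filtration step, which is what lets the linearized invertibility actually absorb the error. This matching is standard in Moy--Prasad theory and parallels the normal-approximation techniques of \cite{MR2431235, MR2543925} invoked in Lemma \ref{lem:AS-vreg}.
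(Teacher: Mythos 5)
Your proposal is correct, and it establishes the lemma along a route that is parallel in spirit but technically different from the paper's. The paper splits $s$ into its topological Jordan decomposition $s = s_0 \cdot s_+$, observes that $\gamma = s_0 \cdot (s_+ g_+)$ is then a $(0+)$-approximation, and invokes the normal-approximation machinery of Adler--Spice (specifically \cite[Lemma 9.2]{MR2431235}) as a black box to produce the conjugator $k \in G_{\x,0+}$ such that ${}^k\gamma \in {}^k\bfS$. You instead reprove the content of that black box directly: you use the root-space splitting $G_{\x,r:r+} \cong S_{r:r+} \oplus V_{\x,r:r+}$ (available because $\bfS$ is unramified and $\x \in \mathcal{A}(\bfS,F)$), the invertibility of $\Ad(s) - 1$ on $V_{\x,r:r+}$ (which is exactly the very-regularity condition $\alpha(s) \not\equiv 1 \pmod{\mfp}$), and a Hensel-type successive approximation to construct the conjugator by hand. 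The final bookkeeping — identifying $[\delta_n, s]$ with $(\Ad(s)^{\pm1}-1)(\delta_n)$ modulo the next filtration step, and letting the product $\prod_n \delta_n$ converge in the pro-$p$ group $G_{\x,0+}$ — is indeed standard and correct. The paper's route is shorter because it leverages the Adler--Spice framework that is already in use throughout Section 4; your route is more self-contained and makes explicit exactly which structural inputs (unramifiedness of $\bfS$, invertibility of $\Ad(s)-1$ on root spaces) drive the result. Both are sound.
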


\begin{proof}
We first take a topological Jordan decomposition (or, equivalently, a normal $(0+)$-approximation) $s=s_{0}\cdot s_{+}$ with topologically semisimple part $s_{0}$ and topologically unipotent part $s_{+}$.
Then the product decomposition $\gamma=s_{0}\cdot (s_{+}g_{+})$ gives a $(0+)$-approximation to $\gamma$; more precisely, the pair $(\underline{\gamma},\x)$ of the good sequence $\underline{\gamma}=(\gamma_{i})_{0\leq i<0+}$ consisting of only one element $\gamma_{0}\colonequals s_{0}$ and the point $\x$ is a $(0+)$-approximation to $\gamma$. 
By \cite[Lemma 9.2]{MR2431235}, there exists an element $k \in G_{\x,0+}$ such that ${}^{k}\underline{\gamma}=({}^{k}\gamma_{i})_{0\leq i<0+}$ is a normal $(0+)$-approximation to $\gamma$. 
In other words, we have the following:
\begin{itemize}
\item
Since ${}^{k}\underline{\gamma}=({}^{k}\gamma_{i})_{0\leq i<0+}$ is a $(0+)$-approximation to $\gamma$, we have $\gamma\in{}^{k}\gamma_{0}G_{\y,0+}$ for some point $\y$ of $\mathcal{B}(C_{\bfG}^{(0+)}({}^{k}\underline{\gamma}),F)$.
Here note that $C_{\bfG}^{(0+)}({}^{k}\underline{\gamma})=C_{\bfG}({}^{k}\gamma_{0})^{\circ}={}^{k}\bfS$ by the regularity of $\gamma_{0}=s_{0}$, which follows from the unramified very regularity of $s$.
Thus, as $k\in G_{\x,0+}$ stabilizes $\x$, we have 
\[
\mathcal{B}(C_{\bfG}^{(0+)}({}^{k}\underline{\gamma}),F)
=\mathcal{A}({}^{k}\bfS,F)
=k\cdot\mathcal{A}(\bfS,F)
\ni k\cdot\x=\x.
\]
Since $\mathcal{A}^{\red}({}^{k}\bfS,F)$ consists of only one point, we have $\mathcal{A}^{\red}({}^{k}\bfS,F)=\{\bar{\x}\}=\{\bar{\y}\}$.
Let us write $\gamma={}^{k}\gamma_0\cdot\gamma_{+}$ with $\gamma_{+}\in G_{\y,0+}=G_{\x,0+}$.
\item
Since ${}^{k}\underline{\gamma}$ is normal, $\gamma$ lies in $C_{\bfG}^{(0+)}({}^{k}\underline{\gamma})={}^{k}\bfS$.
Hence so does $\gamma_{+}$.
\end{itemize}

We now check the unramified very regularity of $\gamma$ using the decomposition $\gamma = {}^k \gamma_{0} \cdot \gamma_+$.
Since $\gamma$ belongs to ${}^k \bfS$, $\gamma$ is semisimple. Moreover, $\gamma$ is a regular element satisfying $\alpha(\gamma) \not\equiv 1 \pmod \mfp$ for any $\alpha \in R({}^k \bfS, \bfG)$. Indeed, for any root $\alpha \in R({}^k \bfS, \bfG)$, we have
\[
\alpha(\gamma)
=\alpha({}^{k}\gamma_{0})\cdot\alpha(\gamma_{+}).
\]
While the unramified very regularity of $s$ implies that $\alpha({}^{k}\gamma_{0})\not\equiv 1 \pmod\mfp$, we have $\alpha(\gamma_{+}) \equiv 1 \pmod\mfp$ by the topological unipotency of $\gamma_{+}$.
In particular, we have $\alpha(\gamma)\not\equiv1\pmod\mfp$.
This also shows the regularity of $\gamma$ since the regularity is a weaker condition; $\alpha(\gamma)\neq1$ for any $\alpha \in R({}^{k}\bfS,\bfG)$.
Finally, since the connected centralizer $\bfT_{\gamma}$ is given by ${}^{k}\bfS$, we have $\mathcal{A}(\bfT_{\gamma},F)=\mathcal{A}({}^{k}\bfS,F)\ni\x$.
\end{proof}

\begin{lemma}\label{lem:SG+ vreg}
We have $S_{\vreg} G_{\x,0+} = \{\gamma \in SG_{\x,0+} \mid \text{$\gamma$ is unramified very regular}\}.$ Moreover, every unramified very regular element of $SG_{\x,0+}$ is $G_{\x,0+}$-conjugate to an element of $S_{\vreg}$.
\end{lemma}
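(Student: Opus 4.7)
The $\subseteq$ direction of the equality is immediate from Lemma~\ref{lem:sg+ vreg}, and the $\supseteq$ direction reduces to the ``Moreover'' clause since $S$ normalizes the pro-$p$ group $G_{\x,0+}$ (any $\gamma=k^{-1}sk$ with $k\in G_{\x,0+}$ and $s\in S_{\vreg}$ rewrites as $s\cdot(s^{-1}k^{-1}sk)\in S_{\vreg}\cdot G_{\x,0+}$). So my plan focuses on showing that every unramified very regular $\gamma\in SG_{\x,0+}$ is $G_{\x,0+}$-conjugate to an element of $S_{\vreg}$.

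Using $S=Z_{\bfG}S_{0}$, I first write $\gamma=z\gamma'$ with $z\in Z_{\bfG}$ and $\gamma'\in S_{0}G_{\x,0+}\subseteq G_{\x,0}$; centrality of $z$ preserves every condition in Definition~\ref{defn:ur-vreg}, so $\gamma'$ remains unramified very regular and I may reduce to the case $\gamma\in G_{\x,0}$, with $\gamma=s_{0}g_{+}$ for some $s_{0}\in S_{0}$ and $g_{+}\in G_{\x,0+}$. Let $\gamma=\gamma_{0}\gamma_{+}$ be the topological Jordan decomposition in $\bfG$. Since $\bfT_{\gamma}$ is unramified with apartment through $\x$, the topologically unipotent part $\gamma_{+}$ lies in $T_{\gamma,0+}\subseteq G_{\x,0+}$, so $\gamma_{0}\in G_{\x,0}$ and its image satisfies $\bar\gamma_{0}=\bar s_{0}\in\bbS(\F_{q})\cap\bbT_{\gamma}(\F_{q})\subseteq\bbG_{\x}(\F_{q})$, where $\bbT_{\gamma}$ denotes the reduction of $\bfT_{\gamma}$.

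The crux will be to verify that $\bar\gamma_{0}$ is regular semisimple in $\bbG_{\x}$: for each root $\alpha$ of $\bbT_{\gamma}$ in $\bbG_{\x}$ (viewed as a depth-$0$ root of $\bfT_{\gamma}$ at $\x$), the identity $\alpha(\gamma)=\alpha(\gamma_{0})\alpha(\gamma_{+})$ combined with $\alpha(\gamma_{+})\in 1+\mfp$ (topological unipotence) and $\alpha(\gamma)\not\equiv 1\pmod{\mfp}$ (unramified very regularity of $\gamma$) forces $\alpha(\bar\gamma_{0})\neq 1$. Consequently $\bar\gamma_{0}$ lies in a unique maximal torus of $\bbG_{\x}$ over $\overline\F_{q}$; since both $\bbS$ and $\bbT_{\gamma}$ contain it, I conclude $\bbS=\bbT_{\gamma}$ as $\F_{q}$-subgroups of $\bbG_{\x}$.

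What remains is to lift this equality to an equality, up to $G_{\x,0+}$-conjugacy, of unramified tori in $\bfG$; this is the main technical step. I will invoke the standard Bruhat--Tits-theoretic fact that two unramified maximal tori of $\bfG$ over $F$ with $\x$ in their apartments and identical reduction in $\bbG_{\x}$ form a single $G_{\x,0+}$-conjugacy class. This produces $k\in G_{\x,0+}$ with $k\bfS k^{-1}=\bfT_{\gamma}$, so $k^{-1}\gamma k\in\bfS(F)=S$; since conjugation by $k$ fixes $\x$ and therefore preserves unramified very regularity, we get $k^{-1}\gamma k\in S_{\vreg}$, as required. The last cited fact can be justified by applying the normal approximation lemma \cite[Lemma 9.2]{MR2431235} to the topologically semisimple element $\gamma_{0}$ (or equivalently extracted from DeBacker's Bruhat--Tits parametrization of unramified maximal tori).
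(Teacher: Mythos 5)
Your proposal is correct, but it is organized differently from the paper's proof and is considerably longer. The paper's argument for the $\supseteq$ inclusion consists of a single step: writing $\gamma=s\cdot g_+$ with $s\in S$, it asserts $s\in S_{\vreg}$ ``by Lemma~\ref{lem:sg+ vreg}''. What is really being invoked there is the \emph{content of the proof} of Lemma~\ref{lem:sg+ vreg}: conjugating the $(0+)$-approximation $\gamma=s_0\cdot(s_+g_+)$ (coming from the topological Jordan decomposition $s=s_0s_+$ in $S$) by some $k\in G_{\x,0+}$ via \cite[Lemma 9.2]{MR2431235} shows ${}^k s_0$ is the topologically semisimple part of $\gamma$, hence $\bfT_\gamma={}^k\bfS$ and $\alpha(s_0)\not\equiv1\pmod\mfp$ for all roots $\alpha$, i.e., $s\in S_{\vreg}$; the ``Moreover'' clause is then the $\bfT_\gamma$-conjugacy statement already recorded in Lemma~\ref{lem:sg+ vreg}. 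You instead prove the ``Moreover'' clause first and deduce the set equality from it (correctly, using that $S$ normalizes $G_{\x,0+}$), and you prove the ``Moreover'' clause by passing to the reductive quotient: reducing $\gamma_0$ mod $0+$, checking that $\bar\gamma_0$ is regular semisimple in $\bbG_\x$, and concluding $\bbS=\bbT_\gamma$. That part of your argument is sound.

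The one place you should tighten is the lifting step: the assertion that two unramified maximal tori with $\x$ in their apartments and equal reduction in $\bbG_\x$ are $G_{\x,0+}$-conjugate. It is true, but your proposed justification --- ``applying \cite[Lemma 9.2]{MR2431235} to the topologically semisimple element $\gamma_0$'' --- is not quite a direct application of that lemma, which concerns replacing a $(0+)$-approximation by a normal one and says nothing directly about tori. To make it precise via \cite[Lemma 9.2]{MR2431235}, you would observe that $\gamma=s_0(s_0^{-1}\gamma)$ and $\gamma=\gamma_0\gamma_+$ are two $(0+)$-approximations of $\gamma$ at $\x$; by the lemma plus uniqueness of the normal $(0+)$-approximation (= topological Jordan decomposition) there is $k\in G_{\x,0+}$ with ${}^k s_0=\gamma_0$, whence ${}^k\bfS=C_{\bfG}(\gamma_0)^\circ=\bfT_\gamma$. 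This is essentially the route the paper takes, so in the end your argument and the paper's converge on the same normal-approximation input; the detour through $\bbG_\x(\FF_q)$ and its maximal tori is the genuine difference. If you want to cite the torus-lifting fact as a black box instead, DeBacker's parametrization of unramified maximal tori (Michigan Math.\ J.\ 2006) is the standard reference, but note it is not already in this paper's bibliography.

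One more small correction: you write $\gamma=s_0 g_+$ with $s_0\in S_0$ and later also use $s_0$ for the topologically semisimple part of $\gamma$; these are not the same element in general (though their reductions mod $0+$ agree). Use distinct notation to avoid conflating the $S_0$-part of the group decomposition of $\gamma$ with the topologically semisimple part $\gamma_0$ of the topological Jordan decomposition.
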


\begin{proof}
By Lemma \ref{lem:sg+ vreg}, every element of $S_{\vreg} G_{\x,0+}$ is unramified very regular. To see the reverse inclusion, let $\gamma \in SG_{\x,0+}$ be unramified very regular. We may write $\gamma = s \cdot g_+$ for some $s \in S$ and $g_+ \in G_{\x,0+}$. By Lemma \ref{lem:sg+ vreg}, $s \in S_{\vreg}$, so $\gamma \in S_{\vreg} G_{\x,0+}$. 

Since every element of $S_{\vreg}G_{\x,0+}$ is $G_{\x,0+}$-conjugate to an element of $S_{\vreg}$ by Lemma \ref{lem:sg+ vreg}, the final assertion now holds.
\end{proof}

We put $S_{0,\vreg}:=S_{\vreg}\cap S_{0}$.
Note that, by the definition of unramified very regular elements, we can easily see that $S_{\vreg}=Z_{\bfG}S_{0,\vreg}$ (recall that $S=Z_{\bfG}S_{0}$).

\begin{defn}\label{defn:finite-vreg}
We define a subset $\bbS(\F_{q})_{\vreg}$ of $\bbS(\F_{q})$ to be the image of $S_{0,\vreg}$ under the reduction map $S_{0}\twoheadrightarrow S_{0:0+}\cong\bbS(\F_{q})$.
Let $\bbS(\F_{q})_{\nvreg}$ denote its complement in $\bbS(\F_{q})$, i.e., $\bbS(\F_{q})_{\nvreg}\colonequals \bbS(\F_{q})\smallsetminus\bbS(\F_{q})_{\vreg}$.
\end{defn}

\begin{lem}\label{lem:greater-than-2}
When $|\bbS(\F_{q})|/|\bbS(\F_{q})_{\nvreg}|>2$, for any element $s\in\bbS(\F_{q})$, there exist elements $t_{1},t_{2}\in\bbS(\F_{q})_{\vreg}$ such that $s=t_{1}t_{2}$.
\end{lem}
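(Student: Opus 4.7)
My plan is to prove this by a standard pigeonhole/inclusion-exclusion counting argument on the abelian group $\bbS(\F_q)$. Write $A \colonequals \bbS(\F_q)_{\vreg}$ for brevity. The goal is to show that $s \in A \cdot A$, i.e., that there exist $t_1, t_2 \in A$ with $s = t_1 t_2$. Equivalently, I want to find $t_1 \in A$ such that $s t_1^{-1} \in A$, that is, $t_1 \in A \cap s A^{-1}$, where $A^{-1} \colonequals \{t^{-1} \mid t \in A\}$.

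The first observation is that $A$ is closed under inversion, so in particular $|A^{-1}| = |A|$. This is because unramified very regularity is a condition on the values $\alpha(\gamma)$ for roots $\alpha$, and $\alpha(\gamma^{-1}) = \alpha(\gamma)^{-1}$ is $\not\equiv 1 \pmod{\mfp}$ if and only if $\alpha(\gamma) \not\equiv 1 \pmod{\mfp}$; so $S_{0,\vreg}$ and hence its image $A$ are stable under inversion. Consequently $|s A^{-1}| = |A|$ as well (translation by $s$ is a bijection on $\bbS(\F_q)$).

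Next, by the standard inclusion-exclusion bound inside the finite set $\bbS(\F_q)$,
\[
|A \cap s A^{-1}| \;\geq\; |A| + |s A^{-1}| - |\bbS(\F_q)| \;=\; 2|A| - |\bbS(\F_q)|.
\]
Since $|A| = |\bbS(\F_q)| - |\bbS(\F_q)_{\nvreg}|$, this simplifies to
\[
|A \cap s A^{-1}| \;\geq\; |\bbS(\F_q)| - 2|\bbS(\F_q)_{\nvreg}|.
\]
The hypothesis $|\bbS(\F_q)|/|\bbS(\F_q)_{\nvreg}| > 2$ (interpreted as $|\bbS(\F_q)_{\nvreg}| = 0$ giving the bound vacuously, in which case $A = \bbS(\F_q)$ and the conclusion is trivial) gives $|\bbS(\F_q)| > 2|\bbS(\F_q)_{\nvreg}|$, so the right-hand side is strictly positive. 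Hence $A \cap sA^{-1}$ is non-empty: picking $t_1$ in the intersection and $t_2 \in A$ with $t_1 = s t_2^{-1}$ yields $s = t_1 t_2$ with both factors in $A$.

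There is no real obstacle here; the only thing that needs to be checked carefully is the inversion-stability of $A$, which follows immediately from the definition of unramified very regularity via the equality $\alpha(\gamma^{-1}) = \alpha(\gamma)^{-1}$. The whole argument then reduces to one line of inclusion-exclusion.
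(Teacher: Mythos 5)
Your proof is correct and is essentially the same pigeonhole argument as the paper's: both exploit inversion-stability of $\bbS(\F_q)_{\vreg}$ and the fact that a translate of this set has too many elements to fit inside $\bbS(\F_q)_{\nvreg}$. The only cosmetic difference is that you phrase it via inclusion–exclusion on $A \cap sA^{-1}$, while the paper considers $sA \cap A$ directly; the content is identical.
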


\begin{proof}
For a given $s\in\bbS(\F_{q})$, we consider a subset $s\cdot\bbS(\F_{q})_{\vreg}$ of $\bbS(\F_{q})$.
If we have $|s\cdot\bbS(\F_{q})_{\vreg}|>|\bbS(\F_{q})_{\nvreg}|$, then $s\cdot\bbS(\F_{q})_{\vreg}$ is not contained in $\bbS(\F_{q})_{\nvreg}$.
In other words, $s\cdot\bbS(\F_{q})_{\vreg}$ intersects $\bbS(\F_{q})_{\vreg}$.
Thus there exist elements $t_{1},t_{2}\in\bbS(\F_{q})_{\vreg}$ such that $st_{1}=t_{2}$.
By noting that the set $S_{0,\vreg}$ is stable under the inversion, $t_{1}^{-1}$ lies in $\bbS(\F_{q})_{\vreg}$ and we get the assertion.
The inequality $|s\cdot\bbS(\F_{q})_{\vreg}|>|\bbS(\F_{q})_{\nvreg}|$ is equivalent to $|\bbS(\F_{q})|/|\bbS(\F_{q})_{\nvreg}|>2$.
\end{proof}

\begin{cor}\label{cor:greater-than-2}
When $|\bbS(\F_{q})|/|\bbS(\F_{q})_{\nvreg}|>2$, for any element $s\in S$, there exist elements $t_{1},t_{2}\in S_{\vreg}$ such that $s=t_{1}t_{2}$. 
In particular, $S_{0,\vreg}$ generates $S_{0}$ as a group.
\end{cor}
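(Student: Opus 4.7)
The plan is to reduce this corollary directly to Lemma \ref{lem:greater-than-2} by a lifting argument, using crucially the compatibility of unramified very regularity with multiplication by topologically unipotent elements (essentially Lemma \ref{lem:sg+ vreg}) and with multiplication by central elements.

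First I would handle the center. Since $\bfS$ is elliptic and unramified, we have $S=Z_{\bfG}S_{0}$ and $S_{\vreg}=Z_{\bfG}S_{0,\vreg}$. For any $z\in Z_{\bfG}$ and any $t\in S_{0,\vreg}$, one checks directly from Definition \ref{defn:ur-vreg} that $zt$ is unramified very regular: $\alpha(zt)=\alpha(t)$ for every root $\alpha$ of $\bfS$ (since roots are trivial on $Z_{\bfG}$), and $C_{\bfG}(zt)^{\circ}=C_{\bfG}(t)^{\circ}=\bfS$. Hence $zt\in S_{\vreg}$. It therefore suffices to prove that for any $s_{0}\in S_{0}$ there exist $t_{1},t_{2}\in S_{0,\vreg}$ with $s_{0}=t_{1}t_{2}$; then any $s=zs_{0}\in S$ can be written as $s=(zt_{1})\cdot t_{2}$ with $zt_{1},t_{2}\in S_{\vreg}$.

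Next I would do the factorization in $S_{0}$. Given $s_{0}\in S_{0}$, reduce modulo $S_{0+}$ to obtain $\bar{s}_{0}\in\bbS(\F_{q})$. The hypothesis $|\bbS(\F_{q})|/|\bbS(\F_{q})_{\nvreg}|>2$ and Lemma \ref{lem:greater-than-2} produce $\bar{t}_{1},\bar{t}_{2}\in\bbS(\F_{q})_{\vreg}$ with $\bar{s}_{0}=\bar{t}_{1}\bar{t}_{2}$. By Definition \ref{defn:finite-vreg} we may lift these to $t_{1},t_{2}\in S_{0,\vreg}$, and then $s_{0}(t_{1}t_{2})^{-1}\in S_{0+}$, so there exists $u\in S_{0+}$ with $s_{0}=t_{1}(t_{2}u)$. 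The remaining point is that $t_{2}u\in S_{0,\vreg}$: since $t_{2}\in S_{\vreg}$ and $u\in S_{0+}\subset G_{\x,0+}$, Lemma \ref{lem:sg+ vreg} guarantees that $t_{2}u$ is unramified very regular, and clearly $t_{2}u\in S_{0}$, so $t_{2}u\in S_{0,\vreg}$ as desired.

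The "in particular" assertion is then immediate: every $s_{0}\in S_{0}$ is a product of two elements of $S_{0,\vreg}$, hence $S_{0,\vreg}$ generates $S_{0}$ as a group. There is essentially no obstacle here beyond verifying the two compatibilities noted in the first two paragraphs (central multiplication preserves unramified very regularity, and multiplication by elements of $S_{0+}$ preserves $S_{0,\vreg}$); both follow directly from already-proved results in the excerpt.
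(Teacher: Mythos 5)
Your proof is correct and follows essentially the same approach as the paper's: reduce to $S_0$ via $S_{\vreg}=Z_{\bfG}S_{0,\vreg}$, invoke Lemma \ref{lem:greater-than-2} to factor modulo $S_{0+}$, then absorb the $S_{0+}$-error term into one factor using Lemma \ref{lem:sg+ vreg}. You spell out the central reduction more explicitly than the paper does, but the paper already records $S_{\vreg}=Z_{\bfG}S_{0,\vreg}$ just before Definition \ref{defn:finite-vreg}, so this is the same argument.
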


\begin{proof}
Let $s$ be an element of $S_{0}$.
By the assumption and Lemma \ref{lem:greater-than-2}, we can take elements $t_{1}$ and $t_{2}$ of $S_{0,\vreg}$ such that $t_{1}t_{2}$ and $s$ have the same image in $S_{0:0+}\cong\bbS(\F_{q})$.
In other words, there exists an element $t_{+}\in S_{0+}$ satisfying $t_{1}t_{2}t_{+}=s$.
By Lemma \ref{lem:sg+ vreg}, $t_{2}t_{+}$ is an unramified very regular element of $S$.
This shows that $s$ can be written as a product of two elements of $S_{0,\vreg}$.
\end{proof}

Let us show that the inequality
\begin{align*}\label{ineq}
\frac{|\bbS(\F_{q})|}{|\bbS(\F_{q})_{\nvreg}|}>2
\tag{$\star$}
\end{align*}
is satisfied when $q$ is sufficiently large.

\begin{lem}\label{lem:transfer}
The unramified elliptic maximal torus $\bfS$ of $\bfG$ transfers to an unramified elliptic maximal torus $\bfS^{\ast}$ of the quasi-split inner form $\bfG^{\ast}$ of $\bfG$ such that the associated point $\x^{\ast}$ of the building $\mcB^{(\red)}(\bfG^{\ast},F)$ corresponds to a Chevalley valuation of $\bfG^{\ast}$.
\end{lem}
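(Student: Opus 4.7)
Since $\bfS$ is unramified, $\bfG$ splits over $F^{\ur}$, and hence its quasi-split inner form $\bfG^{\ast}$ is also unramified. The plan is to first transfer $\bfS$ along an inner twist $\psi\from\bfG_{F^{\ur}}\xrightarrow{\sim}\bfG^{\ast}_{F^{\ur}}$ to obtain an unramified elliptic maximal torus of $\bfG^{\ast}$, and then rearrange within its stable conjugacy class so that the associated point becomes hyperspecial.

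For the transfer, after modifying $\psi$ by inner $F^{\ur}$-automorphisms---using that any two $F$-rational maximal tori of $\bfG^{\ast}$ in a common stable class are related by an inner automorphism of $\bfG^{\ast}_{F^{\ur}}$---we may arrange the cocycle $\sigma\mapsto\psi^{-1}\sigma(\psi)$ to normalize $\bfS$ and to induce the trivial class in $H^{1}(\Gamma_{F},N_{\bfG^{\ast}}(\bfS^{\ast})/\bfS^{\ast})$. Then $\bfS^{\ast}\colonequals\psi(\bfS)$ descends to an $F$-rational maximal torus of $\bfG^{\ast}$; it is unramified because $\psi$ is defined over $F^{\ur}$, and elliptic because the $F$-split rank of a torus is a cohomological invariant of its stable class.

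To arrange that the associated point $\x^{\ast}$ is a Chevalley valuation, fix a hyperspecial vertex $\x_{0}\in\mcB^{\red}(\bfG^{\ast},F)$, available since $\bfG^{\ast}$ is quasi-split unramified, and consider its reductive quotient $\bbG^{\ast}_{\x_{0}}$ over $\FF_{q}$. By Lang's theorem, $\FF_{q}$-rational elliptic maximal tori of $\bbG^{\ast}_{\x_{0}}$ are parametrized by Frobenius-conjugacy classes of elliptic elements in the absolute Weyl group, which is canonically identified via $\psi$ (and the common splitting over $F^{\ur}$) with that of $\bfG$. Hence the stable class of $\bfS^{\ast}$ is realized by some elliptic $\FF_{q}$-torus of $\bbG^{\ast}_{\x_{0}}$, and lifting it through its connected N\'eron model (cf.\ \cite[Lemma 3.4.4]{MR4013740}) produces an unramified elliptic maximal torus $\bfS^{\ast\prime}$ of $\bfG^{\ast}$ stably conjugate to $\bfS^{\ast}$ whose associated point in $\mcB^{\red}(\bfG^{\ast},F)$ is $\x_{0}$. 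Replacing $\bfS^{\ast}$ by $\bfS^{\ast\prime}$---equivalently, composing $\psi$ with the inner automorphism implementing this stable conjugacy---gives the desired transfer. The main obstacle lies in verifying the compatibility of Weyl-group parametrizations of stable classes across the inner twist $\psi$ and the Bruhat--Tits specializations at $\x$ (for $\bfG$) and at $\x_{0}$ (for $\bfG^{\ast}$); once this matching is made precise, Lang's theorem and the connected N\'eron model lifting furnish the Chevalley realization.
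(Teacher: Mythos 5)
Your proposal follows the same two-step outline as the paper---(i) transfer $\bfS$ to an $F$-rational elliptic unramified maximal torus $\bfS^{\ast}$ of $\bfG^{\ast}$, and (ii) replace $\bfS^{\ast}$ within its stable class to arrange that the associated point is superspecial---but you end by explicitly leaving open ``the main obstacle'' of matching Weyl-group parametrizations across the inner twist and the two Bruhat--Tits reductions. That acknowledged gap is precisely the content the proof needs to supply, so as written the argument is incomplete. The paper closes exactly this gap by quoting \cite[Lemma 3.4.12(1)]{MR4013740}: given a superspecial point $\x^{\ast}_{1}$ (which exists by quasi-splitness, and which is a Chevalley valuation by \cite[Remark 3.4.9]{MR4013740}), every elliptic maximal torus of $\bfG^{\ast}$ has a stable conjugate whose associated point is $\x^{\ast}_{1}$. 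You would either have to cite that lemma too, or actually carry out the Lang/N\'eron-model comparison of stable-class parametrizations that you defer---which is, in effect, re-proving Kaletha's lemma.

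Two smaller points. First, your second paragraph spends effort re-deriving the existence of the $F$-rational transfer $\bfS^{\ast}$ via a cocycle in $H^{1}(\Gamma_{F},N_{\bfG^{\ast}}(\bfS^{\ast})/\bfS^{\ast})$; the paper simply invokes the standard fact (cf.\ \cite[Section 3.2]{MR4013740}) that such a transfer exists whenever $\bfS$ is elliptic or $\bfG^{\ast}$ is quasi-split, and then observes that unramifiedness and ellipticity are preserved because $\psi|_{\bfS}$ is defined over $F$ and carries $\bfZ_{\bfG}$ to $\bfZ_{\bfG^{\ast}}$. Your cohomological sketch is heavier than needed and, as phrased, does not actually establish that the class in $H^{1}(\Gamma_{F},N_{\bfG^{\ast}}(\bfS^{\ast})/\bfS^{\ast})$ can be made trivial; it merely asserts it. Second, your Lang-theorem paragraph implicitly requires that the stable class of $\bfS^{\ast}$ is detected by the reductive quotient at the fixed hyperspecial vertex $\x_{0}$---this is true, but it is the substantive content of the step and is exactly what you would need to prove rather than leave as an ``obstacle.''
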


\begin{proof}
The precise meaning of the ``transfer'' is as follows: there exists an inner twist $\psi\colon\bfG\rightarrow\bfG^{\ast}$ such that its restriction to $\bfS$ induces an isomorphism $\psi|_{\bfS}\colon\bfS\rightarrow\bfS^{\ast}$ defined over $F$.
The existence of a transfer $\bfS^{\ast}$ of $\bfS$ is guaranteed by the ellipticity of $\bfS$ or the quasi-splitness of $\bfG^{\ast}$, see, for example, \cite[Section 3.2]{MR4013740}.
Note that $\bfS^{\ast}$ is unramified and elliptic since $\psi|_{\bfS}$ is defined over $F$ and maps $\bfZ_{\bfG}$ to $\bfZ_{\bfG^{\ast}}$.

Hence it suffices to show that such an $\bfS^{\ast}$ can be taken so that the associated point $\x^{\ast}\in\mcB^{(\red)}(\bfG^{\ast},F)$ corresponds to a Chevalley valuation.
For this, we utilize the results of Kaletha in \cite[Section 3.4.1]{MR4013740}.
Since $\bfG^{\ast}$ is quasi-split, there exists a point $\x^{\ast}_{1}\in\mcB^{(\red)}(\bfG^{\ast},F)$ which corresponds to a Chevalley valuation; such a point $\x^{\ast}_{1}$ is superspecial in the sense of Kaletha (see \cite[Remark 3.4.9]{MR4013740}).
By applying \cite[Lemma 3.4.12 (1)]{MR4013740} to $\x^{\ast}_{1}$ and $\bfS^{\ast}\subset\bfG^{\ast}$, we can find a maximal torus $\bfS_{1}^{\ast}$ of $\bfG^{\ast}$ stably conjugate to $\bfS^{\ast}$ with associated point $\x^{\ast}_{1}$.
By replacing $\bfS^{\ast}$ with $\bfS^{\ast}_{1}$ (hence $\x^{\ast}$ with $\x^{\ast}_{1}$), we obtain the desired assertion.
\end{proof}

\begin{lem}\label{lem:transfer2}
Let $\psi\colon\bfG\rightarrow\bfG^{\ast}$ be an inner twist whose restriction to $\bfS$ gives an isomorphism $\bfS\cong\bfS^{\ast}$ defined over $F$.
Then, under the isomorphism $\bbS(\F_{q})\rightarrow\bbS^{\ast}(\F_{q})$ induced by $\psi$, $\bbS(\F_{q})_{\vreg}$ is identified with $\bbS^{\ast}(\F_{q})_{\vreg}$.
\end{lem}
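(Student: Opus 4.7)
The plan is to translate the unramified very regularity condition on $S_0$ into a statement about reductions of roots, and then to check that the inner twist $\psi$ respects that statement via its induced bijection on root systems.

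First I would unpack Definition \ref{defn:ur-vreg} on $S$. For $\gamma\in S_0$, condition (2) holds automatically with $\bfT_\gamma=\bfS$ whenever conditions (1) and (3) do (using that $\bfS$ is unramified and elliptic, so its reduced apartment is $\{\bar\x\}$), and condition (3) already implies condition (1) since an element at which all roots are nontrivial is regular. Hence $\gamma\in S_{0,\vreg}$ if and only if $\gamma\in S_0$ and $\alpha(\gamma)\not\equiv 1\pmod{\mfp_{F_\alpha}}$ for every $\alpha\in R(\bfS,\bfG)$. Since $\bfS$ is unramified, this reduction-theoretic condition depends only on the image $\bar\gamma\in\bbS(\F_q)$, and can be rephrased as $\bar\alpha(\bar\gamma)\neq 1$ in $\bar\F_q^\times$ for every $\alpha$, where $\bar\alpha$ is the root of $\bbS$ in the appropriate reductive group over $\bar\F_q$.

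Next, I would exploit that $\psi|_{\bfS}\colon\bfS\to\bfS^{\ast}$ is an $F$-rational isomorphism of tori. Because $\psi$ is an inner twist, it induces an $F$-rational bijection of absolute root systems $R(\bfS^{\ast},\bfG^{\ast})\to R(\bfS,\bfG)$ by pullback, $\alpha^{\ast}\mapsto\alpha^{\ast}\circ\psi|_{\bfS}$; concretely, for each $\alpha\in R(\bfS,\bfG)$ there is a unique corresponding $\alpha^{\ast}\in R(\bfS^{\ast},\bfG^{\ast})$ such that the identity $\alpha^{\ast}(\psi(t))=\alpha(t)$ holds for all $t\in\bfS(\ol{F})$. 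In particular, $\psi|_{\bfS}$ restricts to a bijection $S\to S^{\ast}$ sending $S_0$ to $S_0^{\ast}$, under which the unramified very regularity condition spelled out in the previous paragraph is preserved on the nose.

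Finally, since $\psi|_{\bfS}$ is an $F$-rational isomorphism of tori, functoriality of connected N\'eron models yields an isomorphism of these models over $\mcO_F$, and hence an isomorphism $\bbS\to\bbS^{\ast}$ of their special fibers over $\F_q$ compatible with the identifications $S_{0:0+}\cong\bbS(\F_q)$ and $S^{\ast}_{0:0+}\cong\bbS^{\ast}(\F_q)$. Passing the identity $\alpha^{\ast}(\psi(\gamma))=\alpha(\gamma)$ to special fibers, the condition $\bar\alpha(\bar\gamma)\neq 1$ corresponds exactly to $\bar\alpha^{\ast}(\psi(\bar\gamma))\neq 1$, yielding the desired bijection $\bbS(\F_q)_{\vreg}\leftrightarrow\bbS^{\ast}(\F_q)_{\vreg}$. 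The only subtlety I anticipate is the bookkeeping needed to confirm that pullback of roots along the $F$-rational isomorphism $\psi|_{\bfS}$ commutes with reduction to the special fiber, but this is a direct consequence of N\'eron-model functoriality applied to tori, so no serious obstacle is expected.
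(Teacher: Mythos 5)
Your proposal is correct and follows essentially the same route as the paper: use the $F$-rational isomorphism $\psi|_{\bfS}$ together with the induced bijection on absolute roots to transport the unramified very regularity condition, and then pass through the reduction map $S_0 \twoheadrightarrow \bbS(\F_q)$. The paper's version is more economical than yours: rather than characterizing $\bbS(\F_q)_{\vreg}$ intrinsically, it just observes that $\psi$ identifies $S_{0,\vreg}$ with $S^{\ast}_{0,\vreg}$ (directly from Definition \ref{defn:ur-vreg} and the root bijection $\alpha\mapsto\alpha\circ\psi|_{\bfS}^{-1}$), and then uses that $\bbS(\F_q)_{\vreg}$ is \emph{defined} (Definition \ref{defn:finite-vreg}) as the image of $S_{0,\vreg}$ under a reduction map which commutes with $\psi$; the N\'eron-model detour and the step simplifying conditions (1)--(3) of the definition are not needed.

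One caution about your intermediate reformulation ``$\bar\alpha(\bar\gamma)\neq 1$ \dots where $\bar\alpha$ is the root of $\bbS$ in the appropriate reductive group over $\ol{\F}_q$'': the roots of $\bbS$ in the reductive quotient $\bbG_{\x}$ are in general only a \emph{proper} subset of the reductions of $R(\bfS,\bfG)$ — this is exactly the subtlety that makes $\bbS(\F_q)_{\vreg}\subsetneq \bbS_{\reg}(\F_q)$ in general and motivates Lemma \ref{lem:transfer} (passage to a point corresponding to a Chevalley valuation). In the extreme Iwahori case $\bbG_{\x}$ has no roots at all, so your phrasing, read literally, would make the condition vacuous. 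Your argument is ultimately unaffected because you really transport the condition $\alpha(\gamma)\not\equiv 1\pmod{\mfp}$ for all $\alpha\in R(\bfS,\bfG)$ (absolute roots over the splitting field), not a condition on roots of $\bbG_{\x}$; but the wording should be tightened to avoid suggesting otherwise.
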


\begin{proof}
As $\psi|_{\bfS}$ is defined over $F$, we have identifications $S\cong S^{\ast}$ and $\bbS(\F_{q})\cong\bbS^{\ast}(\F_{q})$ which are consistent with reduction morphisms:
\[
\xymatrix{
S_{0} \ar@{->>}[d] \ar^-{\psi}_-{\cong}[r]& S^{\ast}_{0} \ar@{->>}[d]\\
\bbS(\F_{q}) \ar^-{\psi}_-{\cong}[r]& \bbS^{\ast}(\F_{q})
}
\]
Since $\psi$ induces a bijection $R(\bfS,\bfG)\rightarrow R(\bfS^{\ast},\bfG^{\ast})\colon \alpha\mapsto\alpha\circ\psi|_{\bfS}^{-1}$, $\psi$ gives an identification of $S_{\vreg}$ with $S^{\ast}_{\vreg}$, and also $S_{0,\vreg}$ with $S^{\ast}_{0,\vreg}$, by the definition of unramified very regularity.
As $\bbS(\F_{q})_{\vreg}$ (resp.\ $\bbS^{\ast}(\F_{q})_{\vreg}$) is defined to be the image of $S_{0,\vreg}$ (resp.\ $S^{\ast}_{0,\vreg}$) under the reduction morphism, $\psi$ gives an identification of $\bbS(\F_{q})_{\vreg}$ with $\bbS^{\ast}(\F_{q})_{\vreg}$.
\end{proof}

\begin{prop}\label{prop:q suff large}
The inequality (\ref{ineq}) is satisfied when $q$ is sufficiently large.
\end{prop}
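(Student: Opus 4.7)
The plan is to reduce the inequality to a standard Lang--Weil-type estimate by passing to the quasi-split inner form. First I would apply Lemmas \ref{lem:transfer} and \ref{lem:transfer2} to replace $(\bfG,\bfS,\x)$ by its quasi-split transfer $(\bfG^{\ast},\bfS^{\ast},\x^{\ast})$; by Lemma \ref{lem:transfer2} this does not change the ratio $|\bbS(\F_q)|/|\bbS(\F_q)_{\nvreg}|$, and it arranges that $\x^{\ast}$ corresponds to a Chevalley (hyperspecial) valuation. At such a point, the reductive quotient is a split Chevalley group over $\F_q$ containing the N\'eron special fiber of $\bfS^{\ast}$ as an ordinary maximal torus $\bbS^{\ast}$, and the reduction map identifies the two root systems $R(\bfS^{\ast},\bfG^{\ast})$ and $R(\bbS^{\ast},\bbG^{\ast})$ compatibly with the reduction of character lattices.

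Next I would translate Definition \ref{defn:ur-vreg} into the condition that $\bar s \in \bbS^{\ast}(\F_q)$ lies in $\bbS^{\ast}(\F_q)_{\vreg}$ iff $\bar{\alpha}(\bar s)\neq 1$ for every root $\alpha\in R(\bfS^{\ast},\bfG^{\ast})$, where $\bar{\alpha}$ is the induced character of $\bbS^{\ast}$. Since $\bfS^{\ast}$ is unramified, $X^{\ast}(\bfS^{\ast})$ is canonically identified with $X^{\ast}(\bbS^{\ast})$, so each $\bar{\alpha}$ is a nontrivial character, and hence
\[
\bbS^{\ast}(\F_q)_{\nvreg}\subseteq \bigcup_{\alpha\in\Gamma_F\backslash R(\bfS^{\ast},\bfG^{\ast})}\{\bar s\in\bbS^{\ast}(\F_q):\bar{\alpha}(\bar s)=1\}.
\]
Each summand on the right is the group of $\F_q$-points of a closed $\F_q$-subgroup of $\bbS^{\ast}$ of dimension $\dim\bbS^{\ast}-1$.

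Finally I would apply the standard estimate $|\bbT(\F_q)|=\prod_i(q-\zeta_i)$ with $|\zeta_i|=1$ for any $\F_q$-torus $\bbT$ of dimension $n$, which gives the sandwich $(q-1)^{n}\leq|\bbT(\F_q)|\leq(q+1)^{n}$ (with an analogous estimate for the diagonalizable kernels $\ker(\bar{\alpha})$, absorbing the bounded component-group contribution into a constant $C$ depending only on the root datum). This yields
\[
\frac{|\bbS^{\ast}(\F_q)|}{|\bbS^{\ast}(\F_q)_{\nvreg}|}\geq \frac{(q-1)^{\dim\bbS^{\ast}}}{C\cdot|R(\bfS^{\ast},\bfG^{\ast})|\cdot(q+1)^{\dim\bbS^{\ast}-1}},
\]
which is of order $q/|R|$ as $q\to\infty$ and therefore exceeds $2$ once $q$ is sufficiently large. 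The only non-automatic step is verifying that each reduced root $\bar{\alpha}$ is genuinely nontrivial on $\bbS^{\ast}$, which is what forces $\ker(\bar{\alpha})$ to have codimension one and contribute only $O(q^{\dim\bbS^{\ast}-1})$ rather than $\Theta(q^{\dim\bbS^{\ast}})$ points; this is the purpose of the transfer step, which places us in a setting where every absolute root descends to a nontrivial character of $\bbS^{\ast}$ through the identification of character lattices for unramified tori.
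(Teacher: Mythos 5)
Your proposal is correct and follows the same overall strategy as the paper: apply Lemmas \ref{lem:transfer} and \ref{lem:transfer2} to transfer to the quasi-split form, use the fact that $\x^{\ast}$ corresponds to a Chevalley valuation to identify $\bbS^{\ast}(\F_{q})_{\vreg}$ with the regular semisimple locus $\bbS^{\ast}_{\reg}(\F_{q})$, and then bound the complement. The only divergence is in the final step: where the paper cites \cite[Lemma 2.3.11]{MR4211779} for the estimate $|\bbS^{\ast}_{\reg}(\F_{q})|/|\bbS^{\ast}(\F_{q})|\geq 1-C/q$, you unpack that bound into an elementary point count over the kernels of the reduced roots using $(q-1)^{n}\leq|\bbT(\F_{q})|\leq(q+1)^{n}$; this is essentially the proof of the cited lemma and is equally valid, just more self-contained. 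Two small slips worth flagging, neither of which affects the conclusion: the reductive quotient at a superspecial point of a quasi-split group is quasi-split over $\F_{q}$ but need not be \emph{split} (e.g.\ for unramified unitary groups), and the $\F_{q}$-rational subgroup $\bigcap_{\gamma}\ker({}^{\gamma}\bar{\alpha})$ cut out by a full Galois orbit of roots may have codimension strictly greater than one (only $\ker\bar{\alpha}$ over $\overline{\F}_{q}$ is guaranteed to have codimension exactly one), so ``of dimension $\dim\bbS^{\ast}-1$'' should read ``of codimension $\geq 1$''; smaller dimension only strengthens the bound.
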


\begin{proof}
We take an inner twist $\psi\colon\bfG\rightarrow\bfG^{\ast}$ transferring $\bfS$ to $\bfS^{\ast}$ as in Lemma \ref{lem:transfer}.
Then, by Lemma \ref{lem:transfer2}, we have 
\[
\frac{|\bbS(\F_{q})|}{|\bbS(\F_{q})_{\nvreg}|}
=
\frac{|\bbS^{\ast}(\F_{q})|}{|\bbS^{\ast}(\F_{q})_{\nvreg}|}.
\]
Hence it is enough to show the assertion for $\bfS^{\ast}\subset\bfG^{\ast}$ whose associated point $\x^{\ast}\in\mathcal{B}^{(\red)}(\bfG^{\ast},F)$ corresponds to a Chevalley valuation.

Let $\bbG^{\ast}_{\x^{\ast}}$ be the reductive quotient of the special fiber of the parahoric subgroup scheme of $\bfG^{\ast}$ with respect to $\x^{\ast}$, and regard $\bbS^{\ast}$ as a maximal torus of 
 $\bbG^{\ast}_{\x^{\ast}}$.
Since the point $\x^{\ast}$ corresponds to a Chevalley valuation, the reduction map induces a bijection from $R(\bfS^{\ast},\bfG^{\ast})$ to $R(\bbS^{\ast},\bbG^{\ast}_{\x^{\ast}})$.
This implies that an element $\gamma\in S^{\ast}_{0}$ is unramified very regular if and only if its reduction $\overline{\gamma}\in\bbS^{\ast}(\F_{q})$ is regular semisimple in $\bbG^{\ast}_{\x^{\ast}}$.
Therefore the set $\bbS^{\ast}(\F_{q})_{\vreg}$ is nothing but the set $\bbS^{\ast}_{\reg}(\F_{q})$ of $\F_{q}$-valued points of regular semisimple locus $\bbS^{\ast}_{\reg}$ of $\bbS^{\ast}$ in $\bbG^{\ast}_{\x^{\ast}}$.
Similarly, $\bbS^{\ast}(\F_{q})_{\nvreg}$ ($\colonequals\bbS^{\ast}(\F_{q})\smallsetminus\bbS^{\ast}(\F_{q})_{\vreg}$) is nothing but $\bbS_{\nreg}^*(\F_{q})$ ($\colonequals\bbS^{\ast}(\F_{q})\smallsetminus\bbS^{\ast}_{\reg}(\F_{q})$).

Now the statement follows from a well-known fact of finite reductive groups.
For example, by \cite[Lemma 2.3.11]{MR4211779}, there exists a positive number $C>0$ depending only on the root datum of $\bbG^{\ast}_{\x^{\ast}}$ such that we have
\[
\frac{|\bbS^{\ast}_{\reg}(\F_{q})|}{|\bbS^{\ast}(\F_{q})|}\geq1-\frac{C}{q},
\]
or equivalently,
\[
\frac{|\bbS^{\ast}(\F_{q})|}{|\bbS^{\ast}_{\nreg}(\F_{q})|}\geq\frac{q}{C}.
\]
Thus the inequality $(\star)$ is satisfied whenever $q>2C$.
\end{proof}

\begin{remark}
The subtlety handled the above lemmas culminating in Proposition \ref{prop:q suff large} is exactly that in general $\bbS(\FF_q)_{\vreg} \subsetneq \bbS_{\reg}(\FF_q)$. For example, in the extreme case that the parahoric $G_{\x,0}$ is Iwahori, $\bbS_{\reg}(\FF_q) = \bbS(\FF_q)$.
\end{remark}

\begin{remark}\label{rem:q >> 0}
There is a natural question of how large $q$ must be in order for \eqref{ineq} to be satisfied. At least for Coxeter tori, this bound is very mild (forthcoming work). For example, if $\bfS$ is the Coxeter torus of $\bfG = G_2$, then $|\bbS(\FF_q)_{\nreg}|$ is either $1$ or $3$, depending on $q$, and $|\bbS(\FF_q)| \geq 7$ for $q > 3$. Hence only $q=2$ does not satisfy \eqref{ineq}, so in fact \eqref{ineq} is a \textit{weaker} condition than the conditions on $p$ required by the theory of Kaletha and Yu reviewed in Section \ref{sec:Yu}. The $\GL_n$ case (which was known already to Henniart \cite{MR1235293}) is explained in Section \ref{sec:GLn}.
\end{remark}

\subsection{Representations of $SG_{\x,0+}$ characterized by their trace on $S_{\vreg}$}\label{subsec:SG+}

In this section, we prove (Theorem \ref{thm:SG+ vreg}) that some irreducible virtual representations of $SG_{\x,0+}$ are characterized by character values on unramified very regular elements of $SG_{\x,0}$.
In this subsection, we will additionally assume that the inequality (\ref{ineq}) (introduced in Section \ref{subsec:vreg}) holds.
Note that then, in particular, we have $S_{\vreg}\neq\varnothing$.

This section pertains to smooth irreducible representations $\pi_+$ of $SG_{\x,0+}$ such that for a nonzero constant $c \in \bbC$,
\begin{equation*}\label{e:char vreg +}
\Theta_{\pi_+}(\gamma) = c \cdot \theta(\gamma), \qquad \text{for all $\gamma \in S_{\vreg}$}
\tag{$\dagger_{+}$}
\end{equation*}
for a character $\theta$ of $S$ with $\bfG^0(\bfS, \theta) = \bfS$. 
This determines the $\Theta_{\pi_+}$ on the unramified very regular elements of $SG_{\x,0+}$ since every unramified regular element of $SG_{\x,0+}$ is $G_{\x,0+}$-conjugate to an element of $S_{\vreg}$ by Lemma \ref{lem:SG+ vreg}.

Note that we have the following chain of open compact subgroups:
\begin{align*}
S_{0}G_{\x,0+}
&\supset S_{0}G_{\x,0+}\cap K^{d}=S_{0}(G^{0},\ldots,G^{i})_{\x,(0+,s_{0},\ldots,s_{i-1})}\\
&\supset G_{\x,0+}\cap K^{d}=(G^{0},\ldots,G^{i})_{\x,(0+,s_{0},\ldots,s_{i-1})}
\end{align*}

\begin{lemma}\label{lem:hom G+ +}
Let ${\pi_+}$ be a smooth irreducible representation of $SG_{\x,0+}$ satisfying \eqref{e:char vreg +} for the character $\theta = \phi\cdot\varepsilon^{\ram}[\phi]$ of $S$.
Then 
\begin{equation*}
\Hom_{G_{\x,0+}\cap K^{d}}(\rho_d' \otimes \phi_d, \pi_+) \neq0.
\end{equation*}
\end{lemma}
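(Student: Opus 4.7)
The plan is to translate the Hom statement via Frobenius reciprocity and then establish non-vanishing by comparing characters on the unramified very regular locus. Since $H\colonequals G_{\x,0+}\cap K^{d}$ is contained in $K^{d}$ and $\rho_{d}'\otimes\phi_{d}$ is defined on $K^{d}$, it suffices to show the stronger statement that $\Hom_{K^{d}}(\rho_{d}'\otimes\phi_{d},\pi_{+}|_{K^{d}})\neq 0$. By Frobenius reciprocity applied to the inclusion $K^{d}\subset SG_{\x,0+}$, this is equivalent to $\pi_{+}$ appearing as an irreducible constituent of
\[
\eta\colonequals\Ind_{K^{d}}^{SG_{\x,0+}}\bigl(\rho_{d}'\otimes\phi_{d}\bigr).
\]

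Next I would compute $\Theta_{\eta}|_{S_{\vreg}}$ by mimicking the proof of Proposition \ref{prop:AS-vreg}, but with $SG_{\x,0}$ replaced by $SG_{\x,0+}$: the Frobenius-formula sum over $W_{G_{\x,0}}(\bfS)$ collapses to a sum over $W_{G_{\x,0+}}(\bfS)\colonequals N_{G_{\x,0+}}(\bfS)/S_{0+}$, which is trivial because any element of the pro-unipotent group $G_{\x,0+}$ that normalizes $\bfS$ must represent the identity in the Weyl group and hence already lie in $S_{0+}$. The analogous descent through $\cc K^{d}=K^{d}$ (valid in the toral case) then gives
\[
\Theta_{\eta}(\gamma)=(-1)^{r(\bbG^{0}_{\x})-r(\bbS)+r(\bfS,\phi)}\cdot\varepsilon^{\ram}[\phi](\gamma)\phi(\gamma)=\pm\,\theta(\gamma),\qquad\gamma\in S_{\vreg},
\]
matching $\Theta_{\pi_{+}}(\gamma)=c\cdot\theta(\gamma)$ up to the nonzero constant $\pm c$.

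Finally I would upgrade this character agreement to the desired non-vanishing. By $G_{\x,0+}$-conjugation invariance of characters together with Lemma \ref{lem:SG+ vreg}, the identity $\Theta_{\pi_{+}}=\pm c\cdot\Theta_{\eta}$ extends from $S_{\vreg}$ to the whole unramified very regular locus $S_{\vreg}G_{\x,0+}$ of $SG_{\x,0+}$. Since the hypothesis \eqref{ineq} implies via Corollary \ref{cor:greater-than-2} that $S_{\vreg}$ generates $S_{0}$ as a group (so that $S_{\vreg}G_{\x,0+}$ cuts out enough of $SG_{\x,0+}$), one concludes that the unique irreducible constituent of $\eta$ whose character has the displayed form on $S_{\vreg}$ must be isomorphic to $\pi_{+}$. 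The main obstacle is the last step: converting a pointwise identity of characters on the unramified very regular locus into a genuine constituent statement requires showing that among the constituents of $\eta$, exactly one carries the character recipe $\pm\theta$ on $S_{\vreg}$, which one expects to deduce via the generation property together with the irreducibility of $\pi_{+}$.
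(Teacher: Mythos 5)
Your reduction at the start is the root of the trouble. You propose to prove the stronger statement $\Hom_{K^{d}}(\rho_{d}'\otimes\phi_{d},\pi_{+})\neq 0$; by Frobenius reciprocity and the irreducibility of $\eta := \Ind_{K^{d}}^{SG_{\x,0+}}(\rho_{d}'\otimes\phi_{d})$ (which comes from Yu's theory), this is \emph{equivalent} to $\pi_{+}\cong\eta$. But $\pi_{+}\cong\eta$ is exactly the conclusion of Theorem \ref{thm:SG+ vreg}, and the proof of that theorem takes the present Lemma as its first step. So your route is implicitly circular: you have converted the Lemma into the Theorem it is supposed to feed.

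The method you then sketch for the stronger claim---matching $\Theta_{\pi_{+}}$ and $\Theta_{\eta}$ on $S_{\vreg}$ and invoking the generation statement of Corollary \ref{cor:greater-than-2} to upgrade this to an isomorphism---does not close the gap, as you yourself note. Two irreducible representations of $SG_{\x,0+}$ may have proportional character values on the unramified very regular locus yet be non-isomorphic; agreement there is exactly the kind of partial data the whole section is designed to leverage, not something that mechanically determines a representation. Generation of $S_{0}$ by $S_{\vreg}$ is a multiplicative statement about group elements and does not translate into the assertion that the character inner product $\langle\rho_{d}'\otimes\phi_{d},\pi_{+}\rangle_{K^{d}}$---a sum over all of $K^{d}$, including the non-vreg locus on which $\Theta_{\pi_{+}}$ is unknown---is nonzero.

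The paper avoids this by not aiming for $K^{d}$ at all. It computes the partial sum $\sum_{\gamma\in S_{0,\vreg}G_{\x,0+}\cap K^{d}}\Theta_{\rho_{d}'\otimes\phi_{d}}(\gamma)\overline{\Theta_{\pi_{+}}(\gamma)}$, which is a sum over precisely the locus where both characters are controlled, and shows it is nonzero. It then expresses $S_{0,\vreg}G_{\x,0+}\cap K^{d}$ via inclusion--exclusion over the auxiliary subgroups $S_{A}G_{\x,0+}\cap K^{d}$ (with $S_{A}\colonequals\{\delta\in S_{0}\mid\alpha(\delta)\equiv 1\pmod{\mfp}\text{ for all }\alpha\in A\}$), converting the nonvanishing partial sum into the nonvanishing of a genuine inner product over \emph{some} subgroup $S_{A}G_{\x,0+}\cap K^{d}$. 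Since every such subgroup contains $G_{\x,0+}\cap K^{d}$, the stated conclusion follows. The flexibility of ``some $A$ will do'' is exactly what the weaker target $\Hom_{G_{\x,0+}\cap K^{d}}\neq 0$ tolerates and your strengthening to $\Hom_{K^{d}}\neq 0$ destroys. Without an inclusion--exclusion device of this kind, the character data on $S_{\vreg}$ alone cannot reach the inner product over $K^{d}$.
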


\begin{proof}
Because of the smoothness assumption, the action of $S_0G_{\x,0+}$ on ${\pi_+}$ factors through a finite quotient $S_0G_{\x,0+}/G_{\x,r}$ for sufficiently large $r \in \bbR_{>0}$. All sums in the rest of the proof can be and are interpreted as taking place in this finite quotient.

We first prove that
\begin{equation}\label{e:vreg nonzero +}
\sum_{\gamma \in S_{0,\vreg} G_{\x,0+}\cap K^{d}} \Theta_{\rho'_{d} \otimes \phi_d}(\gamma) \cdot \overline{\Theta_{\pi_+}(\gamma)} \neq 0.
\end{equation}
By Proposition \ref{prop:rho_d'}, $\Theta_{\rho'_{d} \otimes \phi_d}(\gamma) = 0$ if $\gamma \in S_{0,\vreg} G_{\x,0+}\cap K^{d}$ is not $K^d$-conjugate to an element of $S$. 
Thus we get 
\[
\sum_{\gamma \in S_{0,\vreg} G_{\x,0+}\cap K^{d}} \Theta_{\rho'_{d} \otimes \phi_d}(\gamma) \cdot \overline{\Theta_{\pi_+}(\gamma)}
=
\sum_{\begin{subarray}{c}\gamma\in S_{0,\vreg}G_{\x,0+}\cap K^{d} \\ \text{${}^{k}\gamma\in S$ for some $k\in K^{d}$}\end{subarray}} \Theta_{\rho'_{d} \otimes \phi_d}(\gamma) \cdot \overline{\Theta_{\pi_+}(\gamma)}.
\]

Let us investigate the index set of the sum on the right-hand side.
If $\gamma\in S_{0,\vreg} G_{\x,0+}\cap K^{d}$ is $K^d$-conjugate to an element of $S$ (say ${}^{k}\gamma\in S$ with $k\in K^{d}$), then ${}^{k}\gamma$ belongs to $S_{0,\vreg}$.
Conversely, for any element $\gamma\in S_{0,\vreg}$ and $k\in K^{d}$, ${}^{k}\gamma$ always belongs to $S_{0,\vreg}G_{\x,0+}\cap K^{d}$.
In other words, we have a surjective map
\[
S_{0,\vreg}\times K^{d}
\twoheadrightarrow
\{\gamma\in S_{0,\vreg}G_{\x,0+}\cap K^{d} \mid \text{${}^{k}\gamma\in S$ for some $k\in K^{d}$}\}
\]
given by $(\gamma',k)\mapsto {}^{k}\gamma'$.
Suppose that the images of $(\gamma_{1},k_{1})\in S_{0,\vreg}\times K^{d}$ and $(\gamma_{2},k_{2})\in S_{0,\vreg}\times K^{d}$ under this map coincide, i.e., ${}^{k_{1}}\gamma_{1}={}^{k_{2}}\gamma_{2}$.
Then, by taking the connected centralizer groups of ${}^{k_{1}}\gamma_{1}$ and ${}^{k_{2}}\gamma_{2}$ in $\bfG$, we get ${}^{k_{1}}\bfS={}^{k_{2}}\bfS$.
Recall that we have $K^{d}\cap N_{G_{\x,0}}(\bfS)=S_{0}$ as proved in the proof of Proposition \ref{prop:AS-vreg} (note that here we are supposing that $\bfG^{0}=\bfS$), hence we have $N_{K^{d}}(\bfS)=S$ (note $K^{d}\subset Z_{\bfG}G_{\x,0}$).
Hence we get $k_{1}^{-1}k_{2}\in S$.
In other words, the map $(\gamma',k)\mapsto {}^{k}\gamma'$ induces a bijection
\[
S_{0,\vreg}\times (K^{d}/S)
\xrightarrow{1:1}
\{\gamma\in S_{0,\vreg}G_{\x,0+}\cap K^{d} \mid \text{${}^{k}\gamma\in S$ for some $k\in K^{d}$}\}.
\]

By this observation, we get
\[
\sum_{\begin{subarray}{c}\gamma\in S_{0,\vreg}G_{\x,0+}\cap K^{d} \\ \text{${}^{k}\gamma\in S$ for some $k\in K^{d}$}\end{subarray}} \Theta_{\rho'_{d} \otimes \phi_d}(\gamma) \cdot \overline{\Theta_{\pi_+}(\gamma)}
=
\sum_{\gamma'\in S_{0,\vreg}}
\sum_{k\in K^{d}/S}
\Theta_{\rho'_{d} \otimes \phi_d}({}^{k}\gamma') \cdot \overline{\Theta_{\pi_+}({}^{k}\gamma')}.
\]
Since both of $\Theta_{\rho'_{d}\otimes\phi_{d}}$ and $\overline{\Theta_{\pi_+}}$ are invariant under $K^{d}$-conjugation (note that $K^{d}$ is contained in $SG_{\x,0+}$, where $\pi_{+}$ is defined), we get
\[
\sum_{\gamma'\in S_{0,\vreg}}
\sum_{k\in K^{d}/S}
\Theta_{\rho'_{d} \otimes \phi_d}({}^{k}\gamma') \cdot \overline{\Theta_{\pi_+}({}^{k}\gamma')}
=
|K^{d}/S|
\sum_{\gamma'\in S_{0,\vreg}}
\Theta_{\rho'_{d} \otimes \phi_d}(\gamma') \cdot \overline{\Theta_{\pi_+}(\gamma')}.
\]
Here, as explained in the beginning of this proof, precisely the symbol $|K^{d}/S|$ denotes the index of the subgroup $SG_{\x,r}$ in $K^{d}$.

Therefore, to get (\ref{e:vreg nonzero +}), is suffices to check the above sum over $\gamma'\in S_{0,\vreg}$ is not zero.
By Proposition \ref{prop:rho_d'} and our assumption on $\pi_{+}$, for any $\gamma'\in S_{0,\vreg}$, we have
\[
\Theta_{\rho'_{d} \otimes \phi_d}(\gamma') \cdot \overline{\Theta_{\pi_+}(\gamma')}
=
(-1)^{r(\bfS,\phi)}\cdot \bar{c} \cdot \theta(\gamma')\overline{\theta(\gamma')}.
\]
Thus we get
\[
\sum_{\gamma'\in S_{0,\vreg}}
\Theta_{\rho'_{d} \otimes \phi_d}(\gamma') \cdot \overline{\Theta_{\pi_+}(\gamma')}.
=
(-1)^{r(\bfS,\phi)}\bar{c}
\sum_{\gamma'\in S_{0,\vreg}} \theta(\gamma')\overline{\theta(\gamma')}.
\]
Since $\theta(\gamma')\overline{\theta(\gamma')}$ is a positive number, this sum is not zero (recall that $S_{0,\vreg}\neq\varnothing$).

We next show
\[
\sum_{\gamma \in G_{\x,0+}\cap K^{d}} \Theta_{\rho'_{d} \otimes \phi_d}(\gamma) \cdot \overline{\Theta_{\pi_+}(\gamma)} \neq 0
\]
by using (\ref{e:vreg nonzero +}) we proved just now.
For any subset $A \subset R(\bfS, \bfG)$, we consider the subgroup 
\[
S_A \colonequals \{\delta \in S_0 \mid \text{$\alpha(\delta) \equiv 1 \pmod{\mfp}$ for all $\alpha \in A$}\}
\]
of $S$.
Observe that $S_{0}=S_{\varnothing}$ and that $S_{0,\vreg} = S_0 \smallsetminus \bigcup_{\varnothing \neq A \subset R(\bfS, \bfG)} S_{A}$. 
By the principle of inclusion and exclusion, the left-hand side of \eqref{e:vreg nonzero +} is equal to
\begin{align*}
\sum_{A \subset R(\bfS, \bfG)} (-1)^{|A|} \sum_{\gamma \in S_A G_{\x,0+}\cap K^{d}} \Theta_{\rho'_{d} \otimes \phi_d}(\gamma) \cdot \overline{\Theta_{\pi_+}(\gamma)}.
\end{align*}
It follows that the sum $\sum_{\gamma \in S_A G_{\x,0+}\cap K^{d}} \Theta_{\rho'_{d} \otimes \phi_d}(\gamma) \cdot \overline{\Theta_{\pi_+}(\gamma)}$ is not zero for some $A \subset R(\bfS, \bfG)$.
In other words, 
$\dim \Hom_{S_A G_{\x,0+}\cap K^{d}}(\rho_d' \otimes \phi_d, \pi_+) \neq 0$ for some subset $A \subset R(\bfS, \bfG)$. Since $S_A G_{\x,0+} \supset G_{\x,0+}$, the desired conclusion follows.
\end{proof}

\begin{theorem}\label{thm:SG+ vreg}
Let $\pi_+$ be a smooth irreducible representation of $SG_{\x,0+}$ satisfying \eqref{e:char vreg +} for the character $\theta = \phi\cdot\varepsilon^{\ram}[\phi]$ of $S$.
Then $c = (-1)^{r(\bfS, \phi)}$ and $\pi_+ \cong \Ind_{K^{d}}^{SG_{\x,0+}}(\rho_d' \otimes \phi_d)$.
\end{theorem}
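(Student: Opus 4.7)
The aim is to identify $\pi_+$ with $\tilde\pi\colonequals \Ind_{K^{d}}^{SG_{\x,0+}}(\rho'_{d}\otimes\phi_{d})$; once this is done, the value $c = (-1)^{r(\bfS,\phi)}$ will fall out by computing $\Theta_{\tilde\pi}|_{S_{\vreg}}$ and comparing with the hypothesis on $\Theta_{\pi_+}$. I will proceed in three steps: (i) show $\tilde\pi$ is irreducible, (ii) compute $\Theta_{\tilde\pi}$ explicitly on $S_{\vreg}$, and (iii) produce a non-zero intertwiner $\tilde\pi\to\pi_+$, at which point Schur's lemma finishes the argument.

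For (i), irreducibility of $\tilde\pi$ is bootstrapped from Yu's theorem: since $\cInd_{K^{d}}^{G}(\rho'_{d}\otimes\phi_{d}) = \pi_{(\bfS,\phi)}$ is irreducible supercuspidal, and compact induction factors as $\cInd_{K^{d}}^{G} = \cInd_{SG_{\x,0+}}^{G}\circ \Ind_{K^{d}}^{SG_{\x,0+}}$, any non-trivial decomposition of $\tilde\pi$ would descend to a decomposition of $\pi_{(\bfS,\phi)}$. For (ii), the Frobenius character formula gives
\[
\Theta_{\tilde\pi}(\gamma) = \sum_{\substack{g\in K^{d}\backslash SG_{\x,0+}\\ {}^{g}\gamma\in K^{d}}}\Theta_{\rho'_{d}\otimes\phi_{d}}({}^{g}\gamma),\qquad \gamma\in S_{\vreg}.
\]
Arguing as in the proof of Proposition \ref{prop:AS-vreg} (using Lemma \ref{lem:cc K^{d}}; recall $\cc K^{d}=K^{d}$ since $\phi$ is toral), any $g$ contributing must lie in $N_{SG_{\x,0+}}(\bfS)$. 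Since $G_{\x,0+}$ is pro-$p$ and $W_{G}(\bfS)$ has order prime to $p$, one has $N_{G_{\x,0+}}(\bfS) = S_{0+}$, so $N_{SG_{\x,0+}}(\bfS) = S \subset K^{d}$ in the toral case; the index set is therefore trivial. Proposition \ref{prop:rho_d'}, together with $W_{G^{0}_{\x,0}}(\bfS)=1$ and $r(\bbG^{0}_{\x})=r(\bbS)$ from torality, then gives $\Theta_{\tilde\pi}(\gamma) = (-1)^{r(\bfS,\phi)}\theta(\gamma)$.

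For step (iii), Lemma \ref{lem:hom G+ +} yields $\Hom_{H}(\rho'_{d}\otimes\phi_{d},\pi_{+})\neq 0$, where $H\colonequals G_{\x,0+}\cap K^{d}$. I want to upgrade this to $\Hom_{K^{d}}(\rho'_{d}\otimes\phi_{d},\pi_{+})\neq 0$, whence Frobenius reciprocity produces the desired non-zero map $\tilde\pi\to\pi_{+}$. In the toral setting $K^{d}=S\cdot H$, $H\triangleleft K^{d}$, and $S\cap H = S_{0+}$, so the obstruction to extending an $H$-map to $K^{d}$ is measured by the action of $S/S_{0+}$ on $\Hom_{H}(\rho'_{d}\otimes\phi_{d},\pi_{+})$. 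Equivalently, by character orthogonality in the relevant finite quotient,
\[
\dim\Hom_{K^{d}}(\rho'_{d}\otimes\phi_{d},\pi_{+}) = \tfrac{1}{|K^{d}/\ker|}\sum_{\gamma\in K^{d}/\ker}\Theta_{\rho'_{d}\otimes\phi_{d}}(\gamma)\overline{\Theta_{\pi_{+}}(\gamma)},
\]
and non-vanishing will follow by repeating the inclusion--exclusion argument of Lemma \ref{lem:hom G+ +}, but now summing over the unramified very regular locus $S_{\vreg}\cdot H$ inside $K^{d}$ (rather than just over $H$). The contribution from this locus is computed using Proposition \ref{prop:rho_d'} and the assumed identity $\Theta_{\pi_{+}}(\gamma) = c\,\theta(\gamma)$ on $S_{\vreg}$: the characters match up in sign precisely because of the twist $\theta = \phi\cdot \varepsilon^{\ram}[\phi]$, yielding a sum of positive terms.

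The main obstacle is this final upgrade from $H$-equivariance to $K^{d}$-equivariance; without the correct twist by $\varepsilon^{\ram}[\phi]$ in the hypothesis, the signs coming from Proposition \ref{prop:rho_d'} would cause cancellation and the character sum over $S_{\vreg}\cdot H$ could vanish. Once the intertwiner is in hand, Schur's lemma gives $\tilde\pi\cong\pi_{+}$, and comparing characters at any $\gamma\in S_{\vreg}$ with the computation from step (ii) forces $c=(-1)^{r(\bfS,\phi)}$.
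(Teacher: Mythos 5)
Your steps (i) and (ii) are correct: $\tilde\pi \colonequals \Ind_{K^{d}}^{SG_{\x,0+}}(\rho'_{d}\otimes\phi_{d})$ is irreducible because its compact induction to $G$ is Yu's irreducible supercuspidal, and the Frobenius formula plus Proposition \ref{prop:rho_d'} correctly give $\Theta_{\tilde\pi}(\gamma)=(-1)^{r(\bfS,\phi)}\theta(\gamma)$ for $\gamma\in S_{\vreg}$ (using that $N_{SG_{\x,0+}}(\bfS)=S\subset K^{d}$ in the toral case).

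The gap is in step (iii). You cannot upgrade $\Hom_{H}(\rho'_{d}\otimes\phi_{d},\pi_+)\neq 0$ (Lemma \ref{lem:hom G+ +}) to $\Hom_{K^{d}}(\rho'_{d}\otimes\phi_{d},\pi_+)\neq 0$ by re-running the inclusion--exclusion argument over $K^{d}$. The inclusion--exclusion expresses the very-regular character sum as an alternating sum of $\langle\cdot,\cdot\rangle_{S_{A}H}$ over subsets $A\subset R(\bfS,\bfG)$, and concludes that \emph{some} Hom over a subgroup $S_{A}H\subset K^{d}$ is nonzero. Passing from a nonzero Hom over a \emph{subgroup} to a nonzero Hom over $K^{d}$ itself goes the wrong way along restriction, and there is a genuine obstruction: a priori $\pi_+$ could be $\Ind_{K^{d}}^{SG_{\x,0+}}((\rho'_{d}\otimes\phi_{d})\otimes\tilde\theta_0)$ for a nontrivial depth-zero character $\tilde\theta_0$ of $S$ (trivial on $H$). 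In that case $\Hom_{H}\neq 0$ while $\Hom_{K^{d}}=0$, so your claimed nonvanishing would be false. Moreover, your positivity heuristic for the very-regular contribution is circular (it presumes the sign of $c$, which is the thing to be determined), and it says nothing about the non-very-regular contribution to the full inner product over $K^{d}/\ker$.

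The paper avoids this by \emph{inducing upward} rather than trying to show $K^{d}$-equivariance directly: Frobenius reciprocity from $H\subset S_0 G_{\x,0+}\cap K^{d}$ and the projection formula decompose $\Ind_{H}^{S_0 G_{\x,0+}\cap K^{d}}(\rho'_{d}\otimes\phi_{d})$ into the twists $(\rho'_{d}\otimes\phi_{d})\otimes\theta_0$ indexed by characters $\theta_0$ of $S_{0:0+}$; a second Frobenius step up to $SG_{\x,0+}$ (using Yu's irreducibility for the \emph{twisted} datum $(\bfS,\phi\otimes\tilde\theta_0)$) identifies $\pi_+$ with a twist of $\tilde\pi$. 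Only then does the character comparison on $S_{\vreg}$, combined crucially with Corollary \ref{cor:greater-than-2} (which relies on the assumption \eqref{ineq}), force $\theta_0=\mathbbm{1}$ and $c=(-1)^{r(\bfS,\phi)}$. So the twist-by-$\theta_0$ possibility that your argument silently ignores is exactly what drives the structure of the actual proof, and ruling it out requires the density assumption on $S_{\vreg}$ that you never invoke.
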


\begin{proof}
We first note that the central character of $\Ind_{K^{d}}^{SG_{\x,0+}}(\rho_d' \otimes \phi_d)$ is given by $\phi|_{Z_{\bfG}}$ by \cite[Fact 3.7.11]{MR4013740}.
As $\varepsilon^{\ram}[\phi]|_{Z_{\bfG}}$ is trivial by definition (Definition \ref{def:varepsilon ram}), we have $\theta|_{Z_{\bfG}}=\phi|_{Z_{\bfG}}$.
On the other hand, also the central character of $\pi_{+}$ is given by $\theta|_{Z_\bfG}$.
Indeed, if we take an unramified very regular element $\gamma\in S_{\vreg}$ ($S_{\vreg}\neq\varnothing$ by the assumption), then $z\gamma$ is unramified very regular for any $z\in Z_{\bfG}$.
Thus the condition \eqref{e:char vreg +} implies that 
\[
\Theta_{\pi_{+}}(z\gamma)
=
c\cdot\theta(z\gamma)
=
c\cdot\theta(z)\cdot\theta(\gamma)
=
\theta(z)\cdot\Theta_{\pi_{+}}(\gamma).
\]
Therefore, to get the assertion, it is enough to show that $c = (-1)^{r(\bfS, \phi)}$ and $\pi_{+}|_{S_{0}G_{\x,0+}}\cong\Ind_{K^{d}}^{SG_{\x,0+}}(\rho_d' \otimes \phi_d)|_{S_{0}G_{\x,0+}}$ since we have $SG_{\x,0+}=Z_{\bfG}S_{0}G_{\x,0+}$.
Also note that $\Ind_{K^{d}}^{SG_{\x,0+}}(\rho_d' \otimes \phi_d)|_{S_{0}G_{\x,0+}}$ is nothing but $\Ind_{S_{0}G_{\x,0+}\cap K^{d}}^{S_{0}G_{\x,0+}}(\rho_d' \otimes \phi_d)$.

By Lemma \ref{lem:hom G+ +} and Frobenius reciprocity for $G_{\x,0+}\cap K^{d}\subset S_0 G_{\x,0+}\cap K^{d}$, we have
\begin{align}\label{eq:hom SG+ +1}
\Hom_{S_0 G_{\x,0+}\cap K^{d}}\bigl(\Ind_{G_{\x,0+}\cap K^{d}}^{S_0 G_{\x,0+}\cap K^{d}}(\rho_d' \otimes \phi_d), \pi_+\bigr) \neq0.
\end{align}
Since $\rho_d' \otimes \phi_d$ is a representation of $K^{d}$, which contains $S_{0}G_{\x,0+}\cap K^{d}$, the projection formula gives
\[
\Ind_{G_{\x,0+}\cap K^{d}}^{S_0 G_{\x,0+}\cap K^{d}}(\rho_d' \otimes \phi_d)
\cong
(\rho_d' \otimes \phi_d)
\otimes
\bigl(\Ind_{G_{\x,0+}\cap K^{d}}^{S_0 G_{\x,0+}\cap K^{d}}\mathbbm{1}\bigr).
\]
As we have $(S_0G_{\x,0+}\cap K^{d})/(G_{\x,0+}\cap K^{d})\cong S_{0:0+}$, we have 
\[
\Ind_{G_{\x,0+}\cap K^{d}}^{S_0 G_{\x,0+}\cap K^{d}}\mathbbm{1}
\cong
\bigoplus_{\theta_0 \from S_{0:0+} \to \bbC^\times} \theta_{0},
\]
hence
\[
\Ind_{G_{\x,0+}\cap K^{d}}^{S_0 G_{\x,0+}\cap K^{d}}(\rho_d' \otimes \phi_d)
\cong
\bigoplus_{\theta_0 \from S_{0:0+} \to \bbC^\times}(\rho_d' \otimes \phi_d)\otimes\theta_{0}.
\]
Therefore (\ref{eq:hom SG+ +1}) implies that we have
\[
\Hom_{S_0 G_{\x,0+}\cap K^{d}}\bigl((\rho_d'\otimes\phi_d)\otimes\theta_{0}, \pi_+\bigr) \neq0
\]
for at least one $\theta_0 \from S_{0:0+} \to \bbC^\times$.

Next, by Frobenius reciprocity for $S_0 G_{\x,0+}\cap K^{d} \subset S_{0}G_{\x,0+}$, we get
\begin{align}\label{eq:hom SG+ +2}
\Hom_{S_{0}G_{\x,0+}}\bigl(\Ind_{S_0 G_{\x,0+}\cap K^d}^{S_0 G_{\x,0+}}((\rho_d'\otimes\phi_d)\otimes\theta_{0}), \pi_+\bigr) \neq0.
\end{align}
Note that we have $S_{0}G_{\x,0+}/G_{\x,0+} \cong(S_0G_{\x,0+}\cap K^{d})/(G_{\x,0+}\cap K^{d})\cong S_{0:0+}$.
In particular, $\theta_{0}$ can be regarded as a character of $S_{0}G_{\x,0+}$ (which is trivial on $G_{\x,0+}$).
Thus the projection formula gives 
\[
\Ind_{S_0 G_{\x,0+}\cap K^d}^{S_0 G_{\x,0+}}((\rho_d'\otimes\phi_d)\otimes\theta_{0})
\cong
\bigl(\Ind_{S_0 G_{\x,0+}\cap K^d}^{S_0 G_{\x,0+}}(\rho_d'\otimes\phi_d)\bigr)\otimes\theta_{0}.
\]
Since the compact induction of $\rho_d'\otimes\phi_d$ from to $K^{d}$ to $G$ is irreducible by Yu's theory, so is the induced representation $\Ind_{K^d}^{SG_{\x,0+}}(\rho_d'\otimes\phi_d)$.
By noting that $K^{d}=Z_{\bfG}\cdot (S_0 G_{\x,0+}\cap K^d)$ and $SG_{\x,0+}=Z_{\bfG}\cdot S_{0}G_{\x,0+}$, $\Ind_{S_0 G_{\x,0+}\cap K^d}^{S_0 G_{\x,0+}}(\rho_d'\otimes\phi_d)$ is nothing but the restriction of $\Ind_{K^d}^{SG_{\x,0+}}(\rho_d'\otimes\phi_d)$ to $S_{0}G_{\x,0+}$ and irreducible.
On the other hand, $\pi_{+}$ is also irreducible as a representation of $S_{0}G_{\x,0+}$.
Thus, by (\ref{eq:hom SG+ +2}), we get
\[
\pi_{+}
\cong
\bigl(\Ind_{S_0 G_{\x,0+}\cap K^d}^{S_0 G_{\x,0+}}(\rho_d'\otimes\phi_d)\bigr)\otimes\theta_{0}.
\]

Now our task is to show that $\theta_{0}=\mathbbm{1}$.
By a similar, but simpler, argument to the proof of Proposition \ref{prop:AS-vreg}, for all $\gamma \in S_{0,\vreg}$ we have
\begin{align*}
\Theta_{(\Ind_{S_0 G_{\x,0+}\cap K^d}^{S_0 G_{\x,0+}}(\rho_d'\otimes\phi_d))\otimes\theta_{0}}(\gamma)
&=
\Theta_{\Ind_{S_0 G_{\x,0+}\cap K^d}^{S_0 G_{\x,0+}}(\rho_d'\otimes\phi_d)}(\gamma)\cdot\theta_{0}(\gamma)\\
&=
(-1)^{r(\bfS, \phi)}\theta(\gamma)\cdot\theta_0(\gamma).
\end{align*}
Thus, by the assumption on $\pi_{+}$, for all $\gamma \in S_{0,\vreg}$ we have
\[
c\cdot\theta(\gamma)
=
(-1)^{r(\bfS, \phi)}\theta(\gamma)\cdot\theta_0(\gamma).
\]
Hence $\theta_0$ must satisfy $\theta_0|_{S_{0,\vreg}} = c \cdot (-1)^{r(\bfS,\phi)}$. 
By Corollary \ref{cor:greater-than-2}, we can find elements $\gamma_{1}, \gamma_{2}\in S_{0,\vreg}$ such that the product $\gamma_{1}\gamma_{2}\in S_{0}$ is again unramified very regular.
Then we have
\[
\theta_0(\gamma_{1})
=
\theta_0(\gamma_{2})
=
\theta_0(\gamma_{1}\gamma_{2})
= 
c\cdot (-1)^{r(\bfS,\phi)}.
\]
By noting that $\theta_{0}(\gamma_{1}\gamma_{2})=\theta_{0}(\gamma_{1})\theta_{0}(\gamma_{2})$, we see that $c\cdot(-1)^{r(\bfS,\phi)}=1$.
Then the equality $\theta_{0}|_{S_{0,\vreg}}=\mathbbm{1}$ implies $\theta_{0}=\mathbbm{1}$ since $S_{0,\vreg}$ generates $S_{0}$ by Corollary \ref{cor:greater-than-2}.
\end{proof}

\begin{cor}\label{cor:SG+ vreg ind}
Let $\pi_+$ be a smooth irreducible representation of $SG_{\x,0+}$ satisfying all the conditions in Theorem \ref{thm:SG+ vreg}. Then $\Ind_{SG_{\x,0+}}^{SG_{\x,0}}(\pi_+)$ is an irreducible representation of $SG_{\x,0}$.
\end{cor}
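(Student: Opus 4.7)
My plan is to reduce the irreducibility of $\Ind_{SG_{\x,0+}}^{SG_{\x,0}}(\pi_+)$ to the irreducibility of Yu's supercuspidal representation $\pi_{(\bfS,\phi)}$ by compactly inducing all the way up to $G$. First, I would apply Theorem \ref{thm:SG+ vreg} to rewrite $\pi_+$ explicitly as $\Ind_{K^d}^{SG_{\x,0+}}(\rho_d'\otimes\phi_d)$. Transitivity of induction along $K^d\subset SG_{\x,0+}\subset SG_{\x,0}$ then yields
\[
\Ind_{SG_{\x,0+}}^{SG_{\x,0}}(\pi_+)
\;\cong\;
\Ind_{K^d}^{SG_{\x,0}}(\rho_d'\otimes\phi_d).
\]

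Next, I would observe that in the chain $K^d\subset SG_{\x,0+}\subset SG_{\x,0}$ both quotients are compact: the upper one is finite, of order $[\bbG_{\x}(\F_q):\bbS(\F_q)]$ (as one sees using $SG_{\x,0}\cap SG_{\x,0+}=S_0 G_{\x,0+}$), and the lower one is compact since both $K^d$ and $SG_{\x,0+}$ are open and compact-mod-center. Consequently, ordinary and compact induction coincide at each stage, and transitivity of compact induction produces
\[
\cInd_{SG_{\x,0}}^G\!\bigl(\Ind_{SG_{\x,0+}}^{SG_{\x,0}}(\pi_+)\bigr)
\;\cong\;
\cInd_{K^d}^G(\rho_d'\otimes\phi_d)
\;\cong\;
\pi_{(\bfS,\phi)},
\]
which is an irreducible supercuspidal representation of $G$ by Yu's theorem as recalled in Section \ref{subsec:Yu}.

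Finally, I would close the argument by a Clifford-theoretic remark: because $SG_{\x,0+}$ is normal of finite index in $SG_{\x,0}$, the induced representation $\Ind_{SG_{\x,0+}}^{SG_{\x,0}}(\pi_+)$ is automatically semisimple. If it decomposed nontrivially as $\tau_1\oplus\tau_2$ with each $\tau_i\neq 0$, then since compact induction of a smooth representation from an open subgroup containing the center is both exact and faithful (the canonical map $V\hookrightarrow \cInd V$ realizes $V$ as functions supported on the subgroup), we would obtain
\[
\pi_{(\bfS,\phi)}
\;\cong\;
\cInd_{SG_{\x,0}}^G(\tau_1)\;\oplus\;\cInd_{SG_{\x,0}}^G(\tau_2)
\]
as a nontrivial direct sum, contradicting the irreducibility of $\pi_{(\bfS,\phi)}$. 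I do not anticipate any serious obstacle: the entire proof is a bookkeeping exercise once Theorem \ref{thm:SG+ vreg} and Yu's irreducibility theorem are in hand, and the only minor point requiring care is confirming the identifications $\Ind=\cInd$ throughout the chain, which follows from the compactness of the relevant quotients.
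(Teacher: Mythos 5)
Your proposal is correct and matches the paper's proof in substance: both reduce to the irreducibility of $\Ind_{K^d}^{SG_{\x,0}}(\rho_d'\otimes\phi_d)$, which in turn is established by compactly inducing up to $G$ and invoking Yu's irreducibility theorem for $\cInd_{K^d}^G(\rho_d'\otimes\phi_d)\cong\pi_{(\bfS,\phi)}$ — the same mechanism the paper uses inside the proof of Theorem~\ref{thm:SG+ vreg}. The paper's stated proof of the corollary is just terser, citing this fact without reproving it.
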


\begin{proof}
This follows from Theorem \ref{thm:SG+ vreg} together with the fact that $\Ind_{K^{d}}^{SG_{\x,0}}(\rho_d' \otimes \phi_d)$ is irreducible.
\end{proof}

\subsection{Representations of $SG_{\x,0}$ characterized by their trace on $S_{\vreg}$}\label{subsec:SG}

In this section, we prove (Theorem \ref{thm:SG vreg}) that some irreducible representations of $SG_{\x,0}$ are characterized by character values on unramified very regular elements of $SG_{\x,0}$.
As in the previous subsection, we will assume here that the inequality (\ref{ineq}) holds.
The reader should think of this section in parallel to Section \ref{subsec:SG+}, though we will need an additional argument to establish the analogues of Lemma \ref{lem:hom G+ +} and Theorem \ref{thm:SG+ vreg}. 

In this section, we will consider smooth irreducible representations $\pi$ of $SG_{\x,0}$ such that for some nonzero constant $c \in \bbC$,
\begin{equation*}\label{e:char vreg}
\Theta_\pi(\gamma) 
=
c\sum_{w \in W_{G_{\x,0}}(\bfS)} \theta^w(\gamma), \qquad \text{for all $\gamma \in S_{\vreg}$},
\tag{$\dagger$}
\end{equation*}
where $\theta^w$ is the character of $S$ defined by $\theta^w(\gamma)=\theta({}^{w}\gamma)=\theta(w\gamma w^{-1})$.

\begin{lemma}\label{lem:hom G+}
Let $\pi$ be a smooth irreducible representation of $SG_{\x,0}$ satisfying (\ref{e:char vreg}) for the character $\theta = \phi\cdot\varepsilon^\ram[\phi]$ of $S$. 
Then
\begin{equation*}
\Hom_{G_{\x,0+}\cap K^{d}}(\rho_d' \otimes \phi_d, \pi) \neq 0.
\end{equation*}
\end{lemma}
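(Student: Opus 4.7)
The plan is to adapt the proof of Lemma \ref{lem:hom G+ +} to handle the Weyl-averaged character hypothesis on $\pi$. The bulk of that earlier argument---the reduction to a single character sum on $S_{0,\vreg}$ via Proposition \ref{prop:rho_d'} and a bijection, followed by an inclusion--exclusion over the subsets $S_A$---will carry over essentially verbatim. The new ingredient, and the only one, is a coset-orthogonality step that collapses the Weyl average $\sum_{w}\theta^w$ appearing in \eqref{e:char vreg} to its $w=1$ term.

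Concretely, I would first show that $\sum_{\gamma \in S_{0,\vreg}G_{\x,0+}\cap K^d} \Theta_{\rho_d'\otimes\phi_d}(\gamma)\,\overline{\Theta_\pi(\gamma)} \neq 0$. Exactly as in Lemma \ref{lem:hom G+ +}, Proposition \ref{prop:rho_d'} restricts the support to $\gamma$ that are $K^d$-conjugate to elements of $S$, and the identity $N_{K^d}(\bfS)=S$ (immediate here since $\bfG^0=\bfS$, as used in the proof of Proposition \ref{prop:AS-vreg}) yields a bijection $S_{0,\vreg} \times (K^d/S) \xrightarrow{\sim} \{\gamma \in S_{0,\vreg}G_{\x,0+}\cap K^d : {}^{k}\gamma \in S \text{ for some } k\in K^d\}$. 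Using $K^d$-invariance of both characters ($K^d \subset SG_{\x,0}$ so $\Theta_\pi$ is $K^d$-invariant), the sum reduces to $|K^d/S|\sum_{\gamma'\in S_{0,\vreg}} \Theta_{\rho_d'\otimes\phi_d}(\gamma')\,\overline{\Theta_\pi(\gamma')}$. Substituting Proposition \ref{prop:rho_d'} (with $W_{G^{0}_{\x,0}}(\bfS)$ trivial because $\bfG^0 = \bfS$), which gives $\Theta_{\rho_d'\otimes\phi_d}(\gamma') = (-1)^{r(\bfS,\phi)} \theta(\gamma')$, and hypothesis \eqref{e:char vreg}, this becomes
\[
|K^d/S|\,(-1)^{r(\bfS,\phi)}\,\bar c \sum_{w\in W_{G_{\x,0}}(\bfS)} \sum_{\gamma'\in S_{0,\vreg}} (\theta\,\overline{\theta^w})(\gamma').
\]

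The key new observation is that every $w \neq 1$ term in the outer sum vanishes. By Lemma \ref{lem:theta-stab}, $\theta|_{S_{0+}}$ has trivial $W_G(\bfS)$-stabilizer, so $\theta\,\overline{\theta^w}$ is a nontrivial character of $S_{0+}$ whenever $w \in W_{G_{\x,0}}(\bfS)\smallsetminus\{1\}$. On the other hand, Lemma \ref{lem:sg+ vreg} shows that $S_{0,\vreg}$ is stable under multiplication by $S_{0+}$, so $S_{0,\vreg}$ is a disjoint union of $S_{0+}$-cosets in $S_0$. Summing $\theta\,\overline{\theta^w}$ over each such coset gives zero by character orthogonality on $S_{0+}$. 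Only the $w=1$ term survives, contributing $|S_{0,\vreg}| \cdot |K^d/S| \cdot (-1)^{r(\bfS,\phi)} \cdot \bar c$, which is nonzero since $S_{0,\vreg}\neq\varnothing$ by \eqref{ineq}.

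Finally, the inclusion--exclusion step of Lemma \ref{lem:hom G+ +} applies verbatim: expanding $\mathbbm{1}_{S_{0,\vreg}} = \sum_{A \subseteq R(\bfS,\bfG)} (-1)^{|A|}\mathbbm{1}_{S_A}$, the nonvanishing above forces $\sum_{\gamma \in S_A G_{\x,0+}\cap K^d} \Theta_{\rho_d'\otimes\phi_d}(\gamma)\,\overline{\Theta_\pi(\gamma)} \neq 0$ for some subset $A$, hence $\Hom_{S_A G_{\x,0+}\cap K^d}(\rho_d'\otimes\phi_d,\pi) \neq 0$; the inclusion $G_{\x,0+}\cap K^d \subset S_A G_{\x,0+}\cap K^d$ yields the conclusion. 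The main (and essentially only) obstacle, as indicated by the paper's introduction, is controlling the Weyl sum $\sum_w \theta^w$ appearing in \eqref{e:char vreg}; the coset-orthogonality argument above resolves this cleanly and is precisely where the regularity of $\theta|_{S_{0+}}$ (Lemma \ref{lem:theta-stab}) enters.
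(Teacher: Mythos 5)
Your proposal is correct and follows essentially the same route as the paper: the reduction via Proposition \ref{prop:rho_d'} and the bijection $S_{0,\vreg}\times (K^d/S) \xrightarrow{\sim}\{\cdots\}$, the $w=1$ term surviving by positivity, and the vanishing of $w\neq 1$ terms by orthogonality against the nontrivial character $\theta\,\overline{\theta^w}$ of $S_{0+}$ (guaranteed by Lemma \ref{lem:theta-stab}). Your phrasing of the vanishing step as ``coset-orthogonality on $S_{0+}$-cosets filling out $S_{0,\vreg}$'' is exactly the double-sum factorization over $\bbS(\FF_q)_{\vreg}\times S_{0+}$ that the paper writes out explicitly, so there is no substantive difference.
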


\begin{proof}
This is very similar to the proof of Lemma \ref{lem:hom G+ +} but with one additional argument.
As in that proof, it is enough to show that we have
\[
\sum_{\gamma \in S_{0,\vreg} G_{\x,0+}\cap K^{d}} \Theta_{\rho'_{d} \otimes \phi_d}(\gamma) \cdot \overline{\Theta_\pi(\gamma)}
\neq
0.
\]
By the same argument as in the proof of Lemma \ref{lem:hom G+ +}, 
\[
\sum_{\gamma \in S_{0,\vreg} G_{\x,0+}\cap K^{d}} \Theta_{\rho'_{d} \otimes \phi_d}(\gamma) \cdot \overline{\Theta_\pi(\gamma)}
=
|K^{d}/S|
\sum_{\gamma'\in S_{0,\vreg}}
\Theta_{\rho'_{d} \otimes \phi_d}(\gamma') \cdot \overline{\Theta_{\pi}(\gamma')}.
\]
By Proposition \ref{prop:rho_d'} and the assumption of $\pi$, we have
\[
\sum_{\gamma'\in S_{0,\vreg}}
\Theta_{\rho'_{d} \otimes \phi_d}(\gamma') \cdot \overline{\Theta_{\pi}(\gamma')}
=
(-1)^{r(\bfS, \phi)}\bar{c}
\sum_{w\in W_{G_{\x,0}(\bfS)}}
\sum_{\gamma'\in S_{0,\vreg}}
\theta(\gamma')\overline{\theta^{w}(\gamma')}.
\]
Recall the summand $\sum_{\gamma'\in S_{0,\vreg}}\theta(\gamma')\overline{\theta(\gamma')}$ corresponding to $w=1$ is not zero by the positivity of $\theta(\gamma')\overline{\theta(\gamma')}$ and the non-emptyness of $S_{0,\vreg}$.
Hence it suffices to check that for $w \in W_{G_{\x,0}}(\bfS)\smallsetminus\{1\}$, the corresponding sum $\sum_{\gamma' \in S_{0,\vreg}} \theta(\gamma') \overline{\theta^w(\gamma')}$ vanishes.
By fixing a set of representatives $\{\tilde{\gamma}'\}$ of the quotient $S_{0,\vreg}/S_{0+}\cong\bbS(\F_{q})_{\vreg}$, we get
\begin{align*}
\sum_{\gamma' \in S_{0,\vreg}} \theta(\gamma') \cdot \overline{\theta^w(\gamma')}
&=
\sum_{\gamma' \in \bbS(\F_{q})_{\vreg}}
\sum_{\gamma_{+}\in S_{0+}}
\theta(\tilde{\gamma}'\gamma_{+})\cdot\overline{\theta^w(\tilde{\gamma}'\gamma_{+})}\\
&=
\sum_{\gamma' \in \bbS(\F_{q})_{\vreg}}
\theta(\tilde{\gamma}')\cdot\overline{\theta^w(\tilde{\gamma}')}
\sum_{\gamma_{+}\in S_{0+}}
\theta(\gamma_{+})\cdot\overline{\theta^w(\gamma_{+})}.
\end{align*}
Since $\theta|_{S_{0+}}$ has trivial $W_{G_{\x,0}}(\bfS)$-stabilizer by Lemma \ref{lem:theta-stab}, it follows that $\sum_{\gamma_{+} \in S_{0+}} \theta(\gamma_+) \cdot \overline{\theta^w(\gamma_+)} =0$ whenever $w\neq1$.
This completes the proof.
\end{proof}

When we prove the analogue of Theorem \ref{thm:SG+ vreg} in the setting of $SG_{\x,0}$-representations, we will need the following lemma. In the $\GL_n$ setting, this result is due to Henniart \cite{MR1263525}, and the proof of the general setting given here is a direct generalization of Henniart's proof.

\begin{lem}\label{lem:toral Henn}
Let $\theta, \theta' \from S_0 \to \bbC^\times$ be two smooth characters and assume that $\theta|_{S_{0+}}$ has trivial $W_{G_{\x,0}}(\bfS)$-stabilizer.
If for some nonzero constant $c \in \bbC$, \begin{equation*}
\sum_{w \in W_{G_{\x,0}}(\bfS)} \theta^w(\gamma) = c \cdot \sum_{w \in W_{G_{\x,0}}(\bfS)} \theta'^w(\gamma)
\end{equation*}
for all $\gamma \in S_{0,\vreg}$, then $c = 1$ and $\theta' = \theta^{w}$ for some $w \in W_{G_{\x,0}}(\bfS)$.
\end{lem}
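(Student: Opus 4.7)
The plan is to reduce to an equality of characters on $S_{0+}$ via a translation trick, apply linear independence of characters, and then use the generating/multiplicative properties of $S_{0,\vreg}$ to promote the conclusion from $S_{0+}$ to all of $S_0$.

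First, I would fix any $\gamma_1 \in S_{0,\vreg}$ and consider $\gamma_1 s_+$ for $s_+ \in S_{0+}$. By Lemma \ref{lem:sg+ vreg} (applied to $s := \gamma_1$ and $g_+ := s_+$, noting that $S_{0+} \subset G_{\x,0+}$), the element $\gamma_1 s_+$ is again unramified very regular, hence lies in $S_{0,\vreg}$. Plugging $\gamma_1 s_+$ into the hypothesis and using that the $\theta^w$ and $\theta'^w$ are group homomorphisms on $S_0$, this yields the identity of functions on $S_{0+}$:
\[
\sum_{w \in W_{G_{\x,0}}(\bfS)} \theta^w(\gamma_1)\cdot \theta^w|_{S_{0+}} \;=\; c\cdot \sum_{w \in W_{G_{\x,0}}(\bfS)} \theta'^w(\gamma_1) \cdot \theta'^w|_{S_{0+}}.
\]

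Next, since $\theta|_{S_{0+}}$ has trivial $W_{G_{\x,0}}(\bfS)$-stabilizer, the $|W_{G_{\x,0}}(\bfS)|$ characters $\theta^w|_{S_{0+}}$ are pairwise distinct, whereas the $\theta'^w|_{S_{0+}}$ group into $|W_{G_{\x,0}}(\bfS)|/k$ distinct characters (each with multiplicity $k$) where $k$ is the size of the $W$-stabilizer of $\theta'|_{S_{0+}}$. Applying linear independence of characters on the compact group $S_{0+}$, the multisets of characters appearing on both sides must match. Since every $\theta^w(\gamma_1)\neq 0$, this forces $\{\theta^w|_{S_{0+}}\}_w \subseteq \{\theta'^w|_{S_{0+}}\}_w$; comparing cardinalities gives $k=1$, and then equality as sets. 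Hence there exists $\sigma\in W_{G_{\x,0}}(\bfS)$ with $\theta|_{S_{0+}} = \theta'^{\sigma}|_{S_{0+}}$, and matching coefficients (this time for every $\gamma_1 \in S_{0,\vreg}$, since $\gamma_1$ was arbitrary) yields
\[
\theta(\gamma_1) = c\cdot \theta'^{\sigma}(\gamma_1) \qquad \text{for all $\gamma_1 \in S_{0,\vreg}$.}
\]

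Finally, write $\theta'' := \theta'^{\sigma}$; then $\theta|_{S_{0,\vreg}} = c \cdot \theta''|_{S_{0,\vreg}}$. Using Corollary \ref{cor:greater-than-2}, any $\gamma \in S_{0,\vreg}$ can be written as $\gamma = \gamma_1 \gamma_2$ with $\gamma_1, \gamma_2 \in S_{0,\vreg}$. Applying the scalar relation at $\gamma$, $\gamma_1$, and $\gamma_2$, and using that both $\theta$ and $\theta''$ are characters, gives $c \cdot \theta''(\gamma) = c^2 \cdot \theta''(\gamma)$, so $c=1$. Once $c=1$ is established, the same factorization shows $\theta$ and $\theta''$ agree on all of $S_0$ (Corollary \ref{cor:greater-than-2} expresses every element of $S_0$ as a product of two elements of $S_{0,\vreg}$), yielding $\theta = \theta'^{\sigma}$, i.e.\ $\theta' = \theta^{\sigma^{-1}}$, as desired.

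The main obstacle is the careful bookkeeping in the middle step: ensuring that, despite the $W$-action, the hypothesis of trivial stabilizer for $\theta|_{S_{0+}}$ propagates to $\theta'|_{S_{0+}}$ and that the combinatorics of matching $W$-orbits of characters on $S_{0+}$ leaves exactly one candidate $\sigma$ (up to the full $W$-orbit) to identify $\theta$ with a translate of $\theta'$. The endgame (promoting from $S_{0,\vreg}$ to $S_0$ and pinning down $c=1$) is a clean consequence of Corollary \ref{cor:greater-than-2}, which is why the hypothesis $(\star)$ is in force throughout this subsection.
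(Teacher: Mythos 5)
Your proof is correct and follows essentially the same Henniart-style strategy as the paper: translate a fixed $\gamma_1 \in S_{0,\vreg}$ by $S_{0+}$ to obtain an identity of $S_{0+}$-characters, use the trivial-stabilizer hypothesis to identify $\theta'|_{S_{0+}}$ with a $W_{G_{\x,0}}(\bfS)$-conjugate of $\theta|_{S_{0+}}$, and then invoke the factorization from Corollary \ref{cor:greater-than-2} to pin down $c=1$ and promote the equality from $S_{0,\vreg}$ to $S_0$. Your middle step --- matching the two sides by straight linear independence of characters together with a multiset cardinality count --- is a somewhat cleaner packaging than the paper's inner-product computation and case analysis, but the underlying content (orthogonality of characters on $S_{0+}$) is the same.
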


\begin{proof}
We follow the same strategy as Henniart in \cite[Section 5.3]{MR1263525}. Henniart works in the setting of $G = \GL_p$, but the proof generalizes with no problems. We present it here.

We first show that the conclusion must hold on $S_{0+}$. Fix $\gamma \in S_{0,\vreg}$. Then $\gamma S_{0+} \subset S_{0,\vreg}$. From this, the assumptions of the lemma imply that we have a linear dependence between the $2|W_{G_{\x,0}}(\bfS)|$ (not necessarily distinct) characters $\theta^{w}|_{S_{0+}}$ and $\theta'^{w}|_{S_{0+}}$ for $w \in W_{G_{\x,0}}(\bfS)$. Explicitly, we have the following identity of $S_{0+}$-characters:
\begin{equation*}
c \cdot \theta' = \theta'(\gamma)^{-1} \Bigl(\sum_{w \in W_{G_{\x,0}}(\bfS)} \theta^{w}(\gamma) \cdot \theta^{w} - c \cdot \sum_{1 \neq w \in W_{G_{\x,0}}(\bfS)} \theta'^{w}(\gamma) \cdot \theta'^{w}\Bigr).
\end{equation*}
Using this linear dependence together with the assumption that $\theta|_{S_{0+}}$ has trivial $W_{G_{\x,0}}(\bfS)$-stabilizer, we see that for any $w' \in W_{G_{\x,0}}(\bfS)$, we have
\begin{equation*}
c \cdot \langle \theta', \theta^{w'} \rangle_{S_{0+}} = \frac{\theta^{w'}(\gamma)}{\theta'(\gamma)} - c \cdot \sum_{1 \neq w \in W_{G_{\x,0}}(\bfS)} \frac{\theta'^{w}(\gamma)}{\theta'(\gamma)} \cdot \langle \theta'^{w}, \theta^{w'} \rangle_{S_{0+}}.
\end{equation*}
If $\langle \theta'^{w}, \theta^{w'} \rangle_{S_{0+}} \neq 0$ for some $w \in W_{G_{\x,0}}(\bfS)$, then we have shown that the conclusion of the lemma holds on $S_{0+}$. Otherwise, we see that $\langle \theta', \theta^{w'} \rangle_{S_{0+}} \neq 0$, and again we see that the conclusion of the lemma holds on $S_{0+}$. 

We have now shown that $\theta'|_{S_{0+}} = \theta^{w}|_{S_{0+}}$ for some $w \in W_{G_{\x,0}}(\bfS)$ and by the above, we see that for any $\gamma \in S_{0,\vreg}$, we have $c = c \cdot \langle \theta', \theta^{w} \rangle_{S_{0+}} = \frac{\theta^{w}(\gamma)}{\theta'(\gamma)}$ and hence $\theta'(\gamma) = c \cdot \theta^{w}(\gamma)$. By the same argument as at the end of the proof of Theorem \ref{thm:SG+ vreg}, the assumption $|\bbS(\FF_q)|/|\bbS(\F_{q})_{\nvreg}| > 2$ implies that if a character of $S_0$ restricts identically to $c$ on $S_{0,\vreg}$, then $c = 1$, and the conclusion of the lemma now follows.
\end{proof}

We now prove the analogue of Theorem \ref{thm:SG+ vreg}.

\begin{theorem}\label{thm:SG vreg}

Let $\pi$ be a smooth irreducible representation of $SG_{\x,0}$ satisfying \eqref{e:char vreg} for the character $\theta = \phi\cdot\varepsilon^{\ram}[\phi]$ of $S$.
Then $c = (-1)^{r(\bfS, \phi)}$ and $\pi \cong \cc\tau_{d}$.
\end{theorem}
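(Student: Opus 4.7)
The proof blueprint mirrors Theorem \ref{thm:SG+ vreg}, with Lemma \ref{lem:hom G+} replacing Lemma \ref{lem:hom G+ +} and with the additional ingredient of Lemma \ref{lem:toral Henn} needed to handle the nontrivial $W_{G_{\x,0}}(\bfS)$-sum. First, evaluating $(\dagger)$ at $z\gamma$ for $z \in Z_\bfG$ and $\gamma \in S_{\vreg}$ (note $z\gamma \in S_{\vreg}$ since central elements pair trivially with roots) identifies the central character of $\pi$ as $\theta|_{Z_\bfG} = \phi|_{Z_\bfG}$, matching that of $\cc\tau_{d}$ by \cite[Fact 3.7.11]{MR4013740}; so it suffices to compare $\pi$ and $\cc\tau_{d}$ as representations of $G_{\x,0}$.

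Starting from Lemma \ref{lem:hom G+}, I run the sequence of Frobenius reciprocities and applications of the projection formula from the proof of Theorem \ref{thm:SG+ vreg} (inducing first from $G_{\x,0+} \cap K^{d}$ to $S_{0}G_{\x,0+} \cap K^{d}$, decomposing via the characters of $S_{0:0+}$, and then inducing to $S_{0}G_{\x,0+}$). This produces a character $\theta_{0}$ of $S_{0:0+}$ together with a nonzero $S_{0}G_{\x,0+}$-equivariant map into $\pi|_{S_{0}G_{\x,0+}}$ from the irreducible representation $\Ind_{S_{0}G_{\x,0+}\cap K^{d}}^{S_{0}G_{\x,0+}}(\rho'_{d}\otimes\phi_{d}) \otimes \theta_{0}$. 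Extending $\theta_{0}$ to a character $\widetilde{\theta_{0}}$ of $SG_{\x,0+}$ compatible with the shared central character, this subrepresentation is the restriction of the irreducible $SG_{\x,0+}$-representation $\pi_{+}^{\theta_{0}} \colonequals \Ind_{K^{d}}^{SG_{\x,0+}}(\rho'_{d}\otimes\phi_{d}) \otimes \widetilde{\theta_{0}}$.

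A further application of Frobenius reciprocity for $SG_{\x,0+} \subset SG_{\x,0}$ yields a nonzero $SG_{\x,0}$-map from $\Ind_{SG_{\x,0+}}^{SG_{\x,0}}(\pi_{+}^{\theta_{0}})$ into $\pi$. By transitivity and the projection formula, the source is identified with $\Ind_{K^{d}}^{SG_{\x,0}}\bigl((\rho'_{d}\otimes\phi_{d}) \otimes \widetilde{\theta_{0}}|_{K^{d}}\bigr)$, which is precisely the $SG_{\x,0}$-representation built via Kaletha's reparametrization from the tame elliptic regular pair $(\bfS, \phi \cdot \widetilde{\theta_{0}}|_{S})$; call it $\cc\tau_{d}^{\widetilde{\theta_{0}}}$. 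The same proof as Corollary \ref{cor:SG+ vreg ind} (applied to the twisted pair, which remains tame elliptic regular since $\widetilde{\theta_{0}}$ is depth zero and $\phi$ is regular) gives irreducibility of $\cc\tau_{d}^{\widetilde{\theta_{0}}}$, so $\pi \cong \cc\tau_{d}^{\widetilde{\theta_{0}}}$.

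Finally, I compute the character of $\cc\tau_{d}^{\widetilde{\theta_{0}}}$ on $S_{\vreg}$ via Proposition \ref{prop:AS-vreg} (noting that depth-zero $\widetilde{\theta_{0}}$ does not alter $\varepsilon^{\ram}[\phi]$) and compare with $(\dagger)$ to obtain
\[
c \sum_{w \in W_{G_{\x,0}}(\bfS)} \theta({}^{w}\gamma) = (-1)^{r(\bfS,\phi)} \sum_{w \in W_{G_{\x,0}}(\bfS)} \theta({}^{w}\gamma)\, \widetilde{\theta_{0}}({}^{w}\gamma)
\]
for all $\gamma \in S_{0,\vreg}$. Since $\theta|_{S_{0+}}$ has trivial $W_{G_{\x,0}}(\bfS)$-stabilizer by Lemma \ref{lem:theta-stab}, Lemma \ref{lem:toral Henn} applies to the characters $\theta$ and $\theta \cdot \widetilde{\theta_{0}}|_{S_{0}}$ and forces $c = (-1)^{r(\bfS,\phi)}$ together with $\theta \cdot \widetilde{\theta_{0}} = \theta^{w}$ for some $w \in W_{G_{\x,0}}(\bfS)$. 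Because $\varepsilon^{\ram}[\phi]$ is Weyl-invariant (its defining product is over Galois-Weyl orbits), the latter equation rewrites as $\phi \cdot \widetilde{\theta_{0}}|_{S} = \phi^{w}$, identifying $(\bfS, \phi \cdot \widetilde{\theta_{0}}|_{S})$ with $(\bfS, \phi)$ up to $W_{G_{\x,0}}(\bfS)$-conjugacy and therefore $\cc\tau_{d}^{\widetilde{\theta_{0}}} \cong \cc\tau_{d}$, completing the proof. The main obstacle is the identification of $\Ind_{K^{d}}^{SG_{\x,0}}\bigl((\rho'_{d}\otimes\phi_{d}) \otimes \widetilde{\theta_{0}}|_{K^{d}}\bigr)$ with a $\cc\tau_{d}$-type representation for the twisted tame elliptic regular pair $(\bfS, \phi \cdot \widetilde{\theta_{0}}|_{S})$ and its irreducibility, both of which follow formally from the depth-zero nature of $\widetilde{\theta_{0}}$ together with the regularity and genericity assumptions on $\phi$.
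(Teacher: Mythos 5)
Your blueprint follows the paper's strategy very closely---central character comparison, Lemma \ref{lem:hom G+}, Frobenius reciprocity and the projection formula to extract a depth-zero twist $\theta_{0}$, identification of $\pi$ with a twisted $\cc\tau_{d}$, and Lemma \ref{lem:toral Henn} to pin down the constant and the twist. The extra passage through $SG_{\x,0+}$ before arriving at $SG_{\x,0}$ is equivalent bookkeeping and harmless.

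The gap is in your final step, where you assert that $\varepsilon^{\ram}[\phi]$ is $W_{G_{\x,0}}(\bfS)$-invariant ``because its defining product is over Galois--Weyl orbits.'' That is not what the definition says: the products in $\varepsilon^{\sym}$ and $\varepsilon_{\sym,\ur}$ run over $\Gamma_{F}\times\{\pm1\}$-orbits and $\Gamma_{F}$-orbits respectively, with no Weyl action built in. Moreover, $\varepsilon^{\ram}[\phi]$ is a product of $\varepsilon^{\ram}(\bfG^{i+1}/\bfG^{i}, r_{i}, \cdot)$, and a general $w\in W_{G_{\x,0}}(\bfS)$ need not normalize the intermediate twisted Levis $\bfG^{i}$ ($0<i<d$) arising from the Howe factorization of $\phi$ (these subgroups are determined by $\phi|_{S_{0+}}$, whose $W_{G}(\bfS)$-stabilizer is trivial by Lemma \ref{lem:theta-stab}, so no nontrivial $w$ preserves them). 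In the $0$-toral case ($d=1$) the invariance does hold, since the only nontrivial factor is $\varepsilon^{\ram}(\bfG,\bfS,r_{0},\cdot)$, which is $G$-conjugation-invariant by Remark \ref{rem:signs}(ii); but Theorem \ref{thm:SG vreg} is proved for general toral $\phi$, where $d$ may exceed $1$. The correct (and simpler) route used in the paper: from Lemma \ref{lem:toral Henn} you have $\theta\widetilde{\theta_{0}}=\theta^{w}$. Restrict to $S_{0+}$; since $\widetilde{\theta_{0}}$ is depth zero and $\varepsilon^{\ram}[\phi]|_{S_{0+}}=\mathbbm{1}$, this gives $\phi|_{S_{0+}}=\phi^{w}|_{S_{0+}}$, and the trivial $W_{G_{\x,0}}(\bfS)$-stabilizer of $\phi|_{S_{0+}}$ (Lemma \ref{lem:theta-stab}) forces $w=1$, hence $\theta_{0}=\mathbbm{1}$. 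This gives $\pi\cong\cc\tau_{d}$ directly, with no appeal to Weyl invariance of $\varepsilon^{\ram}[\phi]$ and no need to compare $\cc\tau_{d}^{\widetilde{\theta_{0}}}$ with $\cc\tau_{d}$ as separate objects.
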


\begin{proof}
Recall that $\cc\tau_{d}$ is defined to be $\Ind_{K^{d}}^{SG_{\x,0}}(\rho_d' \otimes \phi_d)$ (in our situation, we have $\cc K^{d}=K^{d}$ and $\cc\rho'_{d}=\rho'_{d}$).
By comparing the central characters as in the proof of Theorem \ref{thm:SG+ vreg}, it is enough to show that $c = (-1)^{r(\bfS, \phi)}$ and $\pi \cong \Ind_{S_{0}G_{\x,0+}\cap K^{d}}^{S_{0}G_{\x,0}}(\rho_d' \otimes \phi_d)$ as a representation of $S_{0}G_{\x,0}$.

Lemma \ref{lem:hom G+} and Frobenius reciprocity imply that
\[
\Hom_{G_{\x,0}}\bigl(\Ind_{G_{\x,0+}\cap K^{d}}^{G_{\x,0}}(\rho'_{d}\otimes\phi_{d}),\pi\bigr)\neq0.
\]
Recall from the proof of Theorem \ref{thm:SG+ vreg} that
\[
\Ind_{G_{\x,0+}\cap K^{d}}^{S_0 G_{\x,0+}\cap K^{d}}(\rho_d' \otimes \phi_d) \cong \bigoplus_{\theta_0 \from S_{0:0+} \to \bbC^\times} (\rho_d'\otimes\phi_d)\otimes\theta_{0}.
\]
Thus we have
\[
\Ind_{G_{\x,0+}\cap K^{d}}^{G_{\x,0}}(\rho'_{d}\otimes\phi_{d})
\cong
\bigoplus_{\theta_0 \from S_{0:0+} \to \bbC^\times} \Ind_{S_{0}G_{\x,0+}\cap K^{d}}^{G_{\x,0}}\bigl((\rho_d'\otimes\phi_d)\otimes\theta_{0}\bigr).
\]
Hence, by the irreducibility of $\pi$, we have
\begin{align}\label{eq:SG vreg}
\Hom_{G_{\x,0}}\Bigl(\Ind_{S_{0}G_{\x,0+}\cap K^{d}}^{G_{\x,0}}\bigl((\rho_d'\otimes\phi_d)\otimes\theta_{0}\bigr),\pi\Bigr)\neq0
\end{align}
for a unique $\theta_{0}$.

Let us investigate the representation $\Ind_{S_{0}G_{\x,0+}\cap K^{d}}^{G_{\x,0}}((\rho_d'\otimes\phi_d)\otimes\theta_{0})$.
Recall that $\rho_d'\otimes\phi_d$ is the representation of $K^{d}$ obtained from a tame elliptic regular pair $(\bfS,\phi)$.
If we take an extension $\tilde{\theta}_{0}$ of $\theta_{0}$ from $S_{0}$ to $S$, then the pair $(\bfS,\phi\otimes\tilde{\theta}_{0})$ is again tame elliptic regular.
Furthermore, we can easily check that the Yu's construction (reviewed in Section \ref{subsec:Yu}) attaches to $(\bfS,\phi\otimes\tilde{\theta}_{0})$ the representation $(\rho_d'\otimes\phi_d)\otimes\tilde{\theta}_{0}$ of $K^{d}$.
Therefore the compact induction of $(\rho_d'\otimes\phi_d)\otimes\tilde{\theta}_{0}$ from $K^{d}$ to $G$ is an irreducible (supercuspidal) representation.
In particular, also $\Ind_{K^{d}}^{SG_{\x,0}}((\rho_d'\otimes\phi_d)\otimes\tilde{\theta}_{0})$ is irreducible, hence so is its restriction to $G_{\x,0}$ (note that $SG_{\x,0}=Z_{\bfG}G_{\x,0}$).
As we have $\Ind_{K^{d}}^{SG_{\x,0}}((\rho_d'\otimes\phi_d)\otimes\tilde{\theta}_{0})|_{G_{\x,0}}\cong\Ind_{S_{0}G_{\x,0+}\cap K^{d}}^{G_{\x,0}}((\rho_d'\otimes\phi_d)\otimes\theta_{0})$, we eventually get the irreducibility of $\Ind_{S_{0}G_{\x,0+}\cap K^{d}}^{G_{\x,0}}((\rho_d'\otimes\phi_d)\otimes\theta_{0})$.
Then (\ref{eq:SG vreg}) implies that 
\[
\Ind_{S_{0}G_{\x,0+}\cap K^{d}}^{G_{\x,0}}\bigl((\rho_d'\otimes\phi_d)\otimes\theta_{0}\bigr)
\cong
\pi.
\]

It now remains to show that $c = (-1)^{r(\bfS, \phi)}$ and that $\theta_{0} = \mathbbm{1}$.
By applying Proposition \ref{prop:AS-vreg} to $\Ind_{K^{d}}^{SG_{\x,0}}((\rho_d'\otimes\phi_d)\otimes\tilde{\theta_{0}})$ (which is the representation ``$\cc\tau_{d}$'' arising from the twisted pair $(\bfS,\phi\otimes\tilde{\theta}_{0})$) for all $\gamma \in S_{0,\vreg}$ we have
\begin{align*}
\Theta_{\Ind_{S_{0}G_{\x,0+}\cap K^{d}}^{G_{\x,0}}((\rho_d'\otimes\phi_d)\otimes\theta_{0})}(\gamma)
&=
\Theta_{\Ind_{K^{d}}^{SG_{\x,0}}((\rho_d'\otimes\phi_d)\otimes\tilde{\theta}_{0})}(\gamma)\\
&=
(-1)^{r(\bfS, \phi)} \sum_{w \in W_{G_{\x,0}}(\bfS)} \theta^{w}(\gamma) \theta_{0}^{w}(\gamma).
\end{align*}
(Note that the character $\varepsilon^{\ram}[\phi\otimes\theta_{0}]$ associated to the twisted character $\phi\otimes\theta_{0}$ is the same as $\varepsilon^{\ram}[\phi]$ since $\theta_{0}$ is of depth zero.)
Thus, by the assumption of $\pi$, we get
\[
(-1)^{r(\bfS, \phi)} \sum_{w \in W_{G_{\x,0}}(\bfS)} \theta^{w}(\gamma) \theta_{0}^{w}(\gamma)
=
c\sum_{w \in W_{G_{\x,0}}(\bfS)} \theta^w(\gamma).
\]
We therefore may apply Lemma \ref{lem:toral Henn} and conclude that $c = (-1)^{r(\bfS, \phi)}$ and $\theta\theta_{0}=\theta^{w}$ for some $w\in W_{G_{\x,0}}(\bfS)$.
By restricting to $S_{0+}$, we get $\phi|_{S_{0+}}=\phi^{w}|_{S_{0+}}$ (recall that $\varepsilon^{\ram}[\phi]|_{S_{0+}}=\mathbbm{1}$).
Then it follows that $w=1$ from the assumption that $\phi|_{S_{0+}}$ has trivial $W_{G_{\x,0}}(\bfS)$-stabilizer.
Thus finally we get $\theta\theta_{0}=\theta$, which implies that $\theta_{0}=\mathbbm{1}$.
\end{proof}

\section{Deligne--Lusztig varieties for parahoric subgroups}\label{sec:geom} 

Up to this point in the paper, our discussion has been centered on algebraically constructed representations of parahoric subgroups whose compact induction to $G$ is irreducible (and hence supercuspidal). 
Contrary to every other section of this paper, everything in this section holds with no assumptions on the residue characteristic $p$ of $F$.

In this section, we introduce a class of representations of parahoric subgroups that arise geometrically, via the cohomology of Deligne--Lusztig varieties for parahoric subgroups \cite{MR4197070}. 
These varieties are associated to a maximal torus $\bfS \hookrightarrow \bfG$ such that
\begin{center}
$\bfS$ is unramified but not necessarily elliptic.
\end{center}
When $\bfS$ is elliptic, these representations are closely related to Yu's supercuspidal representations; Sections \ref{sec:comparison} and \ref{sec:Lpacket} are devoted to understanding this relationship and to discussing the interesting consequences of having such a comparison.

\subsection{The varieties $X_r$}\label{sec:parahoricDL}

We review a generalization of Deligne--Lusztig varieties for $G_{\x,0}$ defined in \cite{MR4197070}. In the setting that $G_{\x,0} = \bbG(\cO_F)$ for a reductive group $\bbG$ over $\FF_q$, these varieties were originally defined and studied by Lusztig in \cite{MR2048585} (for $F$ equal characteristic) and later by Stasinski in \cite{MR2558788} (for $F$ mixed characteristic). We warn the reader that there is a change of convention between our present work and the papers \cite{MR2048585, MR2558788, MR4197070}: in our normalization, $\bbG_0$ is a reductive group over a finite field.

Let $\bfS \hookrightarrow \bfG$ be an unramified maximal torus (not necessarily elliptic) defined over $F$ and let $\x \in \cB(\bfG, F)$ be a point in the apartment of $\bfS$. Say $\bfS$ splits over the degree-$n$ unramified extension $F_n$ of $F$ and let $\bfU$ be the unipotent radical of a $F_n$-rational Borel subgroup of $\bfG_{F_n}$ containing $\bfS_{F_{n}}$. 
Following, \cite[Section 2.6]{MR4197070}, for $r \in \bZ_{\geq 0}$, we have group schemes $\bbS_r \subset \bbG_r$ defined over $\FF_q$ such that $\bbG_r(\FF_q) = G_{\x,0:r+}$ and $\bbS_r(\FF_q) = S_{0:r+}$, and a group scheme $\bbU_r \subset \bbG_{r, \FF_{q^n}}$. Let $\sigma \from \bbG_r \to \bbG_r$ denote the Frobenius morphism associated to the $\FF_q$-rational structure on $\bbG_r$.

\begin{definition}\label{def:Xr}
For $r \in \bZ_{\geq 0}$, we define the following $\FF_{q^n}$-subscheme of $\bbG_r$:
\begin{equation*}
X_r \colonequals \{x \in \bG_r \mid x^{-1} \sigma(x) \in \bU_r\}.
\end{equation*}
\end{definition}

$X_r$ is separated, smooth, and of finite type over $\FF_{q^n}$ \cite[Lemma 3.1]{MR4197070}. For $(g,t) \in G_{\x,0:r+} \times S_{0:r+}$ and $x \in X_r$, the assignment $(g,t) * x = g x t$ defines an action of $G_{\x,0:r+} \times S_{0:r+}$ on $X_r$ which pulls back to an action of $G_{\x,0} \times S_0$.

We fix a prime number $\ell$ which is not equal to $p$.
By functoriality, the cohomology groups $H_c^i(X_r, \overline \QQ_\ell)$ are representations of $G_{\x,0} \times S_0$.
If $\theta \from S \to \overline \QQ_\ell^\times$ is a $\overline{\Q}_{\ell}^{\times}$-valued character trivial on $S_{r+}$, the subspace $H_c^i(X_r, \overline \QQ_\ell)[\theta]$ of $H_c^i(X_r, \overline \QQ_\ell)$ on which $S_0$ acts by multiplication by $\theta|_{S_{0}}$, is a representation of $G_{\x,0}$.
We define a virtual representation $R_{\bbS_r, \bU_r}^{\bG_r}(\theta)$ of $G_{\x,0}$ with $\overline{\Q}_{\ell}$-coefficient by
\begin{equation*}
R_{\bbS_r, \bU_r}^{\bG_r}(\theta) \colonequals \sum_{i \geq 0} (-1)^i H_c^i(X_r, \overline \QQ_\ell)[\theta].
\end{equation*}

In the special case that $r = 0$, the variety $X_0$ is an affine fibration over a classical Deligne--Lusztig variety and hence their cohomology is the same up to an even degree shift. Hence the $R_{\bS_0, \bU_0}^{\bG_0}(\theta)$ is exactly the usual Deligne--Lusztig representation of the finite reductive group $\bG_0(\FF_q)$ attached to the character $\theta|_{S_{0}}$ of $\bS_0(\FF_q)$.

The next to results are $r>0$ analogues of known $r=0$ theorems. Proposition \ref{prop:geom vreg} in the $r=0$ setting is a special case of the Deligne--Lusztig character formula \cite[Theorem 4.2]{MR393266} and Theorem \ref{thm:mackey} in the $r=0$ setting is a special case of the Mackey formula for Deligne--Lusztig varieties \cite[Theorem 6.8]{MR393266}.

\begin{prop}[{\cite[Theorem 1.2]{MR4197070}}]
\label{prop:geom vreg}
Let $\theta \from S \to \overline \QQ_\ell^\times$ be any character trivial on $S_{r+}$ and let $\gamma \in G_{\x,0}$ be an unramified very regular element. If $\gamma$ is not $G_{\x,0}$-conjugate to an element of $S$, then the character value $\Theta_{R_{\bS_r, \bU_r}^{\bG_r}(\theta)}(\gamma)$ is equal to zero. If $\gamma$ is $G_{\x,0}$-conjugate to an element of $S$ (in this case, we assume that $\gamma$ itself belongs to $S$ by conjugating), we have
\begin{equation*}
\Theta_{R_{\bbS_r, \bU_r}^{\bG_r}(\theta)}(\gamma) = \sum_{w \in W_{G_{\x,0}}(\bfS)} \theta^{w}(\gamma).
\end{equation*}
\end{prop}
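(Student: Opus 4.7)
The plan is to use a Lefschetz fixed-point argument combined with a centralizer analysis exploiting unramified very regularity of $\gamma$; this mirrors the strategy behind the classical Deligne--Lusztig character formula, but with additional technical input at positive level $r$. Since the $G_{\x,0} \times S_0$-action on $X_r$ factors through the finite quotient $G_{\x,0:r+} \times S_{0:r+}$, a standard projector computation gives
\begin{equation*}
\Theta_{R_{\bbS_r, \bU_r}^{\bG_r}(\theta)}(\gamma) = \frac{1}{|S_{0:r+}|} \sum_{t \in S_{0:r+}} \theta(t) \cdot \sum_i (-1)^i \tr\bigl((\gamma, t^{-1})^{\ast} \mid H_c^i(X_r, \overline \QQ_\ell)\bigr).
\end{equation*}
Because $(\gamma, t^{-1})$ has finite order on $X_r$, the Grothendieck--Lefschetz trace formula for separated finite-type schemes identifies the inner trace with $\# X_r^{(\gamma, t^{-1})}$, where $X_r^{(\gamma, t^{-1})} = \{x \in X_r(\overline \FF_q) : x^{-1} \gamma x = t \text{ in } \bG_r\}$.

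The main step is to analyze this fixed locus. The defining condition forces $x^{-1}\gamma x \in \bbS_r$, so one examines the transporter $\{x \in \bG_r : x^{-1}\gamma x \in \bbS_r\}$. Unramified very regularity of $\gamma$ implies $\alpha(\gamma) \not\equiv 1 \pmod{\mfp}$ for every root $\alpha$ of $\bfT_\gamma$, which forces off-torus contributions to collapse: any such $x$ must lie in $N_{\bG_r}(\bbS_r)$ times the pro-unipotent kernel of the reduction $\bG_r \twoheadrightarrow \bG_0$. Hence if $\gamma$ is not $G_{\x,0}$-conjugate to an element of $S$, then the transporter is empty and the character vanishes, giving the first assertion. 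Otherwise, conjugating so $\gamma \in S$, the transporter decomposes as a disjoint union indexed by representatives $w \in W_{G_{\x,0}}(\bfS)$, with $x^{-1}\gamma x \equiv {}^{w}\gamma \pmod{r+}$ on the $w$-stratum.

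Finally, intersecting each $w$-stratum with the Deligne--Lusztig condition $x^{-1}\sigma(x) \in \bU_r$ and applying Lang--Steinberg on the pro-unipotent fiber, one finds that the $w$-stratum contributes $|S_{0:r+}|$ to $\# X_r^{(\gamma, t^{-1})}$ when $t \equiv {}^{w}\gamma \pmod{r+}$, and zero otherwise. Summing $\theta(t)$ against these counts over $t \in S_{0:r+}$ yields $|S_{0:r+}| \cdot \sum_w \theta({}^{w}\gamma)$, producing the stated Weyl sum after dividing by $|S_{0:r+}|$. The main obstacle is the positive-level transporter and Lang--Steinberg analysis: one must propagate $\alpha(\gamma) \not\equiv 1 \pmod{\mfp}$ across the filtration layers inside $\bG_r$ to kill unipotent-direction contributions that have no counterpart in the classical $r=0$ case. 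This is the geometric role played by very regularity and is the crucial technical input beyond the classical Deligne--Lusztig argument.
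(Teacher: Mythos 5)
Your strategy — reduce to a fixed-point computation and use unramified very regularity to control the fixed locus — is the right one and matches the approach of the cited reference [CI21b], which the paper does not reprove. However, the step where you invoke "the Grothendieck--Lefschetz trace formula" to identify $\sum_i(-1)^i\Tr\bigl((\gamma,t^{-1})^*, H_c^i(X_r)\bigr)$ with $\#X_r^{(\gamma,t^{-1})}(\overline\FF_q)$ is not correct as stated, and this is where a genuine repair is needed. Grothendieck--Lefschetz computes traces of \emph{Frobenius}; it does not say that the alternating trace of a finite-order automorphism equals the cardinality of its fixed-point set (which, a priori, is an infinite set, since $X_r$ is a positive-dimensional $\overline\FF_q$-scheme). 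The correct input is the Deligne--Lusztig fixed-point formula \cite[Theorem 3.2]{MR393266}: writing $(\gamma, t^{-1}) = su$ with $s$ of order prime to $p$ and $u$ of $p$-power order (this is the image in $G_{\x,0:r+}\times S_{0:r+}$ of the topological Jordan decompositions of $\gamma$ and $t$), one has
\[
\sum_i(-1)^i\Tr\bigl((\gamma, t^{-1})^*, H_c^i(X_r)\bigr)
= \sum_i(-1)^i\Tr\bigl(u^*, H_c^i(X_r^s)\bigr).
\]
To turn this into a point count one must first establish that $X_r^s$ — the fixed locus of the topologically \emph{semisimple} part, not of $(\gamma,t^{-1})$ itself — is a finite set, after which $H_c^*(X_r^s)$ is a permutation module and $\Tr(u^*) = \#(X_r^s)^u = \#X_r^{(\gamma,t^{-1})}$. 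The finiteness of $X_r^s$ is precisely where unramified very regularity (equivalently, regular semisimplicity of $\gamma_0$ in the reductive quotient $\bG_0(\FF_q)$) enters; you use it later in your transporter analysis, but the logical order should be: finiteness of the $p'$-fixed locus first, point count second. With that correction the rest of your sketch — stratifying the transporter by $W_{G_{\x,0}}(\bfS)$ and resolving the pro-unipotent directions via Lang--Steinberg — is in line with the argument in [CI21b].

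One smaller point worth flagging: your claim that any $x$ with $x^{-1}\gamma x\in\bS_r$ lies in $N_{\bG_r}(\bS_r)\bG_r^+$ follows at level $0$ from regular semisimplicity of $\bar\gamma$ in $\bG_0$, but lifting this statement through the filtration layers of $\bG_r$ requires an argument (e.g., that $N_{\bG_r}(\bS_r)$ surjects onto $N_{\bG_0}(\bS_0)$ along the reduction map, which uses smoothness of the relevant group schemes). This is part of what you correctly identified as "the crucial technical input beyond the classical $r=0$ case," so I would not call it a gap — just a place where care is needed.
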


\begin{thm}[{\cite[Theorem 1.1]{MR4197070}}]
\label{thm:mackey}
Let $r > 0$ and let $\theta \from S \to \overline \QQ_\ell^\times$ be a character trivial on $S_{r+}$ such that $\theta|_{\Nr_{E/F}(\alpha^\vee(E_r^\times))} \not\equiv \mathbbm{1}$ for all $\alpha \in R(\bfS, \bfG)$, where $E$ is the splitting field of $\bfS$. 
Let $\bfU' \subset \bfG_{F^{\ur}}$ be another unipotent radical of a Borel subgroup of $\bfG_{F^{\ur}}$ containing $\bfS$. 
Then for any character $\theta' \from S_{0:r+} \to \overline \QQ_\ell^\times$,
\begin{equation}\label{e:mackey}
\bigl\langle R_{\bbS_r, \bU_r}^{\bG_r}(\theta), R_{\bbS_r, \bU_r'}^{\bG_r}(\theta') \bigr\rangle_{G_{\x,0:r+}} = \sum_{w \in W_{G_{\x,0}}(\bfS)} \langle \theta, \theta'^{w} \rangle_{S_{0:r+}}.
\end{equation}
In particular, if $\theta$ has trivial $W_{G_{\x,0}}(\bfS)$-stabilizer, then $R_{\bbS_r, \bU_r}^{\bG_r}(\theta)$ is irreducible (up to a sign) and is independent of the choice of $\bfU$.
\end{thm}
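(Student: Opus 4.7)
The strategy is a parahoric-level generalization of the classical Deligne--Lusztig Mackey formula \cite{MR393266}. By K\"unneth and Poincar\'e duality, the inner product on the left-hand side of \eqref{e:mackey} equals the alternating dimension of the $(\theta,\theta')$-isotypic component of the cohomology of the quotient variety $G_{\x,0:r+}\backslash(X_{r}(\bU)\times X_{r}(\bU'))$, where $G_{\x,0:r+}$ acts diagonally on the left and $S_{0:r+}\times S_{0:r+}$ acts on the right. First, I would identify this quotient with a locally closed subvariety of $\bG_{r}$ via a map of the form $(x,y)\mapsto x^{-1}y$, so that the inner product becomes a concrete geometric trace calculation on a subvariety of $\bG_{r}$.

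Second, I would stratify this subvariety by a Bruhat-type decomposition of $\bG_{r}$. In the classical case ($r=0$), strata are indexed by $w\in N_{\bG_{0}}(\bbS_{0})/\bbS_{0}$: each stratum whose representative genuinely conjugates $\bbS$ rationally contributes $\langle \theta,\theta'^{w}\rangle_{S_{0:0+}}$, while the remaining strata are affine-space fibrations and vanish cohomologically. For $r>0$, one must iterate this analysis along the subquotients of the Moy--Prasad-type filtration on $\bG_{r}$, inductively reducing to the reductive-quotient picture on $\bG_{0}$.

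Third, the genericity hypothesis $\theta|_{\Nr_{E/F}(\alpha^{\vee}(E_{r}^{\times}))}\not\equiv\mathbbm{1}$ for every $\alpha\in R(\bfS,\bfG)$ enters precisely at each passage between successive layers of the filtration. At each level, the fibers of the stratification carry a character induced by $\theta$, and the hypothesis guarantees nontriviality of this character on the root-subgroup directions associated with roots not contributing to any $w$-stable stratum. Vanishing of such additive character sums on affine spaces (Artin--Schreier arguments) then kills all strata except those originating from $w\in W_{G_{\x,0}}(\bfS)$, and a clean bookkeeping over the layers produces exactly $\langle \theta,\theta'^{w}\rangle_{S_{0:r+}}$ per surviving Weyl-group element.

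The main obstacle is the inductive analysis across the filtration layers: unlike in the classical reductive setting, $\bG_{r}$ is not reductive, so one lacks a ready-made Bruhat decomposition and must construct a stratification by hand, compatibly with the filtration, while establishing vanishing of cohomology on all non-Weyl strata from the genericity of $\theta$ alone. Once \eqref{e:mackey} is in hand, the irreducibility (up to sign) and the independence of $\bU$ in the last sentence of the theorem follow formally: setting $(\bU',\theta')=(\bU,\theta)$ under the trivial-stabilizer hypothesis shows that $R_{\bbS_{r},\bU_{r}}^{\bG_{r}}(\theta)$ has self-inner-product $1$, hence is $\pm$ an irreducible; varying $\bU'$ while keeping $\theta'=\theta$ produces the same virtual character, forcing the irreducible constituent to be independent (up to sign) of the choice of $\bU$.
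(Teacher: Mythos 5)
The paper does not prove this theorem; it cites it as \cite[Theorem 1.1]{MR4197070} (Chan--Ivanov), which in turn extends Lusztig's hyperspecial-level argument from \cite{MR2048585} to arbitrary parahorics. Your outline correctly reproduces the strategy of that reference: K\"unneth and Poincar\'e duality reduce the inner product to the cohomology of a quotient variety; this is stratified by the Bruhat decomposition of the reductive quotient $\bbG_0$; and the genericity hypothesis on $\theta$ forces Artin--Schreier vanishing on the strata not labelled by $W_{G_{\x,0}}(\bfS)$, while the surviving strata contribute $\langle\theta,\theta'^{w}\rangle$. The deduction of the ``in particular'' clause from \eqref{e:mackey} that you give is also the standard one. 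One imprecision worth flagging: the quotient $G_{\x,0:r+}\backslash(X_r\times X_r')$ is \emph{not} a locally closed subvariety of $\bbG_r$. It is canonically identified with $\Sigma=\{(u,u',y)\in\bbU_r\times\bbU_r'\times\bbG_r : u\,\sigma(y)=y\,u'\}$ via $[(g,g')]\mapsto(g^{-1}\sigma(g),\,g'^{-1}\sigma(g'),\,g^{-1}g')$, and the projection $(u,u',y)\mapsto y$ is a fibration onto a subvariety of $\bbG_r$ whose fibers are positive-dimensional affine spaces recording the unipotent data. The Bruhat stratification is taken on the image of $y$ in $\bbG_0$, and the bookkeeping of the $(u,u')$-fibers across the Moy--Prasad filtration of $\bbG_r$---where the vanishing argument uses the commutator structure of the positive-depth congruence subgroups in tandem with the genericity of $\theta$, not merely Artin--Schreier cancellation in isolation---is the technical heart of \cite{MR4197070}, which you correctly identify as the main obstacle but leave unexecuted.
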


\begin{remark}\label{rem:regular}
Let $\bfS$ be an unramified maximal torus of $\bfG$ and $\theta$ a character of $S$ which is trivial on $S_{r+}$ for $r\in\R_{>0}$.
Then recall that we may associate a sequence of root subsystems $\{R_{r}\}$ (or reductive subgroups) to $\theta$ (see Section \ref{subsec:Howe}).
In terms of this sequence of root subsystems, the condition that
\[
\text{$\theta|_{\Nr_{E/F}(\alpha^\vee(E_r^\times))} \not\equiv \mathbbm{1}$ for all $\alpha \in R(\bfS, \bfG)$}
\]
is equivalent to the condition that 
\[
R_{0}=\cdots=R_{r}=\varnothing.
\]
On the other hand, since $\theta|_{S_{r+}}$ is trivial, we have $R_{r+}=R(\bfS,\bfG)$.
Thus the assumption of Theorem \ref{thm:mackey} is satisfied if and only if its associated sequence of reductive subgroups is given by $(\bfG^{0}(\bfS,\theta),\bfG^{1}(\bfS,\theta))$, where $\bfG^{0}(\bfS,\theta)=\bfS$ and $\bfG^{1}(\bfS,\theta)=\bfG$, in other words, $\theta$ is $0$-toral.
Note that, if we furthermore assume the four conditions on $p$ (odd, not bad for $\bfG$, $p \nmid |\pi_1(\bfG_{\der})|$ and $p\nmid|\pi_{1}(\widehat{\bfG}_{\der})|$), the torality implies that $\theta|_{S_{0+}}$ has trivial $W_{G}(\bfS)$-stabilizer by Lemma \ref{lem:theta-stab}; in particular, $\theta$ has trivial $W_{G_{\x,0}}(\bfS)$-stabilizer.
In summary, when the four conditions on $p$ are satisfied, Theorem \ref{thm:mackey} asserts that the virtual representation $R_{\bbS_r, \bU_r}^{\bG_r}(\theta)$ is irreducible (up to sign) for any $0$-toral character $\theta$ of depth $r$.
\end{remark}

\subsection{The Drinfeld stratification}\label{sec:drinfeld}

In this section, we introduce subschemes of $X_r$ associated to certain twisted Levi subgroups of $\bfG$, following \cite[Definition 3.3.1]{CI_DrinfeldStrat}. The work of this paper will allow us to formulate a conjecture (Conjecture \ref{conj:drinfeld}) about the Drinfeld stratification which had previously (in \textit{op.\ cit.}) only been conjectured for $\GL_n$. We will prove part of Conjecture \ref{conj:drinfeld} in Section \ref{sec:comparison} (see Theorem \ref{thm:drinfeld}).

Let $\bG_r^+$ be the kernel of the natural reduction map $\bG_r \to \bG_0$. Note that $\bG_r^+(\FF_q) = G_{\x,0+:r+}$. For a twisted Levi subgroup $\bfL$ of $\bfG$ containing $\bfS$, we let $\bL_r$ denote the corresponding subgroup scheme of $\bbG_r$. Attached to the twisted Levi $\bfL$, one can naturally associate the following subscheme of $X_r$:
\begin{align*}
X_r^{(\bfL)} \colonequals \{x \in \bG_r \mid x^{-1}\sigma(x) \in (\bL_r \cap \bU_r) \bU_r^+\}.
\end{align*}
It is straightforward to show (\cite[Lemma 3.3.3]{CI_DrinfeldStrat}) that $X_r^{(\bfL)}$ is a disjoint union over $G_{\x,0}/(L_{\x,0}G_{\x,0+})$ 
copies of the scheme $X_r \cap \bL_r \bbG_r^+$.
It follows  (\cite[Lemma 3.3.4]{CI_DrinfeldStrat}) from this that for any character $\theta \from \bbS_r(\FF_q) \to \overline \QQ_\ell^\times$,
\begin{equation*}
H_c^i(X_r^{(\bfL)}, \overline \QQ_\ell)[\theta] \cong \Ind_{\bL_r(\FF_q) \bG_r^+(\FF_q)}^{\bG_r(\FF_q)}\big(H_c^i(X_r \cap \bL_r \bG_r^+, \overline \QQ_\ell)[\theta]\big) \qquad \text{for all $i \geq 0$}.
\end{equation*}

\begin{conj}\label{conj:drinfeld}
Let $\bfL \subset \bfG$ be a twisted Levi subgroup of $\bfG$ containing $\bfS$. 
If $R_{0+}\subset R(\bfS, \bfL)$ (or, equivalently, $\bfG^{0}(\bfS,\theta)\subset\bfL$), then
\begin{equation*}
H_c^{\ast}(X_r, \overline \QQ_\ell)[\theta] \cong H_c^{\ast}(X_r^{(\bfL)}, \overline \QQ_\ell)[\theta].
\end{equation*}
\end{conj}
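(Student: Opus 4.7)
My plan is to reduce the conjecture to a character identity on unramified very regular elements and then invoke the parahoric characterization theorems of Section \ref{sec:ell unram vreg}. Both $H_c^{\ast}(X_r, \overline{\Q}_\ell)[\theta]$ and $H_c^{\ast}(X_r^{(\bfL)}, \overline{\Q}_\ell)[\theta]$ are virtual representations of $\bG_r(\F_q) = G_{\x,0:r+}$, and one may compute their characters on the locus of unramified very regular elements of $S$ using Proposition \ref{prop:geom vreg}, together with the induction formula $H_c^{\ast}(X_r^{(\bfL)})[\theta] \cong \Ind_{\bL_r(\F_q)\bG_r^+(\F_q)}^{\bG_r(\F_q)}(H_c^{\ast}(X_r \cap \bL_r\bG_r^+)[\theta])$ recalled just above the statement. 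Since $X_r \cap \bL_r\bG_r^+$ is essentially a Deligne--Lusztig construction at $\bfL$-level, an analogue of Proposition \ref{prop:geom vreg} applied inside $\bfL$, followed by the Frobenius formula for induced characters, should yield the same sum $\sum_{w \in W_{G_{\x,0}}(\bfS)}\theta^w(\gamma)$ on $\gamma \in S_{\vreg}$ that Proposition \ref{prop:geom vreg} gives directly for $X_r$.

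\textbf{Finishing in the toral case.} When $\bfG^0(\bfS,\theta) = \bfS$, one may take $\bfL = \bfS$. Here Theorem \ref{thm:mackey} combined with Remark \ref{rem:regular} ensures that $H_c^{\ast}(X_r)[\theta]$ is irreducible (up to sign) as an $SG_{\x,0}$-representation after extending the $S_0$-action by the center, and a parallel Mackey-type computation should give the same on the $X_r^{(\bfS)}$-side viewed naturally as an $SG_{\x,0+}$-representation. The matching of characters on $S_{\vreg}$ combined with the uniqueness statement of Theorem \ref{thm:SG vreg}, or equivalently Theorem \ref{thm:SG+ vreg} after passing through the induction isomorphism, then forces the desired identification. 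This is the case the authors announce as Theorem \ref{thm:drinfeld}.

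\textbf{The main obstacle.} The hard case is when $\theta$ is \emph{not} toral, so that $\bfG^0(\bfS,\theta) \supsetneq \bfS$ and $\bfL$ must be strictly larger than $\bfS$. In this situation neither the irreducibility half of Theorem \ref{thm:mackey} nor the characterization theorems of Section \ref{sec:ell unram vreg} are directly available, as all of these require the $0$-toral or toral genericity hypothesis. To push the argument through, one would seemingly need either (i) a genuine generalization of Theorem \ref{thm:SG vreg} characterizing non-toral parahoric representations by their values on a suitable enlargement of $S_{\vreg}$ --- for example, by including unramified very regular elements of $\bfG^0$ lying in other unramified maximal tori stably conjugate to $\bfS$ in $\bfG^0$ --- or (ii) a purely geometric vanishing argument, showing that $H_c^{\ast}(X_r \smallsetminus X_r^{(\bfL)})[\theta] = 0$ by exhibiting a free action of a suitable unipotent root subgroup on (a quotient of) the complement and invoking the standard vanishing trick. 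Option (ii) is the route most consistent with the intrinsic geometry of the Drinfeld stratification, and is, I expect, how the full conjecture will eventually be resolved.
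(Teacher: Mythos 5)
You have correctly recognized that the statement is labeled a \emph{conjecture} and is not fully proved in the paper. The paper establishes only the special case $\bfL = \bfS$ for $0$-toral $\theta$, as Theorem \ref{thm:drinfeld}, and your sketch of that case coincides with the paper's route: compare both sides with the Yu-theoretic representations $\cc\tau_d$ and $\Ind_{K^d}^{SG_{\x,0+}}(\rho_d'\otimes\phi_d)$ via the characterization Theorems \ref{thm:SG vreg} and \ref{thm:SG+ vreg} applied to the character computations of Proposition \ref{prop:geom vreg}, these being packaged in the paper as Theorems \ref{thm:comparison SG} and \ref{thm:comparison SG+} plus the transitivity of induction. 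You also correctly diagnose that the obstruction to the general case is the failure of the characterization theorems and of the Mackey-type irreducibility statement once $\bfG^0(\bfS,\theta) \supsetneq \bfS$, and your two proposed remedies (a generalized characterization theorem on a larger very-regular locus, or a geometric vanishing of the complementary strata) are exactly the routes one would try; Remark \ref{rem:genericity condition} and the citation of Dudas--Ivanov indicate the authors share this perspective.

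One small imprecision: in your ``finishing'' paragraph you write as though the condition $\bfG^0(\bfS,\theta) = \bfS$ (torality) suffices to invoke Theorem \ref{thm:mackey} for irreducibility, but as Remark \ref{rem:regular} makes clear, Theorem \ref{thm:mackey} as cited from \cite{MR4197070} requires the stronger \emph{$0$-toral} hypothesis ($d = 1$). This is precisely why Theorem \ref{thm:drinfeld} is stated only for $0$-toral $\theta$; the authors relax $0$-toral to toral only when a suitable Mackey formula is available (inner forms of $\GL_n$ via \cite{CI_loopGLn}, or Coxeter tori via Dudas--Ivanov, per Remark \ref{rem:genericity condition} and Section \ref{sec:GLn}). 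For a toral but not $0$-toral character, the irreducibility of $H_c^\ast(X_r)[\theta]$ is not yet established in general, and the step of your argument that relies on it does not go through as stated. Aside from this, your assessment of what is proved versus what remains conjectural is accurate.
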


\begin{remark}
Let $\bfG = \GL_n$. In the setting $\bfL = \bfS$, Conjecture \ref{conj:drinfeld} was proved in \cite[Theorem 4.1]{CI_loopGLn} using geometric techniques (note that this forces $\bfG^{0}(\bfS,\theta)=\bfS$). 
On the other hand, in the $\GL_n$ setting, it is expected that Conjecture \ref{conj:drinfeld} holds without any assumptions on $p$. By \cite[Lemma 3.7.7]{MR4013740}, the notions of a tame elliptic regular pair and an admissible pair coincides.
Then one can translate between Kaletha's generalized Howe factorization for tame elliptic regular pairs and the classical Howe factorization for admissible characters (see \cite[Section 5.2]{MR4206603}). Then Conjecture \ref{conj:drinfeld} is the same assertion of \cite[Conjecture 7.2.1]{CI_DrinfeldStrat}.
\end{remark}

The proof of Theorem \ref{thm:mackey} \cite[Theorem 1.1]{MR4197070} on the cohomology of $X_r$ can be applied almost identically to obtain an analogous statement for the cohomology of $X_r \cap \bL_r \bG_r^+$. 

\begin{theorem}\label{thm:irred L}
Let $r > 0$ and let $\theta \from S_{0} \to \overline \QQ_\ell^\times$ be a $0$-toral character trivial on $S_{r+}$. 
Let $\bfU' \subset \bfG_{F^{\ur}}$ be another unipotent radical of a Borel subgroup of $\bfG_{F^{\ur}}$ containing $\bfS$ and let $X_r'$ be the associated variety. 
Then for any $\theta' \from S_{0:r+} \to \overline \QQ_\ell^\times$,
\begin{multline*}
\Big\langle H_c^*(X_r \cap \bL_r \bG_r^+, \overline \QQ_\ell)[\theta], H_c^*(X'_r \cap \bL_r \bG_r^+, \overline \QQ_\ell)[\theta'] \Big\rangle_{L_{\x,0:r+}G_{\x,0+:r+}}\\
=
\sum_{w \in W_{L_{\x,0}}(\bfS)} \langle \theta, \theta'^{w} \rangle_{S_{0:r+}}.
\end{multline*}
In particular, if $\theta$ has trivial $W_{L_{\x,0}}(\bfS)$-stabilizer, then $H_c^*(X_r \cap \bL_r \bG_r^+, \overline \QQ_\ell)[\theta]$ is irreducible (up to a sign) and is independent of the choice of $\bfU$.
\end{theorem}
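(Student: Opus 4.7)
The plan is to adapt the proof of Theorem \ref{thm:mackey} (i.e.\ \cite[Theorem 1.1]{MR4197070}) essentially verbatim, substituting $X_r \cap \bL_r\bG_r^+$ for $X_r$ throughout and observing that all Weyl group enumerations become indexed by $W_{L_{\x,0}}(\bfS)$ rather than $W_{G_{\x,0}}(\bfS)$. Concretely, the inner product
\[
\bigl\langle H_c^*(X_r \cap \bL_r\bG_r^+)[\theta],\, H_c^*(X'_r \cap \bL_r\bG_r^+)[\theta']\bigr\rangle_{L_{\x,0:r+}G_{\x,0+:r+}}
\]
will be computed via a Lefschetz/K\"unneth calculation on the fiber product $Y \colonequals (X_r \cap \bL_r\bG_r^+) \times (X'_r \cap \bL_r\bG_r^+)$, equipped with the diagonal action of $L_{\x,0:r+}G_{\x,0+:r+}$ on the left and a $\bbS_r(\FF_q) \times \bbS_r(\FF_q)$ action on the right used to extract $\theta$ and $\theta'$ eigenspaces.

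The key geometric step, analogous to the Lang--Steinberg stratification in the proof of Theorem \ref{thm:mackey}, is to exhibit $Y$ (or its appropriate quotient) as a disjoint union of pieces indexed by the relative position of $x^{-1}\sigma(x)$ and $y^{-1}\sigma(y)$, where each stratum is parametrized by a double coset giving rise to an element $w$ of $W_{L_{\x,0}}(\bfS)$. The crucial point is that the condition of lying in $\bL_r\bG_r^+$ on \emph{both} factors, combined with the relevant Bruhat--type decomposition inside $\bL_r$, forces these strata to be indexed only by $w\in W_{L_{\x,0}}(\bfS)$ rather than the full group $W_{G_{\x,0}}(\bfS)$. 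This is precisely the place where the restriction from $\bG$ to $\bL$ shows up in the combinatorics.

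Each stratum contributes (after applying the Lang--Steinberg theorem for $\bbG_r^+$ and the $r$-th Moy--Prasad filtration tori, together with a character sum over $S_{0:r+}$) a term of the form $\langle \theta, \theta'^w\rangle_{S_{0:r+}}$, which assembles into $\sum_{w \in W_{L_{\x,0}}(\bfS)}\langle \theta, \theta'^w\rangle_{S_{0:r+}}$. The vanishing of the would--be additional contributions indexed by root subgroups of $\bfG$ not lying in $\bfL$ is where I expect to invoke the $0$-torality hypothesis on $\theta$: $0$-torality guarantees that $\theta$ is nontrivial on $\Nr_{E/F}(\alpha^\vee(E_r^\times))$ for every $\alpha\in R(\bfS,\bfG)$, so the relevant inner character sums over these root filtrations vanish term--by--term, exactly as in \cite{MR4197070}. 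This is the step that is most analogous to the ``Coxeter-type'' vanishing lemmas of op.~cit., and it is the main place where care is needed.

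Once the Mackey-style formula is established, the ``In particular'' assertions follow formally: setting $\bfU'=\bfU$ and $\theta'=\theta$, the trivial $W_{L_{\x,0}}(\bfS)$-stabilizer hypothesis makes the right-hand side equal to $1$, so $H_c^*(X_r\cap \bL_r\bG_r^+)[\theta]$ is $\pm$ an irreducible representation of $L_{\x,0:r+}G_{\x,0+:r+}$. Independence of $\bfU$ then follows by taking $\theta'=\theta$ with $\bfU'$ arbitrary: the formula gives $\bigl\langle H_c^*(X_r\cap \bL_r\bG_r^+)[\theta],H_c^*(X'_r\cap \bL_r\bG_r^+)[\theta]\bigr\rangle = 1$, forcing the two virtual representations to agree up to sign, and a degree/parity consideration as in \cite{MR4197070} pins down the sign. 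The main obstacle throughout is verifying that the geometric arguments of \cite{MR4197070} restrict cleanly to the subscheme $X_r\cap \bL_r\bG_r^+$; the structural reason this works is the disjoint union decomposition $X_r^{(\bfL)} = \bigsqcup_{G_{\x,0}/(L_{\x,0}G_{\x,0+})} (X_r\cap \bL_r\bG_r^+)$ already recorded above, which says that $X_r\cap \bL_r\bG_r^+$ is a connected component of a $G_{\x,0}$-stable subscheme to which the same Lang--Steinberg machinery can be applied, now relative to $\bfL$ in place of $\bfG$.
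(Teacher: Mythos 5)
Your proposal matches the paper's approach: the paper's entire proof of Theorem \ref{thm:irred L} is the one-sentence remark preceding it, that the proof of Theorem \ref{thm:mackey} (\cite[Theorem 1.1]{MR4197070}) applies almost identically to $X_r \cap \bL_r\bG_r^+$, and your outline reconstructs exactly this adaptation, with the Weyl group enumeration confined to $W_{L_{\x,0}}(\bfS)$ because $x^{-1}y$ now lies in $\bL_r\bG_r^+$ and the $0$-torality hypothesis killing the unwanted contributions as in \emph{op.\ cit.} One small imprecision: the Bruhat-type stratification is governed by the relative position of $x^{-1}y$ modulo the Borel/unipotent data, not by the ``relative position of $x^{-1}\sigma(x)$ and $y^{-1}\sigma(y)$'' as you phrase it, but this does not affect the plan.
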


\begin{remark}\label{rem:regular+}
Let $\bfS$ be an unramified maximal torus of $\bfG$ and $\theta$ a $0$-toral character of $S$ which is trivial on $S_{r+}$ for $r\in\R_{>0}$.
Let us consider the cohomology $H_c^*(X_r \cap \bL_r \bG_r^+, \overline \QQ_\ell)[\theta]$ for $\bfL=\bfS$.
Then, since the group $W_{L_{\x,0}}(\bfS)$ is trivial, Theorem \ref{thm:irred L} asserts that $H_c^*(X_r \cap \bS_r \bG_r^+, \overline \QQ_\ell)[\theta]$ is irreducible for any $0$-toral character $\theta$ of $S$ of depth $r$.
Note that, in contrast to Remark \ref{rem:regular}, we do not need any assumption on $p$ for this irreducibility result for $0$-toral characters.
\end{remark}

\section{Geometric toral supercuspidal representations}\label{sec:comparison}

We are now ready to prove one of our main theorems of this paper: an explicit comparison of Yu's algebraic construction of supercuspidal representations (Section \ref{sec:Yu}) and the geometric construction of parahoric representations coming from Deligne--Lusztig-type varieties (Section \ref{sec:geom}). 
To do this, we will apply the results of Section \ref{sec:ell unram vreg}---that certain representations are characterized by their character values at unramified very regular elements. 
These comparison methods will result in several theorems as quick corollaries of the work already established in previous sections of this paper. 

As such, we inherit the assumptions on $p$ and the assumptions on $(\bfS, \theta)$ that have been needed for these preceding sections. In this section, we will assume \eqref{ineq} (i.e., $|\bbS(\FF_q)|/|\bbS(\FF_q)_{\nvreg}| > 2$) in addition to our usual assumptions on the residue characteristic of $F$ (i.e., $p$ is odd, $p$ is not bad for $\bfG$, $p \nmid |\pi_1(\bfG_{\der})|$ and $p \nmid |\pi_1(\widehat \bfG_\der)|$). Aside from the $\GL_n$ setting discussed in Section \ref{sec:GLn}), in this section we consider $(\bfS, \theta)$ where
\begin{enumerate}[label=\textbullet]
\item
$\bfS$ is unramified elliptic,
\item
$\theta$ is $0$-toral; equivalently, $\bfG^0(\bfS, \theta) = \bfS$ and $\bfG^1(\bfS,\theta) = \bfG$.
\end{enumerate}
We note that such a pair $(\bfS,\theta)$ is necessarily a tame elliptic regular pair in the sense of Kaletha (Section \ref{subsec:Kaletha-TER}) by definition. 
The first condition is inherited from Section \ref{sec:geom}, though there we considered unramified tori $\bfS$ which were not necessarily elliptic.
The second condition is inherited from Section \ref{sec:ell unram vreg}, though there we considered $\theta$ satisfying the weaker condition that $\theta$ is toral. 

Fix a prime number $\ell$ which is not equal to $p$ and an isomorphism $\iota\colon\C\cong\overline{\Q}_{\ell}$. Let $\theta \from S \to \bbC^\times$ be any character which is trivial on $S_{r+}$. The following lemma demonstrates that the choice of $\iota$ is innocuous from the perspective of considering the \textit{a priori} $\ell$-adic representation $R_{\bS_r,\bU_r}^{\bG_r}(\theta) = H_c^*(X_r, \overline \QQ_\ell)[\theta]$ as a complex representation.

\begin{lemma}\label{lem:indep ell}
Let $\theta_{\ell,\iota}$ denote the $\QQ_\ell^\times$-valued character obtained by composing $\theta$ with the isomorphism $\iota \from \bbC^\times \cong \overline \QQ_\ell^\times$. The complex $SG_{\x,0}$-representation $\iota R_{\bS_r, \bU_r}^{\bG_r}(\theta_{\ell,\iota})$ is independent of $\ell$.
\end{lemma}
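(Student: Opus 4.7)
My approach is to show that the character of $\iota R_{\bS_r, \bU_r}^{\bG_r}(\theta_{\ell,\iota})$ as a complex virtual representation is independent of the pair $(\ell,\iota)$, and then invoke the fact that complex virtual representations of a finite group are determined by their characters. I would first observe that the restriction of $R_{\bS_r, \bU_r}^{\bG_r}(\theta_{\ell,\iota})$ to $G_{\x,0}$ factors through the finite quotient $G_{\x,0:r+}$, since the $G_{\x,0}$-action on $X_r$ does. Since the extension of this representation from $G_{\x,0}$ to $SG_{\x,0}=Z_{\bfG}G_{\x,0}$ is determined by specifying the central character $\theta_{\ell,\iota}|_{Z_\bfG}$ (whose image under $\iota$ is $\theta|_{Z_\bfG}$, independent of $(\ell,\iota)$), it suffices to prove $(\ell,\iota)$-independence of the restriction to the finite group $G_{\x,0:r+}$.

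For $g \in G_{\x,0:r+}$, I would compute the character by using the projector onto the $\theta_{\ell,\iota}$-isotypic component and applying the Grothendieck--Lefschetz trace formula:
\[
\Theta_{R_{\bS_r, \bU_r}^{\bG_r}(\theta_{\ell,\iota})}(g) = \frac{1}{|S_{0:r+}|} \sum_{s \in S_{0:r+}} \theta_{\ell,\iota}(s)^{-1} \cdot \sum_{i} (-1)^i \tr\bigl((g,s)^{\ast} \mid H_c^i(X_r, \overline{\QQ}_\ell)\bigr).
\]
The inner alternating sum equals $\chi_c(X_r^{(g,s)})$, the compactly supported Euler characteristic of the fixed-point subscheme of the finite-order automorphism $(g,s)$. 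Applying $\iota$ yields
\[
\iota\bigl(\Theta_{R_{\bS_r, \bU_r}^{\bG_r}(\theta_{\ell,\iota})}(g)\bigr) = \frac{1}{|S_{0:r+}|} \sum_{s} \theta(s)^{-1} \cdot \chi_c(X_r^{(g,s)}),
\]
an expression that manifestly depends only on $\theta$ and on the geometry of $X_r$.

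The remaining, and main, obstacle is to verify that $\chi_c(X_r^{(g,s)})$ is a rational integer independent of $\ell$. This is not strictly obvious, since a priori $\tr((g,s)^{\ast} \mid H_c^i)$ is only an element of $\overline{\QQ}_\ell$, but it follows from the standard fact that $\ell$-adic Euler characteristics of finite-type $\overline{\FF}_q$-schemes are independent of $\ell$, combined with the trace formula for finite-order automorphisms. Granting this, the virtual character of $\iota R_{\bS_r, \bU_r}^{\bG_r}(\theta_{\ell,\iota})$ is an honest rational-valued function on $G_{\x,0:r+}$ (in fact taking values in $\bbZ[\theta(S_{0:r+})]$) that is independent of $(\ell, \iota)$, so by ordinary character theory for finite groups the virtual complex representation itself is independent of $(\ell, \iota)$.
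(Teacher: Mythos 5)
Your overall strategy is sound and matches the paper's: compute the virtual character of the restriction to the finite quotient $G_{\x,0:r+}$ via the projector onto the $\theta$-isotypic component, show the resulting function is $(\ell,\iota)$-independent, and handle the extension to $SG_{\x,0}$ by observing that $Z_{\bfG}$ acts by $\theta|_{Z_{\bfG}}$. The projector identity
\[
\Theta_{R_{\bS_r, \bU_r}^{\bG_r}(\theta_{\ell,\iota})}(g) = \frac{1}{|S_{0:r+}|} \sum_{s \in S_{0:r+}} \theta_{\ell,\iota}(s)^{-1} \cdot \sum_{i} (-1)^i \tr\bigl((g,s)^{\ast} \mid H_c^i(X_r, \overline{\QQ}_\ell)\bigr)
\]
is exactly the one the paper uses.

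There is, however, a genuine gap in the step where you assert that the inner alternating sum equals $\chi_c(X_r^{(g,s)})$. For a finite-order automorphism $\sigma$ of a separated finite-type scheme $X$ over $\overline{\FF}_q$, the Lefschetz number $\mathcal{L}(\sigma,X)$ is \emph{not} in general equal to $\chi_c(X^{\sigma})$; that identity holds only when $\sigma$ has order prime to $p$. A concrete counterexample: take $X=\mathbb{A}^1$ over $\overline{\FF}_p$ and $\sigma(x)=x+1$, of order $p$. Then $X^{\sigma}=\varnothing$ so $\chi_c(X^{\sigma})=0$, yet $\sigma^\ast$ acts trivially on $H_c^2(\mathbb{A}^1)\cong\overline{\QQ}_\ell(-1)$ (a translation preserves the fundamental class), so $\mathcal{L}(\sigma,\mathbb{A}^1)=1$. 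In your setting the group $G_{\x,0:r+}$ contains plenty of $p$-torsion for $r>0$ (the kernel of $G_{\x,0:r+}\to G_{\x,0:0+}$ is a finite $p$-group), so the elements $(g,s)$ in your sum genuinely can have nontrivial $p$-parts, and the formula you wrote fails for them. What \cite[Theorem 3.2]{MR393266} actually gives is $\mathcal{L}(\sigma,X)=\mathcal{L}(u,X^s)$ for the commuting decomposition $\sigma=su$ into $p'$-part $s$ and $p$-part $u$; one then shows this equals $\chi_c(X^s)$, the Euler characteristic of the fixed points of the \emph{prime-to-$p$} part only.

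The good news is that the conclusion you need is still true and is exactly \cite[Proposition~3.3]{MR393266}: for any finite-order automorphism of a separated finite-type scheme over $\overline{\FF}_q$, the Lefschetz number is a rational integer independent of $\ell$. The paper cites that proposition directly. So the repair is simply to delete the (false) intermediate identification with $\chi_c(X_r^{(g,s)})$ and invoke \cite[Proposition~3.3]{MR393266} for the algebraicity and $\ell$-independence of the trace; the rest of your argument then goes through unchanged.
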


\begin{proof}
The scheme $X_r$ (Definition \ref{def:Xr}) is separated and of finite type over $\overline \FF_q$ (see \cite[Lemma 3.1]{MR4197070}); therefore by \cite[Proposition 3.3]{MR393266} (thanks to Andy Gordon for reminding the authors about this), we know that for any $(s,g) \in S_0 \times G_{\x,0}$, the trace $\Tr((s,g)^*, H_c^*(X_r, \overline \QQ_\ell))$ is an algebraic integer independent of $\ell$. By definition,
\begin{equation*}
\Tr\bigl(g^*, R_{\bS_r, \bU_r}^{\bG_r}(\theta_{\ell,\iota})\bigr) = \frac{1}{|S_{0:r+}|} \sum_{s \in S_{0:r+}} \theta(s)^{-1} \cdot \Tr\bigl((s,g)^*, H_c^*(X_r, \overline \QQ_\ell)\bigr).
\end{equation*}
As a character of the subquotient $S_{0:r+}$, the $S$-character $\theta$ is valued in the algebraic integers; it follows from the above then that the character of $R_{\bS_r, \bU_r}^{\bG_r}(\theta_{\ell,\iota})$ considered as a representation of $G_{\x,0}$ takes values in $\overline \QQ$. It follows that $\iota R_{\bS_r, \bU_r}^{\bG_r}(\theta_{\ell,\iota}) \cong \iota' R_{\bS_r, \bU_r}^{\bG_r}(\theta_{\ell',\iota'})$ as $G_{\x,0}$-representations, where $\ell'$ is a prime not equal to $p$ and $\iota' \from \bbC \cong \overline \QQ_{\ell'}$ is a(ny) chosen isomorphism. Since the $Z_{\bfG}$-action on $\iota R_{\bS_r, \bU_r}^{\bG_r}(\theta_{\ell,\iota})$ is given simply by multiplication by $\theta|_{Z_{\bfG}}$, it now further follows that $\iota R_{\bS_r, \bU_r}^{\bG_r}(\theta_{\ell,\iota})$ is independent of $\ell$ and $\iota$ as an $SG_{\x,0}$-representation.
\end{proof}

\subsection{Comparison for $X_r$}\label{subsec:compare SG}

Theorem \ref{thm:comparison SG} is one of the central theorems of this paper. It not only gives an explicit description of the geometrically arising representation $R_{\bbS_r}^{\bG_r}(\theta)$, but puts it in the framework of algebraic constructions of supercuspidal representations, allowing us passage between these drastically different realizations of these representations. From this theorem, we get the irreducibility of $\cInd_{SG_{\x,0}}^{G}(R_{\bS_r,\bU_r}^{\bG_r}(\theta))$ (Equation \eqref{e:geom sc}) as a free consequence, but this is already highly nontrivial. In the setting that $\bfG$ is an inner form of $\GL_n$, a geometric proof of \eqref{e:geom sc} is in \cite{CI_ADLV} (see Theorems 9.1 and 12.5) and under the relaxation of the $0$-torality condition to the torality discussed in Section \ref{sec:GLn}, a geometric proof of \eqref{e:geom sc} is the subject of \cite{CI_loopGLn}, which further relies on \cite{CI_DrinfeldStrat}.

We focus on the $0$-toral case. When $\theta \from S \to \CC^\times$ is $0$-toral, then by Theorem \ref{thm:mackey}, the virtual representation $R_{\bS_r, \bU_r}^{\bG_r}(\theta)$ does not depend on the choice of $\bU_r$, so we replace this notation by the simpler $R_{\bS_r}^{\bG_r}(\theta)$.

\begin{theorem}\label{thm:comparison SG}
Let $\bfS$ be an unramified elliptic maximal torus of $\bfG$ and $\theta\colon S\rightarrow\C^{\times}$ a $0$-toral character of depth $r\in\R_{>0}$.
We put $\phi:=\theta\cdot\varepsilon^{\ram}[\theta]$.
Then we have
\[
R_{\bS_r}^{\bG_r}(\theta)
\cong
(-1)^{r(\bfS, \phi)} \cc\tau_{d},
\]
where $\cc\tau_{d}$ in the right-hand side is Yu's representation associated to the pair $(\bfS,\phi)$ (see Section \ref{sec:Yu}).
In particular, 
\begin{equation}\label{e:geom sc}
\text{$\cInd_{SG_{\x,0}}^{G}\bigl((-1)^{r(\bfS, \phi)} R_{\bS_r}^{\bG_r}(\theta)\bigr) \cong \pi_{(\bfS, \phi)}$ is irreducible supercuspidal.}
\end{equation}
\end{theorem}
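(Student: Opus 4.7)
The plan is to realize Theorem \ref{thm:comparison SG} as a direct application of the characterization Theorem \ref{thm:SG vreg} to (a sign twist of) $R_{\bS_r}^{\bG_r}(\theta)$. The strategy is: promote $R_{\bS_r}^{\bG_r}(\theta)$ to an honest irreducible smooth representation of $SG_{\x,0}$, compute its character on $S_{\vreg}$ using the geometric character formula, and match this against the algebraic character formula underlying Theorem \ref{thm:SG vreg}.

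First, I would establish that $R_{\bS_r}^{\bG_r}(\theta)$ is irreducible up to a sign. Since $\theta$ is $0$-toral and our standing assumptions on $p$ guarantee (via Lemma \ref{lem:theta-stab}) that $\theta|_{S_{0+}}$ has trivial $W_G(\bfS)$-stabilizer, Remark \ref{rem:regular} shows that the hypotheses of Theorem \ref{thm:mackey} are met. Hence $R_{\bS_r}^{\bG_r}(\theta) = \epsilon\pi_0$ for some $\epsilon \in \{\pm 1\}$ and some irreducible $G_{\x,0}$-representation $\pi_0$. Extending the action from $G_{\x,0}$ to $SG_{\x,0} = Z_{\bfG}G_{\x,0}$ by having $Z_{\bfG}$ act through $\theta|_{Z_{\bfG}}$ produces an irreducible smooth $SG_{\x,0}$-representation. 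By Proposition \ref{prop:geom vreg},
\[
\Theta_{\pi_0}(\gamma) = \epsilon\sum_{w \in W_{G_{\x,0}}(\bfS)} \theta^w(\gamma) \qquad \text{for all } \gamma \in S_{\vreg}.
\]

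To invoke Theorem \ref{thm:SG vreg}, I must verify the identity $\phi \cdot \varepsilon^{\ram}[\phi] = \theta$. Since $\theta$ is $0$-toral of depth $r$, its Howe factorization has $d=1$, $\bfG^0 = \bfS$, $\bfG^1 = \bfG$, and $r_0 = r$. The character $\varepsilon^{\ram}[\theta]$ is quadratic and hence trivial on the pro-$p$ subgroups $\Nr_{E/F}(\alpha^\vee(E_s^\times))$ for $s > 0$, so $\phi = \theta \cdot \varepsilon^{\ram}[\theta]$ and $\theta$ have the same Howe factorization. Because the definition (Definition \ref{def:varepsilon ram}) of $\varepsilon^{\ram}[\cdot]$ depends only on this common twisted Levi sequence and the break $r_0$, we obtain $\varepsilon^{\ram}[\phi] = \varepsilon^{\ram}[\theta]$. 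Quadraticity then gives $\phi \cdot \varepsilon^{\ram}[\phi] = \theta \cdot \varepsilon^{\ram}[\theta]^2 = \theta$, as required. Theorem \ref{thm:SG vreg} applied to $\pi_0$ now forces $\epsilon = (-1)^{r(\bfS,\phi)}$ and $\pi_0 \cong \cc\tau_d$, proving $R_{\bS_r}^{\bG_r}(\theta) \cong (-1)^{r(\bfS,\phi)}\cc\tau_d$.

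For the ``in particular'' assertion, I would apply $\cInd_{SG_{\x,0}}^{G}$ to both sides and use the identity $\cInd_{SG_{\x,0}}^{G}\cc\tau_d \cong \cInd_{K^d}^{G}(\rho'_d \otimes \phi_d) = \pi_{(\bfS,\phi)}$ from Section \ref{subsec:stab-parah}, which is irreducible supercuspidal by Yu's construction; the sign $(-1)^{r(\bfS,\phi)}$ squares to $1$ and cancels. The substantive obstacles—the characterization theorem and the geometric character formula—have already been surmounted, so the remaining work is essentially bookkeeping. The only intermediate subtlety worth isolating is the sign identity $\phi \cdot \varepsilon^{\ram}[\phi] = \theta$, whose verification rests on the observation that $\varepsilon^{\ram}$ is a quadratic character depending only on the Howe-factorization data shared by $\theta$ and $\phi$.
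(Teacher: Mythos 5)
Your proposal is correct and follows the paper's proof essentially verbatim: irreducibility up to sign via Theorem \ref{thm:mackey} and Remark \ref{rem:regular}, the geometric character formula of Proposition \ref{prop:geom vreg}, the observation that $\varepsilon^{\ram}[\phi]=\varepsilon^{\ram}[\theta]$ because $\varepsilon^{\ram}$ is quadratic and tamely ramified and hence $\phi|_{S_{0+}}=\theta|_{S_{0+}}$, and then Theorem \ref{thm:SG vreg} to conclude. The only cosmetic difference is that you make explicit the extension of the $G_{\x,0}$-action to $SG_{\x,0}$ by letting $Z_\bfG$ act through $\theta|_{Z_\bfG}$, which the paper treats as implicit from Section \ref{sec:geom}.
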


\begin{proof}
By the $0$-torality of $\theta$ and the assumption on $p$, Theorem \ref{thm:mackey} implies that $R_{\bS_r}^{\bG_r}(\theta)$ is an irreducible virtual representation of $SG_{\x,0}$ (see Remark \ref{rem:regular}).
Hence either $R_{\bS_r}^{\bG_r}(\theta)$ or its negative is a genuine representation; call it $c \cdot R_{\bS_r}^{\bG_r}(\theta)$. By Proposition \ref{prop:geom vreg}, for $\gamma \in S_{\vreg}$,
\[
\Theta_{c \cdot R_{\bS_r}^{\bG_r}(\theta)}(\gamma)
=
c \cdot \sum_{w \in W_{G_{\x,0}}(\bfS)} \theta^{w}(\gamma).
\]

Note that we have $\phi|_{S_{0+}}\cong\theta|_{S_{0+}}$ since the character $\varepsilon^{\ram}[\theta]$ (Definition \ref{def:varepsilon ram}) is tamely ramified. 
Moreover, since $\varepsilon^{\ram}[\phi]$ is determined by $\phi|_{S_{0+}}$, we have $\varepsilon^{\ram}[\phi]=\varepsilon^{\ram}[\theta]$.
Thus, by also noting that $\varepsilon^{\ram}$ is a sign character, the equality $\phi=\theta\cdot\varepsilon^{\ram}[\theta]$ is equivalent to the equality $\theta=\phi\cdot\varepsilon^{\ram}[\phi]$.
Therefore Theorem \ref{thm:SG vreg} is applicable to $\pi:=c \cdot R_{\bS_r}^{\bG_r}(\theta)$ and we get $c=(-1)^{r(\bfS,\phi)}$ and $\pi\cong\cc\tau_{d}$.
\end{proof}

\begin{remark}\label{rem:SG sign}
Recall that under the assumptions in Theorem \ref{thm:comparison SG}, the virtual representation $R_{\bbS_r}^{\bbG_r}(\theta)$ is irreducible. Suppose that in fact $H_c^*(X_r, \overline \QQ_\ell)[\theta]$ is in fact concentrated in a single degree $r^{\geom}(\theta)$. (This is known to be the case when $\bfG$ is a division algebra; see \cite{MR3848018,MR4168688}.) Then Theorem \ref{thm:comparison SG} gives a \textit{cohomological} meaning to the purely root-theoretically defined integer $r(\bfS,\phi)$ (Proposition \ref{prop:rho_d'}):
\begin{equation*}
r(\bfS,\phi) \equiv r^{\geom}(\theta) \pmod 2.
\end{equation*}
(In the division algebra case, one can check this directly---compare the formula for $r(\bfS,\phi)$ in Proposition \ref{prop:rho_d'} to the formula for the nonvanishing cohomological degree in \cite[Theorem 5.1.1]{MR4168688}.)
\end{remark}

\begin{remark}\label{rem:genericity condition}
The $0$-torality condition is only needed because of Theorem \ref{thm:mackey} (\cite[Theorem 1.1]{MR4197070}).
On the other hand, it is expected (and known in certain cases) that Theorem \ref{thm:mackey} holds beyond the $0$-toral case as long as $\bfS$ is elliptic.
When $\bfG$ is an inner form of $\GL_n$, Theorem \ref{thm:mackey} is known without any constraints on $\theta$ (see Theorem B of \cite{CI_loopGLn}), and we will discuss this in more detail in Section \ref{sec:GLn}. For $\bfG$ arbitrary and $\bfS$ Coxeter, establishing Theorem \ref{thm:mackey} is recent work of Dudas--Ivanov (see Theorem 3.2.3 of \cite{DI2020}) announced during the preparation of the present paper.
With this, the $0$-torality condition on $\theta$ in Theorem \ref{thm:comparison SG} can be relaxed to the torality, with identical proof.
\end{remark}

\subsection{Comparison for $X_r \cap \bS_r \bG_r^+$}\label{subsec:compare SG+}

As in Section \ref{subsec:compare SG}, there is a unique extension of $H_c^*(X_r \cap \bS_r \bG_r^+, \overline \QQ_\ell)[\theta]$ to a (virtual) representation of $SG_{\x,0+}$ with central character $\theta|_{Z_{\bfG}}$.
We again denote this representation by $H_c^*(X_r \cap \bS_r \bG_r^+, \overline \QQ_\ell)[\theta]$.

\begin{theorem}\label{thm:comparison SG+}
Let $\bfS$ be an unramified elliptic maximal torus of $\bfG$ and $\theta\colon S\rightarrow\C^{\times}$ a $0$-toral character of depth $r\in\R_{>0}$.
We put $\phi:=\theta\cdot\varepsilon^{\ram}[\theta]$.
Then we have
\[
H_c^*(X_r \cap \bS_r \bG_r^+, \overline \QQ_\ell)[\theta]
\cong
(-1)^{r(\bfS, \phi)} \Ind_{K^{d}}^{SG_{\x,0+}}(\rho_d' \otimes \phi_d),
\]
where the right-hand side is Yu's representation associated to the pair $(\bfS,\phi)$ (see Section \ref{sec:Yu}).
\end{theorem}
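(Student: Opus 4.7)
My approach is to run the same three-step argument used to prove Theorem \ref{thm:comparison SG}, but one level down on the parahoric filtration. That is, I will first establish irreducibility (up to sign) of $H_c^{\ast}(X_r \cap \bS_r \bG_r^+, \overline\Q_\ell)[\theta]$ via the $\bfL = \bfS$ case of Theorem \ref{thm:irred L}, next compute its character on $S_{\vreg}$ via an adaptation of Proposition \ref{prop:geom vreg}, and finally identify the resulting irreducible representation using Theorem \ref{thm:SG+ vreg}.

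For the irreducibility, I would apply Theorem \ref{thm:irred L} with $\bfL = \bfS$. The $0$-torality of $\theta$ is precisely what is needed to invoke the theorem (cf.\ Remark \ref{rem:regular+}), and since $W_{S_0}(\bfS) = 1$ the right-hand side of the pairing formula collapses to $\langle \theta, \theta \rangle_{S_{0:r+}} = 1$. Hence $H_c^{\ast}(X_r \cap \bS_r \bG_r^+, \overline\Q_\ell)[\theta]$ is an irreducible virtual representation of $S_{0:r+} G_{\x,0+:r+}$; inflating along $S_0 G_{\x,0+} \twoheadrightarrow S_{0:r+} G_{\x,0+:r+}$ and extending by the central character $\theta|_{Z_{\bfG}}$, I obtain an irreducible virtual representation $V$ of $SG_{\x,0+}$.

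The second step---verifying that $\Theta_V(\gamma) = \theta(\gamma)$ for every $\gamma \in S_{\vreg}$---is the main obstacle, and requires genuine geometric input rather than a formal extraction from Proposition \ref{prop:geom vreg}. I expect it to follow from a direct transcription of the Lefschetz fixed-point argument in \cite{MR4197070} carried out on the Drinfeld sub-stratum $X_r \cap \bS_r \bG_r^+$ instead of on the ambient $X_r$: whereas on $X_r$ the $\gamma$-fixed-point locus is indexed by $W_{G_{\x,0}}(\bfS)$, only those components whose index lies in $W_{S_0}(\bfS) = 1$ remain on the sub-stratum, so the sum in Proposition \ref{prop:geom vreg} collapses to the single term $\theta(\gamma)$.

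For the final step, I first observe that $\varepsilon^{\ram}[\theta] = \varepsilon^{\ram}[\phi]$ because both depend only on $\theta|_{S_{0+}} = \phi|_{S_{0+}}$; since $\varepsilon^{\ram}$ is quadratic, the hypothesis $\phi = \theta \cdot \varepsilon^{\ram}[\theta]$ is then equivalent to $\theta = \phi \cdot \varepsilon^{\ram}[\phi]$. Moreover, $(\bfS,\phi)$ is a tame elliptic regular pair with $\phi$ toral, because $\bfG^{0}(\bfS,\phi)$ depends only on $\phi|_{S_{0+}}$ and so torality of $\phi$ follows directly from $0$-torality of $\theta$; hence Theorem \ref{thm:SG+ vreg} is applicable. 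Letting $c_0 \in \{\pm 1\}$ denote the sign for which $c_0 V$ is a genuine irreducible representation, Step 2 yields $\Theta_{c_0 V}(\gamma) = c_0 \cdot \theta(\gamma)$ on $S_{\vreg}$, and Theorem \ref{thm:SG+ vreg} then forces $c_0 = (-1)^{r(\bfS, \phi)}$ and $c_0 V \cong \Ind_{K^{d}}^{SG_{\x,0+}}(\rho_d' \otimes \phi_d)$, which gives the claimed identity.
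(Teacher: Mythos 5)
Your proposal matches the paper's own proof: irreducibility from Theorem \ref{thm:irred L} with $\bfL = \bfS$ (as in Remark \ref{rem:regular+}), the character value $\Theta(\gamma) = \theta(\gamma)$ on $S_{\vreg}$ obtained by running the argument of Proposition \ref{prop:geom vreg} on the sub-stratum, and identification via Theorem \ref{thm:SG+ vreg}. The paper is just as brief as you are on the second step, simply asserting that "the proof of Proposition \ref{prop:geom vreg} in this setting yields the result," so your honest flag that this step requires a genuine re-run of the Lefschetz argument rather than a formal corollary is a fair reading of the situation, not a gap.
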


\begin{proof}
By the $0$-torality of $\theta$, Theorem \ref{thm:irred L} implies that $H_c^*(X_r \cap \bS_r \bG_r^+, \overline \QQ_\ell)[\theta]$ is an irreducible virtual representation of $SG_{\x,0+}$ (see Remark \ref{rem:regular+}).
The proof of Proposition \ref{prop:geom vreg} in this setting yields the result that if $\gamma \in SG_{\x,0+}$ is an unramified very regular element, then 
\begin{equation*}
\Theta_{H_c^*(X_r \cap \bS_r \bG_r^+, \overline \QQ_\ell)[\theta]}(\gamma) = 
\begin{cases}
\theta(g\gamma g^{-1}) & \text{if $\gamma \in g^{-1} S g$ for some $g \in G_{\x,0+}$}, \\
0 & \text{otherwise.}
\end{cases}
\end{equation*}
Then the same proof as in Theorem \ref{thm:comparison SG} works by using the above formula and Theorem \ref{thm:SG+ vreg} in place of Proposition \ref{prop:geom vreg} and Theorem \ref{thm:SG vreg}, respectively.
\end{proof}

As a straightforward corollary of Theorem \ref{thm:comparison SG} and \ref{thm:comparison SG+}, we resolve Conjecture \ref{conj:drinfeld} for $0$-toral characters:

\begin{theorem}\label{thm:drinfeld}
Let $\bfS$ be an unramified elliptic maximal torus of $\bfG$ and $\theta\colon S\rightarrow\C^{\times}$ a $0$-toral character of depth $r\in\R_{>0}$.
Then
\begin{equation*}
R_{\bS_r}^{\bG_r}(\theta)
\cong
H_c^*(X_r^{(\bfS)}, \overline \QQ_\ell)[\theta]
\cong
\Ind_{SG_{\x,0+}}^{SG_{\x,0}}\big(H_c^*(X_r \cap \bS_r \bG_r^+, \overline \QQ_\ell)[\theta]\big).
\end{equation*}
\end{theorem}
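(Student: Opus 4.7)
The plan is to deduce both isomorphisms as essentially formal consequences of work already done in the paper, so the proof amounts to assembling Theorems \ref{thm:comparison SG} and \ref{thm:comparison SG+} along with the disjoint-union structure of the Drinfeld stratum $X_r^{(\bfS)}$.

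For the second isomorphism, I would unwind the definition of $X_r^{(\bfS)}$. By the geometric decomposition recalled in Section \ref{sec:drinfeld} (\cite[Lemma 3.3.3]{CI_DrinfeldStrat}), the scheme $X_r^{(\bfS)}$ is a disjoint union of $|G_{\x,0}/(S_0 G_{\x,0+})|$ copies of $X_r\cap \bS_r\bG_r^+$, and \cite[Lemma 3.3.4]{CI_DrinfeldStrat} then yields the identification
\[
H_c^i(X_r^{(\bfS)},\overline\Q_\ell)[\theta]\cong\Ind_{S_0G_{\x,0+}}^{G_{\x,0}}\bigl(H_c^i(X_r\cap\bS_r\bG_r^+,\overline\Q_\ell)[\theta]\bigr)
\]
as $G_{\x,0}$-representations. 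Since $S=Z_\bfG S_0$ and both sides already have central character $\theta|_{Z_\bfG}$, extending the action by $Z_\bfG$ promotes this to an isomorphism of $SG_{\x,0}=Z_\bfG G_{\x,0}$-representations, which is precisely the second asserted isomorphism.

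For the first isomorphism, I would invoke Theorem \ref{thm:comparison SG} and Theorem \ref{thm:comparison SG+} with $\phi=\theta\cdot\varepsilon^{\ram}[\theta]$, which identify
\[
R_{\bS_r}^{\bG_r}(\theta)\cong(-1)^{r(\bfS,\phi)}\cc\tau_d,\qquad H_c^\ast(X_r\cap \bS_r\bG_r^+,\overline\Q_\ell)[\theta]\cong(-1)^{r(\bfS,\phi)}\Ind_{K^d}^{SG_{\x,0+}}(\rho_d'\otimes\phi_d).
\]
Because $\theta$ is toral we have $\cc K^d=K^d$ and $\cc\rho_d'=\rho_d'$ throughout Section \ref{sec:Yu}, so by definition $\cc\tau_d=\Ind_{K^d}^{SG_{\x,0}}(\rho_d'\otimes\phi_d)$. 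Applying transitivity of induction along the chain $K^d\subset SG_{\x,0+}\subset SG_{\x,0}$ converts this into $\cc\tau_d\cong\Ind_{SG_{\x,0+}}^{SG_{\x,0}}\Ind_{K^d}^{SG_{\x,0+}}(\rho_d'\otimes\phi_d)$, and substituting back via Theorem \ref{thm:comparison SG+} yields $R_{\bS_r}^{\bG_r}(\theta)\cong\Ind_{SG_{\x,0+}}^{SG_{\x,0}}H_c^\ast(X_r\cap\bS_r\bG_r^+,\overline\Q_\ell)[\theta]$.

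There is no real obstacle here: all the hard work has already been carried out in establishing the two comparison theorems. The only potential subtlety is making sure that the signs $(-1)^{r(\bfS,\phi)}$ match on both sides (they do, which is exactly why we can recover a genuine, rather than virtual, isomorphism), and that the passage from $G_{\x,0}$-representations to $SG_{\x,0}$-representations via central character extension is compatible with induction---both of which are straightforward bookkeeping.
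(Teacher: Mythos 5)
Your proof is correct and takes essentially the same approach as the paper: the second isomorphism comes directly from the Drinfeld stratification facts (\cite[Lemmas 3.3.3, 3.3.4]{CI_DrinfeldStrat}) recalled before Conjecture \ref{conj:drinfeld}, and the first follows by combining Theorems \ref{thm:comparison SG} and \ref{thm:comparison SG+} with transitivity of induction $\cc\tau_d\cong\Ind_{SG_{\x,0+}}^{SG_{\x,0}}\Ind_{K^d}^{SG_{\x,0+}}(\rho_d'\otimes\phi_d)$. The extra bookkeeping you do about signs and extension of central characters is correct and merely makes explicit what the paper leaves implicit.
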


\begin{proof}
The second isomorphism in the assertion holds in general (see the paragraph before Conjecture \ref{conj:drinfeld}), so it suffices to check that $R_{\bS_r}^{\bG_r}(\theta)$ is isomorphic to $\Ind_{SG_{\x,0+}}^{SG_{\x,0}}\big(H_c^*(X_r \cap \bS_r \bG_r^+, \overline \QQ_\ell)[\theta]\big)$. But this isomorphism holds by Theorem \ref{thm:comparison SG} and \ref{thm:comparison SG+} together with the fact that 
\begin{equation*}
\cc \tau_d \cong  \Ind_{SG_{\x,0+}}^{SG_{\x,0}}\Ind_{K^{d}}^{SG_{\x,0+}}(\rho_d' \otimes \phi_d). \qedhere
\end{equation*}
\end{proof}

\subsection{The case of $\GL_n$}\label{sec:GLn}

Let $\bfG$ be an inner form of $\GL_n$ and let $\bfS$ be an unramified elliptic maximal torus of $\bfG$. 
Then there are no bad primes of $\bfG$ and $|\pi_1(\bfG_{\der})| = |\pi_1(\widehat \bfG_{\der})| = 1$, so the only baseline assumption on the residue characteristic of $F$ (Section \ref{subsec:assumptions}) is that $p > 2$.  
In this setting, $G \cong \GL_{n'}(D_{k_0/n_0})$, where $n = n'n_0$ and $D_{k_0/n_0}$ is the division algebra of dimension $n_0^2$ with Hasse invariant $k_0/n_0$, and $S \cong E^\times$, where $E$ is the unramified degree-$n$ extension of $F$ (i.e., it is the smallest extension of $F$ which splits $\bfS$). 
This is a specialization of the setting of Section \ref{sec:geom}, and in this setting Theorems \ref{thm:mackey}, \ref{thm:irred L} are known by work of the first author and A.\ Ivanov without the $0$-torality assumption on $\theta$. We record this result here:

\begin{theorem}[{\cite[Theorem 3.1]{CI_loopGLn}, \cite[Theorem 5.2.1]{CI_DrinfeldStrat}}]
Let $r > 0$ and let $\theta, \theta' \from S_{0:r+} \to \overline \QQ_\ell^\times$ be any character. Then
\begin{equation*}
\bigl\langle R_{\bS_r}^{\bG_r}(\theta), R_{\bS_r}^{\bG_r}(\theta') \bigr\rangle_{G_{\x,0:r+}} = \sum_{w \in W_{G_{\x,0}}(\bfS)} \langle \theta, \theta'^{w} \rangle_{S_{0:r+}}.
\end{equation*}
Furthermore, $H_c^*(X_r \cap \bS_r \bG_r^+, \overline \QQ_\ell)[\theta]$ is an irreducible representation of $S_0 G_{\x,0+}$.
\end{theorem}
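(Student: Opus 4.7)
The plan is to reduce both assertions to a single inner-product computation at the level of the smaller group $S_0 G_{\x,0+}$ and carry out that computation geometrically, exploiting the rigid structure of elliptic unramified tori in inner forms of $\GL_n$. Observe first that $G_{\x,0+}$ is pro-unipotent, so $N_{G_{\x,0+}}(\bfS) = S_{0+}$, and consequently $W_{S_0 G_{\x,0+}}(\bfS) = \{1\}$. Hence it would suffice to prove that
\begin{equation*}
\bigl\langle H_c^*(X_r \cap \bS_r \bG_r^+)[\theta], H_c^*(X_r \cap \bS_r \bG_r^+)[\theta'] \bigr\rangle_{S_0 G_{\x,0+}/G_{\x,r+}} = \langle \theta, \theta' \rangle_{S_{0:r+}},
\end{equation*}
since the irreducibility assertion follows by taking $\theta = \theta'$. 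The $G_{\x,0:r+}$-level Mackey formula is then recovered from this together with the Drinfeld-stratification identity $R_{\bS_r}^{\bG_r}(\theta) \cong \Ind_{S_0 G_{\x,0+}}^{G_{\x,0}}(H_c^*(X_r \cap \bS_r \bG_r^+)[\theta])$ (which is Conjecture \ref{conj:drinfeld} for $\bfL = \bfS$, and is known unconditionally in the $\GL_n$ setting) by Frobenius reciprocity, after identifying the relevant double cosets $S_0 G_{\x,0+} \backslash G_{\x,0} / S_0 G_{\x,0+}$ with $W_{G_{\x,0}}(\bfS)$.

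For the central inner-product computation, I would follow the template of the Deligne--Lusztig calculation \cite[Theorem 6.8]{MR393266} transposed to the parahoric setting. Concretely, I would realize the inner product as the $(\theta, \theta'^{-1})$-isotypic Euler characteristic of the diagonal action of $S_{0:r+} \times S_{0:r+}$ on the product $(X_r \cap \bS_r \bG_r^+) \times (X_r \cap \bS_r \bG_r^+)$, then stratify this product via Bruhat-type double cosets of the relative Weyl stratification of $\bG_r$. Each stratum fibers over a quotient of $\bS_r$ and yields a character sum over a coroot subgroup; the goal is to show that every stratum indexed by a nontrivial affine Weyl element contributes zero.

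The principal obstacle is the vanishing of these non-generic strata without any $0$-torality hypothesis on $\theta$. Under torality, the contributions vanish by a direct orthogonality argument using $\theta|_{\Nr_{E/F}(\alpha^\vee(E_r^\times))} \not\equiv \mathbbm 1$, but for arbitrary $\theta$ this character-theoretic cancellation is simply unavailable and the vanishing must come from geometry. The key $\GL_n$-specific input is that the elliptic unramified maximal torus takes the simple form $\bfS \cong \Res_{E/F}(\Gm)$ with $E/F$ unramified of degree $n$, so the root system of $\bfS$ in $\bfG$, together with its Moy--Prasad filtration, is controlled by a single cyclic Galois action. This rigidity should allow each troublesome stratum to be exhibited as a Zariski-locally trivial fibration whose fibers admit a free action of a connected unipotent group, forcing the cohomology to vanish stratum-by-stratum. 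Making this unipotent-freeness argument work uniformly in $r$---and thereby handling all parahoric depths simultaneously---is, I expect, the hardest part; it is where the affine-combinatorial tools available for $\GL_n$ (via the interpretation of $X_r$ inside an affine Deligne--Lusztig variety) enter essentially, and it is the substance of \cite[Theorem 3.1]{CI_loopGLn}.
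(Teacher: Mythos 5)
Note first that this theorem appears in the paper only as a \emph{recall} from external references (\cite[Theorem 3.1]{CI_loopGLn} and \cite[Theorem 5.2.1]{CI_DrinfeldStrat}); the paper does not supply its own proof, so the comparison must be with the cited works. Your sketch has a genuine gap in the deduction of the $G_{\x,0:r+}$-level Mackey formula from the $S_0G_{\x,0+}$-level inner product. You invoke the Drinfeld-stratification identity and Frobenius/Mackey reciprocity, ``after identifying the relevant double cosets $S_0 G_{\x,0+} \backslash G_{\x,0} / S_0 G_{\x,0+}$ with $W_{G_{\x,0}}(\bfS)$.'' But this set of double cosets is $\bbS(\FF_q)\backslash\bbG_\x(\FF_q)/\bbS(\FF_q)$ (a Bruhat-type quotient), which is vastly larger than $W_{G_{\x,0}}(\bfS)=N_{\bbG_\x(\FF_q)}(\bbS)/\bbS(\FF_q)$. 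The entire content of a Mackey-type formula is the vanishing of the intertwining contributions from double cosets not supported on the normalizer, and that is exactly what you are trying to sidestep: it is the heart of the classical Deligne--Lusztig Mackey formula \cite[Theorem 6.8]{MR393266} and of its parahoric analogue in \cite[Theorem 3.1]{CI_loopGLn}. Knowing only $\langle\rho,\rho'\rangle_{S_0G_{\x,0+}}=\langle\theta,\theta'\rangle_{S_{0:r+}}$ gives you the contribution of the identity double coset and (combined with independence of $\bfU$) of the $W_{G_{\x,0}}(\bfS)$-cosets; it gives you no control over the rest.

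There is also a likely circularity in the reduction: the Drinfeld-stratification identity $R_{\bS_r}^{\bG_r}(\theta) \cong \Ind_{S_0G_{\x,0+}}^{G_{\x,0}}\bigl(H_c^*(X_r \cap \bS_r\bG_r^+)[\theta]\bigr)$ is \cite[Theorem 4.1]{CI_loopGLn}, which sits downstream of Theorem 3.1 in that paper, so you cannot presuppose it when proving Theorem 3.1. The organization in the cited references is the reverse of yours: the $G_{\x,0:r+}$-level Mackey formula is established directly by the geometric, strata-killing argument, and only then are the Drinfeld stratification and the $S_0G_{\x,0+}$-irreducibility extracted. Your intuition about stratifying the product variety and exhibiting the off-diagonal strata as fibrations admitting free actions of connected unipotent groups is genuinely the right mechanism for the vanishing statement, and the $\GL_n$-specific rigidity of $\bfS\cong\Res_{E/F}\Gm$ is indeed what makes it tractable --- but that argument must be run at the level of $G_{\x,0:r+}$ on all double cosets, not avoided by an appeal to induction.
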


This means in particular that we may apply Theorem \ref{thm:SG+ vreg} to $H_c^*(X_r \cap \bS_r \bG_r^+, \overline \QQ_\ell)[\theta]$ and Theorem \ref{thm:SG vreg} to $R_{\bbS_r}^{\bG_r}(\theta)$ to any toral character $\theta \from S \to \overline \QQ_\ell^\times$ of depth $r\in\R_{>0}$ whenever the assumption (\ref{ineq}) is satisfied.
Recall that the assumption (\ref{ineq}) is needed in order to guarantee that there is no character of $\bbS(\FF_q)$ which restricts to some constant $c \neq 1$ on all of $\bbS(\FF_q)_{\vreg}$ (Lemma \ref{lem:greater-than-2}). 
In \cite[Section 2.7]{MR1235293}, Henniart proves that the stronger condition $|\bbS(\FF_q)|/|\bbS(\F_{q})_{\nvreg}| > 2n$ holds unless $(q,n) \in \{(2,2), (2,4), (2,6), (3,2)\}$. If $q = 3$ and $n = 2$, then $|\bbS(\FF_q)|/|\bbS(\F_{q})_{\nvreg}| = 8/2 = 4 > 2$, so we may remove this case from the list. We will now argue that in the remaining three exceptional cases, there is no character of $\bbS(\FF_q)$ which restricts to $c \neq 1$ on all of $\bbS(\FF_q)_{\vreg}$. We have $\bbS(\FF_q) \cong \FF_{q^n}^\times$ which is cyclic of order $q^n - 1$; let $\zeta$ be a generator of $\bbS(\FF_q)$. If $q$ is even, then $\zeta$ has odd order, and so $\zeta^2$ must also be a generator of $\bbS(\FF_q)$ and must be in $\bbS(\FF_q)_{\vreg}$. It follows that if a character of $\bbS(\FF_q)$ restricts to a constant on $\bbS(\FF_q)_{\vreg}$, then the character must be the trivial representation. In summary, in our present setting of $\bfG$ being an inner form of $\GL_n$, Theorems \ref{thm:SG+ vreg} and \ref{thm:SG vreg} hold for all odd $q$ (i.e., without assumption (\ref{ineq})).

\begin{theorem}\label{thm:GLn compare}
Let $\theta$ be a toral character of $S$ of depth $r > 0$.
We put $\phi \colonequals \theta \varepsilon^{\ram}[\theta]$.
Then we have
\begin{align*}
R_{\bS_r}^{\bG_r}(\theta) &\cong (-1)^{r(\bfS, \phi)} \Ind_{K^{d}}^{SG_{\x,0}}(\rho_d' \otimes \phi_d), \\
H_c^*(X_r \cap \bS_r \bG_r^+, \overline \QQ_\ell)[\theta] &\cong (-1)^{r(\bfS, \phi)} \Ind_{K^{d}}^{SG_{\x,0+}}(\rho_d' \otimes \phi_d), \\
R_{\bS_r}^{\bG_r}(\theta) &\cong \Ind_{SG_{\x,0+}}^{SG_{\x,0}}\big(H_c^*(X_r \cap \bS_r \bG_r^+, \overline \QQ_\ell)[\theta]\big).
\end{align*}
\end{theorem}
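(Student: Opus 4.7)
The plan is to observe that Theorem \ref{thm:GLn compare} collects the $\GL_n$-setting analogues of Theorems \ref{thm:comparison SG}, \ref{thm:comparison SG+}, \ref{thm:drinfeld}, with the $0$-torality hypothesis on $\theta$ relaxed to mere torality. The proofs of those three theorems carry over verbatim once two upgrades are in place; I will address each upgrade, then assemble the conclusion.

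The first upgrade is the irreducibility (up to sign) of the virtual representations $R_{\bS_r}^{\bG_r}(\theta)$ and $H_c^*(X_r \cap \bS_r \bG_r^+, \overline{\QQ}_\ell)[\theta]$ as representations of $SG_{\x,0}$ and $SG_{\x,0+}$ respectively. In the general set-up this came from Theorems \ref{thm:mackey} and \ref{thm:irred L} and required $0$-torality, but in the $\GL_n$ setting it is available for any toral character by the strengthened Mackey formula of Chan--Ivanov (\cite[Theorem 3.1]{CI_loopGLn}) and its Drinfeld-stratified refinement (\cite[Theorem 5.2.1]{CI_DrinfeldStrat}), which the authors have recorded just above the theorem statement.

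The second upgrade is the applicability of the characterization theorems \ref{thm:SG vreg} and \ref{thm:SG+ vreg}, which convert a character value on $S_{\vreg}$ into an identification of the entire representation. These rely on the inequality (\ref{ineq}); but the paragraph preceding the theorem already verifies (\ref{ineq}) for all odd $q$ in the $\GL_n$ setting via Henniart's bound plus a direct check in the one borderline case $(q,n) = (3,2)$, and we are working with $p > 2$ so only odd $q$ occur. I must also verify that $(\bfS, \phi)$ with $\phi \colonequals \theta \cdot \varepsilon^{\ram}[\theta]$ is a tame elliptic regular pair with $\phi$ toral; this is immediate, since $\varepsilon^{\ram}[\theta]$ is depth-zero so $\phi|_{S_{0+}} = \theta|_{S_{0+}}$, and all of ``toral,'' ``tame,'' ``elliptic,'' and the $R_{0+}$-positivity condition depend only on this restriction (and are trivial for a toral character since then $R_{0+} = \varnothing$).

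With these ingredients in hand, the three isomorphisms follow exactly as in Theorems \ref{thm:comparison SG}, \ref{thm:comparison SG+}, \ref{thm:drinfeld}. For the first, Proposition \ref{prop:geom vreg} computes $\Theta_{R_{\bS_r}^{\bG_r}(\theta)}(\gamma) = \sum_{w \in W_{G_{\x,0}}(\bfS)} \theta^w(\gamma)$ for $\gamma \in S_{\vreg}$, after which Theorem \ref{thm:SG vreg} (applied to $\pm R_{\bS_r}^{\bG_r}(\theta)$, whichever is genuine) pins down the representation as $(-1)^{r(\bfS,\phi)} \cc\tau_d$. The second isomorphism proceeds identically using the analogous character formula on unramified very regular elements of $SG_{\x,0+}$ (from the proof of Theorem \ref{thm:comparison SG+}) together with Theorem \ref{thm:SG+ vreg}. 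The third isomorphism then follows from the first two together with the induction identity $\cc\tau_d \cong \Ind_{SG_{\x,0+}}^{SG_{\x,0}} \Ind_{K^d}^{SG_{\x,0+}}(\rho_d' \otimes \phi_d)$, exactly as in the proof of Theorem \ref{thm:drinfeld}. No step is expected to be a genuine obstacle: the theorem is essentially an assembly statement, and the real work was absorbed into the characterization theorems of Section \ref{sec:ell unram vreg} and into the Chan--Ivanov irreducibility inputs.
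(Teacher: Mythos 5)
Your proposal is correct and follows the same route the paper leaves implicit: the paper states Theorem \ref{thm:GLn compare} without a written proof, relying on the preceding paragraph (the Chan--Ivanov Mackey/irreducibility inputs replacing Theorems \ref{thm:mackey} and \ref{thm:irred L}, and Henniart's bound plus the direct checks of small $(q,n)$ replacing the assumption \eqref{ineq}) to justify re-running the proofs of Theorems \ref{thm:comparison SG}, \ref{thm:comparison SG+}, \ref{thm:drinfeld}. Your additional verification that $(\bfS,\phi)$ remains a tame elliptic regular pair with $\phi$ toral is the right thing to check and is correctly reduced to $\phi|_{S_{0+}} = \theta|_{S_{0+}}$.
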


The twist $\varepsilon^{\ram}[\theta]$ is subtle, and we will discuss it later in this subsection. Before we do that, we note that Theorem \ref{thm:GLn compare} in particular yields an \textit{algebraic} alternative to the geometric approach of \cite{CI_loopGLn} to establish the irreducibility of the $\theta$-eigenspace of loop Deligne--Lusztig varieties:

\begin{cor}\label{cor:GLn sc}
Let $\theta$ be a toral character of $S$ of depth $r > 0$.
Then $\cInd_{SG_{\x,0}}^{G}(R_{\bS_r}^{\bG_r}(\theta))$ is (up to sign) irreducible supercuspidal. In particular, the assumption $p>n$ in \cite[Theorem A]{CI_loopGLn} can be relaxed to $p >2$.
\end{cor}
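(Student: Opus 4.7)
The plan is to deduce the corollary as a formal consequence of Theorem \ref{thm:GLn compare} combined with the irreducibility of Yu's supercuspidal representations. Setting $\phi = \theta \cdot \varepsilon^{\ram}[\theta]$, Theorem \ref{thm:GLn compare} identifies $R_{\bS_r}^{\bG_r}(\theta)$ with $(-1)^{r(\bfS,\phi)} \Ind_{K^{d}}^{SG_{\x,0}}(\rho'_d \otimes \phi_d)$; applying $\cInd_{SG_{\x,0}}^{G}$ to both sides and invoking transitivity of compact induction yields
\[
\cInd_{SG_{\x,0}}^{G}\bigl(R_{\bS_r}^{\bG_r}(\theta)\bigr) \cong (-1)^{r(\bfS,\phi)} \cInd_{K^{d}}^{G}(\rho'_d \otimes \phi_d) = (-1)^{r(\bfS,\phi)} \pi_{(\bfS,\phi)},
\]
where $\pi_{(\bfS,\phi)}$ is irreducible supercuspidal by Yu's construction.

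Before applying this, I would verify that $(\bfS,\phi)$ is actually a tame elliptic regular pair, so that the Yu representation $\pi_{(\bfS,\phi)}$ is defined. Since $\varepsilon^{\ram}[\theta]$ is a product of tamely ramified signs depending only on reductions of root values (see Definition \ref{def:varepsilon ram}), it is trivial on $S_{0+}$, so $\phi|_{S_{0+}} = \theta|_{S_{0+}}$. In particular, the twisted Levi sequence $\vec{\bfG}(\bfS,\phi)$ coincides with $\vec{\bfG}(\bfS,\theta)$, so $\phi$ inherits the torality of $\theta$. Torality means $\bfG^{0}(\bfS,\phi) = \bfS$, which forces $W_{G^{0}}(\bfS) = 1$ and $R_{0+} = \varnothing$, so Kaletha's regularity and positivity-preservation conditions in the definition of a tame elliptic regular pair hold trivially, and Yu's construction applies.

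The only substantial item is the bookkeeping needed to confirm the $p$-relaxation. For an inner form $\bfG$ of $\GL_n$, there are no bad primes, and $\pi_1(\bfG_\der) = \pi_1(\widehat\bfG_\der) = 1$, so the baseline hypotheses on $F$ listed in Section \ref{subsec:assumptions} reduce to $p$ odd. As noted in the paragraphs immediately preceding Theorem \ref{thm:GLn compare}, the hypothesis \eqref{ineq} also holds automatically at every odd $q$ in this setting, via Henniart's case analysis in \cite{MR1235293} supplemented by a direct check in the remaining exceptional small cases. Hence Theorems \ref{thm:mackey}, \ref{thm:irred L}, and \ref{thm:GLn compare}---and therefore the present corollary---hold under the sole restriction $p > 2$, which is strictly weaker than the assumption $p > n$ used in \cite[Theorem A]{CI_loopGLn}. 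I do not anticipate any genuine obstacle here: the heavy lifting has already been accomplished in Theorem \ref{thm:GLn compare}, and the corollary is essentially a packaging statement.
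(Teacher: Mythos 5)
Your proof is correct and follows the same implicit reasoning the paper relies on: the corollary is extracted from Theorem~\ref{thm:GLn compare} via transitivity of compact induction (which applies since $[SG_{\x,0}:K^d]$ is finite), and the $p$-relaxation is exactly the bookkeeping carried out in the paragraphs preceding Theorem~\ref{thm:GLn compare}. Your check that $(\bfS,\phi)$ with $\phi = \theta\cdot\varepsilon^{\ram}[\theta]$ remains a tame elliptic regular pair is a worthwhile sanity check that the paper performs only in the proof of Theorem~\ref{thm:comparison SG}; nothing is missing.
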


The $p>2$ assumption likely cannot be relaxed due to our approach of comparing with Yu's construction, which uses from the start that $p$ is odd. However, given Henniart's work \cite{MR1235293}, it seems likely that one can obtain the above result for $p=2$ by considering G\'erardin's construction \cite{MR546594} in place of Yu's in this setting.

To finish this subsection, we would like to illustrate the subtle nature of $\varepsilon^{\ram}[\theta]$ and the sign $(-1)^{r(\bfS, \phi)}$ by further specializing the present setting to the case $\bfG = \GL_n$. In this setting, each twisted Levi $\bfG^i$ is $\Res_{E_i/F} \GL_{n_i}$, where $E_i$ is the unramified degree-$n/n_i$ extension of $F$.
If $\gamma \in S_{0,\vreg}$ is such that the image of $\gamma$ in $S_{0:0+} \cong \FF_{q^n}^\times$ generates all of $S_{0:0+}$, then $\varepsilon^{\ram}[\theta]$ is the unique tamely ramified character of $S$ such that $\varepsilon^{\ram}[\theta]|_{Z_{\bfG}}=\mathbbm{1}$ and
\begin{equation*}
\varepsilon^{\ram}[\theta](\gamma) = \prod_{i=0}^{d-1} (-1)^{(\floor{n_{i+1}/2} - \floor{n_i/2})(r_i-1)}.
\end{equation*}
Recalling the definition of $\varepsilon^{\ram}[\theta]$ from Definition \ref{def:varepsilon ram}, the above formula comes from the calculation that:
\begin{align*}
|(\Gamma_F \times \{\pm 1\}) \backslash (R_{\x, r/2}^{\bfG} \cap R(\bfG, \bfS)^{\sym})| &= 
\begin{cases}
\floor{(n-1)/2} & \text{if $r$ is even}, \\
0 & \text{otherwise},
\end{cases} \\
|\Gamma_F \backslash (R_{\x,r/2}^{\bfG} \cap R(\bfG, \bfS)_{\sym, \ur})| 
&=
\begin{cases}
1 & \text{if $r$ is even and $n$ is even}, \\
0 & \text{otherwise}.
\end{cases}
\end{align*}

The calculation of the sign $(-1)^{r(\bfS, \phi)}$ is slightly less delicate.
We first note that $r(\bfS, \phi)$ equals $r(\bfS, \theta)$ since $r(\bfS, \phi)$ depends only on the positive-depth part $\phi|_{S_{0+}}$ by definition (Proposition \ref{prop:rho_d'}) and we have $\phi|_{S_{0+}}=\theta|_{S_{0+}}$.
We can calculate that 
\begin{equation*}
|\Gamma_F \backslash R_{\x, r/2}^{\bfG}| = \begin{cases} n-1 & \text{if $r$ is even,} \\ 0 & \text{otherwise.} \end{cases}
\end{equation*}
and therefore 
\begin{equation*}
r(\bfS, \phi)=r(\bfS, \theta) \equiv \sum_{i=0}^{d-1} (n_{i+1} - n_i)(r_i-1) \pmod 2.
\end{equation*}
It is known (\cite[Theorem 6.1.1]{CI_DrinfeldStrat}) that there is a unique and explicitly described cohomological degree $r_\theta$ such that $H_c^{r_\theta}(X_r \cap \bS_r \bG_r^+, \overline \QQ_\ell)[\theta] \neq 0$. 
Moreover, if $\bfG^0(\bfS, \theta) = \bfS$, then Theorem \ref{thm:GLn compare} guarantees that $r_\theta \equiv r(\bfS, \phi)$ modulo $2$. We can see this explicitly as well: 
\begin{align*}
r(\bfS, \phi) 
&= \sum_{i=0}^{d-1} (n_{i+1} - n_i)(r_i - 1) \\
&\equiv (r_d + 1)(n_d) - n_0(r_0 + 1) + \sum_{i=1}^d n_i(r_{i-1} - r_i) \pmod{2}, 
\end{align*}
and if $n_0 = 1$ (i.e., if $\bfG^0(\bfS, \theta) = \bfS$), then this last formula is almost exactly equal to $r_\theta$ in \cite[Corollary 6.1.2]{CI_DrinfeldStrat}, the only difference being summands with even values. From the rearranging of the above sum, it seems that the algebraically calculated number $r(\bfS, \phi)$ arises from induction on the twisted Levis $\bfG^i$ while the geometrically calculated number $r_\theta$ arises from induction on the depths $r_i$.

These calculations can be made similarly for $\bfG$ being any inner form of $\GL_n$, keeping in mind the further calculations that if $\bfG$ is a nonsplit form of $\GL_n$, the point $\bar{\x} \in \cB^{\red}(\bfG, F)$ corresponding to $\bfS \hookrightarrow \bfG$ is not superspecial in the sense of \cite[Definition 3.4.8]{MR4013740}.

\subsection{Discussion of small residual characteristic}\label{sec:small p}

We illustrate some of the subtleties that arise when considering small $p$ in a particular example. We consider the example that $F$ has residual characteristic $p = 2$ and $\bfG = \SL_2$. 
In this case, Yu's construction is not available since it requires the oddness of $p$ to utilize the theory of Weil representations for Heisenberg groups over $\F_{p}$ (\cite[Sections 10--]{MR1824988}).
When $p=2$, the notion of Yu-datum still makes sense, even if Yu's construction cannot be used to build a supercuspidal from such data.
On the other hand, when $p=2$ and $\bfG=\SL_{2}$, there does not even exist a toral Yu-datum!
To see this, we let $\bfS$ be the unramified elliptic maximal torus of $\bfG$; note that $\bfS$ splits over the degree-$2$ unramified extension of $F$ which we denote by $E$. In this setting, one can compute that $S_{r:r+} = \ker(E_{r:r+}^\times \to F_{r:r+}^\times) = F_{r:r+}^\times$. In particular, every character $\theta$ of $S_{r:r+}$ is stabilized by all of $W_G(\bfS) = \Gal(E/F)$. A closely related observation is that $W_G(\bfS)$ acts trivially on the reduction-modulo-$2$ of the character lattice. 
This implies that the condition (GE2) (see \cite[Sections 8]{MR1824988}), which is one of the two conditions for that a character $\theta$ of $S$ is $\bfG$-generic of depth $r$, is never satisfied.

Now let us move to the geometric side. Let $\theta \from S \to \overline \QQ_\ell^\times$ be a character of depth $r$. In this setting, it is automatic that $\theta|_{S_{r:r+}}$ is nontrivial and $\theta|_{\Nr_{E/F}(\alpha^\vee(E_{r}^\times))} \neq \mathbbm{1}$ for all $\alpha \in R(\bfS, \bfG)$. By Theorem \ref{thm:mackey}, 
\begin{equation*}
\bigl\langle R_{\bbS_r, \bU_r}^{\bG_r}(\theta), R_{\bbS_r, \bU_r}^{\bG_r}(\theta) \bigr\rangle_{\SL_2(\cO_F/\mfp_{F}^{r+1})} = 2.
\end{equation*}
It necessarily follows that $R_{\bbS_r, \bU_r}^{\bG_r}(\theta)$ consists of two non-isomorphic irreducible representations $\tau_1, \tau_2$. It seems possible to guess that $\cInd_{\SL_2(\cO_F)}^{\SL_2(F)}(\tau_1)$ and $\cInd_{\SL_2(\cO_F)}^{\SL_2(F)}(\tau_2)$ are non-isomorphic irreducible (and therefore supercuspidal) representations of $\SL_2(F)$. 

Our expectation is that in general the geometric representations $R_{\bbS_r, \bU_r}^{\bG_r}(\theta)$ realize representations which \textit{do not} appear in Yu's construction.
In the above setting, we expect that $R_{\bbS_r, \bU_r}^{\bG_r}(\theta)$ should be a representation constructed by G\'erardin \cite{MR0396859} \cite{MR385015}. In some cases, this is a theorem of Chen--Stasinski \cite{MR3703469}; their technique works for all $p$, and specializing their result to the setting of $\SL_2$ and $p=2$, Chen and Stasinski prove that if $r$ is odd, then $R_{\bbS_r, \bU_r}^{\bG_r}(\theta)$ coincides with the representation corresponding to the character $\theta|_{S_0}$ in G\'erardin's construction. It seems reasonable to us that for general depths, the representation $R_{\bbS_r, \bU_r}^{\bG_r}(\theta)$ should also correspond to one of G\'erardin's representations $\theta\varepsilon|_{S_0}$, where $\varepsilon$ is a twisting character which should be independent of $\theta$ and $p$. (For large $p$, Theorem \ref{thm:SG vreg} can be applied to compare G\'erardin's construction with Yu's construction.)

\section{Geometric $L$-packets of toral supercuspidal representations}\label{sec:Lpacket}

In this section, we examine the implications of our results within the context of the local Langlands correspondence. We write $W_F$ for the Weil group, $I_F$ the inertia group, and $P_F$ the wild inertia group. 
We let $I_{F}^{r}$ denote the $r$-th upper ramification filtration of $I_{F}$ for $r\in\R_{\geq0}$.
Let $\widehat \bfG$ be the Langlands dual group for $\bfG$ and write ${}^L G = \widehat \bfG \rtimes W_F$. Following Kaletha \cite{MR4013740}, we assume $F$ has characteristic $0$ in this section. 

We first review Kaletha's construction of $0$-toral supercuspidal $L$-packets. As in \cite[Definition 6.1.1]{MR4013740}, define:

\begin{definition}
A \textit{0-toral supercuspidal parameter of generic depth $r>0$} is a discrete Langlands parameter $\varphi \from W_F \to {}^L G$ satisfying the following conditions:
\begin{enumerate}[label=(\arabic*)]
\item
The centralizer of $\varphi(I_F^r)$ in $\widehat \bfG$ is a maximal torus and contains (the projection from ${}^{L}G$ to $\widehat{\bfG}$ of) $\varphi(P_F)$.
\item
$\varphi(I_F^{r+})$ is trivial, i.e., $\varphi(\sigma)=1\rtimes\sigma$ for any $\sigma\in I_{F}^{r+}$.
\end{enumerate}
\end{definition}

In the present paper, our focus is on Howe-unramified supercuspidal representations. If a $0$-toral supercuspidal parameter $\varphi$ of generic depth $r>0$ is such that the centralizer of $\varphi(I_F^r)$ in $\widehat \bfG$ corresponds to an unramified torus in $\bfG$ \cite[Section 5.1]{MR4013740}, then we additionally say that $\varphi$ is \textit{Howe-unramified}. 

Following \cite[Definition 5.2.4, Section 6.1]{MR4013740}, we define a \textit{Howe-unramified 0-toral supercuspidal $L$-packet datum of depth $r$} to be a tuple $(\bfS, \jhat, \chi, \theta)$ consisting of 
\begin{enumerate}[label=\textbullet]
\item
$\bfS$ an unramified torus of dimension equal to the absolute rank of $\bfG$, defined over $F$ with anisotropic quotient $\bfS/\bfZ_{\bfG}$
\item
$\jhat \from \widehat{\bfS} \to \widehat \bfG$ is an embedding of complex reductive groups whose $\widehat \bfG$-conjugacy class is $\Gamma_F$-stable
\item
$\chi$ is a minimally ramified $\chi$-data for $R(\bfS, \bfG)$
\item
$\theta \from S \to \bbC^\times$ is a $0$-toral character of depth $r$.
\end{enumerate}
Here, $\chi$ is canonical because $\bfS$ is unramified \cite[Definition 4.6.1]{MR4013740}, so we will omit it from the tuple. By \cite[Proposition 6.1.2]{MR4013740}, there is a bijection 
\begin{equation*}
(\bfS, \jhat, \theta) \mapsto \varphi_{(\bfS, \jhat, \theta)}
\end{equation*}
between isomorphism classes of Howe-unramified $0$-toral supercuspidal $L$-packet data of depth $r$ and $\widehat \bfG$-conjugacy classes of Howe-unramified $0$-toral supercuspidal parameters of generic depth $r$. Explicitly, the Langlands parameter is the composition
\begin{equation*}
\varphi_{(\bfS, \jhat, \theta)} \from W_F \overset{\varphi_\theta}{\longrightarrow} {}^L S \overset{{}^L j}{\longrightarrow} {}^L G
\end{equation*}
where $\varphi_{\theta}\colon W_{F}\rightarrow {}^{L}S$ is the $L$-parameter of $\theta$ and ${}^L j \from {}^L S \to {}^L G$ is the Langlands--Shelstad extension of $\jhat$ determined by the canonical $\chi$-data. A \textit{Howe-unramified $0$-toral supercuspidal datum of depth $r$} \cite[Definition 5.3.2]{MR4013740} is a tuple $(\bfS, \jhat, \theta, j)$ where $(\bfS, \jhat, \theta)$ is a Howe-unramified $0$-toral supercuspidal $L$-packet datum of depth $r$ and $j \from \bfS \hookrightarrow \bfG$ is an $F$-rational embedding admissible for $\jhat$. 
We crucially observe that the set of all Howe-unramified $0$-toral supercuspidal datum of depth $r$ corresponding to a fixed $(\bfS, \jhat, \theta)$ is indexed by $G$-conjugacy classes within the stable conjugacy class of a(ny) $F$-rational embedding $j \from \bfS \hookrightarrow \bfG$ admissible for $\jhat$.

For such an embedding $j \from \bfS \hookrightarrow \bfG$, let $\x_j \in \cB(\bfG, F)$ be a point in the apartment of $j\bfS\colonequals j(\bfS)$. We let $\bS_{j,r} \subset \bG_{j,r}$ be the group schemes defined over $\FF_q$ associated to $j$ as in Section \ref{sec:parahoricDL} and we assume the inequality \eqref{ineq} in Section \ref{subsec:vreg}.
By combining our geometric comparison theorem (Theorem \ref{thm:comparison SG}) with Kaletha's construction of toral supercuspidal $L$-packets, we obtain the following result:

\begin{theorem}\label{thm:geom L packets}
Let $(\bfS, \jhat, \theta)$ be a Howe-unramified $0$-toral supercuspidal $L$-packet datum of depth $r$. Then the corresponding Langlands parameter $\varphi_{(\bfS, \jhat, \theta)}$ has $L$-packet 
\begin{equation*}
\bigl\{\cInd_{jS \cdot G_{\x_j,0}}^G(|R_{\bbS_{j,r}}^{\bbG_{j,r}}(j\theta)|)\bigr\}_{j \in \mathcal{J}^{\bfG}_{G}},
\end{equation*}
where 
\begin{itemize}
\item
$j\theta\colonequals \theta\circ j^{-1}$, 
\item
$|R_{\bbS_{j,r}}^{\bbG_{j,r}}(j\theta)|$ denotes $(-1)^{r(j\bfS,j\theta)}R_{\bbS_{j,r}}^{\bbG_{j,r}}(j\theta)$, and
\item
$\mathcal{J}^{\bfG}_{G}$ is a set of representatives of the $G$-conjugacy classes of the stable conjugacy class within $F$-rational embeddings $\bfS \hookrightarrow \bfG$ admissible for $\jhat$.
\end{itemize}
\end{theorem}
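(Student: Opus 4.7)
The plan is to combine our geometric comparison theorem (Theorem~\ref{thm:comparison SG}) with Kaletha's algebraic construction of $L$-packets for $0$-toral supercuspidal parameters, so that the theorem reduces to matching twisting characters. First, for each $j \in \mathcal{J}^{\bfG}_{G}$, the pair $(j\bfS, j\theta)$ is a Howe-unramified elliptic pair with $j\theta$ a $0$-toral character of depth $r$ on $jS$; hence Theorem~\ref{thm:comparison SG} applies to it and yields
\begin{equation*}
\cInd_{jS \cdot G_{\x_j,0}}^{G}\bigl(|R_{\bbS_{j,r}}^{\bbG_{j,r}}(j\theta)|\bigr)
\cong
\pi_{(j\bfS,\, j\theta \cdot \varepsilon^{\ram}[j\theta])}.
\end{equation*}
In particular, each member of the putative $L$-packet is an irreducible Howe-unramified $0$-toral supercuspidal representation, and it is parametrized by the tame elliptic regular pair $(j\bfS, j\theta \cdot \varepsilon^{\ram}[j\theta])$ via Kaletha's reparametrization (Proposition~\ref{prop:TER-pair}).

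Second, I would invoke Kaletha's definition of the $L$-packet associated to $\varphi_{(\bfS,\jhat,\theta)}$, which (in the $0$-toral setting, following \cite{MR4013740} and the earlier work \cite{MR3849622}) consists precisely of the $G$-conjugacy classes of pairs $(j\bfS,\, j\theta \cdot \varepsilon[j\theta])$ as $j$ ranges over $\mathcal{J}^{\bfG}_{G}$, where $\varepsilon[j\theta]$ is the quadratic twisting character introduced by Kaletha to achieve stability. The remaining point is to verify that in our Howe-unramified setting, Kaletha's twist $\varepsilon[j\theta]$ coincides with $\varepsilon^{\ram}[j\theta]$ as defined in Definition~\ref{def:varepsilon ram}. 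Since $j\bfS$ is unramified, the root system $R(j\bfS, \bfG)$ contains no symmetric ramified roots, so all contributions to Kaletha's $\varepsilon$ that would come from symmetric ramified roots are trivial, and the remaining symmetric unramified and asymmetric contributions are exactly what constitute $\varepsilon^{\ram}[j\theta]$ in the sense of Section~\ref{subsec:DS}.

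The main obstacle will be this bookkeeping identification of Kaletha's twisting character with $\varepsilon^{\ram}$ in the Howe-unramified $0$-toral setting, since the two constructions are written in somewhat different languages; however, this is essentially formal once one unwinds Kaletha's definitions and uses ellipticity and the vanishing of symmetric ramified contributions. With this identification, the $L$-packet of $\varphi_{(\bfS,\jhat,\theta)}$ is exactly $\{\pi_{(j\bfS, j\theta \cdot \varepsilon^{\ram}[j\theta])}\}_{j \in \mathcal{J}^{\bfG}_{G}}$, which by the first step above agrees with the set displayed in the theorem. The fact that two embeddings $j_1, j_2$ give isomorphic representations precisely when they are $G$-conjugate (so that the indexing by $\mathcal{J}^{\bfG}_G$ is well-defined without redundancy) follows from the uniqueness statement in Proposition~\ref{prop:TER-pair} together with the observation that the twist $\varepsilon^{\ram}$ depends only on the $G$-conjugacy class of $(j\bfS, j\theta)$.
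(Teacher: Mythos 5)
Your proposal takes essentially the same approach as the paper: apply Theorem~\ref{thm:comparison SG} to each admissible embedding $j$ to identify the geometric induction with $\pi_{(j\bfS,\,j\theta\cdot\varepsilon^{\ram}[j\theta])}$, and then match this with Kaletha's construction of the $L$-packet by comparing twisting characters. The one place where you are imprecise is in the final ``bookkeeping'' step: you treat Kaletha's twist as a single character $\varepsilon[j\theta]$ and argue by ``vanishing of symmetric ramified contributions,'' but in \cite[Step 3 in Section 5.3]{MR4013740} the twist is actually a \emph{product of three} distinct characters $j\theta' = (\theta\cdot\zeta_S^{-1})\circ j^{-1}\cdot\epsilon_{f,\ram}\cdot\epsilon^{\ram}$. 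Your argument about symmetric ramified roots does not by itself address $\zeta_S$ (which arises from the Langlands--Shelstad construction of the $L$-embedding and the choice of $\chi$-data), nor is it phrased correctly for $\epsilon_{f,\ram}$. The correct reason these vanish is precisely the unramifiedness of $\bfS$ (hence the minimality of the $\chi$-data and the absence of symmetric ramified roots), while $\epsilon^{\ram}$ is then observed to coincide with $\varepsilon^{\ram}[j\theta]$ from Definition~\ref{def:varepsilon ram}. So the structure of your argument is correct; you should just spell out that Kaletha's twist has three factors and cite the vanishing of the first two separately, rather than lumping them into a single ``$\varepsilon[j\theta]$.''
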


\begin{proof}[Proof of Theorem \ref{thm:geom L packets}]
Kaletha's construction of regular supercuspidal $L$-packets assigns to $\varphi_{(\bfS, \jhat, \theta)}$ the finite set of supercuspidal representations of $G$ given by $\{\pi_{(j\bfS,j\theta')}\}_{j\in\mathcal{J}^{\bfG}_{G}}$ where $j\theta'$ is a particular twist of the character $j\theta$ of $jS\colonequals j\bfS(F)$. 
In \cite[Step 3 in Section 5.3]{MR4013740}, this twist is written as $j\theta' = (\theta \cdot \zeta_S^{-1})\circ j^{-1} \cdot \epsilon_{f, \ram} \cdot \epsilon^{\ram}$; Kaletha's $\epsilon^{\ram}$ is the character $\varepsilon^{\ram}[j\theta]$ with respect to $(j\bfS\subset\bfG,j\theta)$ in Section \ref{sec:AS}, and because we assume $\bfS$ is unramified, the other twists $\zeta_S$ and $\epsilon_{f,\ram}$ are trivial. Therefore by Theorem \ref{thm:comparison SG}, $\pi_{(j\bfS, j\theta')}$ is isomorphic to the geometrically arising supercuspidal representation $\cInd_{jS \cdot G_{\x_j,0}}^G(|R_{\bS_{j,r}}^{\bG_{j,r}}(j\theta)|)$.
\end{proof}

The fact that the geometric description does not need to be separately twisted by $\varepsilon^{\ram}[j\theta]$ has nontrivial significance. Kaletha's $L$-packets $\{\pi_{(j\bfS, j\theta')}\}_{j\in\mathcal{J}^{\bfG}_{G}}$ are \textit{not} the same as $\{\pi_{(j\bfS, j\theta)}\}_{j\in\mathcal{J}^{\bfG}_{G}}$; these packets can be genuinely different (see \cite[Example 5.5]{MR3849622} for an explicit example). 
In fact, it is known that although $\{\pi_{(j\bfS, j\theta)}\}_{j\in\mathcal{J}^{\bfG}_{G}}$ appears more canonical from the perspective of Yu's construction and Kaletha's generalization of the Howe factorization, these sets of supercuspidals are \textit{not} (!) stable and therefore cannot be $L$-packets.

Let us elaborate on stability here. 
In general, it is expected that the local Langlands correspondence not only associates an $L$-packet $\Pi_{\varphi}^{\bfG}$ to each $L$-parameter $\varphi$, but also gives a parametrization of members of $\Pi_{\varphi}^{\bfG}$ in terms of a certain finite group $\mathcal{S}_{\varphi}$ determined by $\varphi$; members of $\Pi_{\varphi}^{\bfG}$ are expected to be labelled by irreducible characters of $\mathcal{S}_{\varphi}$.
If we let $\langle\pi,-\rangle$ denote the irreducible character of $\mathcal{S}_{\varphi}$ corresponding to $\pi\in\Pi_{\varphi}^{\bfG}$, then the stability of the $L$-packet $\Pi_{\varphi}^{\bfG}$ asserts that the linear combination of Harish-Chandra characters
\[
\sum_{\pi\in\Pi_{\varphi}^{\bfG}}\langle\pi,1\rangle \cdot \Theta_{\pi}
\]
is stable, i.e., constant on every stable conjugacy class of strongly regular semisimple elements (see \cite[Section 6]{MR1021499} for a general formulation of the stability).
When $\varphi$ is a $0$-toral supercuspidal parameter (or, more generally, regular supercuspidal parameter), the associated group $\mathcal{S}_{\varphi}$ is abelian (see \cite[Section 5.3]{MR4013740}).
Hence, for such an $L$-parameter, the stability simply asserts that the sum $\sum_{\pi\in\Pi_{\varphi}^{\bfG}}\Theta_{\pi}$ should be stable.
Although the stability alone cannot characterize the local Langlands correspondence, it has an important role as a touchstone in verifying the validity of the construction of the $L$-packets.

Given this, 
is it naturally pressing to ask:
\begin{quote}
Does the correspondence $(j\colon\bfS \hookrightarrow \bfG, \theta) \mapsto \pi_{(j\bfS, j\theta)}$ map stable conjugacy classes to sets of supercuspidals with stable character sums?
\end{quote}
In the setting of Howe-unramified $0$-toral supercuspidal representations, this question was posed by Reeder \cite{MR2427973}, who proved that this correspondence maps stable conjugacy classes to sets of supercuspidals with constant formal degree. 
DeBacker--Spice \cite{MR3849622} proved that the answer is in fact no and defined a twisting character $\varepsilon^{\ram}$. 
Under the additional assumption that $F$ has characteristic zero with sufficiently large residual characteristic, DeBacker--Spice proved that the twisted correspondence $(j\colon\bfS \hookrightarrow \bfG, \theta) \mapsto \pi_{(j\bfS, j\theta\cdot \varepsilon^{\ram}[j\theta])}$ does in fact preserve stability (\cite[Theorem 5.10]{MR3849622}). 
Kaletha \cite{MR4013740} defined twisting characters in the more general setting of regular supercuspidal representations and proved the associated stability preservation assertion for $0$-toral supercuspidal representations under the same assumptions on $F$ as in DeBacker--Spice (\cite[Theorem 6.3.2]{MR4013740}).
This fact is one of strong evidences for the validity of Kaletha's construction of the local Langlands correspondence.

The content of the next theorem is that if we replace the correspondence $(j\colon\bfS \hookrightarrow \bfG, \theta) \mapsto \pi_{(j\bfS, j\theta)}$ with the geometric construction $(j\colon\bfS \hookrightarrow \bfG, \theta) \mapsto \cInd_{jS \cdot G_{\x_j,0}}^G(|R_{\bbS_{j,r}}^{\bbG_{j,r}}(j\theta)|)$, then we do not need to separately define a twisting character. 
Theorem \ref{thm:stability} is a corollary of results we have already established in this paper (and \cite[Theorem 5.10]{MR3849622} or \cite[Theorem 6.3.2]{MR4013740}), but we would like to emphasize and repeat the following point mentioned in the introduction: the geometry seems to innately know about automorphic side of the local Langlands correspondence.

\begin{theorem}\label{thm:stability}
Assume additionally that $F$ has characteristic zero with residual characteristic $p \geq (2+e)n$ where $e$ is the ramification degree of $F$ over $\bbQ_p$ and $n$ is the dimension of the smallest faithful rational representation of $\bfG$. 
The correspondence
\begin{equation*}
(j\colon\bfS \hookrightarrow \bfG, \theta) \mapsto \cInd_{jS \cdot G_{\x_j,0}}^G(|R_{\bbS_{j,r}}^{\bbG_{j,r}}(j\theta)|)
\end{equation*}
preserves stability for $0$-toral characters $\theta$.
\end{theorem}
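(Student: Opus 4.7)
The plan is to deduce Theorem \ref{thm:stability} as an essentially immediate corollary of the comparison theorem (Theorem \ref{thm:comparison SG}) combined with the stability theorem of DeBacker--Spice \cite[Theorem 5.10]{MR3849622} (equivalently, its packaging by Kaletha \cite[Theorem 6.3.2]{MR4013740}). No new representation-theoretic or geometric input is required beyond what has already been established.

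First, I would apply Theorem \ref{thm:comparison SG} index-by-index across $\mathcal{J}^{\bfG}_{G}$. For each $F$-rational embedding $j\colon \bfS \hookrightarrow \bfG$ admissible for $\jhat$, the transported pair $(j\bfS, j\theta)$ is again an unramified elliptic torus equipped with a $0$-toral character of depth $r$, so setting $j\phi \colonequals j\theta \cdot \varepsilon^{\ram}[j\theta]$ the theorem yields
\[
\cInd_{jS \cdot G_{\x_j,0}}^G\bigl(|R_{\bbS_{j,r}}^{\bbG_{j,r}}(j\theta)|\bigr) \;\cong\; \pi_{(j\bfS,\, j\phi)}.
\]
Summing Harish--Chandra characters over $j \in \mathcal{J}^{\bfG}_{G}$, the character sum of the geometrically defined packet of Theorem \ref{thm:geom L packets} coincides with
\[
\sum_{j \in \mathcal{J}^{\bfG}_{G}} \Theta_{\pi_{(j\bfS,\, j\theta \cdot \varepsilon^{\ram}[j\theta])}}.
\]

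Second, I would invoke \cite[Theorem 5.10]{MR3849622} (or \cite[Theorem 6.3.2]{MR4013740}), which, under the hypothesis $p \geq (2+e)n$ and in the Howe-unramified $0$-toral setting considered here, asserts precisely the stability of this latter character sum on the strongly regular semisimple locus of $G$. Combining the two statements gives stability of the geometric correspondence. One minor bookkeeping step is to verify that the hypotheses in force throughout Section \ref{sec:Lpacket}---in particular the inequality \eqref{ineq} and the baseline conditions on $p$ used in Section \ref{sec:comparison}---are implied by (or at least compatible with) the stronger assumption $p \geq (2+e)n$; this is routine.

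The main point, therefore, is not a difficulty in this final argument but rather that the essential nontrivial content has already been absorbed into Theorem \ref{thm:comparison SG}. The crucial phenomenon is the natural appearance of $\varepsilon^{\ram}[\theta]$ built into the geometric representation $R_{\bbS_r}^{\bbG_r}(\theta)$ itself, which exactly matches the external twisting character that DeBacker--Spice and Kaletha were forced to insert by hand in order to make Kaletha's correspondence stable. Given that match, Theorem \ref{thm:stability} is a formal consequence, and there is no remaining obstacle.
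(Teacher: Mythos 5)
Your proposal is correct and matches the paper's intended argument exactly: Theorem \ref{thm:stability} is indeed derived by applying Theorem \ref{thm:comparison SG} across $j \in \mathcal{J}^{\bfG}_{G}$ to identify the geometric packet with $\{\pi_{(j\bfS,\,j\theta\cdot\varepsilon^{\ram}[j\theta])}\}_j$, and then invoking the stability result of DeBacker--Spice \cite[Theorem 5.10]{MR3849622} (equivalently Kaletha \cite[Theorem 6.3.2]{MR4013740}) for that twisted packet. The paper itself presents this as an immediate corollary with no further proof, so there is nothing to add.
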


\section{Regular supercuspidal representations characterized by $S_{\vreg}$}\label{sec:vreg characterization}

In this section, we prove that certain regular supercuspidal representations are determined by their Harish-Chandra character on unramified very regular elements. 
These results can be viewed as versions of the comparison results of Section \ref{sec:comparison} in the setting that the group $SG_{\x,0}$ is replaced by the $p$-adic group $G$; on the other hand, neither result implies the other logically (see Remark \ref{rem:Gx0 vs G}). 
The advantage to obtaining a characterization result at the level of $G$ is that it allows one to characterize members of certain $L$-packets by their Harish-Chandra character on a very small collection of elements of $G$, as one does for real groups \cite{MR1011897}, resolving a hole mentioned by Kaletha around \cite[(5.3.3)]{MR4013740}. 
Such a characterization problem was also mentioned several years earlier by Adler--Spice \cite[Section 0.3]{MR2543925}, motivated by Henniart's characterization of certain supercuspidal representations of $\GL_n$ \cite{MR1235293,MR1263525}. 

The class of regular supercuspidals for which we establish this characterization are those which correspond to tame elliptic regular pairs $(\bfS, \phi)$ where $\bfS$ is unramified and $\bfG^0(\bfS, \phi) = \bfS$---that is, precisely the class of Howe-unramified toral supercuspidal representations. We will additionally assume in this section that $\bfS$ is such that the inequality \eqref{ineq} introduced in Section \ref{subsec:vreg} is satisfied.

We prove the following theorem in Section \ref{subsec:unram pi vreg}. 

\begin{theorem}\label{thm:unram pi vreg}
Let $\bfS$ be an elliptic unramified maximal torus of $\bfG$ and let $\theta \from S \to \bbC^\times$ be a toral character. Then there is a unique regular supercuspidal representation $\pi$ of $G$ such that for every $\gamma \in S_{\vreg}$,
\begin{equation}\label{e:HC vreg}
\Theta_\pi(\gamma) = c \cdot \sum_{w \in W_G(\bfS)} \theta({}^w \gamma)
\end{equation}
for a nonzero constant $c \in \bbC$.
Furthermore, $\pi \cong \pi_{(\bfS, \phi)}$ with $\phi:=\theta\cdot\varepsilon^{\ram}[\theta]$ and we must have $c = (-1)^{r(\bbG_{\x}^0) - r(\bbS) + r(\bfS, \phi)}$, where $\x\in\mcB(\bfG,F)$ is a point associated to $\bfS$ and the exponent of $(-1)$ is as in Proposition \ref{prop:rho_d'}.
\end{theorem}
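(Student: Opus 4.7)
The plan is to reduce the theorem to the parahoric-level character formula of Proposition \ref{prop:AS-vreg} via the Harish-Chandra character formula for a compactly induced representation. Since $\theta$ is toral and $\bfS$ is unramified elliptic, we have $\bfG^{0}(\bfS,\theta)=\bfS$, so $\cc K^{d}=K^{d}$ and Yu's construction gives $\pi_{(\bfS,\phi)}=\cInd_{SG_{\x,0}}^{G}\cc\tau_{d}$ with $\phi\colonequals \theta\cdot\varepsilon^{\ram}[\theta]$; note $\phi$ is again toral since $\varepsilon^{\ram}[\theta]$ is tamely ramified.

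For existence, take $\pi=\pi_{(\bfS,\phi)}$. Since $\gamma\in S_{\vreg}$ is regular elliptic in $G$ with $\Cent_{G}(\gamma)=S\subset SG_{\x,0}$, the Harish-Chandra character formula for compact induction (a finite sum here thanks to ellipticity) yields
\[
\Theta_{\pi}(\gamma)=\sum_{[g]\in SG_{\x,0}\backslash G_{\bar{\x}}}\Theta_{\cc\tau_{d}}({}^{g^{-1}}\gamma),
\]
where the identification $\{g\in G\mid {}^{g^{-1}}\gamma\in SG_{\x,0}\}=G_{\bar{\x}}$ uses the ellipticity of $\bfS$ (so that ${}^{g^{-1}}\bfS$'s unique reduced apartment is $\{\bar{\x}\}$, forcing $g\bar{\x}=\bar{\x}$). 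By Proposition \ref{prop:AS-vreg}(1), only those $g\in SG_{\x,0}\cdot N_{G}(\bfS)$ contribute, and the contributing cosets are in natural bijection with $W_{G_{\x,0}}(\bfS)\backslash W_{G}(\bfS)$ (using $N_{SG_{\x,0}}(\bfS)\cap S = S_{0}$ and $SG_{\x,0}=Z_{\bfG}G_{\x,0}$). Taking representatives $g=n^{-1}$ with $n\in N_{G}(\bfS)$, each ${}^{n^{-1}}\gamma$ lies in $S$, so Proposition \ref{prop:AS-vreg}(2) applies and the outer sum over $W_{G_{\x,0}}(\bfS)\backslash W_{G}(\bfS)$ combines bijectively with the inner sum over $W_{G_{\x,0}}(\bfS)$ into a single sum over $W_{G}(\bfS)$. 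Finally, the identity $\varepsilon^{\ram}[\phi]\cdot\phi=\theta$ (because $\varepsilon^{\ram}$ takes values in $\{\pm1\}$ and $\varepsilon^{\ram}[\phi]=\varepsilon^{\ram}[\theta]$, as both depend only on $\phi|_{S_{0+}}=\theta|_{S_{0+}}$) yields \eqref{e:HC vreg} with $c=(-1)^{r(\bbG_{\x}^{0})-r(\bbS)+r(\bfS,\phi)}$.

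For uniqueness, given a regular supercuspidal $\pi$ satisfying \eqref{e:HC vreg}, write $\pi\cong\pi_{(\bfS',\phi')}$ by Proposition \ref{prop:TER-pair}. A support argument via the compact-induction character formula for $\pi=\cInd_{S'G_{\x',0}}^{G}\cc\tau_{d}'$ (together with Proposition \ref{prop:AS-vreg}(1) in the $(\bfS',\x')$ setting) shows that nonvanishing of $\Theta_{\pi}$ on some $\gamma\in S_{\vreg}$ forces a conjugate of $\gamma$ to lie in $S'G_{\x',0}$ and to be $S'G_{\x',0}$-conjugate to an element of $S'$, whence $\bfS\sim_{G}\bfS'$. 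Reducing via $G$-conjugation to $\bfS=\bfS'$ (and $\x=\x'$), the same computation as in the existence part yields $\Theta_{\pi}(\gamma)=(-1)^{r(\bbG_{\x}^{0})-r(\bbS)+r(\bfS,\phi')}\sum_{w\in W_{G}(\bfS)}\theta'({}^{w}\gamma)$ for $\theta'\colonequals \phi'\varepsilon^{\ram}[\phi']$. A straightforward adaptation of Lemma \ref{lem:toral Henn} with $W_{G}(\bfS)$ in place of $W_{G_{\x,0}}(\bfS)$---valid because $\phi|_{S_{0+}}=\theta|_{S_{0+}}$ has trivial $W_{G}(\bfS)$-stabilizer by Lemma \ref{lem:theta-stab} applied to the toral pair $(\bfS,\phi)$---then forces $c=(-1)^{r(\bbG_{\x}^{0})-r(\bbS)+r(\bfS,\phi')}$ and $\theta'=\theta^{w}$ for some $w\in W_{G}(\bfS)$. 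Restricting to $S_{0+}$ gives $\phi'|_{S_{0+}}=\phi^{w}|_{S_{0+}}$, hence $\varepsilon^{\ram}[\phi']=\varepsilon^{\ram}[\phi^{w}]$ and $\phi'=\phi^{w}$; thus $(\bfS,\phi')\sim_{G}(\bfS,\phi)$ and $\pi\cong\pi_{(\bfS,\phi)}$.

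The main obstacle is the support/vanishing step in the uniqueness argument. An unramified very regular element $\gamma'\colonequals {}^{g^{-1}}\gamma$ in $S'G_{\x',0}$ with respect to $(\bfS',\x')$ must have its connected centralizer ${}^{g^{-1}}\bfS$'s reduced apartment equal to $\{\bar{\x}'\}$, forcing $g\bar{\x}'=\bar{\x}$. Hence unless $\bar{\x}$ and $\bar{\x}'$ are $G$-conjugate, there is no $g$ for which Proposition \ref{prop:AS-vreg} directly applies to ${}^{g^{-1}}\gamma$, and one must argue separately---via the general Adler--DeBacker--Spice character formula \cite[Theorem 7.1]{MR2543925} applied to an arbitrary regular semisimple $\gamma$ whose centralizer $\bfS$ is not $G$-conjugate to $\bfS'$---that $\Theta_{\pi_{(\bfS',\phi')}}(\gamma)=0$. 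A secondary technical point is the careful verification of the Frobenius character formula for compact induction at regular elliptic elements (with no measure factors thanks to $\Cent_{G}(\gamma)=S\subset SG_{\x,0}$); this is standard but must be stated compatibly with central characters.
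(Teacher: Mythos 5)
Your approach is essentially the paper's: reduce via the Harish-Chandra integral character formula for compact induction to the parahoric-level formula, use the fixed-point argument in the building to control the support, and then invoke a $W_G(\bfS)$-adapted version of Lemma~\ref{lem:toral Henn} together with Lemma~\ref{lem:theta-stab} for uniqueness. The main difference is cosmetic: you induct from $SG_{\x,0}$ using $\cc\tau_d$, whereas the paper inducts from $K_\sigma$ using $\sigma\otimes\phi_d$ and only at the last step unwinds the Frobenius formula to recover the $W_G(\bfS)$-sum; both are valid.

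That said, there is one genuine gap. In the uniqueness step you write that ``nonvanishing of $\Theta_\pi$ on some $\gamma\in S_{\vreg}$'' forces $\bfS\sim_G\bfS'$, but you never establish that $\Theta_\pi$ cannot vanish identically on $S_{\vreg}$. A priori \eqref{e:HC vreg} could hold with both sides identically zero, in which case your support argument gives no information and $\bfS'$ could be unrelated to $\bfS$. The paper resolves this with Lemma~\ref{lem:vreg nonzero}: since $\theta|_{S_{0+}}$ has trivial $W_G(\bfS)$-stabilizer (Lemma~\ref{lem:theta-stab}), an inner-product argument on $S_{0+}$ produces $\gamma\in S_{\vreg}$ with $\sum_{w\in W_G(\bfS)}\theta({}^w\gamma)\neq 0$, and hence $\Theta_\pi(\gamma)\neq 0$. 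You need this, or an equivalent nonvanishing statement, before the uniqueness argument can proceed.

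Your ``main obstacle'' paragraph is a misdiagnosis. If $\bar{\x}$ and $\bar{\x}'$ are not $G$-conjugate, then for $\gamma\in S_{\vreg}$ there is \emph{no} $g$ with ${}^{g^{-1}}\gamma\in S'G_{\x',0}$, so in the compact-induction formula the index set is empty and $\Theta_\pi(\gamma)=0$ immediately. No separate appeal to Adler--DeBacker--Spice~\cite[Theorem~7.1]{MR2543925} is needed (nor would it be available without the compactness hypothesis on $\bfG^{d-1}/\bfZ_\bfG$, which is exactly why the paper proves Proposition~\ref{prop:char at vreg} rather than citing an existing formula). Finally, two small slips: the identity $\{g\in G\mid {}^{g^{-1}}\gamma\in SG_{\x,0}\}=G_{\bar{\x}}$ only holds as an inclusion $\subset$ in general, since $G_{\bar{\x}}$ can strictly contain $SG_{\x,0}$; this does not damage the argument because the non-contributing cosets of $G_{\bar{\x}}/SG_{\x,0}$ carry zero by extension. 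And $N_{SG_{\x,0}}(\bfS)\cap S = S$, not $S_0$; what you want is $N_{SG_{\x,0}}(\bfS)=Z_{\bfG}N_{G_{\x,0}}(\bfS)$ and $N_{SG_{\x,0}}(\bfS)/S\cong W_{G_{\x,0}}(\bfS)$, which together with $N_G(\bfS)/S=W_G(\bfS)$ give the desired bijection of contributing cosets with $W_{G_{\x,0}}(\bfS)\backslash W_G(\bfS)$.
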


Theorem \ref{thm:unram pi vreg} allows us to formulate the construction of Kaletha's $L$-packets in the following way. Let $j \from \bfS \hookrightarrow \bfG$ be an unramified elliptic maximal torus defined over $F$ and let $\theta \from S \to \bbC^\times$ be a toral character.
For any $F$-rational embedding $j' \from \bfS \hookrightarrow \bfG$ stably conjugate to $j$, let $\pi_{j'}$ be \textit{the} regular supercuspidal representation of $G$ with Harish-Chandra character
\begin{equation*}
\Theta_{\pi_{j'}}(\gamma) = c \cdot \sum_{w \in W_G(j'\bfS)} j'\theta({}^w \gamma), \qquad \text{for $\gamma \in j' S_{\vreg}$,}
\end{equation*}
where $j'\theta:=\theta\circ j'^{-1}$ and $c$ is some nonzero constant. Then the $L$-packet Kaletha constructs \cite[Section 5.3]{MR4013740} is precisely the collection of all $\pi_{j'}$; moreover, the Langlands parameter corresponding to this $L$-packet is the homomorphism $\varphi_{(\bfS, \widehat j, \theta)} \from W_F \to {}^L G$ recalled in Section \ref{sec:Lpacket}. The contribution of Theorem \ref{thm:unram pi vreg} in this context is exactly the italicized \textit{the} above; that is, that members of certain $L$-packets are characterized by their Harish-Chandra character on unramified very regular elements.

\begin{remark}\label{rem:shallow vs vreg}
Kaletha actually asks for something slightly different---that regular supercuspidal representations are characterized by their characters on \textit{shallow} elements (see around \cite[(5.3.3)]{MR4013740}). Not all shallow elements are unramified very regular (because shallow elements can have connected centralizer being equal to a torus which is not unramified), and not all unramified very regular elements are shallow (because shallow elements necessarily have order coprime to $p$). A key point here is that if $\gamma$ is unramified very regular, then any element of $\gamma (T_\gamma)_{0+}$ is also unramified very regular.
But this is not the case for shallow elements! Pushing further in this direction, it is in fact possible to find two nonisomorphic regular supercuspidal representations with identical character values on shallow elements.
Indeed, if we take two non-$G$-conjugate tame elliptic regular pairs $(\bfS,\theta)$ and $(\bfS,\theta')$ such that the regular generic reduced cuspidal $\bfG$-data associated to them have the same depth zero part (but have different positive depth part), then we cannot distinguish the regular supercuspidal representations $\pi_{(\bfS,\theta)}$ and $\pi_{(\bfS,\theta')}$ by their characters at shallow elements.
So, in order for Kaletha's desired characterization to hold in general (i.e., outside the Howe-unramified setting), one must replace ``shallow'' by a generalized notion of very regularity.
\end{remark}

\begin{remark}\label{rem:Gx0 vs G}
We note that Theorem \ref{thm:unram pi vreg} is not strong enough to obtain Theorem \ref{thm:geom L packets} without the work of Sections \ref{sec:ell unram vreg}, \ref{sec:comparison}. The point here is without Sections \ref{sec:ell unram vreg},\ref{sec:comparison}, we would not know the irreducibility nor the supercuspidality of the induced representation $\cInd_{S \cdot G_{\x,0}}^G(|R_{\bbS_r}^{\bbG_r}(\theta)|)$.

On the flip side, the results of Section \ref{sec:ell unram vreg} are not enough to obtain Theorem \ref{thm:geom L packets} because the results of Section \ref{sec:ell unram vreg} characterize representations at the level of parahoric subgroups. 
\end{remark}

\subsection{Character formula on unramified very regular elements}
In order to prove Theorem \ref{thm:unram pi vreg} we will first need a character formula for regular supercuspidal representations on unramified very regular elements. Such a formula is not contained in the work of Adler--Spice \cite[Theorem 6.4]{MR2543925} because their formula requires the compactness assumption that $\bfG^{d-1}/\bfZ_\bfG$ is $F$-anisotropic, and this hypothesis is not satisfied by every regular supercuspidal representation. In \cite[Section 4.4]{MR4013740}, Kaletha establishes a character formula for shallow elements without this compactness condition. Given the comments in Remark \ref{rem:shallow vs vreg}, neither Kaletha's nor Adler--Spice's formulas suffice for us.

In this section, we prove:

\begin{proposition}\label{prop:char at vreg}
Let $(\bfS', \phi)$ be a tame elliptic regular pair of $\bfG$ and let $\x' \in \cB(\bfG, F)$ be a point associated to $\bfS' \hookrightarrow \bfG$.
Let $\bfS$ be an unramified elliptic maximal torus of $\bfG$.
When $\bfS'$ is not $G$-conjugate to $\bfS$, we have $\Theta_{\pi_{(\bfS', \phi)}}(\gamma)=0$ for any unramified very regular element $\gamma \in S_{\vreg}$.
When $\bfS'$ is $G$-conjugate to $\bfS$ (in this case, we assume that $\bfS$ itself equals $\bfS'$), for any unramified very regular element $\gamma \in S_{\vreg}$, we have
\[
\Theta_{\pi_{(\bfS', \phi)}}(\gamma)
=
(-1)^{r(\bbG_{\x}^0) - r(\bbS) + r(\bfS, \phi)} \sum_{w \in W_G(\bfS)} \varepsilon^{\ram}[\phi]({}^w \gamma) \cdot \phi({}^w \gamma).
\]
\end{proposition}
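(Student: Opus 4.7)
The plan is to reduce to Proposition \ref{prop:AS-vreg} via the standard Frobenius formula for compact induction. By Yu's construction (Sections \ref{subsec:Yu}, \ref{subsec:stab-parah}) together with the transitivity of compact induction, we have $\pi_{(\bfS',\phi)}\cong\cInd_{S'G_{\x',0}}^{G}\cc\tau_{d}$. Since $\bfS'$ is elliptic the subgroup $S'G_{\x',0}$ is open and compact modulo $Z_{\bfG}$, and since $\gamma\in S_{\vreg}$ is elliptic regular semisimple (its connected centralizer $\bfS$ being elliptic), the Frobenius formula
\[
\Theta_{\pi_{(\bfS',\phi)}}(\gamma)=\sum_{\begin{subarray}{c}g\in S'G_{\x',0}\backslash G\\ {}^{g}\gamma\in S'G_{\x',0}\end{subarray}}\Theta_{\cc\tau_{d}}({}^{g}\gamma)
\]
is valid and the sum is finite.

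The key geometric observation is that, for each contributing $g$, the element ${}^{g}\gamma$ is itself unramified very regular in $S'G_{\x',0}$ with respect to the point $\x'$. Indeed, ${}^{g}\gamma\in S'G_{\x',0}\subset G_{\bar{\x}'}$ fixes $\bar{\x}'$, while ${}^{g}\gamma$ is regular semisimple with connected centralizer the unramified elliptic maximal torus ${}^{g}\bfS$, whose reduced apartment consists of a single point. Hence $\bar{\x}'$ must equal this unique point of $\mathcal{A}^{\red}({}^{g}\bfS,F)$, so $\x'\in\mathcal{A}({}^{g}\bfS,F)$; the remaining conditions of Definition \ref{defn:ur-vreg} are inherited from $\gamma$. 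Consequently Proposition \ref{prop:AS-vreg} applies, and $\Theta_{\cc\tau_{d}}({}^{g}\gamma)=0$ unless ${}^{g}\gamma$ is $S'G_{\x',0}$-conjugate to an element of $S'$; this forces ${}^{g}\bfS$ (and hence $\bfS$) to be $G$-conjugate to $\bfS'$. The case where $\bfS$ is not $G$-conjugate to $\bfS'$ follows immediately.

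When $\bfS'=\bfS$ (in which case we may arrange $\x'=\x$), the contributing $g$ are exactly those for which ${}^{g}\gamma$ is $SG_{\x,0}$-conjugate to an element of $S$, i.e.\ those lying in $SG_{\x,0}\cdot N_{G}(\bfS)$. Exactly as in the end of the proof of Proposition \ref{prop:AS-vreg}, one computes $SG_{\x,0}\cap N_{G}(\bfS)=S\cdot N_{G_{\x,0}}(\bfS)$, so that the outer index set is identified with
\[
(SG_{\x,0}\cap N_G(\bfS))\backslash N_G(\bfS)=W_{G_{\x,0}}(\bfS)\backslash W_{G}(\bfS).
\]
Substituting the formula of Proposition \ref{prop:AS-vreg}(2) into each surviving term and merging the inner sum over $W_{G_{\x,0}}(\bfS)$ with the outer sum over $W_{G_{\x,0}}(\bfS)\backslash W_{G}(\bfS)$ produces the claimed single sum over $W_{G}(\bfS)$. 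The main obstacle is the building-theoretic verification in the second paragraph that any conjugate of $\gamma$ lying in $S'G_{\x',0}$ is genuinely unramified very regular at $\x'$; once this is in place the proof reduces to the coset and sign bookkeeping already established in Section \ref{sec:AS}.
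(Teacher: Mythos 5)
Your approach---expanding $\pi_{(\bfS',\phi)}\cong\cInd_{S'G_{\x',0}}^G\cc\tau_d$ and substituting the parahoric-level character formula into a Frobenius-type sum---is a legitimate reorganization of the paper's proof, which instead works with $\cInd_{K_\sigma}^G(\sigma\otimes\phi_d)$ and the $\cc K^d$-representation $\cc\rho'_d$ via the Harish-Chandra integral formula. When $\bfS'=\bfS$ (so both are unramified), your coset bookkeeping $(SG_{\x,0}\cap N_G(\bfS))\backslash N_G(\bfS)\cong W_{G_{\x,0}}(\bfS)\backslash W_G(\bfS)$ and the merging of the two sums into a single sum over $W_G(\bfS)$ are correct, and this leg of the argument genuinely does leverage Proposition \ref{prop:AS-vreg} to shorten the paper's induction through $\cc\rho'_d$.

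The gap is in the vanishing case. A tame elliptic regular pair $(\bfS',\phi)$ need not have $\bfS'$ unramified, and Proposition \ref{prop:AS-vreg} is stated and proved under the standing assumption of Section \ref{sec:AS} that the torus of the datum is unramified elliptic. That hypothesis is used essentially there: the identity $S'G_{\x',0}=Z_\bfG G_{\x',0}$ reduces to $\gamma\in G_{\x',0}$, the sign $\varepsilon_{\sym,\ram}$ in Proposition \ref{prop:rho_d'} is trivial only because the torus is unramified, and Lemma \ref{lem:cc K^{d}} together with the identification $\cc K^d\cap N_{G_{\x',0}}(\bfS')=N_{G^0_{\x',0}}(\bfS')$ are established only in that setting. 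So you cannot invoke Proposition \ref{prop:AS-vreg} when $\bfS'$ is ramified---and that is precisely the case you must dispose of when $\bfS'$ is not $G$-conjugate to $\bfS$. The paper's Claim 2 handles this by unwinding the recursion $\Theta_{\cc\rho'_i}(\gamma)=\Theta_{\tilde\phi_{i-1}}(\gamma\ltimes 1)\cdot\Theta_{\cc\rho'_{i-1}}(\gamma)\cdot\phi_{i-1}(\gamma)$, reducing nonvanishing to $\Theta_{\kappa_{(\bfS',\phi_{-1})}}(\gamma_0)\neq0$, and invoking Lemma \ref{lem:Kaletha}; that argument makes no unramifiedness assumption on $\bfS'$, and some version of it is needed in place of your citation of Proposition \ref{prop:AS-vreg}.

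A smaller point: the Frobenius-type character formula for compact induction from the open, compact-modulo-center subgroup $S'G_{\x',0}$ is not a formal identity. The paper derives it from the Harish-Chandra integral character formula, and the exchange of integrals there requires first establishing (via the Tits fixed-point argument you only invoke later for a different purpose) that $g\mapsto\dot{\Theta}_\sigma({}^g\gamma)$ has compact support modulo center---in fact that it is supported on a single right coset of $G_{\bar{\x}'}$. This should be made explicit rather than asserted.
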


Before we prove the main result of this subsection (Proposition \ref{prop:char at vreg}) which we will use to prove Theorem \ref{thm:unram pi vreg} in the next section, let us fix some notation. From now until the end of the paper, we will use the following notation.
Let $(\bfS', \phi)$ be a tame elliptic regular pair with an associated point $\x'\in\mathcal{B}(\bfG,F)$ and let $(\vec{\bfG}, \pi_{-1}, \vec{\phi})$ be a corresponding regular generic reduced cuspidal $\bfG$-datum (Section \ref{subsec:Kaletha-TER}).
We caution that at this point $\bfS'$ might not be unramified nor $\phi$ might be toral.
We let $\cc \rho_i'$, $\tilde \phi_i$ denote the ``intermediate'' representations arising in Yu's construction of the supercuspidal representation associated to $(\vec{\bfG}, \pi_{-1}, \vec{\phi})$ (see Sections \ref{subsec:Yu}, \ref{subsec:stab-parah} for recollections). We remind the reader that $\cc \rho_i'$ is a representation of $\cc K^i = SG_{\x',0}^0(G^0, \ldots, G^i)_{\x',(s_0,\ldots, s_{i-1})}$ and $\tilde \phi_i$ is a representation of $G_{\bar{\x}'}^i \ltimes J^{i+1}$ where $J^{i+1} = (G^i, G^{i+1})_{\x',(r_i,s_i)}$.
Furthermore, we fix an unramified elliptic maximal torus $\bfS$ of $\bfG$.

\begin{proof}[Proof of Proposition \ref{prop:char at vreg}]

We first note that the central character of $\pi_{(\bfS',\phi)}$ is given by $\phi|_{Z_{\bfG}}$ (\cite[Fact 3.7.11]{MR4013740}) and $\varepsilon^{\ram}[\phi]|_{Z_{\bfG}}=\mathbbm{1}$ by definition (Definition \ref{def:varepsilon ram}).
Moreover, since $\bfS$ is unramified, we have $S=S_{0}Z_{\bfG}$.
Thus it is enough to treat the case where $\gamma$ belongs to $S_{0,\vreg}$.

Recall (Sections \ref{subsec:Yu}, \ref{subsec:stab-parah}) that we have
\begin{equation*}
\pi_{(\bfS', \phi)} = \cInd_{K_{\sigma}}^{G}(\sigma \otimes \phi_d) = \cInd_{K_\sigma}^G(\sigma) \otimes \phi_d
\end{equation*}
where $K_\sigma = K_{\sigma_d} = G_{\bar{\x}'}^{d-1} G_{\x',0+}$ and $\sigma = \sigma_d = \Ind_{K^d}^{K_{\sigma_d}}(\rho_d').$ Let $\Theta_\sigma$ denote the character of $\sigma$ and let $\dot{\Theta}_\sigma$ denote the extension by zero of $\Theta_\sigma$ to a function of $G$.

\begin{claim}\label{claim:1}
For any unramified very regular element $\gamma \in S_{\vreg}$,
\begin{equation*}
\Theta_{\pi_{(\bfS', \phi)}}(\gamma)
=
\phi_{d}(\gamma)\sum_{g \in \cc K^{d} \backslash G_{\bar{\x}'}}  \dot{\Theta}_{\cc\rho'_{d}}({}^{g}\gamma).
\end{equation*}
\end{claim}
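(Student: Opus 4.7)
The plan is to realize $\pi_{(\bfS',\phi)}$ as a compact induction from the stabilizer $G_{\bar \x'}$, apply the standard Frobenius character formula, and then exploit the fact that a regular semisimple element in the building has a very constrained fixed set.

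First, I would use induction in stages to rewrite
\[
\pi_{(\bfS',\phi)} = \cInd_{K^d}^G(\rho'_d \otimes \phi_d) = \cInd_{G_{\bar \x'}}^G\tau_d,\qquad \tau_d = \Ind_{K^d}^{G_{\bar \x'}}(\rho'_d \otimes \phi_d).
\]
Because $\gamma \in S_{\vreg}$ has $\bfT_\gamma = \bfS$ which is elliptic, the conjugacy class of $\gamma$ is compact modulo $Z_\bfG$; hence the standard Frobenius formula for compactly induced representations from the open compact-mod-center subgroup $G_{\bar \x'}$ applies and yields the finite sum
\[
\Theta_{\pi_{(\bfS',\phi)}}(\gamma) = \sum_{\substack{g\in G_{\bar \x'}\backslash G\\ g\gamma g^{-1}\in G_{\bar \x'}}} \Theta_{\tau_d}(g\gamma g^{-1}).
\]

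Next, I would use the building-theoretic observation that the fixed point set of $\gamma$ on $\mathcal{B}^{\red}(\bfG, F)$ equals $\mathcal{A}^{\red}(\bfT_\gamma, F) = \mathcal{A}^{\red}(\bfS, F)$, which is a single point $\bar\x_\gamma$ since $\bfS$ is elliptic. Since $\cc K^d \subset G_{\bar \x'} \subset G_{\bar \x',0}$, the condition $g\gamma g^{-1} \in G_{\bar \x'}$ amounts to $\gamma$ fixing $g^{-1}\bar \x'$, forcing $g^{-1}\bar\x' = \bar\x_\gamma$.

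The argument then splits. In the principal case where $\bfS'$ is $G$-conjugate to $\bfS$, I reduce by stable conjugation to $\bfS = \bfS'$, so $\bar\x' = \bar\x_\gamma$ and the constraint becomes $g \in G_{\bar \x'}$; only the identity coset contributes, giving $\Theta_{\pi_{(\bfS',\phi)}}(\gamma) = \Theta_{\tau_d}(\gamma)$. Unfolding the ordinary induction $\tau_d \cong \Ind_{\cc K^d}^{G_{\bar \x'}}(\cc\rho'_d\otimes\phi_d)$ via Lemma~\ref{lem:stab-parah} and extracting the scalar $\phi_d(\gamma)$---permissible because $\phi_d$ is a character of the full group $G$---produces exactly the claimed identity. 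In the remaining case where $\bfS'$ is not $G$-conjugate to $\bfS$, I argue both sides vanish: for any $g$ appearing in either sum, the element $g\gamma g^{-1}$ has connected centralizer $g\bfS g^{-1}$, which is $G$-conjugate to $\bfS$ and therefore not $G$-conjugate (let alone $\cc K^d$-conjugate) to $\bfS'$, so Proposition~\ref{prop:rho_d'} forces $\dot{\Theta}_{\cc\rho'_d}(g\gamma g^{-1}) = 0$; the same vanishing propagates through the expansion of $\Theta_{\tau_d}$, so the left-hand side is $0$ as well.

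The main obstacle is the non-conjugate case: one must match the vanishing mechanism on both sides of the identity. The left side vanishes because only cosets with $g^{-1}\bar\x' = \bar\x_\gamma$ can contribute and each such conjugate is shown to be outside the relevant orbit; the right side, being a sum over all of $\cc K^d\backslash G_{\bar\x'}$, vanishes term-by-term via Proposition~\ref{prop:rho_d'}, and reconciling these two different vanishing patterns requires care.
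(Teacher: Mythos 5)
Your overall framework — unfolding $\pi_{(\bfS',\phi)}$ as a compact induction, applying a Frobenius-type formula, and isolating the contributing cosets via the fact that $\gamma$ fixes a single point of the reduced building — matches the spirit of the paper, which instead inducts through $K_{\sigma}=G^{d-1}_{\bar\x'}G_{\x',0+}$ and uses the Harish-Chandra integral formula (including the formal-degree identity $\deg\pi=\dim\sigma\cdot\meas(K_\sigma/Z_\bfG)^{-1}$) rather than the naive sum formula. That latter substitution is fine in spirit, but there is a genuine gap in your case analysis.

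The dichotomy you want is not \emph{``$\bfS'$ is $G$-conjugate to $\bfS$ or not''} but rather \emph{``the unique $\gamma$-fixed point $\bar\x_\gamma$ lies in the $G$-orbit of $\bar\x'$ or not.''} These are different: two non-conjugate elliptic maximal tori (e.g., one unramified and one ramified) can have associated vertices in the same $G$-orbit. Your ``principal case'' argument actually only uses $\bar\x'=\bar\x_\gamma$ after conjugation, not $\bfS=\bfS'$, so it already handles the whole nonempty-support case; the correct complementary case is the empty-support one, where $\{g \mid g\bar\x_\gamma=\bar\x'\}=\varnothing$, and there both sides vanish for the trivial reason that no conjugate of $\gamma$ can lie in $G_{\bar\x'}\supset\cc K^d$ at all — no further lemma is needed. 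By instead splitting on $\bfS'$-versus-$\bfS$ conjugacy, you are forced in the remaining case to invoke Proposition~\ref{prop:rho_d'}. But that proposition is stated and proved throughout Section~\ref{sec:AS} under the standing hypothesis that the torus in the datum is \emph{unramified}; in Proposition~\ref{prop:char at vreg} the torus $\bfS'$ is an arbitrary tamely ramified elliptic torus and may well be ramified, so Proposition~\ref{prop:rho_d'} does not apply. The paper deliberately proves Claim~\ref{claim:1} uniformly (the formula holds whether or not $\bfS'\sim\bfS$; both sides may be nonzero or both zero) and only afterward establishes the vanishing in the non-conjugate case (Claim~2) by a separate argument through the depth-zero piece and Lemma~\ref{lem:Kaletha}, which does not depend on $\bfS'$ being unramified. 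Reorganizing your proof around the building-theoretic dichotomy removes the dependence on Proposition~\ref{prop:rho_d'} and closes the gap.
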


We first argue that the function $g \mapsto \dot{\Theta}_\sigma({}^g \gamma)$ on $G/Z_\bfG$ is supported on a single right coset $(G_{\bar{\x}'}/Z_\bfG) \cdot g_\sigma$ in $G/Z_\bfG$ (and is in particular compactly supported).
For this we follow an argument of Kaletha \cite[\S4.4]{MR4013740} (who did it under the assumption that $\gamma$ is shallow) adapted to our situation ($\gamma$ is an unramified very regular element of $S$).
By definition, we have that for any $\alpha \in R(\bfS, \bfG)$, we have $\alpha(\gamma) \not\equiv 1 \pmod{\mathfrak{p}}$, and so by \cite[Section 3.6]{MR546588}, the set of fixed points of $\gamma$ in $\cB^{\red}(\bfG, F^{\ur})$ is $\cA^{\red}(\bfT_\gamma, F^{\ur})=\cA^{\red}(\bfS, F^{\ur})$.
Hence the fixed points of $\gamma$ in the rational building $\cB^{\red}(\bfG,F)$ consists of a single point $\bar{\x}$.
The same holds for ${}^g \gamma$ in place of $\gamma$, with fixed point $g \bar{\x}$.
Obviously ${}^g \gamma$ is an element of the stabilizer subgroup $G_{\bar{\x}'}$ of $\bar{\x}'$ if and only if $\bar{\x}' = g \bar{\x}$.
In particular, unless $\bar{\x}' = g \bar{\x}$,  we have ${}^g \gamma \notin K_\sigma$ and $\dot{\Theta}_\sigma({}^g \gamma) = 0$.
Hence $g \mapsto \dot{\Theta}_\sigma({}^g \gamma)$ is supported on $\{g \in G \mid g \bar{\x} = \bar{\x}'\}$, which is either empty (in which we take $g_\sigma$ arbitrarily) or forms a single right coset of $G_{\bar{\x}'}/Z_\bfG$ in $G/Z_\bfG$ (in which case $g_\sigma$ is uniquely determined).

In the following, by replacing $(\bfS,\gamma)$ with $({}^{g_{\sigma}}\bfS,{}^{g_{\sigma}}\gamma)$, we assume that $g_{\sigma}=1$.
Note that then the point $\bar{\x}$ associated to $\bfS$ is necessarily equal to $\bar{\x}'$.

By the Harish-Chandra integral character formula, we have
\[\label{HC}
\Theta_{\pi_{(\bfS', \phi)}}(\gamma)
= 
\frac{\deg\pi_{(\bfS', \phi)}}{\dim\sigma} \phi_d(\gamma)
\int_{G/Z_\bfG} \int_{\mathcal{K}} \dot{\Theta}_\sigma({}^{gc} \gamma) \, dc \, dg,
\tag{HC}
\]
where 
\begin{itemize}
\item
$\deg\pi_{(\bfS',\phi)}$ is the formal degree of the supercuspidal representation $\pi_{(\bfS',\phi)}$ with respect to a fixed Haar measure of $G/Z_{\bfG}$,
\item
$\mathcal{K}$ is a(ny) compact open subgroup of $G$, 
\item
$dc$ is the Haar measure of $\mathcal{K}$ normalized so that $\mathcal{K}$ has measure $1$.
\end{itemize}

Since the function $g \mapsto \dot{\Theta}_\sigma({}^{gc} \gamma)$ is compactly supported, by a standard argument via Fubini's theorem, we may exchange the two integrals and get rid of the integral over $\mathcal{K}$:
\[
\int_{G/Z_\bfG} \int_{\mathcal{K}} \dot{\Theta}_\sigma({}^{gc} \gamma) \, dc \, dg
=
\int_{G/Z_\bfG} \dot{\Theta}_\sigma({}^{g} \gamma) \, dg.
\]
Since the support of the function $g\mapsto\dot{\Theta}({}^{g}\gamma)$ is contained in $G_{\bar{\x}'}/Z_{\bfG}$ (note that now $g_{\sigma}=1$), we can compute this integral as follows:
\begin{align*}
\int_{G/Z_\bfG} \dot{\Theta}_\sigma({}^{g} \gamma) \, dg
&=
\sum_{g' \in K_\sigma \backslash G_{\bar{\x}'}} \int_{K_\sigma g'/Z_\bfG} \dot{\Theta}_\sigma({}^{g} \gamma) \, dg \\
&=\sum_{g' \in K_\sigma \backslash G_{\bar{\x}'}} \meas(K_\sigma g'/Z_\bfG) \cdot \dot{\Theta}_\sigma({}^{g'} \gamma) \\
&=\meas(K_\sigma/Z_\bfG)\sum_{g \in K_\sigma \backslash G_{\bar{\x}'}}  \dot{\Theta}_\sigma({}^{g} \gamma).
\end{align*}
As the irreducible supercuspidal representation $\pi_{(\bfS',\phi)}$ is obtained by the compact induction of $\sigma$ from $K_{\sigma}$ to $G$, we have
\[
\deg\pi_{(\bfS',\phi)}=\dim\sigma\cdot\meas(K_\sigma/Z_\bfG)^{-1}
\]
(see, e.g., \cite[Theorem A.14]{MR1423022}).
Thus the formula (\ref{HC}) is simplified to 
\[
\Theta_{\pi_{(\bfS', \phi)}}(\gamma)
=
\sum_{g \in K_\sigma \backslash G_{\bar{\x}'}}  \dot{\Theta}_\sigma({}^{g}\gamma).
\]
Since we have $\sigma=\Ind_{\cc K^{d}}^{K_{\sigma}}\cc\rho'_{d}\otimes\phi_{d}$, the Frobenius formula implies Claim \ref{claim:1}.

\begin{claim}\label{claim:2}
If there is an unramified very regular element $\gamma \in S_{\vreg}$ such that $\Theta_{\pi_{(\bfS',\phi)}}(\gamma) \neq 0$, then $\bfS'$ is necessarily $G$-conjugate to the unramified elliptic maximal torus $\bfS$.
\end{claim}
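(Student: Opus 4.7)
The plan is to deduce Claim \ref{claim:2} from Claim \ref{claim:1} by exploiting the structure of the fixed-point set of $\gamma$ in the building together with the support results already developed for the intermediate representation $\cc\rho'_{d}$. First, Claim \ref{claim:1} reduces the assumption $\Theta_{\pi_{(\bfS',\phi)}}(\gamma)\neq0$ to the existence of some $g\in G_{\bar{\x}'}$ such that ${}^{g}\gamma\in\cc K^{d}$ and $\Theta_{\cc\rho'_{d}}({}^{g}\gamma)\neq0$. Next, recall that in the proof of Claim \ref{claim:1} we used $\alpha(\gamma)\not\equiv 1\pmod{\mfp}$ together with \cite[Section 3.6]{MR546588} to see that the fixed-point set of $\gamma$ (hence of any conjugate) in $\mcB^{\red}(\bfG,F)$ is a single point, namely $\bar{\x}$ for $\gamma$ itself and $g\bar{\x}$ for ${}^{g}\gamma$. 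Since ${}^{g}\gamma\in\cc K^{d}\subset G_{\bar{\x}'}$, we are forced to have $g\bar{\x}=\bar{\x}'$. Replacing $(\bfS',\phi)$ by a $G$-conjugate (which does not change what needs to be proved in Claim \ref{claim:2}), we may therefore assume $g=1$, so that $\gamma$ itself lies in $\cc K^{d}$ and $\Theta_{\cc\rho'_{d}}(\gamma)\neq 0$.

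Second, since $\gamma$ is unramified very regular, Lemma \ref{lem:cc K^{d}} applies and produces an element $k\in\cc K^{d}$ with ${}^{k}\gamma\in SG^{0}_{\x',0}$, where $\bfG^{0}=\bfG^{0}(\bfS',\phi)$. The element ${}^{k}\gamma$ is still unramified very regular and, by conjugation invariance of the character, satisfies $\Theta_{\cc\rho'_{d}}({}^{k}\gamma)\neq0$. Applying the vanishing assertion of Proposition \ref{prop:rho_d'} (part (1)) now forces ${}^{k}\gamma$ to be $\cc K^{d}$-conjugate to an element of $S'$. Taking connected centralizers in $\bfG$ then yields that $\bfT_{\gamma}$ is $G$-conjugate to $\bfS'$; since $\bfT_{\gamma}=\bfS$ by the unramified very regularity assumption on $\gamma$, we conclude that $\bfS'$ is $G$-conjugate to $\bfS$, as desired.

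The only mildly delicate point is the applicability of Lemma \ref{lem:cc K^{d}} and Proposition \ref{prop:rho_d'} in the current context: both are stated in Section \ref{sec:AS} under the blanket hypothesis that the torus of the datum is unramified, whereas in Claim \ref{claim:2} we are \emph{a priori} allowing $\bfS'$ to be arbitrary. The key observation is that their proofs only invoke the unramified very regularity of $\gamma$ itself (through the calculation of $C_{\bfG^{d}}^{(s_{d-1}-i)}(\gamma)=\bfT_{\gamma}$ and Spice's topological Jordan decomposition) together with Kaletha's Lemma \ref{lem:Kaletha}, and make no essential use of the unramifiedness of the datum's torus; alternatively, one may argue by contradiction, since the conclusion of the proof forces $\bfS'\sim \bfS$, contradicting any assumed ramification of $\bfS'$. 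Thus the main structural content of the proof is simply a careful bookkeeping of supports, and no genuinely new estimate is required.
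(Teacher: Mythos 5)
Your proof is correct and follows essentially the same strategy as the paper's: use Claim~\ref{claim:1} to locate a conjugate of $\gamma$ in $\cc K^{d}$ on which $\cc\rho'_{d}$ has nonzero trace, peel back to the depth-zero layer, invoke Lemma~\ref{lem:Kaletha} (i.e.\ \cite[Corollary 3.4.26]{MR4013740}), and then compare connected centralizers to conclude $\bfS'\sim_G\bfS$. The difference is one of packaging: the paper re-expands $\Theta_{\cc\rho'_{d}}$ by hand as $\prod_i \Theta_{\tilde\phi_{i}}(\gamma\ltimes 1)\cdot\Theta_{\cc\rho'_0}(\gamma)\cdot\prod_i\phi_i(\gamma)$, passes to the topologically semisimple part $\gamma_0$, and applies Lemma~\ref{lem:Kaletha} to $\gamma_0$ directly. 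You instead cite Lemma~\ref{lem:cc K^{d}} to conjugate $\gamma$ into $S'G^{0}_{\x',0}$ and then cite the already-proved vanishing statement, Proposition~\ref{prop:rho_d'}(1), as a black box. This is a legitimate shortcut, since the proof of Proposition~\ref{prop:rho_d'}(1) is precisely the computation the paper redoes inline. In fact, your version has the virtue of making explicit the $\cc K^{d}$-conjugation step into $S'G^{0}_{\x',0}$: the paper asserts ``$\gamma\in S_{0}\subset G^{0}_{\x,0}$'' right after replacing $\gamma$ by ${}^{g}\gamma$, and the needed justification for landing in $G^{0}_{\x',0}\subset\cc K^{d-1}$ is exactly the Lemma~\ref{lem:cc K^{d}} argument you invoke.

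Two small comments. First, your observation that Lemma~\ref{lem:cc K^{d}} and Proposition~\ref{prop:rho_d'} are stated under the Section~\ref{sec:AS} hypothesis that the torus of the datum is unramified is a genuine point, and your primary justification — that those proofs only use unramified very regularity of $\gamma$ (through $\bfT_{\gamma}$, $C^{(\cdot)}_{\bfG}(\gamma)$, the topological Jordan decomposition) and \cite[Corollary 3.4.26]{MR4013740}, which holds for any maximally unramified elliptic maximal torus — is the right one. Second, your suggested ``alternative'' justification by contradiction does not work: to reach the contradiction $\bfS'\sim\bfS$ you would already need the lemmas to apply under the hypothesis being contradicted, so the reasoning would be circular. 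Drop that remark and keep the first justification.
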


If $\Theta_{\pi_{(\bfS', \phi)}}(\gamma)$ is not zero, there exists $g\in G_{\bar{\x}'}$ such that $\dot\Theta_{\cc\rho'_{d}}({}^{g}\gamma)$ is not zero by Claim \ref{claim:1}. 
By replacing $\gamma$ with ${}^{g}\gamma$, we may assume that $\Theta_{\cc\rho'_{d}}(\gamma)\neq0$.
By definition, $\cc \rho_d'$ is the representation of $\cc K^d = \cc K^{d-1} J^d$ descended from the $\cc K^{d-1} \ltimes J^d$-representation $(\tilde \phi_{d-1}|_{\cc K^{d-1} \ltimes J^d}) \otimes ((\cc \rho_{d-1}' \otimes \phi_{d-1}|_{\cc K^{d-1}}) \ltimes \mathbbm{1})$.
Hence, as we have $\gamma\in S_{0}\subset G^{0}_{\x,0}=G^{0}_{\x',0}\subset \cc K^{d-1}$, we get
\[
\Theta_{\cc \rho_d'}(\gamma) = \Theta_{\tilde \phi_{d-1}}(\gamma \ltimes 1) \cdot \Theta_{\cc \rho_{d-1}'}(\gamma) \cdot \phi_{d-1}(\gamma).
\]
Then the second factor on the right-hand side can be computed in the same way:
\[
\Theta_{\cc \rho_{d-1}'}(\gamma) = \Theta_{\tilde \phi_{d-2}}(\gamma \ltimes 1) \cdot \Theta_{\cc \rho_{d-2}'}(\gamma) \cdot \phi_{d-2}(\gamma).
\]
From this, we see that we must have $\Theta_{\cc\rho_{0}'}(\gamma)\neq0$.
If we take a topological Jordan decomposition (or a normal $(0+)$-approximation) $\gamma=\gamma_{0}\gamma_{+}$ with a topologically semisimple element $\gamma_{0}\in G^{0}_{\x',0}$ and a topologically unipotent element $\gamma_{+}\in G^{0}_{\x',0}$, then we have
\[
\Theta_{\cc\rho_{0}'}(\gamma)
=
\Theta_{\kappa_{(\bfS',\phi_{-1})}}(\gamma)
=
\Theta_{\kappa_{(\bfS',\phi_{-1})}}(\gamma_{0}).
\]
By Lemma \ref{lem:Kaletha}, this does not vanish only if $\gamma_{0}$ is $S'G^{0}_{\x',0}$-conjugate to an element of $S'$.
We take $g\in S'G^{0}_{\x',0}$ such that ${}^{g}\gamma_{0}\in S'$.
Since $\gamma$ is unramified very regular, the topologically semisimple part $\gamma_{0}$ is regular semisimple in $\bfG^{0}$.
Thus the connected centralizer $\bfT_{{}^{g}\gamma_{0}}={}^{g}\bfT_{\gamma_{0}}$ of ${}^{g}\gamma_{0}\in S'$ is equal to $\bfS'$.
On the other hand, by noting that $\gamma_{0}$ is semisimple and commutes with $\gamma_{+}$, $\gamma_{0}$ is contained in the connected centralizer $\bfT_{\gamma}$ of $\gamma$.
This implies that we have $\bfT_{\gamma_{0}}=\bfT_{\gamma}$ by the regularity of $\gamma_{0}$.
As $\bfT_{\gamma}$ is nothing but $\bfS$, we get $\bfS=\bfT_{\gamma}=\bfT_{\gamma_{0}}={}^{g^{-1}}\bfS'$, which establishes Claim \ref{claim:2}.

\mbox{}

We are now ready to finish the proof of Proposition \ref{prop:char at vreg}. From now on, we assume that $\bfS'$ is $G$-conjugate to $\bfS$.
Let us compute the character 
\[
\Theta_{\pi_{(\bfS', \phi)}}(\gamma)
=
\sum_{g \in \cc K^{d} \backslash G_{\bar{\x}'}} \dot{\Theta}_{\cc\rho'_{d}}({}^{g}\gamma).
\]
This can be done in a similar way to the proof of Proposition \ref{prop:AS-vreg}.
By taking $G$-conjugation, we may assume that $\bfS'=\bfS$.
In the following, we omit $\prime$ from the notation; we simply write $\bfS$ and $\x$.
By the argument in the previous paragraph, we see that
\[
\sum_{g \in \cc K^{d} \backslash G_{\bar{\x}}} \dot{\Theta}_{\cc\rho'_{d}}({}^{g}\gamma)
=
\sum_{\begin{subarray}{c}g \in \cc K^{d} \backslash G_{\bar{\x}} \\ {}^{g}\gamma\in S\end{subarray}} \Theta_{\cc\rho'_{d}}({}^{g}\gamma).
\]
The index set of the sum on the right-hand side can be furthermore rewritten as $\cc K^{d}\cap N_{G_{\bar{\x}}}(\bfS) \backslash N_{G_{\bar{\x}}}(\bfS)$.
Then we can check that $\cc K^{d}\cap N_{G_{\bar{\x}}}(\bfS)$ equals $N_{SG^{0}_{\x,0}}(\bfS)$ in the same way as the proof of Proposition \ref{prop:AS-vreg}.
Here note that $N_{SG^{0}_{\x,0}}(\bfS)=SN_{G^{0}_{\x,0}}(\bfS)$ and $N_{G_{\bar{\x}}}(\bfS)=N_{G}(\bfS)$ (the latter equality holds since if an element of $G$ normalizes $\bfS$, then $g$ stabilizes the point $\bar{\x}$ associated to $\bfS$).
Hence, by Proposition \ref{prop:rho_d'}, we have 
\begin{align*}
\Theta_{\pi_{(\bfS, \phi)}}(\gamma)
&=
\sum_{g\in SN_{G^{0}_{\x,0}}(\bfS) \backslash N_{G}(\bfS)}
(-1)^{r(\bbG^{0}_{\x})-r(\bbS)+r(\bfS,\phi)}
\sum_{w\in W_{G^{0}_{\x,0}}(\bfS)}
\varepsilon^{\ram}[\phi]({}^{wg}\gamma)\cdot\phi({}^{wg}\gamma)\\
&=
(-1)^{r(\bbG^{0}_{\x})-r(\bbS)+r(\bfS,\phi)}
\sum_{w\in W_{G}(\bfS)}
\varepsilon^{\ram}[\phi]({}^{w}\gamma)\cdot\phi({}^{w}\gamma). \qedhere
\end{align*}
\end{proof}

\subsection{Proof of Theorem \ref{thm:unram pi vreg}}\label{subsec:unram pi vreg}

We use the notation fixed in the previous subsection. To prove Theorem \ref{thm:unram pi vreg} we will use Proposition \ref{prop:char at vreg} together with the following lemma:

\begin{lemma}\label{lem:vreg nonzero}
Let $\theta \from S \to \bbC^\times$ be a smooth character such that $\theta|_{S_{0+}}$ has trivial $W_G(\bfS)$-stabilizer. Then there exists an element $\gamma \in S_{\vreg}$ such that 
\begin{equation*}
\sum_{w \in W_G(\bfS)} \theta({}^w \gamma) \neq 0.
\end{equation*}
\end{lemma}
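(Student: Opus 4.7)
The plan is to argue by contradiction: assume that $\sum_{w \in W_{G}(\bfS)}\theta^{w}(\gamma) = 0$ for every $\gamma \in S_{\vreg}$, and derive a contradiction from the triviality of the $W_{G}(\bfS)$-stabilizer of $\theta|_{S_{0+}}$. The key point enabling this is that $S_{\vreg}$ is ``large enough'' in a very specific local sense — namely, if we fix any $\gamma_{0} \in S_{\vreg}$, then Lemma \ref{lem:sg+ vreg} applied inside $S$ (taking $g_{+} \in S_{0+}$) shows that $\gamma_{0} \cdot S_{0+} \subset S_{\vreg}$. So the hypothesized vanishing restricts to a nontrivial constraint on all of $S_{0+}$, not just on a sparse subset.

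First I would verify that $S_{\vreg}$ is nonempty; this is part of the setup of Section \ref{subsec:vreg} under the assumption \eqref{ineq}, so I may fix some $\gamma_{0} \in S_{\vreg}$. Then, plugging $\gamma_{0} s$ for $s \in S_{0+}$ into the supposed identity gives
\[
0 = \sum_{w \in W_{G}(\bfS)} \theta^{w}(\gamma_{0} s) = \sum_{w \in W_{G}(\bfS)} \theta^{w}(\gamma_{0}) \cdot \theta^{w}(s) \qquad \text{for all $s \in S_{0+}$.}
\]
This exhibits a linear relation, on $S_{0+}$, among the characters $\{\theta^{w}|_{S_{0+}}\}_{w \in W_{G}(\bfS)}$, with coefficients $\theta^{w}(\gamma_{0})$.

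Next I would use the hypothesis on the stabilizer to show these characters are pairwise distinct: if $\theta^{w_{1}}|_{S_{0+}} = \theta^{w_{2}}|_{S_{0+}}$, then $w_{2}^{-1}w_{1}$ stabilizes $\theta|_{S_{0+}}$, forcing $w_{1} = w_{2}$. Since $\theta$ is smooth, the characters $\theta^{w}|_{S_{0+}}$ all factor through some common finite quotient $S_{0+}/S_{r+}$, so linear independence of distinct characters of a finite abelian group applies. The resulting linear independence forces $\theta^{w}(\gamma_{0}) = 0$ for every $w \in W_{G}(\bfS)$, which is absurd because $\theta$ takes values in $\bbC^{\times}$.

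I do not anticipate any serious obstacle here: the argument really is just a combination of Lemma \ref{lem:sg+ vreg} (to see that $\gamma_{0} S_{0+} \subset S_{\vreg}$) together with orthogonality/linear independence of characters and the stabilizer hypothesis. The only small subtlety worth being careful about is the smoothness of $\theta$, which ensures the restriction to $S_{0+}$ factors through a finite quotient so that the linear-independence-of-characters argument is literally applicable.
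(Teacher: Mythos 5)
Your proof is correct and follows essentially the same route as the paper's: fix $\gamma_0 \in S_{\vreg}$, note that $\gamma_0 S_{0+} \subset S_{\vreg}$, and use the trivial $W_G(\bfS)$-stabilizer hypothesis together with orthogonality of characters on the finite quotient $S_{0+}/S_{r+}$ to derive a contradiction. The only cosmetic difference is that you conclude directly from linear independence of the distinct characters $\theta^w|_{S_{0+}}$ that all coefficients $\theta^w(\gamma_0)$ vanish, whereas the paper isolates $\theta|_{S_{0+}}$ and takes an inner product with itself; these are the same computation.
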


\begin{proof}
The proof is very similar to that of Lemma \ref{lem:toral Henn}. We will actually prove the stronger statement that for any $\gamma \in S_{\vreg}$, there exists an element $x \in S_{0+}$ such that 
\begin{equation}\label{e:gamma x}
\sum_{w \in W_G(\bfS)} \theta({}^w (\gamma x)) \neq 0.
\end{equation}
To this end, fix $\gamma \in S_{\vreg}$ and first note that $\gamma S_{0+} \subset S_{\vreg}$. Now assume for a contradiction that \eqref{e:gamma x} is false for all $x \in S_{0+}$. This implies that we have the following identity of characters of $S_{0+}$:
\begin{equation*}
\theta|_{S_{0+}} = - \theta(\gamma)^{-1} \sum_{1 \neq w \in W_G(\bfS)} \theta({}^w \gamma) \cdot (\theta{}^w|_{S_{0+}}).
\end{equation*}
But now we have
\begin{equation*}
1 = \langle \theta|_{S_{0+}}, \theta|_{S_{0+}} \rangle = -\theta(\gamma)^{-1} \sum_{1 \neq w \in W_G(\bfS)} \theta({}^w \gamma) \cdot \langle \theta|_{S_{0+}}, \theta{}^w|_{S_{0+}} \rangle,
\end{equation*}
which is a contradiction since each summand on the right-hand side is zero by assumption.
\end{proof}

\begin{proof}[Proof of Theorem \ref{thm:unram pi vreg}]

First, the regular supercuspidal representation associated to the pair $(\bfS,\phi)$ indeed satisfies the required condition by Proposition \ref{prop:char at vreg}.
Thus our task is to show uniqueness.

Let $\pi$ be a regular supercuspidal representation of $G$ and let $(\bfS', \phi')$ be a corresponding tame elliptic regular pair (see Proposition \ref{prop:TER-pair}).
Recall that $(\bfS', \phi')$ is unique up to $G$-conjugacy. 
We assume that the representation $\pi$ satisfies the equality (\ref{e:HC vreg}) for a toral character $\theta$ of an unramified elliptic maximal torus $\bfS$ of $\bfG$.
Because $\theta|_{S_{0+}}$ has trivial $W_G(\bfS)$-stabilizer by Lemma \ref{lem:theta-stab}, Lemma \ref{lem:vreg nonzero} implies that there is an element $\gamma \in S_{\vreg}$ such that $\Theta_\pi(\gamma) \neq 0$.
By Proposition \ref{prop:char at vreg}, this implies that $\bfS$ and $\bfS'$ must be $G$-conjugate. In particular, this means we may assume $\bfS' = \bfS$, so that we now have $\pi \cong \pi_{(\bfS, \phi')}$ for some character $\phi'$ of $S$.

By Proposition \ref{prop:char at vreg}, for all $\gamma \in S_{\vreg}$, we have
\begin{equation}\label{e:char at vreg}
\Theta_{\pi_{(\bfS, \phi')}}(\gamma)
= (-1)^{r(\bbG_{\x}^0) - r(\bbS) + r(\bfS, \phi')} \sum_{w \in W_G(\bfS)} \varepsilon^{\ram}[\phi']({}^w \gamma) \cdot \phi'({}^w \gamma).
\end{equation}
It is now in relating $\phi'$ and $\theta$ that we will invoke the remaining assumption \eqref{ineq}. With this assumption, Lemma \ref{lem:toral Henn} holds after replacing every instance of $W_{G_{\x,0}}(\bfS)$ by $W_G(\bfS)$. 
Therefore we must have that $\varepsilon^{\ram}[\phi'] \cdot \phi' = \theta^w$ for some $w \in W_G(\bfS)$ and $c=(-1)^{r(\bbG_{\x}^0) - r(\bbS) + r(\bfS, \phi')}$.
As $\varepsilon^{\ram}[\phi']$ is tamely ramified, we get $\phi'|_{S_{0+}}=\theta^{w}|_{S_{0+}}$.
Since $\varepsilon^{\ram}[\phi']$ (resp.\ $\varepsilon^{\ram}[\theta^{w}]$) is determined by $\phi'|_{S_{0+}}$ (resp.\ $\theta^{w}|_{S_{0+}}$), we have $\varepsilon^{\ram}[\phi']=\varepsilon^{\ram}[\theta^{w}]$.
By noting that $\varepsilon^{\ram}$ is a sign character, we finally conclude that $\phi'=\theta^{w}\varepsilon^{\ram}[\theta^{w}] \,(=(\theta\varepsilon^{\ram}[\theta])^{w})$.
This implies that $\pi_{(\bfS,\phi')}\cong\pi_{(\bfS,\theta\varepsilon^{\ram}[\theta])}$.
\end{proof}

\newpage

\end{document}